    \newcolumntype{L}{>{\raggedright\arraybackslash}X}
\tikzset{
    >=stealth',
    punkt/.style={
           rectangle,
           rounded corners,
           draw=black, very thick,
           text width=6.5em,
           minimum height=2em,
           text centered},
    pil/.style={
           ->,
           thick,
           shorten <=2pt,
           shorten >=2pt,}
}
\newtheorem{theorem}{Theorem}[section]
\newtheorem{lemma}[theorem]{Lemma}
\newtheorem{proposition}[theorem]{Proposition}
\newtheorem{corollary}[theorem]{Corollary}
\newtheorem*{theorem*}{Theorem}
\newtheorem{conjecture}{Conjecture}
\newtheorem{problem}[conjecture]{Problem}
\theoremstyle{definition}
\newtheorem{definition}[theorem]{Definition}
\newtheorem{example}[theorem]{Example}
\newtheorem*{remark*}{Remark}
\theoremstyle{remark}
\newtheorem{remark}[theorem]{Remark}
\numberwithin{equation}{section}
\DeclareMathOperator{\P1minus}{\mathbb{P}^1\backslash\left\{0,1,\infty\right\}}
\DeclareMathOperator{\M11}{\mathcal{M}_{1,1}}
\DeclareMathOperator{\Tatecurve}{\mathcal{E}^{\times}_{\partial/\partial \mathit{q}}}
\DeclareMathOperator{\dch}{dch}
\DeclareMathOperator{\Zm}{\mathcal{Z}^{\mm}}
\DeclareMathOperator{\ugeom}{\mathfrak{u}^{\geom}}
\DeclareMathOperator{\Ugeom}{\mathcal{U}^{\geom}}
\DeclareMathOperator{\ab}{ab}
\DeclareMathOperator{\mm}{\mathfrak{m}}
\DeclareMathOperator{\mot}{mot}
\DeclareMathOperator{\dR}{dR}
\DeclareMathOperator{\B}{B}
\DeclareMathOperator{\rel}{rel}
\DeclareMathOperator{\geom}{geom}
\DeclareMathOperator{\op}{op}
\DeclareMathOperator{\id}{id}
\DeclareMathOperator{\odd}{odd}
\DeclareMathOperator{\even}{even}
\DeclareMathOperator{\res}{res}
\DeclareMathOperator{\bp}{bp}
\def\adr{\mathsf{a}}
\def\bdr{\mathsf{b}}
\DeclareMathOperator{\gr}{gr}
\DeclareMathOperator{\GrL}{\mathscr{G}}
\DeclareMathOperator{\Co}{Co}
\DeclareMathOperator{\Fil}{Fil}
\DeclareMathOperator{\Hom}{Hom}
\DeclareMathOperator{\im}{im}
\DeclareMathOperator{\per}{per}
\DeclareMathOperator{\Aut}{Aut}
\DeclareMathOperator{\Out}{Out}
\DeclareMathOperator{\Lie}{Lie}
\DeclareMathOperator{\Spec}{Spec}
\DeclareMathOperator{\Der}{Der}
\DeclareMathOperator{\Isom}{Isom}
\DeclareMathOperator{\Sym}{Sym}
\DeclareMathOperator{\Gal}{Gal}
\DeclareMathOperator{\ad}{ad}
\DeclareMathOperator{\Ad}{Ad}
\DeclareMathOperator{\depth}{depth}
\DeclareMathOperator{\weight}{weight}
\DeclareMathOperator{\Alg}{\mathsf{Alg}}
\DeclareMathOperator{\MT}{\mathsf{MT}(\mathbb{Z})}
\DeclareMathOperator{\Sch}{\mathsf{Sch}}
\DeclareMathOperator{\Set}{\mathsf{Set}}
\DeclareMathOperator{\Rep}{\mathsf{Rep}}
\DeclareMathOperator{\Vect}{\mathsf{Vect}}
\DeclareMathOperator{\Loc}{\mathsf{LS}}
\DeclareMathOperator{\Con}{\mathsf{Con}}
\begin{document}

\title{Multiple zeta values and iterated Eisenstein integrals}

\author{Alex Saad}
\address{Department of Mathematics, University of Oxford}
\email{saad@maths.ox.ac.uk}
\thanks{The author is partly supported by EPSRC grant EP/M024830/1 and ERC grant 724638.}
\date{\today}
\dedicatory{This paper is dedicated to Geoffrey John}
\keywords{Multiple zeta values, iterated Eisenstein integrals, relative completion}

\begin{abstract}
Brown showed that the affine ring of the motivic path torsor $\pi_1^{\mot}(\P1minus, \vec{1}_0, -\vec{1}_1)$, whose periods are multiple zeta values, generates the Tannakian category $\MT$ of mixed Tate motives over $\mathbb{Z}$ \cite{brownmtm}. Brown also introduced multiple modular values \cite{mmv}, which are periods of the relative completion of the fundamental group of the moduli stack $\M11$ of elliptic curves \cite{hainmoduli}. We prove that all motivic multiple zeta values may be expressed as $\mathbb{Q}[2 \pi i]$-linear combinations of motivic iterated Eisenstein integrals along elements of $\pi_1 (\M11) \cong SL_2(\mathbb{Z})$, which are examples of motivic multiple modular values. This provides a new modular generator for $\MT$. We also explain how the coefficients in this linear combination may be partially determined using the motivic coaction.
\end{abstract}

\maketitle

\tableofcontents

\specialsection*{Introduction}

Multiple zeta values (MZVs) are real numbers defined by series of the form
\begin{equation*}
\zeta(k_1, \dots, k_r) = \sum_{0 < n_1 < \dots < n_r} \frac{1}{n_1^{k_1}\cdots n_r^{k_r}},
\end{equation*}
where $k_i\in\mathbb{N}$ and $k_r\geq 2$ to ensure convergence. The integer $k_1 + \dots + k_r$ is called the \emph{weight}, and the \emph{depth} is $r$. It is well-known that MZVs are periods of the motivic fundamental path torsor of $X:=\P1minus$ \cite{brownmtm}. They may be expressed as certain iterated integrals on $X$ of length equal to the weight.

In this paper we prove that all multiple zeta values may be expressed as $\mathbb{Q}[2\pi i]$-linear combinations of \emph{multiple modular values} \cite{mmv}. These are periods of the relative completion of the fundamental group of the moduli stack $\M11$ of elliptic curves \cite{hainhodge}, whose fundamental group is $\Gamma := SL_2(\mathbb{Z})$. The $\mathbb{Q}$-algebra of multiple modular values is very rich, containing the $L$-values of modular forms for $\Gamma$ (i.e. modular forms of level $1$), amongst other quantities (e.g. \cite[\S $7$]{brownihara}).

More specifically, we show that every MZV of weight $n$ and depth $r$ can be expressed as $(2\pi i)^n$ times a  $\mathbb{Q}$-linear combination of iterated integrals (along a specific element of $\pi_1 (\M11)$) of level-$1$ Eisenstein series
\begin{equation*}
\mathbb{G}_{2k} (\tau) = -\frac{B_{2k}}{4k} + \sum_{n\geq 1}\sigma_{2k-1} (n) q^n, \quad q := e^{2\pi i \tau}.
\end{equation*}
Moreover, the length of these iterated integrals is $s\leq r$ and the total modular weight is at most $n+s$.\footnote{In fact we will prove a stronger statement, where periods are replaced by motivic periods \cite{motivicperiods}. The advantage of working with motivic periods is that they are equipped with an action of a ``motivic'' Galois group, with the caveat that this group is only known to be truly \emph{motivic} (rather than Hodge-theoretic) when working with periods of mixed Tate motives. See  \S \ref{mpsection}.} With an appropriate choice of tangential basepoint $\vec{1}_\infty$ at the cusp \cite[\S $4$]{mmv}, the element of $\pi_1(\M11)\cong \Gamma$ we integrate along corresponds to
\begin{equation*}
S = \left(\begin{matrix} 0 & -1 \\ 1 & 0 \end{matrix} \right).
\end{equation*}
By identifying $S$ with the imaginary axis on the upper half plane $\mathfrak{H}$, our main result (Theorem \ref{maintheorem}) implies the following Theorem. (Here we must regularise with respect to the unit tangent vectors on $\mathfrak{H}$ based at $0$ and $i\infty$ \cite[\S $4$]{mmv}.)

\begin{theorem*}
Every MZV of weight $n$ and depth $r$ can be expressed as a $\mathbb{Q}$-linear combination of iterated integrals on $\mathfrak{H}$ of the form
\begin{equation*}
(2\pi i)^{n}\int_{\vec{1}_0}^{\vec{1}_\infty} \mathbb{G}_{2n_1+2} (\tau_1) \tau_1^{b_1} d\tau_1 \cdots \mathbb{G}_{2n_s+2}(\tau_s) \tau_s^{b_s} d\tau_s,
\end{equation*}
where $s\leq r$, the total modular weight $N:=(2n_1+2) + \dots + (2 n_s+2)$ is bounded above by $n+s$, and $0\leq b_i \leq 2n_i$.
\end{theorem*}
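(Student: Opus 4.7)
The plan is to establish the stronger motivic statement (Theorem \ref{maintheorem}) and then pass to periods via the de Rham period map. The geometric bridge is the cuspidal degeneration of $\M11$: as $\tau\to i\infty$ the universal elliptic curve degenerates to a nodal cubic whose normalisation is $\mathbb{P}^1$, and with an appropriate tangential basepoint $\vec{1}_\infty$ at the cusp one obtains a comparison morphism relating the motivic fundamental group of $\P1minus$ (whose periods are MZVs) and the relative completion of $\pi_1(\M11)$ at $\vec{1}_\infty$. Motivic MZVs therefore appear inside the Hopf algebra of motivic multiple modular values along the path $S$, and the task is to identify their image explicitly.

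First I would describe the Hain connection on $\M11$: a $1$-form valued in a completion of $\Lie(\ugeom)$ whose Eisenstein components are precisely the forms $\mathbb{G}_{2k}(\tau)\tau^b d\tau$ for $0\le b\le 2k-2$, acting on $\Sym^{2k-2}$ of the standard representation of $SL_2(\mathbb{Z})$. Iterating this connection along the imaginary axis $S$ from $\vec{1}_0$ to $\vec{1}_\infty$ yields, by Chen's theorem, exactly the iterated Eisenstein integrals appearing in the statement (with $2k=2n_i+2$ giving $b_i\in\{0,\dots,2n_i\}$). To prove that motivic MZVs arise this way, I would apply the comparison morphism to Brown's generators for the motivic MZV algebra and expand the resulting group-like element in the Hain connection basis, the Tate-twist discrepancy being absorbed into the $(2\pi i)^n$ prefactor.

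The bounds $s\le r$ and $N\le n+s$ require more care. For the length bound I would work with the coradical filtration on both motivic Hopf algebras: a motivic MZV of depth $r$ lies in the $r$-th step of the coradical filtration of the motivic fundamental group of $\P1minus$, the comparison morphism is filtered, and iterated Hain connection integrals of length $s$ form a basis of the $s$-th step on the multiple modular values side, forcing $s\le r$. For the modular-weight bound I would track the Hodge weight through the comparison morphism, using that the modular weight $2n_i+2$ of $\mathbb{G}_{2n_i+2}$ contributes to the total weight while each $\tau^{b_i}$ factor carries a corresponding shift; balancing these against the Hodge weight of the underlying motivic MZV of weight $n$ extracts the bound $N\le n+s$.

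The main obstacle will be the simultaneous control of length and modular weight, since the classical depth filtration on MZVs is not known to be motivic. I expect the cleanest route is via the motivic coaction: Goncharov's coaction on motivic MZVs and the analogous coaction on iterated Eisenstein integrals (computed in \cite{mmv}) both respect the relevant filtrations. An inductive argument on depth/length using these coactions to reduce to lower-depth cases should establish both bounds simultaneously, and this is presumably the technical heart of Theorem \ref{maintheorem}.
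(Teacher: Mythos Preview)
Your outline captures the right ambient geometry (cuspidal degeneration, Hodge weights for the modular-weight bound) but misses the concrete mechanism the paper uses, and your coaction-based induction for the length bound has a circularity that the Pollack relations make unavoidable.

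The paper does not construct a direct comparison morphism from $\pi_1^{\mot}(\P1minus)$ into the relative completion of $\pi_1(\M11)$; no such map is produced. Instead both objects are linked through a third one: the de Rham fundamental group $\Pi_Y^{\dR}$ of the infinitesimal punctured Tate curve $\Tatecurve$. The Hain morphism $\phi\colon\Pi_X^{\dR}\to\Pi_Y^{\dR}$ places MZVs among the coefficients of explicit series $\alpha^{\mm},\beta^{\mm}\in\Pi_Y^{\dR}(\mathcal{P}^{\mm}_{\MT})$, while the monodromy morphism $\mu\colon\mathcal{U}_{1,\vec{1}}^{\dR}\to\Aut(\Pi_Y^{\dR})$ places iterated Eisenstein integrals among the coefficients of $\mu(\Psi)$. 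The engine is the topological identity $\tilde{S}(\beta)=\alpha^{-1}$ in $\pi_1(\Tatecurve)$ (where $\tilde{S}\in B_3$ lifts $S$), which upon relative completion becomes
\[
\mu(\Psi)\bigl(S^{\mm}(\beta^{\mm})\bigr)=(\alpha^{\mm})^{-1}.
\]
Filtering by the $\bdr$-degree, using that $\phi$ carries the depth filtration on $\Pi_X^{\dR}$ strictly to the $B$-filtration on $\Pi_Y^{\dR}$ (Lemma~\ref{hainfiltration}) and that the length filtration on $\Ugeom$ acts through the $B$-filtration (Proposition~\ref{lengthmonodromy}), an explicit induction comparing the filtered coefficient spaces of $(\alpha^{\mm})^{-1}$ and $S^{\mm}(\beta^{\mm})$ gives $\mathfrak{D}_r\Zm\subseteq\Co_r^L(\mu(\Psi))[\mathbb{L}^\pm]$. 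This equation and the $B$-filtration bookkeeping are the technical heart, and they are absent from your plan. Your Hodge-theoretic idea for the bound $N\le n+s$ is, by contrast, essentially what the paper does once the length bound is in hand.

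Your coaction induction runs into the following obstruction. Granting that depth $r$ implies coradical degree $\le r$, suppose inductively that every mixed Tate period of coradical degree $<c$ lies in $H_{c-1}[\mathbb{L}^\pm]$. Given $\zeta$ of coradical degree $c$, you need an iterated Eisenstein integral $I$ of length $c$ with $\zeta-I\in C_{c-1}\mathcal{P}^{\mm}_{\MT}$. Lemma~\ref{falphabet} does show that the leading $f$-word of $\int_S[E_{2n_1+2}(0)\vert\cdots\vert E_{2n_c+2}(0)]$ is a nonzero multiple of $f_{2n_1+1}\cdots f_{2n_c+1}$, so such integrals span $\gr_c^C$ --- but they are \emph{not} all mixed Tate (Remark~\ref{falphabetlemmaremark}): the Pollack relations in $\ugeom$ are exactly the shadow of cusp-form periods appearing in lower coradical degree. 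Hence either you take $I\notin\mathcal{P}^{\mm}_{\MT}$, in which case $\zeta-I\notin\mathcal{P}^{\mm}_{\MT}$ and the induction hypothesis does not apply, or you restrict to $I\in\Co(\mu(\Psi))$, in which case you must first show that these span $\gr_c^C\mathcal{P}^{\mm}_{\MT}$ --- which is equivalent to the theorem you are trying to prove (indeed equivalent to Theorem~\ref{motivetheorem}). The paper's route through $\tilde{S}(\beta)=\alpha^{-1}$ sidesteps this entirely: it never needs to decide which individual Eisenstein words lie in $\mathcal{O}(\Ugeom)$, only that the whole series $\mu(\Psi)$ lands there.
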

This follows from Theorem \ref{maintheorem} by applying the period map. Note that Theorem \ref{maintheorem} is stated in the de Rham normalisation, which alters the power of $2\pi i$. See Remark \ref{bettidrintegrals} for a more precise statement.

\subsubsection*{Examples}

The simplest nontrivial example is the case of $\zeta(3)$. It is a multiple zeta value of weight $3$ and depth $1$, and can therefore be expressed as an iterated integral on $X$ of length $3$. The well-known formula is
\begin{equation*}
\zeta(3) = \int_{0<x<y<z<1} \frac{dx dy dz}{(1-x)y z},
\end{equation*}
which may be verified by expanding $1/(1-x)$ as a geometric series and integrating term-by-term. By our result it may be written as a single integral on $\mathfrak{H}$. Indeed,
\begin{equation*}
\zeta(3) = -(2\pi i)^3 \int_{\vec{1}_0}^{\vec{1}_\infty} \mathbb{G}_4 (\tau) d\tau.
\end{equation*}
After expanding $\mathbb{G}_4$ in $q$ this instead expresses $\zeta(3)$ as a rapidly converging Lambert series. Similar formulae exist for all odd zeta values \cite{ramanujan}; for example
\begin{equation*}
    \zeta(5) = -\frac{1}{12} (2 \pi i)^5 \int_{\vec{1}_0}^{\vec{1}_\infty} \mathbb{G}_6(\tau) d\tau.
\end{equation*}

A more involved combination occurs in depth $2$. Brown gave the first example of an expression of this type for the value $\zeta(3,5)$ \cite[Example $7.2$]{brownihara}. His formula is\footnote{There is a difference in normalisation between Brown's formulae and ours; namely $\Lambda(\mathbb{G}_{k_1}, \dots, \mathbb{G}_{k_s};b_1, \dots, b_s) = i^{b_1 +\dots + b_s} \int_{\vec{1}_0}^{\vec{1}_1} \mathbb{G}_{k_1} (\tau_1) \tau_1^{b_1} d\tau_1 \cdots \mathbb{G}_{k_s}(\tau_s) \tau_s^{b_s} d\tau_s$. This introduces the extra factor of $(-1)$ here that is not present in Brown's formula.}
\begin{equation} \label{zeta35intro}
\zeta(3,5) = -\frac{5}{12}(2\pi i)^8 \int_{\vec{1}_0}^{\vec{1}_\infty}\mathbb{G}_6 (\tau_1) d\tau_1 \mathbb{G}_4  (\tau_2) d\tau_2 + \frac{503}{2^{13} 3^5 5^2 7} (2\pi i)^8.
\end{equation}
This agrees with our result, which states that $\zeta(3,5)$ may be expressed as a linear combination of at most double Eisenstein integrals of total modular weight $\leq 10$. Compare this to the expression for $\zeta(3,5)$ as an iterated integral on $X$ of length $8$.\footnote{In general the depth of an MZV is always smaller than its weight. Our result shows that there is a substantial simplification to the length filtration on iterated integrals in passing from $X$ to $\M11$.}

However, not all linear combinations of iterated Eisenstein integrals are equal to MZVs. The formula
\begin{align*}
& 600\pi \int_{\vec{1}_0}^{\vec{1}_\infty} \mathbb{G}_4 (\tau_1) \tau_1 d\tau_1 \mathbb{G}_{10} (\tau_2) \tau_2^4 d\tau_2 + 480 \pi \int_{\vec{1}_0}^{\vec{1}_\infty} \mathbb{G}_4 (\tau_1) \tau_1^2 d\tau_1  \mathbb{G}_{10} (\tau_2) \tau_2^3 d\tau_2 \\
& = \int_0^{i\infty} \Delta (\tau) \tau^{11} d\tau = \Lambda(\Delta, 12),
\end{align*}
given in \cite[Example $7.3$]{brownihara}, exhibits a linear combination of iterated Eisenstein integrals equal to a noncritical completed $L$-value of the Ramanujan cusp form $\Delta\in S_{12} (\Gamma)$. This multiple modular value is not expected to be an MZV.

\subsubsection*{Algebraic structure}

The space spanned by iterated Eisenstein integrals has a rich algebraic structure. The subspace consisting of MZVs is generated by periods of a certain pro-nilpotent Lie subalgebra $\ugeom\subseteq \Der \Lie(\adr, \bdr)$.\footnote{Here we have slightly abused the concept of a period; to be more precise we are referring to periods of the affine ring of the associated pro-unipotent group scheme, which is an ind-object in the category of mixed Tate motives over $\mathbb{Z}$.} It is generated by derivations $\varepsilon_{2n+2}^\vee$, for each $n\geq 1$, which were originally studied by Tsunogai \cite{tsunogai} in the $\ell$-adic context. They are defined by
\begin{align*}
\varepsilon_{2n+2}^\vee (\adr) &= \ad(\adr)^{2n+2} (\bdr) \\
\varepsilon_{2n+2}^\vee (\bdr) & = \frac{1}{2} \sum_{i+j = 2n+1} (-1)^i \left[\ad(\adr)^i (\bdr), \ad(\adr)^j (\bdr) \right].
\end{align*}
There are many arithmetic relations in $\ugeom$, some of which were studied by Pollack in his masters' thesis \cite{pollack}. For each depth in the lower central series filtration on $\ugeom$ there is a family of relations whose coefficients are connected to period polynomials of cusp forms for $\Gamma$. Some examples in depth $2$ are
\begin{align*}
[\varepsilon_{10}^\vee, \varepsilon_4^\vee] - 3 [\varepsilon_8^\vee, \varepsilon_6^\vee] &=0 \\
2[\varepsilon_{14}^\vee, \varepsilon_4^\vee]  - 7[\varepsilon_{12}^\vee, \varepsilon_6^\vee] + 11 [\varepsilon_{10}^\vee, \varepsilon_8^\vee] &=0.
\end{align*}

Geometrically, $\ugeom$ is the infinitesimal image of the monodromy representation of the unipotent radical of the relative de Rham fundamental group $\mathcal{G}_{1,1}^{\dR}:=\pi_1^{\rel, \dR}(\M11)$ on the de Rham fundamental group of the infinitesimal punctured Tate curve (see \S \ref{monodromysection}). The group $\mathcal{G}_{1,1}^{\dR}$ is generated by symbols corresponding to a basis of all modular forms for $\Gamma$, tensored with irreducible representations of $SL_2/\mathbb{Q}$. The derivations $\varepsilon_{2n+2}^\vee$ are the images of Eisenstein generators, while the cuspidal symbols act trivially.

The Lie algebra $\ugeom$ is a pro-object in the category $\MT$ of mixed Tate motives over the integers. The de Rham fiber functor $\omega^{\dR} \colon \MT\to\Vect_{\mathbb{Q}}$ sending a mixed Tate motive to its de Rham realisation (algebraic de Rham cohomology) equips $\MT$ with the structure of a neutral Tannakian category over $\mathbb{Q}$ \cite{deligne, brownmtm}. Consequently, the \emph{motivic Galois group} $G_{\MT}^{\dR}:= \Aut^{\otimes}_{\MT} (\omega^{\dR})$, and its unipotent radical $U_{\MT}^{\dR}$, act on $\ugeom$.

As a corollary of our result, which crucially relies upon its validity at the level of motivic periods, we obtain the following theorem (Theorem \ref{motivetheorem}):
\begin{theorem*}
The action of $G_{\MT}^{\dR}$ on $\ugeom$ is faithful.
\end{theorem*}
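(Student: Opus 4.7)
The plan is to combine Theorem \ref{maintheorem} with Brown's theorem \cite{brownmtm} (that $G_{\MT}^{\dR}$ acts faithfully on the algebra of motivic multiple zeta values), exploiting the semidirect product decomposition $G_{\MT}^{\dR} \cong \mathbb{G}_m \ltimes U_{\MT}^{\dR}$ to treat the semisimple and unipotent factors separately.

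First I would record the Tannakian transfer step. Since $\ugeom$ is a pro-object of $\MT$, so is the associated pro-unipotent group $\Ugeom$, and its coordinate ring $\mathcal{O}(\Ugeom)$ is an ind-object of $\MT$. A motivic iterated Eisenstein integral is, by the convention flagged in the footnote of the introduction, a motivic period attached to $\mathcal{O}(\Ugeom)$. Consequently, if $g \in G_{\MT}^{\dR}$ acts trivially on $\ugeom$ then it acts trivially on $\mathcal{O}(\Ugeom)$, and so fixes every motivic iterated Eisenstein integral.

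The semisimple factor is disposed of immediately: $\mathbb{G}_m$ acts on $\ugeom$ through the de Rham weight grading, and the generators $\varepsilon^\vee_{2k+2}$ sit in nonzero pure weights, so the restriction to $\mathbb{G}_m$ is already faithful. For the unipotent factor, assume $u \in U_{\MT}^{\dR}$ acts trivially on $\ugeom$. The previous paragraph shows $u$ fixes every motivic iterated Eisenstein integral. Moreover $L^{\mm} = (2\pi i)^{\mm}$ is a motivic period of the simple Tate object $\mathbb{Q}(-1)$, on which the unipotent radical must act trivially (every one-dimensional representation of a unipotent group is trivial); hence $u$ fixes every $\mathbb{Q}[L^{\mm}]$-linear combination of motivic iterated Eisenstein integrals. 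By Theorem \ref{maintheorem} these exhaust all motivic MZVs, so $u$ fixes the whole algebra of motivic multiple zeta values, and Brown's faithfulness then forces $u = \id$.

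The step demanding the most care is the Tannakian identification in the second paragraph: one must verify that the $G_{\MT}^{\dR}$-action on motivic iterated Eisenstein integrals is really the one induced from its action on $\ugeom$ via passage to $\mathcal{O}(\Ugeom)$, so that equivariance transports correctly along the whole chain $\ugeom \rightsquigarrow \mathcal{O}(\Ugeom) \rightsquigarrow \{\text{motivic Eisenstein integrals}\} \rightsquigarrow \{\text{motivic MZVs}\}$. Once this bookkeeping is in place the rest is formal, so I expect the only nontrivial technical work to lie here; but it is precisely the kind of compatibility that the machinery underlying Theorem \ref{maintheorem} is designed to guarantee.
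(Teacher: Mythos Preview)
Your proposal is correct and follows essentially the same approach as the paper's proof of Theorem \ref{motivetheorem}: reduce to the unipotent radical using the weight grading for $\mathbb{G}_m$, then use Theorem \ref{maintheorem} to show that any $u\in U_{\MT}^{\dR}$ acting trivially on $\ugeom$ (equivalently on $\mathcal{O}(\Ugeom)$) must fix every motivic MZV, whence $u=\id$ by Brown's theorem. The paper packages the step you flag as delicate by working with the concrete element $\mu(\mathcal{C}_S^{\mm})\in\Ugeom(\mathcal{P}^{\mm}_{\MT})$ and noting that the $G_{\MT}^{\dR}$-action on $\mathcal{O}(\Ugeom)$ is equivalent to the action on coefficients of such points, which makes the equivariance bookkeeping explicit; but the underlying argument is the same as yours.
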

This confirms a conjecture of Brown \cite[Remark $14.6$]{mmv}, suggesting that every mixed Tate motive over $\mathbb{Z}$ may be constructed from modular forms. It is a ``modular'' analogue of Brown's result \cite{brownmtm}, which implies that $G_{\MT}^{\dR}$ acts faithfully on the motivic fundamental path torsor $\pi_1^{\mot}(X, \vec{1}_0, -\vec{1}_1)$, or of Belyi's Theorem \cite{belyi}, which implies that $\Gal(\bar{\mathbb{Q}}/\mathbb{Q})$ acts faithfully on the profinite fundamental group $\pi_1^{\text{\'{e}t}} (X\times_{\mathbb{Q}} \bar{\mathbb{Q}},b)$ for any rational basepoint $b$.

The faithfulness of the Galois action on $\ugeom$ is connected to the Pollack relations. Let $\Lie \Pi_Y^{\dR}$ be the Lie algebra of the de Rham fundamental group of $Y:=\Tatecurve$, the infinitesimal smoothing of the punctured Tate curve. It is canonically isomorphic to the completed free Lie algebra $\Lie(\adr, \bdr)^\wedge$.

Although $\Tatecurve$ is a topological object, its de Rham fundamental group $\Pi_Y^{\dR}$ (or equivalently, its Lie algebra $\Lie(\adr, \bdr)^\wedge$) has a motivic structure, and $\Lie(\adr, \bdr)^\wedge$ is a pro-object of $\MT$ \cite{brownzetaelements, umem}. Consequently, it has an action by the Lie algebra $\mathfrak{k} = \Lie(U_{\MT}^{\dR})$, which is non-canonically isomorphic to the completed free Lie algebra on generators $\sigma_{2n+1}$ for all $n\geq 1$ \cite{brownmtm}. This action is described by a homomorphism
\begin{equation*}
\rho \colon \mathfrak{k}\to \Der \Lie(\adr, \bdr).
\end{equation*}
In \cite{brownzetaelements}, Brown proved that $\rho$ is injective and that there is a choice of generators $\sigma_{2n+1}\in\mathfrak{k}$ that act to lowest order through $\ugeom\subseteq \Der \Lie(\adr, \bdr)$ via
\begin{equation}\label{nakamurader1}
\rho(\sigma_{2n+1}) \equiv \varepsilon_{2n+2}^\vee \pmod{W_{-2n-3}},
\end{equation}
where $W_\bullet \Der \Lie (\adr, \bdr)$ is the negative of the lower central series filtration. The image of $\rho$ is contained in the normaliser of $\ugeom$ within $\Der \Lie(\adr, \bdr)$. This follows because $\ugeom$ is also a pro-object of $\MT$ and the actions of $\mathfrak{k}$ on $\ugeom$ and on $\Lie(\adr, \bdr)^\wedge$ must be compatible: the $\mathfrak{k}$-action on $\ugeom$ is described by a homomorphism $\tilde{\rho} \colon \mathfrak{k}\to \Der(\ugeom)$ satisfying $\tilde{\rho}(\sigma)(\varepsilon) = [\rho(\sigma), \varepsilon]$ for any $\varepsilon \in \ugeom$.

If $\ugeom$ had no relations, \eqref{nakamurader1} would trivially imply that $\tilde{\rho}$ is injective (i.e. that the $\mathfrak{k}$-action on $\ugeom$ is faithful). However, the Pollack relations in $\ugeom$ prevent this, and can be viewed as a potential cuspidal obstruction to a faithful Galois action on $\ugeom$. Our result implies that $\tilde{\rho}$ is still injective despite this obstruction. This result can be viewed as ``orthogonal'' to a consequence of Oda's conjecture\footnote{Now a theorem by \cite{takao, brownmtm}.} \cite{oda}, which implies a different injectivity result for $\mathfrak{k}$. This is discussed in \S \ref{odasection}.

\subsubsection*{Context}

There are close links between MZVs and other modular and elliptic periods. For example, every MZV may be written as a $\mathbb{Q}[\log(2)^\pm]$-linear combination of iterated integrals of certain weight $2$ modular forms for $\Gamma_0 (4)$ \cite[Theorem $8.1$]{brownLvalues}.

In a different vein, Lochak-Matthes-Schneps \cite{lochakmatthesschneps} showed that the algebra of MZVs is contained within the algebra of elliptic multiple zeta values modulo $2\pi i$. These are functions on $\mathfrak{H}$ given by \emph{indefinite} iterated Eisenstein integrals.

\subsubsection*{Proof idea}

In this section we explain the heuristics of the proof, leaving technicalities and precise definitions for later.

Firstly, we make use of a slightly modified moduli scheme $\mathcal{M}_{1,\vec{1}}$ classifying elliptic curves $E$ together with a choice of tangential basepoint $\vec{v}$ at the origin. It is an affine scheme, and its associated analytic space is a complex manifold whose topological fundamental group is isomorphic to the braid group $B_3$ on three strands. Forgetting the tangential basepoint induces a morphism $\mathcal{M}_{1,\vec{1}}\to \M11$ equipping $\mathcal{M}_{1,\vec{1}}$ with the structure of a principal $\mathbb{G}_m$-bundle over $\M11$. In turn, this induces a natural homomorphism of fundamental groups $B_3 \to \Gamma$.

The benefits of working with $\mathcal{M}_{1,\vec{1}}$ are twofold. Firstly, the space of iterated integrals on $\mathcal{M}_{1,\vec{1}}$ is essentially the same as that on $\M11$; the only additional numbers one obtains are powers of $2\pi i$ (see \eqref{vectordecomp}). Secondly, the addition of basepoint data to the moduli problem equips $\pi_1 (E^\times, \vec{v})$ with a natural ``monodromy'' action by $\pi_1(\mathcal{M}_{1,\vec{1}})$. In particular, choosing $E = \mathcal{E}_{\partial/\partial q}$ to be the infinitesimal Tate curve and $\vec{v}$ to be the unit tangent vector $\vec{1}_O$ at the origin defines an action
\begin{equation*}
    \pi_1 (\mathcal{M}_{1,\vec{1}}) \times \pi_1(\Tatecurve, \vec{1}_O) \to \pi_1(\Tatecurve, \vec{1}_O).
\end{equation*}

This action may be explicitly described using generators and relations (see \S \ref{topmonodromysection}). The group  $\pi_1(\Tatecurve, \vec{1}_O)$ is free on two standard generators $\alpha$ and $\beta$. The group $\pi_1 (\mathcal{M}_{1,\vec{1}})$ is generated by certain (explicitly defined) elements of the braid group, called $\tilde{S}$ and $\tilde{T}$. Under the map $B_3\to \Gamma$ these are sent to the generators
\begin{equation*}
S = \left(\begin{matrix} 0 & -1 \\ 1 & 0 \end{matrix} \right), \quad T = \left(\begin{matrix} 1 & 1 \\ 0 & 1 \end{matrix} \right) \in \Gamma
\end{equation*}
respectively. The elements $\tilde{S}$ and $\tilde{T}$ act on $\alpha$ and $\beta$ via combinations of Dehn twists, and this action may be computed explicitly. In particular we have
\begin{equation} \label{fromonesourceallthingsdepend}
    \tilde{S}(\beta) = \alpha^{-1}.
\end{equation}
The importance of this equation is that it relates iterated integrals on $\Tatecurve$ along $\alpha$ to those along $\beta$ via iterated integrals on $\mathcal{M}_{1, \vec{1}}(\mathbb{C})$ along $\tilde{S}$.

We show in \S \ref{genus0andTate} that all multiple zeta values occur as iterated integrals along both $\alpha$ and $\beta$, but that their distributions within these two spaces of iterated integrals differ. By \eqref{fromonesourceallthingsdepend}, the \emph{difference} between these two spaces of iterated integrals on $\Tatecurve$ is contained in the space of iterated integrals on $\mathcal{M}_{1,\vec{1}}$ along $\tilde{S}$. In this way we are able to show that all MZVs occur as natural iterated integrals along the element $\tilde{S}$.

Finally, we must show that the MZVs within the space of iterated integrals along $\tilde{S}$ are contained within the subspace of iterated Eisenstein integrals. This uses the structure of the relative completion of $\Gamma$ and its monodromy action; in particular, it uses the fact that the cuspidal generators act trivially (see Proposition \ref{explicitmonodromy}).

The heuristics of the proof are summarised in the following diagram:
\begin{equation*}
\begin{tikzpicture}[node distance=1.5cm, auto,]
 \node[punkt] (mzv) {MZVs};
 \node[below=of mzv] (dummy) {}
    edge[pil, <- right hook, dotted, shorten <= 0.5cm] (mzv.south);
 \node[right=of dummy] (t) {Integrals along $\beta$}
   edge[pil, <- left hook , bend right=45] (mzv.east);
 \node[left=of dummy] (g) {Integrals along $\alpha$}
    edge[pil, <->] node[auto] {Integrals along $\tilde{S}$} (t)
   edge[pil, <- right hook, bend left=45] (mzv.west);
 \node[below=of dummy] (s) {Integrals along $S$}
    edge[pil, right hook ->, ] (dummy);
 \node[punkt, below=of s] (e) {Iterated Eisenstein integrals}
    edge[pil, right hook ->,] (s);
\end{tikzpicture}
\end{equation*}

In order to formalise this argument and to formally work with the ``space of iterated integrals along a path'', we make judicious use of the notion of the (relative) de Rham fundamental group of a space $X/\mathbb{Q}$. This is an affine group scheme over $\mathbb{Q}$. The points of its unipotent radical with values in the algebra $\mathcal{P}^{\mm}_{\mathcal{H}}$ of motivic $\mathcal{H}$-periods receive a map\footnote{When the fundamental group is itself unipotent, this map is a homomorphism. In the general case it is a cocycle for $\pi_1 (X)$.} from the topological fundamental group of $X(\mathbb{C})$. Suitably interpreted, this map sends $\gamma \in \pi_1 (X)$ to a noncommutative formal generating series for motivic iterated integrals on $X(\mathbb{C})$ along $\gamma$. Below we define the main spaces, topological paths and generating series of periods used in the argument.
\begin{itemize}
    \item Let $X = \P1minus$. The straight line path $\dch$ between the tangential basepoints at $0$ and $1$ is mapped to the \emph{Drinfeld associator} $\Phi_{0 1}^{\mm} = \sum_w \zeta^{\mm} (w) w$. It is a power series in variables $\mathsf{x}_0, \mathsf{x}_1$.
    \item Let $X = \Tatecurve$. The two natural generators $\alpha, \beta$ for the fundamental group of the punctured torus are mapped to power series $\alpha^{\mm}, \beta^{\mm}$ in variables $\adr, \bdr$. They are products of exponentials and Drinfeld associators.
    \item Let $X = \M11$. The element $S \in \Gamma \cong \pi_1(\M11)$ is mapped to an element $\mathcal{C}_S^{\mm}$. It is a power series in symbols corresponding to a basis for modular forms for $\Gamma$, with coefficients in $SL_2$-representations. It is the value at $S$ of a certain ``canonical cocycle'' \cite[Definition $15.4$]{mmv}, and by definition its coefficients are motivic \emph{multiple modular values} \cite{mmv}.
    \item Let $X = \mathcal{M}_{1,\vec{1}}$. The element $\tilde{S} \in \pi_1 (\mathcal{M}_{1, \vec{1}})$ is mapped to $\Psi = \exp(\eta \mathbf{e}_2) \mathcal{C}_S^{\mm}$. The symbol $\mathbf{e}_2$ corresponds to the Eisenstein series $\mathbb{G}_2$ of weight $2$; $\eta$ is a motivic period whose value we compute as $\eta = \mathbb{L}/8$ in Corollary \ref{qvalue}. 
\end{itemize}

The relationship $\tilde{S}(\beta) = \alpha^{-1}$ implies a similar equation holds between these generating series; namely,\footnote{The automorphism $S^{\mm} \colon (\adr, \bdr) \mapsto (-\mathbb{L}^{-1} \bdr, \mathbb{L}\adr)$ in \eqref{introfundamentaleq} is present for technical reasons.}
\begin{equation} \label{introfundamentaleq}
    \mu(\Psi)(S^{\mm}(\beta^{\mm})) = (\alpha^{\mm})^{-1}.
\end{equation}
Here $\mu$ is a morphism of group schemes called the monodromy morphism (defined in \S \ref{monodromysection}). Its image consists of noncommutative power series in elements of $\ugeom \subseteq \Der \Lie(\adr, \bdr)$ together with an additional central derivation $\varepsilon_2$. Since $\mu$ kills cuspidal symbols (see Lemma \ref{explicitmonodromy}), the coefficients of $\mu(\Psi)$ are motivic iterated Eisenstein integrals. Moreover these coefficients must be contained within the subalgebra of motivic periods of mixed Tate motives because the image of $\mu$ is a pro-object in $\MT$ (Proposition \ref{motivicremark}). In other words, $\mu(\Psi)$ is a generating series for all motivic iterated Eisenstein integrals equal to $\mathbb{Q}[\mathbb{L}^\pm]$-linear combinations of motivic MZVs.

Our goal is to use \eqref{introfundamentaleq} to understand the coefficients of $\mu(\Psi)$ in terms of those of $S^{\mm}(\beta^{\mm})$ and $(\alpha^{\mm})^{-1}$. We show that the coefficients of these series are motivic MZVs, and that \emph{all} motivic MZVs occur. We exhibit explicit bounds for the growth of these coefficients in terms of natural filtrations defined in terms of the degrees in $\adr$ and $\bdr$ (Lemmas \ref{betacoeffs} and \ref{alphacoeffs}). To do this formally we introduce the notion of a coefficient space (Definition \ref{coeffspacedef}).

We then compare the difference in the filtered coefficient spaces of $S^{\mm}(\beta^{\mm})$ and $(\alpha^{\mm})^{-1}$. By equation \eqref{introfundamentaleq}, the difference is contained within the filtered coefficient space of $\mu(\Psi)$. In this way we are able to show that all motivic MZVs occur within the coefficients of $\mu(\Psi)$. We conclude the proof by relating the MZV and modular weights by a Hodge-theoretic argument.

\subsubsection*{Coefficients}

Our proof is nonconstructive; we are able to show that every motivic MZV may be written as a linear combination of iterated Eisenstein integrals of certain lengths and modular weights, but \emph{a priori} we cannot determine the coefficients in this expression. Nevertheless, in \S \ref{coefficientsection} we show that some coefficients may be determined by comparing the additional information coming from $f$-alphabet decompositions of motivic periods \cite[\S $5.4$]{motivicperiods} \cite[\S $22$]{mmv}.

As an example, consider the formula \eqref{zeta35intro} expressing $\zeta^{\mm}(3,5)$ as a double Eisenstein integral. The coefficient $-5/12$ of the longest word in this expression may be computed as follows.

Theorem \ref{maintheorem} implies that $\zeta^{\mm}(3,5)$ may be written as a linear combination of motivic iterated Eisenstein integrals of length at most $2$ and total modular weight at most $10$. There are many possible iterated Eisenstein integrals satisfying these criteria, but the space they span in fact has many relations. By a computation given in Example \ref{example2}, we are able to show that
\begin{equation} \label{introexamplemotivic}
    \zeta^{\mm}(3,5) = A \mathbb{L}^6 I_{6, 4}^{0,0} + B \mathbb{L}^8,
\end{equation}
where $A, B \in \mathbb{Q}$ must be determined and where $I_{2n_1+2, \dots, 2n_s+2}^{b_1, \dots, b_s}$ is the motivic analogue of the iterated integral
\begin{align*}
    &\int_S \mathbb{G}_{2n_1+2}(q_1) \log(q_1)^{b_1} \frac{dq_1}{q_1} \cdots \mathbb{G}_{2n_s+2}(q_s) \log(q_s)^{b_s} \frac{dq_s}{q_s} \\
    = & (2 \pi i)^{b_1 + \dots + b_s + s} \int_{\vec{1}_0}^{\vec{1}_\infty} \mathbb{G}_{2n_1+2} (\tau_1) \tau_1^{b_1} d\tau_1 \cdots \mathbb{G}_{2n_s+2}(\tau_s) \tau_s^{b_s} d\tau_s.
\end{align*}
To leading order in the coradical filtration \cite[\S $2.5$, \S $3.8$]{motivicperiods} the $f$-alphabet decomposition of $\zeta^{\mm}(3,5)$ is $-5 f_5 f_3$ \cite[\S $3$]{browndecomposition}. By Lemma \ref{falphabet} the decomposition of $I_{6, 4}^{0,0}$ is $12 \mathbb{L}^{-6} f_5 f_3$. Comparing coefficients in \eqref{introexamplemotivic} determines $A = -5/12$. The coefficient $B$ cannot be determined in this manner because of the inherent ambiguity associated with splitting the coradical filtration. This computation, together with the outline of a general procedure for determining coefficients, is covered in \S \ref{coefficientsection}.

\subsubsection*{Structure of the paper}

This paper is structured as follows: \S $1$-$4$ should be standard to experts. \S $1$ establishes notation; \S $2$ consists of background algebraic geometry and a review of motivic periods; \S $3$ generalities and specifics on fundamental groups; and \S $4$ is a review of $\ugeom$.

In \S $5$ we study filtrations on fundamental groups and the induced filtrations on coefficient spaces, and introduce several essential pieces of notation.

\S $6$ studies links between moduli spaces and the infinitesimal punctured Tate curve via the monodromy action.

\S $7$ defines the canonical cocycle, series $\Psi$, motivic multiple modular values and motivic iterated Eisenstein integrals.

\S $8$ studies links between $\P1minus$ and the infinitesimal punctured Tate curve via the Hain morphism. We define the series $\alpha^{\mm}$ and $\beta^{\mm}$ and compute bounds on their coefficients in terms of the depth and weight of motivic MZVs.

\S $9$ consists of the proof of Theorem \ref{maintheorem}, the main result of this paper.

\S $10$ explores Galois-theoretic consequences of Theorem \ref{maintheorem}. We prove that the motivic Galois group of the category of mixed Tate motives over $\mathbb{Z}$ acts faithfully on $\ugeom$, and discuss the related Oda Conjecture.

\S $11$ returns to the depth $1$ case of Theorem \ref{maintheorem}. We use Brown's explicit formula for $\mathcal{C}_S^{\mm}$ in length $1$ and explicit formulae for $\phi$ and $\mu$ to compute the unknown motivic period $\eta$ occuring in $\Psi$, and compare this to a formula of Matthes \cite{matthesE2, matthesquasi}.

In \S $12$ we show how an $f$-alphabet decomposition can be used to partially determine the coefficients in an expression for a multiple zeta value in terms of iterated Eisenstein integrals.

\section*{Acknowledgements}

The author would like to thank F. Brown for his extensive guidance and support throughout this project. Many thanks are also due to N. Matthes, M. Luo and A. Keilthy for fruitful discussions and thorough readings of early versions of this text, and to R. Hain for his comments regarding Oda's conjecture.

\section{Notation and conventions}

\subsection{Representations of $SL_2$} \label{SL2reps}

For $n\geq 0$, let $V_n$ be the $\mathbb{Q}$-vector space of homogeneous polynomials in $X$ and $Y$ of degree $n$. It is equipped with a \emph{right} action of the group scheme $SL_2$: for $R$ a $\mathbb{Q}$-algebra, $p(X,Y) \in V_n \otimes_{\mathbb{Q}} R$, and
\begin{equation*}
\gamma = \left(\begin{matrix} a & b \\ c & d \end{matrix} \right) \in SL_2 (R),
\end{equation*}
this is defined by $p(X,Y)\vert_\gamma = p(aX + bY, cX + dY)$.

There is also a de Rham version of $V_n$, denoted by $V_n^{\dR}$, generated by de Rham indeterminates $\mathsf{X}, \mathsf{Y}$. They are related to the Betti generators $X,Y$ via $(X,Y)\mapsto (\mathsf{X}, \mathbb{L} \mathsf{Y})$, where $\mathbb{L}$ is the motivic period analogue of $2\pi i$ (see \S\ref{mpsection}). The action $(-)\vert^{\dR}$ of the (Betti) $SL_2$ on the de Rham generators is therefore twisted by powers of $\mathbb{L}$; in particular, given $\gamma$ as before and $p(\mathsf{X}, \mathsf{Y})\in V_n^{\dR}\otimes R$, we have
\begin{equation*}
p(\mathsf{X}, \mathsf{Y})\vert^{\dR}_\gamma = p(\mathsf{X}, \mathsf{Y})\vert_{\epsilon \gamma \epsilon^{-1}} \text{ where } \epsilon = \left(\begin{matrix} 1 & 0 \\ 0 & \mathbb{L}^{-1} \end{matrix} \right).
\end{equation*}
Equivalently, one can interpret this as two different forms of $SL_2/\mathbb{Q}$ (Betti and de Rham forms, denoted $SL_2^{\B}$ and $SL_2^{\dR}$), which are isomorphic over $\mathbb{Q}[\mathbb{L}, \mathbb{L}^{-1}]$ via $\gamma\mapsto \epsilon \gamma \epsilon^{-1}$. In particular, the images of the two standard generators $S,T\in SL_2(\mathbb{Z})$ under the composition $SL_2(\mathbb{Z}) \hookrightarrow SL_2^{\B}(\mathbb{Q}) \to SL_2^{\dR} (\mathbb{Q}[\mathbb{L}, \mathbb{L}^{-1}])$ are
\begin{equation*}
S^{\mm} = \left(\begin{matrix} 0 & -\mathbb{L} \\ \mathbb{L}^{-1} & 0 \end{matrix} \right), \quad T^{\mm} = \left(\begin{matrix} 1 & \mathbb{L} \\ 1 & 0 \end{matrix} \right).
\end{equation*}

\subsection{Modular forms}

Let $\mathfrak{H} = \left\{\tau\in\mathbb{C}:\Im(\tau)>0\right\}$ and let $q := \exp(2\pi i \tau)$ be the corresponding coordinate on the punctured $q$-disc $D^\times = \left\{q\in\mathbb{C}: 0<\lvert q\rvert  < 1\right\}$.

We only consider modular forms $f$ for the full modular group $\Gamma = SL_2(\mathbb{Z})$. Such modular forms are of \emph{level} $1$.

The Hecke-normalised Eisenstein series of weight $2k \geq 4$ is the modular form $\mathbb{G}_{2k}$ with $q$-expansion
\begin{equation} \label{Eisdef}
\mathbb{G}_{2k} (\tau) = -\frac{B_{2k}}{4k} + \sum_{n\geq 1}\sigma_{2k-1} (n) q^n.
\end{equation}
Here the arithmetic function $\sigma_r$ is defined as $\sigma_r (n) := \sum_{d\vert n} d^r$.

\subsection{Semidirect products} \label{sdpconvention}

Let $G$ and $\Pi$ be groups, and suppose that $G$ acts on $\Pi$ on the \emph{left} via $\pi\mapsto g(\pi)$. The semidirect product $\Pi \rtimes G$ with respect to this action is the group with underlying set $\Pi\times G$ and product $(\pi_1, g_1)(\pi_2, g_2) = (\pi_1 g_1 (\pi_2), g_1 g_2)$.

Let us suppose instead that $G$ acts on $\Pi$ on the \emph{right} via $\pi \mapsto \pi\vert_g$. The semidirect product $G\ltimes \Pi$ with respect to this action is the group with underlying set $G\times \Pi$ and product $(g_1, \pi_1) (g_2, \pi_2) = (g_1 g_2, \pi_1 \vert_{g_2} \pi_2)$.

\subsection{Fundamental groups}

We use the topologists' convention regarding path multiplication in fundamental groups. If $\alpha, \beta$ are two elements in a fundamental group, the product $\alpha \beta$ is homotopic to the result of first traversing $\alpha$ and then $\beta$.

\subsection{Hopf algebras}

If $H$ is a Hopf algebra over a field of characteristic zero, let
\begin{equation*}
\GrL(H) = \left\{x\in H : x\text{ invertible and } \Delta(x) = x\otimes x\right\}
\end{equation*}
be its group of grouplike elements.

\subsubsection{The shuffle algebra} \label{hopfshufflealg}

Let $Z$ be a set and $R$ a $\mathbb{Q}$-algebra. Let $R\langle Z \rangle$ be the $R$-span of all words $w$ in the alphabet $Z$, including the empty word $\varnothing$. It is a commutative $\mathbb{Q}$-algebra equipped with the shuffle product $\shuffle$. It can be equipped with the structure of a Hopf algebra over $\mathbb{Q}$ with the deconcatenation coproduct.

\subsubsection{The Hopf algebra of noncommutative formal power series} \label{hopfalgnoncomm}

Let $Z$ and $R$ be as above. The ring $R\langle\langle Z \rangle\rangle$ of formal power series with noncommuting indetermines in $Z$ consists all formal power series with coefficients in $R$ whose indeterminates are words in the elements of $Z$, equipped with the concatenation product. It has the structure of a Hopf algebra over $\mathbb{Q}$ when equipped with the coproduct for which elements of $Z$ are primitive.

\subsection{Tangential basepoints}

We regularly use fundamental groups with \emph{tangential basepoints} \cite[\S $15$]{deligne}. For completeness we include the following algebraic definition of a tangential basepoint, attributed to Nakamura \cite[Definition $1.1$]{nakamuratangential}.

\begin{definition}[Tangential basepoint] \label{tangentialbasepoint}
Let $X$ be a connected scheme and let $K$ be a field of characteristic $0$. A $K$-rational tangential basepoint is a morphism $\vec{v} \colon \Spec K((q))\to X$. For notational reasons, define
\begin{equation*}
X(K)_{\bp} := X(K)\cup\left\{K\text{-rational tangential basepoints}\right\}.
\end{equation*}
If $X$ is a curve, the unit tangent vector at $p\in X(K)$ is denoted $\vec{1}_p\in X(K)_{\bp}$.
\end{definition}

\section{Background notions}

In this section we recall background notions from algebraic geometry, topology and the Tannakian theory of motivic periods.

\subsection{Geometry}

In this subsection we define the moduli spaces $\M11$ and $\mathcal{M}_{1,\vec{1}}$, the universal family $\mathcal{E}\to \M11$ and the infinitesimal punctured Tate curve $\Tatecurve$. We give both an algebraic and analytic description of these spaces.

\subsubsection{Algebraic definition of the moduli spaces $\M11$ and $\mathcal{M}_{1,\vec{1}}$} \label{modulialg}

Let $\M11$ be the moduli stack of elliptic curves. It is the stack over $\Spec(\mathbb{Z})$ whose points $\M11(X)$ classify isomorphism classes of elliptic curves $(E,O)$ over a scheme $X$. Here $E\to X$ is an elliptic curve over $X$ and $O \colon X\to E$ is a distinguished section.

Consider the functor $F \colon \Sch^{\op}\to \Set$ that assigns to a scheme $X$ the set of isomorphism classes $[E,O, \omega]$ of triples $(E,O,\omega)$, where:
\begin{itemize}
\item $(E,O)$ is an elliptic curve over $X$, as above;
\item $\omega\in\Omega^1_{E/X}(E)$ is a nonzero differential.
\end{itemize}
Over $S = \Spec \mathbb{Z}[1/6]$, this functor is representable by the affine scheme
\begin{equation*}
\mathcal{M}_{1,\vec{1}}: = \Spec \mathbb{Z}[1/6][u,v,\Delta^{-1}], \quad \text{where } \Delta = u^3 - 27 v^2,
\end{equation*}
because every isomorphism class $[E,O,\omega]\in F (X)$ can be locally represented over $X$ by a Weierstrass equation
\begin{equation*}
E: y^2 = 4 x^3 - g_2 x - g_3
\end{equation*}
where $g_2, g_3$ are local sections of $\mathcal{O}_X$ with $g_2^3 - 27 v^2$ invertible. This Weierstrass equation determines a morphism $X \to \mathcal{M}_{1,\vec{1}}$ via $u \mapsto g_2, v\mapsto g_3$; it also determines $\omega = dx/y$ and $O$ uniquely  \cite[\S $8.3$]{hainmoduli}. 

There is a natural embedding $\mathcal{M}_{1,\vec{1}}\hookrightarrow \mathbb{A}^2$. The multiplicative group $\mathbb{G}_m$ acts on $\mathbb{A}^2$ via $\lambda \cdot (u,v) = (\lambda^4 u, \lambda^6 v)$. The formula $\lambda \cdot \Delta = \lambda^{12} \Delta$ implies that the $\mathbb{G}_m$-action restricts to an action on $\mathbb{A}^2\backslash V(\Delta) = \mathcal{M}_{1,\vec{1}}$.

It follows that $\M11$ is the stack quotient $\M11\cong [\mathbb{G}_m\backslash \mathcal{M}_{1,\vec{1}}]$ \cite[Remark $8.5$]{hainmoduli}. The quotient morphism $\mathcal{M}_{1,\vec{1}}\to \M11$ is the map $[E,O,\omega]\mapsto [E,O]$ and is evidently a principal $\mathbb{G}_m$-bundle.

\begin{remark}
The notation $\mathcal{M}_{1,\vec{1}}$ is suggestive of the fact that this is also the moduli space of isomorphism classes of pairs $((E,O), \vec{v})$, where $(E,O)$ is as before and $\vec{v}$ is a nonzero tangential basepoint at $O$.
\end{remark}

\subsubsection{Analytic defintion of the moduli spaces $\M11$ and $\mathcal{M}_{1,\vec{1}}$}

To study periods it is more useful to have an analytic description of the associated analytic spaces $\mathcal{M}_{1,\vec{1}}^{an}$ and $\mathcal{M}_{1,1}^{an}$. They are examples of complex-analytic orbifolds \cite{hainmoduli}. As we generally work with moduli spaces as orbifolds rather than as algebraic stacks, we abuse notation and from this point forward use the same symbols to denote the stacks and their associated orbifolds.

Let $\mathfrak{H} = \left\{z\in\mathbb{C}:\Im(z)>0\right\}$. The modular group $\Gamma:=SL_2(\mathbb{Z})$ acts on $\mathfrak{H}$ by
\begin{equation} \label{mobiusaction}
\gamma\cdot \tau := \frac{a\tau +b}{c\tau+d} \quad \text{where }\gamma = \left(\begin{matrix} a & b \\ c & d \end{matrix}\right), \quad \tau\in\mathfrak{H}.
\end{equation}

The analytic moduli space of elliptic curves is the orbifold quotient $\mathcal{M}_{1,1}:=[\Gamma \backslash \mathfrak{H}]$ under the action \eqref{mobiusaction}. Define a left-action of $\Gamma$ on $\mathbb{C}^\times \times \mathfrak{H}$ by
\begin{equation} \label{bundleaction}
\gamma \cdot  (\xi,\tau) = \left((c\tau+d)^{-1}\xi, \gamma \cdot \tau \right),
\end{equation}
where $\gamma\in\Gamma$ acts on $\tau\in\mathfrak{H}$ as in \eqref{mobiusaction}. Then $\mathcal{M}_{1,\vec{1}}$ is the orbifold quotient
\begin{equation*}
\mathcal{M}_{1,\vec{1}} := [\Gamma \backslash (\mathbb{C}^\times \times \mathfrak{H})].
\end{equation*}
The action \eqref{bundleaction} has no fixed points, so $\mathcal{M}_{1,\vec{1}}$ is an analytic variety \cite[\S $14$]{hainhodge}.

Projection onto the second factor induces a morphism $\mathcal{M}_{1,\vec{1}}\to \M11$. This is a $\mathbb{C}^\times$-bundle over $\M11$; let $\mathcal{L}$ denote the associated analytic line bundle over $\M11$. The global sections of $\mathcal{L}^{\otimes k}$ are modular forms of weight $k$.

The moduli space $\M11$ has a Deligne-Mumford compactification $\overline{\mathcal{M}}_{1,1}$. This corresponds to compactifying the orbifold $\M11$ by patching basic orbifolds together to include the \emph{cusp} of $\M11$ \cite[\S $4$]{hainmoduli}.

\subsubsection{The universal elliptic curve} \label{universalell}

The identity $\id\in \Hom(\M11,\M11)$ corresponds to a universal family $\mathcal{E}\to \M11$ with a canonical section $O \colon \M11\to\mathcal{E}$. The universal punctured elliptic curve is $\mathcal{E}^\times := \mathcal{E}\backslash \left\{O\right\}$. An analytic description is given in \cite[\S $1$]{hainkzb}.

For any $\tau\in\M11$, the fiber $\mathcal{E}_\tau$ is isomorphic to the elliptic curve $E_\tau$ whose complex points are isomorphic to the quotient $\mathbb{C}/\Lambda_\tau$, where $\Lambda_\tau = \mathbb{Z}\oplus\mathbb{Z}\tau$. The fiber $\mathcal{E}^\times_\tau$ is isomorphic to the affine curve $E_\tau^{\times}$. The moduli space $\mathcal{M}_{1,\vec{1}}$ is the normal bundle of the image of $O \colon \M11\to \mathcal{E}$.

It is possible to extend $\mathcal{E}\to\M11$ over $\overline{\mathcal{M}}_{1,1}$ to obtain a generalised universal elliptic curve $\overline{\mathcal{E}}\to \overline{\mathcal{M}}_{1,1}$. The fiber over the cusp of $\overline{\mathcal{M}}_{1,1}$ is the nodal cubic.

\subsubsection{The Tate curve}

The Tate curve is the elliptic curve over $\mathbb{Q}((q))$ whose affine part is given by the equation
\begin{equation*}
y^2 = 4x^3 -g_2(q) x - g_3(q),
\end{equation*}
where $ g_2(q) := 20 \mathbb{G}_4(q)$, $g_3(q) := \frac{7}{3}\mathbb{G}_6(q)$ and $\mathbb{G}_{2k} (q) \in \mathbb{Q}[[q]]$ is the $q$-expansion of the Hecke-normalised Eisenstein series of weight $2k$ defined in \eqref{Eisdef}, considered as a formal power series  \cite[Chapter V, \S $3$]{silverman}.

Following Nakamura \cite[Example $4$]{nakamuratangential} we note that the Tate curve over $\mathbb{Q}((q))$ is equivalent to a $\mathbb{Q}$-rational tangential basepoint
\begin{equation} \label{tatecurvebasepoint}
\vec{1}_\infty \colon \Spec \mathbb{Q}((q))\to \M11.
\end{equation}

Hain provides a topological model for the infinitesimal punctured Tate curve in \cite[\S $16$]{hainkzb}. It is more tractable to work with from the point of view of fundamental groups, although it should be noted that the resulting space is not algebraic. Following Hain, we identify the tangential basepoint \eqref{tatecurvebasepoint} with the positive unit tangent vector at the cusp. A local coordinate at the cusp is $q = \exp(2\pi i \tau)$, and $\vec{1}_\infty = \partial/\partial q$. The infinitesimal punctured Tate curve is the fiber $Y := \Tatecurve$ of $\mathcal{E}^\times\to\M11$ over the tangent vector $\partial/\partial q$.

By equipping $\Tatecurve$ with the tangential basepoint $\vec{1}_O$ at the punctured origin $O$, we determine a trivialisation for $\Omega^1_{\mathcal{E}_{\partial/\partial q}}$ and hence a tangential basepoint on $\mathcal{M}_{1,\vec{1}}$. We continue to denote this tangential basepoint by $\vec{1}_\infty:=[\mathcal{E}_{\partial/\partial q}, \vec{1}_O]$. It is a lift of the tangential basepoint $\vec{1}_\infty$ on $\M11$ under the morphism $\mathcal{M}_{1,\vec{1}}\to\M11$.

\subsection{Motivic periods} \label{mpsection}

In this section we recall basic background about motivic periods, with particular focus on periods of mixed Tate motives. The reader is referred to \cite{motivicperiods} for more detail.

\subsubsection{Tannakian theory of motivic periods} \label{tannakianperiods}

Let $\MT$ be the category of mixed Tate motives over $\mathbb{Z}$ \cite{delignegoncharov}, and let $\mathcal{H}$ be the category of ``Hodge-theoretic triples'' \cite[\S $1$]{deligne}\cite[\S $3.1$]{motivicperiods}. These categories are equipped with Betti and de Rham fiber functors
\begin{equation*}
\omega^{\B}, \omega^{\dR} \colon\MT\to\Vect_{\mathbb{Q}}, \quad \omega_{\mathcal{H}}^{\B}, \omega_{\mathcal{H}}^{\dR} \colon \mathcal{H}\to\Vect_{\mathbb{Q}},
\end{equation*}
with respect to which they are neutral Tannakian over $\mathbb{Q}$. There is also a $\mathcal{H}$-realisation functor $\omega^{\mathcal{H}} \colon \MT\to\mathcal{H}$ satisfying $\omega^\bullet = \omega_{\mathcal{H}}^\bullet \circ \omega^{\mathcal{H}}$ for $\bullet \in\left\{\B, \dR\right\}$. It is a deep result that $\omega^{\mathcal{H}}$ is fully faithful \cite{delignegoncharov}.

To each of these categories one may associate a $\mathbb{Q}$-algebra of motivic periods, denoted $\mathcal{P}^{\mm}_{\MT}$ and $\mathcal{P}^{\mm}_{\mathcal{H}}$ respectively. A \emph{motivic} $\mathcal{H}$-period is an element of $\mathcal{P}^{\mm}_{\mathcal{H}}$ arising from the cohomology of an algebraic variety \cite[\S $3.5$]{motivicperiods}. Since $\omega^{\mathcal{H}}$ is fully faithful, there is an inclusion
\begin{equation} \label{motivicperiodinclusion}
\mathcal{P}^{\mm}_{\MT}\hookrightarrow \mathcal{P}^{\mm}_{\mathcal{H}}.
\end{equation}
The period algebras are equipped with actions of their respective motivic Galois groups
\begin{equation*}
G_{\MT}^{\dR}:=\Aut_{\MT}^\otimes (\omega^{\dR}), \quad G_{\mathcal{H}}^{\dR}:=\Aut_{\mathcal{H}}^\otimes (\omega_{\mathcal{H}}^{\dR}).
\end{equation*}
They are also equipped with canonical period maps
\begin{equation*}
\per \colon \mathcal{P}^{\mm}_{\MT}\to\mathbb{C}, \quad \per \colon \mathcal{P}^{\mm}_{\mathcal{H}}\to\mathbb{C}.
\end{equation*}
All of these features are compatible with the inclusion \eqref{motivicperiodinclusion}.

\begin{remark} \label{motivicperiodinclusionremark}
The embedding \eqref{motivicperiodinclusion} means that $\mathcal{P}^{\mm}_{\MT}$, together with its Galois action, may be studied within the algebra of motivic $\mathcal{H}$-periods, which has an elementary description in terms of \emph{matrix coefficients} \cite[\S $2.2$]{motivicperiods}. For this reason, all motivic periods will be viewed as contained in $\mathcal{P}^{\mm}_{\mathcal{H}}$, and when considering periods of mixed Tate motives we use \eqref{motivicperiodinclusion} implicitly.
\end{remark}

\subsubsection{The universal comparison isomorphism} \label{unicompsection}

The identity map on $\mathcal{P}^{\mm}_{\mathcal{H}}$ can be viewed as a canonical point $c^{\mm}\in \Isom_{\mathcal{H}}^{\otimes} (\omega^{\dR}_{\mathcal{H}}, \omega_{\mathcal{H}}^{\B})(\mathcal{P}^{\mm}_{\mathcal{H}})$ i.e. a natural isomorphism
\begin{equation*}
c^{\mm} \colon  \omega^{\dR}_{\mathcal{H}} \otimes \mathcal{P}^{\mm}_{\mathcal{H}} \xrightarrow{\sim} \omega^{\B}_{\mathcal{H}} \otimes \mathcal{P}^{\mm}_{\mathcal{H}}.
\end{equation*}
The component at $M = (M_{\B}, M_{\dR}, c_M)\in\mathcal{H}$ gives a \emph{universal} comparison isomorphism\footnote{If $M\in\MT$, this restricts to an isomorphism with $\mathcal{P}^{\mm}_{\MT}$ replacing $\mathcal{P}^{\mm}_{\mathcal{H}}$.}
\begin{equation*}
c_M^{\mm} \colon M_{\dR}\otimes_{\mathbb{Q}}\mathcal{P}^{\mm}_{\mathcal{H}} \xrightarrow{\sim}M_{\B}\otimes_{\mathbb{Q}}\mathcal{P}^{\mm}_{\mathcal{H}}.
\end{equation*}
It satisfies $(\id\otimes \per)\circ c_M^{\mm} = c_M \circ (\id\otimes \per)$.

\subsection{Mixed Tate periods}

\subsubsection{The Lefschetz period}

The most basic example of a motivic period is the Lefschetz period $\mathbb{L}$. It satisfies $\per(\mathbb{L}) = 2\pi i$. We recall the definition here as it is widely used in this paper.

\begin{definition}[Lefschetz period]
The Lefschetz period is
\begin{equation*}
\mathbb{L} = \mathbb{L}^{\mm} := [H^1 (\mathbb{G}_m), \left[dz/z\right], \gamma_0]^{\mm},
\end{equation*}
where $H^1 (\mathbb{G}_m)\cong\mathbb{Q}(-1)\in \MT$ and $\gamma_0\in H_1(\mathbb{C}^\times, \mathbb{Q})\cong H^1_{\B} (\mathbb{G}_m)^\vee$ is the class of a small counterclockwise loop circling the origin in $\mathbb{C}^\times$.
\end{definition}

\subsubsection{Motivic multiple zeta values} \label{motivicmzvsection}

Let $X = \P1minus$, and let 
\begin{equation*}
_0 \Pi^{\mot}_1 := \pi_1^{\mot} (X, \vec{1}_0, -\vec{1}_1)
\end{equation*}
denote the motivic path torsor between the tangential basepoints $\vec{1}_0$ and $\vec{1}_1$. Its affine ring $\mathcal{O} ({_0 \Pi^{\mot}_1})$ is an ind-object of $\MT$ \cite{deligne}. Using $\omega^{\mathcal{H}} \colon \MT \hookrightarrow \mathcal{H}$ it may be considered as an ind-object of $\mathcal{H}$ via
\begin{equation*}
\mathcal{O} ({_0 \Pi_1^{\mot}}) = (\mathcal{O} ({_0 \Pi_1^{\B}}), \mathcal{O} ({_0 \Pi_1^{\dR}}), c).
\end{equation*}
Let $\dch \in{ _0 \Pi_1^{\B}} (\mathbb{R})$ denote the straight line path from $\vec{1}_0$ to $-\vec{1}_1$ (see \S \ref{motivicdrinfeldassociators}).

\begin{definition}[Motivic multiple zeta value]
The $\mathbb{Q}$-algebra of motivic multiple zeta values is the ring $\Zm$ generated by matrix coefficients $[\mathcal{O}({_0 \Pi^{\mot}_1}), w, \dch]^{\mm}$, where $w$ ranges over elements of $\mathcal{O}({_0 \Pi^{\dR}_1})$.\footnote{The ind-object $\mathcal{O}({_0 \Pi^{\mot}_1})$ has a weight filtration by finite-dimensional subobjects $M_r \mathcal{O}({_0 \Pi^{\mot}_1})\in \MT$. A motivic MZV $\zeta^{\mm}(w)$ of \emph{weight} $n$ can be written as the matrix coefficient $[M_n\mathcal{O}({_0 \Pi^{\mot}_1}), w, \dch]^{\mm}$.}
\end{definition}

There is a canonical isomorphism $\mathcal{O}({_0 \Pi^{\dR}_1}) \cong \mathbb{Q}\langle e_0, e_1\rangle$ with the shuffle algebra on two symbols \cite{delignegoncharov}. Here $e_0$ and $e_1$ are formal symbols corresponding to $\omega_0 := dz/z, \omega_1 := dz/(1-z) \in \Omega^1 (X)$ respectively. Their classes span $H^1_{\dR}(X)$.

\begin{remark}[Admissible words] \label{admissibledef}
Let $w\in\mathbb{Q}\langle e_0, e_1 \rangle$ be a single word. The motivic multiple zeta value  $[\mathcal{O}(_0 \Pi^{\mot}_1), w, \dch]^{\mm}\in \Zm$ is denoted $\zeta^{\mm}(w)$. If $w$ is \emph{admissible}, meaning it is of the form $w = e_1 v e_0$ for some word $v$, then we may write
\begin{equation} \label{admissibleword}
w = e_1 e_0^{k_1 - 1} \cdots e_1 e_0^{k_r - 1} \quad \text{with } k_1 \dots k_{r-1}\geq 1 \text{ and } k_r\geq2.
\end{equation}
We use the notation $\zeta^{\mm} (k_1, \dots, k_r) := \zeta^{\mm} (w)$ for an admissible motivic MZV.
\end{remark}

\begin{remark}[Shuffle-regularisation]
There is a unique shuffle-algebra homomorphism $\zeta^{\mm} \colon  \mathbb{Q}\langle e_0, e_1\rangle \to \mathcal{P}^{\mm}_{\mathcal{H}}$, such that whenever $w$ is admissible of the form \eqref{admissibleword} we have $\zeta^{\mm}(w) = \zeta^{\mm}(k_1, \dots, k_r)$, and such that $\zeta^{\mm}(e_0) = \zeta^{\mm} (e_1) = 0$ \cite[Proposition $1.173$]{fresan}. We have $\per \zeta^{\mm} (w) = \zeta(w)$.
\end{remark}

\subsubsection{Brown's theorem} \label{browntheorem}

Brown proved that $G^{\dR}_{\MT}$ acts faithfully on $\mathcal{O}( {_0 \Pi^{\mot}_1})$ \cite{brownmtm}. This is equivalent to the statement that the Tannakian subcategory of $\MT$ generated by the ind-object $\mathcal{O}( {_0 \Pi^{\mot}_1})$ is equivalent to $\MT$, and implies that there is an isomorphism $\Zm[\mathbb{L}^{-1}] \xrightarrow{\sim} \mathcal{P}^{\mm}_{\MT}$.

\section{Fundamental groups} \label{fgsection}

For us, fundamental groups are a convenient tool for packaging and manipulating generating series of periods. The relevant fundamental groups for this purpose are Betti and de Rham fundamental groups, as well as the usual topological fundamental group. For technical reasons, we will also have to consider \emph{relative} versions of these groups. These notions are defined and outlined in the next sections.

\subsection{The topological fundamental group} \label{topfg}

Let $X$ be a scheme\footnote{When $X$ is an algebraic stack, $X(\mathbb{C})$ is a complex-analytic orbifold and the fundamental group here refers to the \emph{orbifold fundamental group} \cite{hainmoduli}.} and let $x\in X(\mathbb{C})_{\bp}$. The topological fundamental group of $X$, based at $x$, is
\begin{equation*}
\pi_1^{\text{top}}(X,x):=\pi_1(X(\mathbb{C}),x).
\end{equation*}

\begin{table}[ht]
\centering
\caption{Description of fundamental groups}\label{fg}
\renewcommand\arraystretch{1.5}
\noindent\[
\begin{tabularx}{\linewidth}{|c|c|L|} 
\hline
Space & Fundamental group & Description of generators \\
\hline
$\P1minus$&$\langle \gamma_0, \gamma_1, \gamma_\infty \vert \gamma_0\gamma_1\gamma_\infty = 1\rangle$&$\gamma_p =$ small positively oriented loop around puncture at $p$\\
$E^\times$ & $\langle\alpha, \beta\rangle$ & Cycles on punctured torus\\
$\M11$ & $SL_2(\mathbb{Z}) $ &Orientation-preserving automorphisms of $H_1 (E^\times(\mathbb{C}), \mathbb{Z})$ \\
$\mathcal{M}_{1,\vec{1}}$ & $B_3$ & Automorphisms of $\pi_1^{\text{top}} (E^\times)$ acting via Dehn twists along $\alpha, \beta$\\
\hline
\end{tabularx}
\]
\end{table}
We record several important fundamental groups in Table \ref{fg}. (To ease notation, the basepoint $x$ is omitted.) Here $E^\times$ denotes the fiber of $\mathcal{E}^\times\to\M11$ over any point in $\M11(\mathbb{C})_{\bp}$, and in particular may refer to the infinitesimal punctured Tate curve $Y = \Tatecurve$. The element $\alpha$ is the image under $\mathbb{C}\mapsto\mathbb{C}/\Lambda_\tau\cong E(\mathbb{C})$ of the path $\hat{\alpha} \colon (0,1)\mapsto \mathbb{C}, t\mapsto t$, and $\beta$ is the image of $\hat{\beta} \colon (0,1)\mapsto\mathbb{C}, t\mapsto t\tau$.

Under the isomorphism $ \pi_1^{\text{top}}(\M11, \vec{1}_\infty) \cong \Gamma$ the element $S$ corresponds to the imaginary axis on $\mathcal{H}$, or equivalently to a loop based at $\vec{1}_\infty$ encircling the point $i$.\footnote{This loop is not nullhomotopic because the stabiliser of $[i]\in\M11$ is $\mathbb{Z}/4\mathbb{Z}$.} The element $T$ corresponds to a small positively oriented loop around the cusp.

In Table \ref{fg} the braid group on $3$ strands has presentation
\begin{equation} \label{braidpresentation}
B_3 = \langle t_A, t_B  \vert  t_A t_B t_A = t_B t_A t_B \rangle.
\end{equation}
The elements $t_A$ and $t_B$ can be identified with Dehn twists on cycles $A, B \subseteq E(\mathbb{C})$ that intersect transversally at one point. One such choice is (the images of) $\alpha$ and $\beta$. This point of view realises $\pi_1^{\text{top}}(\mathcal{M}_{1,\vec{1}},x)$ as the mapping class group of a genus $1$ surface with one puncture. See \S \ref{topmonodromysection} for a detailed description of the monodromy action of this mapping class group.

\subsection{The relative fundamental group}

The relative completion of a discrete group \cite{hainmalcev} generalises the notion of unipotent completion. It can be used as a substitute in cases when the unipotent completion is trivial.

Here we use a geometric variant of relative completion, due to Brown \cite[\S $12$]{mmv}. The advantage of this method is that it is possible to define all realisations of the relative completion simultaneously from the following categorical construction.

\begin{definition}[Abstract relative completion] \label{abstractrelcompletion}
Let $(\mathsf{C},\omega)$ be a neutral Tannakian category over a field $K$ of characteristic zero, and let $\mathsf{S}\hookrightarrow \mathsf{C}$ be a full semisimple Tannakian subcategory with fiber functor $\omega\vert_\mathsf{S}$. Define a subcategory $\mathsf{F}(\mathsf{C},\mathsf{S}) \hookrightarrow \mathsf{C}$ whose objects are objects $V\in\mathsf{C}$ equipped with a (finite, exhaustive, separated) increasing filtration
\begin{equation*}
0 = V_0\subseteq V_1 \subseteq \cdots \subseteq V_n = V
\end{equation*}
by $\mathsf{C}$-subobjects of $V$, such that each graded quotient $V_{i+1}/V_i$ is isomorphic to an object in $\mathsf{S}$. The morphisms in $\mathsf{F}(\mathsf{C}, \mathsf{S})$ are the $\mathsf{C}$-morphisms compatible with the filtrations. Then $\mathsf{F}(\mathsf{C},\mathsf{S})$ is a Tannakian category equipped with the fiber functor $\omega \vert_{\mathsf{F}(\mathsf{C}, \mathsf{S})}$. We define the fundamental group of $(\mathsf{C},\omega)$ relative to $\mathsf{S}$ to be the Tannakian fundamental group
\begin{equation*}
\mathcal{G} = \pi_1(\mathsf{C},\mathsf{S}, \omega) := \underline{\Aut}^{\otimes}_{\mathsf{F}(\mathsf{C},\mathsf{S})} \left(\omega \vert_{\mathsf{F}(\mathsf{C}, \mathsf{S})}\right).
\end{equation*}
\end{definition}

Let $S := \underline{\Aut}^{\otimes}_{\mathsf{S}} (\omega\vert_{\mathsf{S}})$. It is pro-reductive because $\mathsf{S}$ is semisimple \cite[Proposition $2.23$]{delignemilne}. There is an inclusion $\mathsf{S}\hookrightarrow \mathsf{F}(\mathsf{C},\mathsf{S})$ given by equipping an object $V\in\mathsf{S}$ with the trivial filtration $0 = V_0\subseteq V_1 = V$. This inclusion induces a faithfully flat morphism of affine group schemes $\mathcal{G}\to S$ whose kernel is a pro-unipotent group $\mathcal{U}$, exhibiting $\mathcal{G}$ as an extension
\begin{equation} \label{abstractext}
1\to \mathcal{U}\to \mathcal{G} \to S \to 1.
\end{equation}

Let $X$ be a scheme\footnote{In one case we must consider the relative fundamental group of the \emph{algebraic stack} $X = \M11$. As described in \S \ref{modulialg}, this is the stack quotient $[\mathbb{G}_m \backslash \mathcal{M}_{1,\vec{1}}]$. We define its relative fundamental groups by looking at $\mathbb{G}_m$-equivariant objects of $\mathsf{LS}(\mathcal{M}_{1,\vec{1}})$ and $\Con (\mathcal{M}_{1, \vec{1}})$ and then applying the same Tannakian machinery as for schemes.} over a field $K\subseteq\mathbb{C}$, and let $x\in X(K)_{\bp}$. To this data we can associate two neutral Tannakian categories:
\begin{itemize}
\item The category $\Loc(X) = \Loc_K(X)$ of local systems $V$ of finite-dimensional $K$-vector spaces on $X$, equipped with the \emph{Betti} fiber functor $\omega_x^{\B}$ sending each local system $V$ to the stalk $V_x$.
\item The category $\Con(X) = \Con_K(X)$ of algebraic $K$-vector bundles with a flat connection $(\mathcal{V},\nabla)$ on $X$ and regular singularities at infinity, equipped with the \emph{de Rham} fiber functor $\omega_x^{\dR}$ sending $(\mathcal{V},\nabla)$ to the fiber $\mathcal{V}(x) = \mathcal{V}_x/\mathfrak{m}_x \mathcal{V}_x$.
\end{itemize}

Let $\mathsf{S}^{\B}\hookrightarrow \Loc(X)$ (resp. $\mathsf{S}^{\dR}\hookrightarrow \Con(X)$) be a full semisimple Tannakian subcategory of $\Loc(X)$ (resp. $\Con(X)$) with fiber functor given by the restriction of $\omega_x^{\B}$ (resp. $\omega_x^{\dR}$). Denote their Tannaka groups by $S^{\B}$ and $S^{\dR}$ respectively.

\begin{definition}[Relative fundamental group] \label{relfgdef}
Applying Definition \ref{abstractrelcompletion} to the above setup produces two affine group schemes over $K$,
\begin{equation}
\pi_1^{\rel,\B}(X,x):=\pi_1(\Loc(X),\mathsf{S}^{\B}, \omega^{\B}_x), \quad \pi_1^{\rel,\dR}(X,x):=\pi_1(\Con(X),\mathsf{S}^{\dR}, \omega^{\dR}_x),
\end{equation}
respectively called the \emph{Betti} and \emph{de Rham} relative fundamental groups of $X$ with basepoint $x$. These depend on the choice of the semisimple subcategories.
\end{definition}

\begin{remark}
A standard way to define the semistable subcategory $\mathsf{S}^{\B}$ (resp. $\mathsf{S}^{\dR}$) is by taking the semistable Tannakian subcategory generated by an object $V \in \mathsf{LS}(X)$ (resp. $\mathcal{V} \in \Con(X)$) inducing a polarised variation of MHS over $X$ \cite{hainmalcev}. In this case the associated reductive group is the symplectic group $S^{\B} = Sp (V_x)$ (resp. $S^{\dR} = Sp(\mathcal{V}(x))$), considered as an affine group scheme over $K$.
\end{remark}

\subsubsection{Splitting}

For relative completions of fundamental groups, the exact sequence \eqref{abstractext} always splits \cite[Proposition $3.1$]{hainhodge}. This means that for $\bullet\in\left\{\B,\dR\right\}$ the relative fundamental group is isomorphic to a semidirect product
\begin{equation} \label{relativesplitting}
\pi_1^{\rel, \bullet}(X,x) \cong \mathcal{U}^\bullet\rtimes S^{\bullet},
\end{equation}
where $S^\bullet$ is pro-reductive and $\mathcal{U}^\bullet$ is pro-unipotent. This decomposition depends upon the choice of splitting.

\subsubsection{Comparison isomorphism} \label{complexcomparisoniso}

Let $X_\mathbb{C} = X\times_{K} \mathbb{C}$ and let $\hat{x}\in X_{\mathbb{C}}(\mathbb{C})$ be a lift of $x$. The Riemann-Hilbert correspondence induces a tensor-equivalence $\Con_{\mathbb{C}}(X_{\mathbb{C}}) \xrightarrow{\sim} \Loc_{\mathbb{C}}(X_\mathbb{C})$. Assume that this induces an equivalence $ \mathsf{S}^{\dR}_{\mathbb{C}} \xrightarrow{\sim} \mathsf{S}^{\B}_{\mathbb{C}}$. Tannakian duality yields a canonical isomorphism $\pi_1^{\rel, B}(X_{\mathbb{C}},\hat{x}) \xrightarrow{\sim} \pi_1^{\rel, \dR}(X_{\mathbb{C}},\hat{x})$. Relative completion commutes with base change \cite[\S $3.3$]{hainhodge} so we obtain a canonical isomorphism
\begin{equation} \label{comparisonisorel}
\pi_1^{\rel, B}(X,x)\times_{K}{\mathbb{C}} \xrightarrow{\sim} \pi_1^{\rel, \dR}(X,x)\times_{K}{\mathbb{C}}.
\end{equation}

\begin{definition}[Comparison isomorphism] \label{comparisoniso}
The isomorphism \eqref{comparisonisorel} is called the comparison isomorphism between (relative) Betti and de Rham fundamental groups.
\end{definition}

The isomorphism \eqref{comparisonisorel} is induced by viewing the affine rings as ind-objects in $\mathcal{H}\otimes_{\mathbb{Q}} K$, whose ring of motivic periods is $\mathcal{P}^{\mm}_{\mathcal{H}}\otimes K$. By \S \ref{unicompsection} it lifts to a universal comparison isomorphism
\begin{equation*}
\pi_1^{\rel, B}(X,x) \times_{K} ({\mathcal{P}^{\mm}_{\mathcal{H}}}\otimes_{\mathbb{Q}} K) \xrightarrow{\sim} \pi_1^{\rel, \dR}(X,x)\times_{K}({\mathcal{P}^{\mm}_{\mathcal{H}}}\otimes_{\mathbb{Q}} K).
\end{equation*}

\begin{remark}[Generating series]
The Betti fiber functor $\omega_x^{\B} \colon \Loc(X)\to\Vect_K$ factors through the category $\Rep_K(\pi_1^{\text{top}}(X,x))$ of $K$-representations of the topological fundamental group, and the augmented functor $\Loc(X)\to\Rep_K(\pi_1^{\text{top}}(X,x))$ is a Tannakian equivalence.\footnote{The category $\Rep_K(\pi_1^{\text{top}}(X,x))$ is equipped with the forgetful fiber functor.} This induces a canonical Zariski-dense homomorphism
\begin{equation} \label{canonicaldensehom}
\pi_1^{\text{top}}(X,x)\to\pi_1^{\rel,\B}(X,x)(K).
\end{equation}
Consider the composition
\begin{equation} \label{universaldrmap}
\pi_1^{\text{top}}(X,x)\to \pi_1^{\rel, \B}(X,x)(K)\to \pi_1^{\rel, \dR}(X,x)(\mathcal{P}^{\mm}_{\mathcal{H}}\otimes_{\mathbb{Q}} K)
\end{equation}
where the first map is \eqref{canonicaldensehom} and the second is induced by the universal comparison isomorphism. This composition can be viewed as the map sending an element $\gamma$ of the topological fundamental group to the generating series of all homotopy-invariant motivic iterated integrals on $X$ along $\gamma$.
\end{remark}

\subsubsection{Relation to unipotent completion}

Let $\bullet \in\left\{ \B, \dR\right\}$. When $\mathsf{S}^{\bullet}$ is a trivial Tannakian subcategory, the relative fundamental group $\pi_1^{\rel, \bullet}(X,x)$ is pro-unipotent. In this case we denote $\pi_1^{\rel, \bullet}(X,x)$ by $\pi_1^{\bullet}(X,x)$.

When $\bullet = \B$, $\pi_1^{\B}(X,x)$ is the unipotent completion of $\pi_1^{\text{top}}(X,x)$. The canonical homomorphism \eqref{canonicaldensehom} is none other than the natural map from $\pi_1^{\text{top}}(X,x)$ to the rational points of its unipotent completion.

Throughout this paper, we only make use of a nontrivial semisimple subcategory $\mathsf{S}^\bullet$ in the cases $(X,x) = (\M11, \vec{1}_\infty)$ or $(\mathcal{M}_{1,\vec{1}}, \vec{1}_\infty)$, in which case $\mathsf{S}^\bullet$ is generated by an explicit object depending on the universal family over $\M11$. This is because the unipotent completion of $\Gamma$ is trivial. In all other situations we only require relative completion with respect to a trivial $\mathsf{S}$.

\begin{remark}
If a discrete group $\Pi$ is the topological fundamental group of a space $X$, we generally write $\Pi^{\B}$ (resp. $\Pi^{\dR}$) for the associated Betti (resp. de Rham) fundamental group.
\end{remark}

\subsection{De Rham fundamental groups of $\P1minus$ and $\Tatecurve$}

In this section we describe the de Rham fundamental groups of $X=\P1minus$ and $Y=\Tatecurve$ in more detail.

\subsubsection{$R$-points}

The groups $\Pi_X := \pi_1^{\text{top}}(X,\vec{1}_1)$ and $\Pi_Y := \pi_1^{\text{top}}(Y,\vec{1}_O)$ are free on two generators (see \S  \ref{topfg}); we have $\Pi_X = \langle \gamma_0, \gamma_1\rangle$ and $\Pi_Y = \langle \alpha, \beta\rangle$. Let $R$ be a $\mathbb{Q}$-algebra, and consider the Hopf algebras of formal noncommutative power series $R\langle\langle \mathsf{x}_0, \mathsf{x}_1 \rangle\rangle$, $R\langle\langle \adr, \bdr \rangle\rangle$ defined in \S \ref{hopfalgnoncomm}. The $R$-points of the de Rham fundamental groups may be described as the groups of grouplike elements
\begin{equation} \label{drpoints}
\Pi_X^{\dR}(R) \cong \GrL(R\langle\langle \mathsf{x}_0, \mathsf{x}_1\rangle\rangle ), \quad\Pi_Y^{\dR}(R) \cong \GrL(R\langle\langle \adr, \bdr\rangle\rangle ).
\end{equation}
The elements $\mathsf{x}_p, \adr, \bdr$ correspond to the abelianisations of logarithms of the associated elements $\gamma_p, \alpha, \beta$ up to a normalisation coming from the comparison isomorphism. In other words, under the natural map $\Pi_X \to (\Pi_X^{\dR})^{\ab} (\mathcal{P}^{\mm}_{\MT})$ we have $\gamma_p \mapsto \exp(\mathbb{L} \mathsf{x}_p)$. Under $\Pi_Y \to (\Pi_Y^{\dR})^{\ab} (\mathcal{P}^{\mm}_{\mathcal{H}})$ we have $\alpha \mapsto \exp(\mathbb{L}\adr), \beta\mapsto \exp(-\bdr)$. The coefficients reflect the MHS on the fundamental group \cite{haingeometry}; $\mathsf{x}_p$ spans a copy of $\mathbb{Q}(1)$, $\adr$ spans $\mathbb{Q}(1)$ and $\bdr$ spans $\mathbb{Q}(0)$.

\subsubsection{Lie algebras}

The Lie algebras of $\Pi_X^{\dR}$ and $\Pi_Y^{\dR}$ are non-canonically isomorphic to the completed free Lie algebras $\Lie(\mathsf{x}_0, \mathsf{x}_1)^\wedge$ and $\Lie(\adr,\bdr)^\wedge$ respectively.

\subsubsection{Motivic Drinfeld associators} \label{motivicdrinfeldassociators}

There are six tangential basepoints on $X$ that have good reduction modulo every prime: each puncture $p\in\left\{0,1,\infty\right\}$ is equipped with two unit tangent vectors $\pm\vec{1}_p$ based at $p$.

Let $i,j\in\left\{0,1,\infty\right\}$ be distinct. There is a natural ``straight line path''
\begin{equation*}
\dch_{i j}\in {_i \Pi_j} :=\pi_1^{\text{top}}(X,\vec{1}_i, -\vec{1}_j)
\end{equation*}
whose image is contained within $X(\mathbb{R})$. Under the map ${_i \Pi_j}\to {_i \Pi_j^{\dR}}(\mathcal{P}^{\mm}_{\MT})$ described in \eqref{universaldrmap}, the element $\dch_{i j}$ is sent to an element $\Phi_{i j}^{\mm}$. By \eqref{drpoints}, this element is a grouplike power series $\Phi_{i j}^{\mm} \in\mathcal{P}^{\mm}_{\MT} \langle\langle \mathsf{x}_0, \mathsf{x}_1 \rangle\rangle$.\footnote{Technically, $\Phi_{i j}^{\mm}$ is an element of a de Rham path torsor. However the canonical de Rham splitting for mixed Tate motives implies that the basepoint plays no role in $\pi_1^{\dR} (X)$.
\par
Here and onward we also make an obvious abuse of notation and write $\zeta^{\mm}(w)$ for $w\in M(\mathsf{x}_0, \mathsf{x}_1)$, even though $\zeta^{\mm}(w)$ is strictly defined for $w\in M(e_0, e_1)$.} As in the case for the real Drinfeld associator $\Phi_{0 1}\in\mathbb{R}\langle\langle \mathsf{x}_0, \mathsf{x}_1\rangle\rangle$, $\Phi_{0 1}^{\mm}$ is the generating series for motivic multiple zeta values
\begin{equation} \label{drinfeldgeneratingseries}
\Phi_{0 1}^{\mm} = \Phi^{\mm}(\mathsf{x}_0, \mathsf{x}_1) = \sum_{w\in M(\mathsf{x}_0, \mathsf{x}_1)} \zeta^{\mm}(w) w
\end{equation}
and satisfies $\per(\Phi_{0 1}^{\mm}) = \Phi_{0 1}$. The other series $\Phi_{i j}^{\mm}$ are obtained making by the change of variables $\Phi^{\mm}_{i j} := \Phi^{\mm} (\mathsf{x}_i, \mathsf{x}_j)$. Here we have defined $\mathsf{x}_\infty := - (\mathsf{x}_0 + \mathsf{x}_1)$.

\subsection{Relative fundamental groups of $\M11$ and $\mathcal{M}_{1,\vec{1}}$} \label{specificfundamentalgroups}

Let $\pi \colon \mathcal{E}\to\M11$ be the universal family of \S \ref{universalell}. Let $V:= R^1 \pi_* \mathbb{Q}$ be the local system over $\M11$ whose fiber over $\tau$ is the Betti cohomology $H_{\B}^1 (E_\tau, \mathbb{Q})$. Associated to $V$ is a vector bundle $\mathcal{V}$ over $\M11$ equipped with the Gauss-Manin connection $\nabla$. 

Let $\mathsf{S}^{\B}$ be the full semisimple subcategory of $\Loc_{\mathbb{Q}} (\M11)$ generated by $V$, and let $\mathsf{S}^{\dR}$ be the full subcategory of $\Con_{\mathbb{Q}}(\M11)$ generated by $(\mathcal{V}, \nabla)$. Definition \ref{relfgdef} yields the Betti and de Rham relative fundamental groups with respect to $\mathsf{S}^{\B}$, $\mathsf{S}^{\dR}$:
\begin{equation*}
\mathcal{G}_{1,1}^{\B} :=\pi_1^{\rel,\B} (\M11,\vec{1}_\infty), \quad \mathcal{G}_{1,1}^{\dR} :=\pi_1^{\rel,\dR} (\M11,\vec{1}_\infty).
\end{equation*}
Let $V'$ (resp. $\mathcal{V}'$) denote the pullback of $V$ (resp. $\mathcal{V}$) by $\mathcal{M}_{1,\vec{1}}\to \M11$. The same procedure defines the relative fundamental groups of $\mathcal{M}_{1,\vec{1}}$ with respect to $V'$ (resp. $\mathcal{V}'$) to give
\begin{equation*}
\mathcal{G}_{1,\vec{1}}^{\B} :=\pi_1^{\rel,\B} (\mathcal{M}_{1,\vec{1}} ,\vec{1}_\infty), \quad \mathcal{G}_{1,\vec{1}}^{\dR} :=\pi_1^{\rel,\dR} (\mathcal{M}_{1,\vec{1}} ,\vec{1}_\infty).
\end{equation*}
The decomposition \eqref{relativesplitting} produces non-canonical isomorphisms
\begin{equation*}
\mathcal{G}_{1,1}^{\bullet} \cong \mathcal{U}_{1,1}^{\bullet}\rtimes SL_2, \quad \mathcal{G}_{1,\vec{1}}^{\bullet} \cong \mathcal{U}_{1,\vec{1}}^{\bullet}\rtimes SL_2,
\end{equation*}
where $\mathcal{U}_{1,1}^{\bullet}$ and $\mathcal{U}_{1,\vec{1}}^{\bullet}$ are pro-unipotent. By \cite[Proposition $14.2$]{hainhodge} we also have
\begin{equation} \label{vectordecomp}
\mathcal{U}_{1,\vec{1}}^{\dR}\cong \mathcal{U}_{1,1}^{\dR}\times\mathbb{G}_a(1).
\end{equation}
In order to describe $\mathcal{G}_{1,1}^{\dR}$ and $\mathcal{G}_{1,\vec{1}}^{\dR}$ it is therefore sufficient to describe $\mathfrak{u}_{1,1} :=\Lie(\mathcal{U}_{1,1}^{\dR})$, together with its $SL_2$-action. The Lie algebra $\mathfrak{u}_{1,\vec{1}} := \Lie(\mathcal{U}_{1,\vec{1}}^{\dR})$ is isomorphic to the direct product $\mathfrak{u}_{1,1} \oplus \mathbb{Q}\mathbf{e}_2$ with a central generator $\mathbf{e}_2$ spanning a copy of $\mathbb{Q}(1)$. This generator is acted on trivially by $SL_2$.

\subsubsection{Explicit description of generators}

It remains to describe $\mathfrak{u}_{1,1}$. It is equipped with a limiting mixed Hodge structure (MHS) \cite{hainhodge}, and therefore has two different weight filtrations, $M$ and $W$. These filtrations will not be used in the sequel, but note that the filtrations on $\ugeom$ defined in \S  \ref{ugeomsection} are their image under the monodromy morphism of \S \ref{monodromysection}. The Lie algebra $\mathfrak{u}_{1,1}$, together with its MHS, is described explicitly in \cite[\S $13.5$]{mmv}. It is completed free Lie algebra on its abelianisation, which is isomorphic to
\begin{equation*}
\prod_{n\geq 2} H^1_{\dR}(\M11, \Sym^n \mathcal{V})^\vee\otimes V_n^{\dR}.
\end{equation*}
By describing $H^1_{\dR} (\M11, \Sym^n \mathcal{V})$ \cite{mmv, brownhain2018}, Brown shows that $\mathfrak{u}_{1,1}$ is the completed free Lie algebra generated by terms of the form
\begin{equation} \label{liealgconcrete}
\mathbf{e}_{2n+2} \mathsf{X}^k \mathsf{Y}^{2n-k}, \quad  \mathbf{e}'_f \mathsf{X}^k\mathsf{Y}^{2n-k}, \quad \mathbf{e}''_f \mathsf{X}^k\mathsf{Y}^{2n-k}
\end{equation}
where $n\geq 1$, $0\leq k\leq 2n$ and $f$ ranges over normalised Hecke eigenforms of weight $2n+2$.

The isomorphism $\mathfrak{u}_{1,\vec{1}}\cong \mathfrak{u}_{1,1}\oplus{\mathbb{Q} \mathbf{e}_2}$ implies that $\mathfrak{u}_{1,\vec{1}}$ is the completed free Lie algebra generated by the terms in \eqref{liealgconcrete}, where we also allow $n\geq 0$. An element of $\mathcal{U}^{\dR}_{1,\vec{1}}(R)$ can be written in the form $u \cdot \exp(r\mathbf{e}_2)$, where $u\in \mathcal{U}^{\dR}_{1,1}(R)$ and $r\in R$.

The symbols $\mathbf{e}_{2n+2}$ each span a copy of $\mathbb{Q}(1)$, while $\mathbf{e}_f',\mathbf{e}_f''$ is a basis for a rank-$2$ mixed Hodge structure associated to the motive of $f$ \cite{scholl}. The symbols $\mathsf{X}$ and $\mathsf{Y}$ span copies of $\mathbb{Q}(0)$ and $\mathbb{Q}(1)$ respectively.

\subsubsection{Left and right actions} \label{leftrightactions}

The reductive group $SL_2^{\dR}$ acts on $\mathfrak{u}_{1,1}$ naturally on the \emph{right} as follows: let $R$ be a $\mathbb{Q}$-algebra and let
\begin{equation*}
\gamma = \left(\begin{matrix} a & b \\ c & d \end{matrix} \right) \in SL_2^{\dR}(R).
\end{equation*}
Then $\gamma$ acts on $\mathfrak{u}_{1,1}\otimes_{\mathbb{Q}} R$ via
\begin{equation} \label{rightSL2action}
P(\mathsf{X},\mathsf{Y})\vert_\gamma := P(a \mathsf{X} + b\mathsf{Y},c\mathsf{X}+d\mathsf{Y}).
\end{equation}
It also acts on $\Pi_Y^{\dR}:=\pi_1^{\dR}(\Tatecurve, \vec{1}_O)$ via right multiplication on the frame $(\adr, \bdr)$:
\begin{equation} \label{SL2actionPi}
(\adr, \bdr) \vert_\gamma := (\adr, \bdr)\gamma = (a \adr + c\bdr, b\adr + d\bdr)
\end{equation}

In contrast, the unipotent group $\mathcal{U}_{1, \vec{1}}^{\dR}$ acts on $\Pi_Y^{\dR}$ on the \emph{left}, via the monodromy homomorphism of \S \ref{monodromysection}, by applying certain derivations on $\Lie(\adr, \bdr)$ to power series in $\adr,\bdr$. These derivations are defined in \S  \ref{geometricderivationssection}.

It is convenient to consider only left actions, so we let $SL_2^{\dR}$ act on the \emph{left} by taking the inverse action of each right action. For $\gamma\in SL_2^{\dR}(R)$ and $x$ an $R$-point of either $\Pi_Y^{\dR}$, $\mathcal{U}_{1,\vec{1}}^{\dR}$ or $\mathcal{U}_{1,1}^{\dR}$, this is defined by $\gamma(x) = x\rvert_{\gamma^{-1}}$.

The group $\mathcal{G}_{1,\vec{1}}^{\dR}$ (resp. $\mathcal{G}_{1,1}^{\dR}$) splits non-canonically as the semidirect product
\begin{equation} \label{sdsplit}
\mathcal{G}_{1,\vec{1}}^{\dR} \cong  \mathcal{U}_{1,\vec{1}}^{\dR} \rtimes SL_2^{\dR} \quad \text{(resp. }\mathcal{G}_{1,1}^{\dR} \cong  \mathcal{U}_{1,1}^{\dR} \rtimes SL_2^{\dR} \text{)},
\end{equation}
where $SL_2^{\dR}$ acts on the left by the above inverse action.

\subsubsection{Induced $\mathfrak{sl}_2$-actions} \label{sl2action}

The $SL_2$-action on $\mathfrak{u}_{1,1}$ described in \S  \ref{leftrightactions} induces an action of $\Lie(SL_2) = \mathfrak{sl}_2$ on $\mathfrak{u}_{1,1}$. It acts via the operators $\mathsf{X}\partial/\partial\mathsf{Y}$ and $-\mathsf{Y}\partial/\partial\mathsf{X}$.

\section{The Lie algebra of geometric derivations} \label{geometricders}

\subsection{The derivations $\varepsilon_{2n+2}^\vee$ and $\varepsilon_{2n+2}$} \label{geometricderivationssection}

Let $L=\Lie(\adr,\bdr)$ denote the free Lie algebra on two letters $\adr$ and $\bdr$ over $\mathbb{Q}$ \cite{freeliealgebras}. Its lower central series completion $L^\wedge$ is canonically isomorphic to $\Lie(\Pi_Y^{\dR})$.

Let $\theta = [\adr,\bdr]$ and consider the Lie subalgebra $\Der^\theta (L)\subseteq \Der(L)$ consisting of derivations $\delta$ for which $\delta(\theta) = 0$. Such a derivation is determined by its value on either $\adr$ or $\bdr$. Within $\Der^\theta (L)$ there is a distinguished family of derivations $\varepsilon_{2n+2}^\vee$, for each $n\geq -1$ \cite{tsunogai}.

\begin{definition}[Geometric derivations] \label{epsderdef}
For $n\geq -1$ define $\varepsilon_{2n+2}^\vee\in \Der^\theta (L)$ by
\begin{align*}
\varepsilon_{2n+2}^\vee (\adr) &= \ad(\adr)^{2n+2} (\bdr) \\
\varepsilon_{2n+2}^\vee (\bdr) & = \frac{1}{2} \sum_{i+j = 2n+1} (-1)^i \left[\ad(\adr)^i (\bdr), \ad(\adr)^j (\bdr) \right].
\end{align*}
The derivation $\varepsilon_0^\vee$ can also be written as $\varepsilon_0^\vee = \bdr\partial/\partial \adr$.
\end{definition}
\subsubsection{Action of $SL_2$ and $\mathfrak{sl}_2$} \label{sl2actionugeom}

The algebraic group $SL_2$ acts on the right of $L$ by right-multiplying the row vector $(\adr,\bdr)$ (see \S \ref{leftrightactions}). For all $n\geq -1$ we define
\begin{equation} \label{dualderdef}
\varepsilon_{2n+2} := (-)\vert_{S} \circ \varepsilon_{2n+2}^\vee \circ (-)\vert_{ S^{-1}}.
\end{equation}
The element $\varepsilon_0$ may also be written as $\varepsilon_0 = -\adr\partial/\partial \bdr$.

There is an inner action of $\mathfrak{sl}_2$ on $\Der^\theta (L)$ by $\varepsilon_0^\vee$ and $\varepsilon_0$. The following proposition is \cite[Lemma $5.2$]{browneisenstein}.

\begin{proposition} \label{epsilonbasis}
The derivation $\varepsilon_{2n+2}^\vee$ is a highest-weight vector and generates an irreducible representation of $\mathfrak{sl}_2$ of dimension $2n+1$, with basis
\begin{equation*}
\left\{ \ad(\varepsilon_0^\vee)^k (\varepsilon_{2n+2}^\vee): 0\leq k\leq 2n\right\}.
\end{equation*}
\end{proposition}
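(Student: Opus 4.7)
The approach is to verify that $\varepsilon_{2n+2}^\vee$ is a highest-weight vector of $\mathfrak{sl}_2$-weight $2n$ for the inner action on $\Der^\theta(L)$ generated by $\varepsilon_0^\vee$ and $\varepsilon_0$, after which the result follows from the standard classification of finite-dimensional irreducible $\mathfrak{sl}_2$-representations. I would use the $\mathfrak{sl}_2$-triple $(\varepsilon_0^\vee, h, \varepsilon_0)$ with Cartan $h := [\varepsilon_0^\vee, \varepsilon_0]$. A direct evaluation on generators, using $\varepsilon_0^\vee = \bdr\,\partial/\partial\adr$ and $\varepsilon_0 = -\adr\,\partial/\partial\bdr$, shows that $h$ acts on $L$ itself by $h(\adr) = \adr$ and $h(\bdr) = -\bdr$, so that a Lie word in $\adr, \bdr$ is an $h$-eigenvector with weight equal to (number of $\adr$'s) minus (number of $\bdr$'s).

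The main step is to check $[\varepsilon_0, \varepsilon_{2n+2}^\vee] = 0$. Both sides are derivations of $L$, so it suffices to test them on $\adr$ and $\bdr$. The key observation is that since $\varepsilon_0(\adr) = 0$, the Leibniz rule gives $\varepsilon_0\bigl(\ad(\adr)^k(\bdr)\bigr) = \ad(\adr)^k(\varepsilon_0(\bdr)) = \ad(\adr)^k(-\adr) = 0$ for every $k \geq 1$. Applied to $\adr$, this immediately yields $[\varepsilon_0, \varepsilon_{2n+2}^\vee](\adr) = \varepsilon_0\bigl(\ad(\adr)^{2n+2}(\bdr)\bigr) = 0$. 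Applied to $\bdr$, the sum $\varepsilon_{2n+2}^\vee(\bdr) = \tfrac{1}{2}\sum_{i+j=2n+1}(-1)^i[\ad(\adr)^i(\bdr), \ad(\adr)^j(\bdr)]$ is annihilated by $\varepsilon_0$ on all terms where both $i \geq 1$ and $j \geq 1$; only the boundary contributions $(i,j) = (0,2n+1)$ and $(2n+1,0)$ survive, and a short calculation shows that they combine to $-\ad(\adr)^{2n+2}(\bdr)$, which exactly cancels $\varepsilon_{2n+2}^\vee(\varepsilon_0(\bdr)) = \varepsilon_{2n+2}^\vee(-\adr) = -\ad(\adr)^{2n+2}(\bdr)$.

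The $h$-weight is then computed in the same way: since $\ad(\adr)^{2n+2}(\bdr)$ has $h$-weight $(2n+2) - 1 = 2n+1$ while $\adr$ has weight $1$, Leibniz for $[h, \varepsilon_{2n+2}^\vee]$ on $\adr$ gives $2n\,\varepsilon_{2n+2}^\vee(\adr)$, and the analogous computation on $\bdr$ (whose image has weight $2n - 1$) gives $2n\,\varepsilon_{2n+2}^\vee(\bdr)$, so $[h, \varepsilon_{2n+2}^\vee] = 2n\,\varepsilon_{2n+2}^\vee$. Thus $\varepsilon_{2n+2}^\vee$ is highest-weight of weight $2n$, and the standard theory of irreducible $\mathfrak{sl}_2$-modules guarantees that it generates an irreducible submodule of $\Der^\theta(L)$ of dimension $2n+1$ with basis $\{\ad(\varepsilon_0^\vee)^k(\varepsilon_{2n+2}^\vee) : 0 \leq k \leq 2n\}$; in particular the vanishing $\ad(\varepsilon_0^\vee)^{2n+1}(\varepsilon_{2n+2}^\vee) = 0$ is automatic.

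The main obstacle is the bookkeeping in the highest-weight check on $\bdr$: one must correctly isolate the two boundary terms of Tsunogai's defining sum, track the signs $(-1)^i$, and verify that their contributions combine to exactly cancel $\varepsilon_{2n+2}^\vee(-\adr)$. This compatibility between $\varepsilon_0$ and the precise bracket formula for $\varepsilon_{2n+2}^\vee(\bdr)$ is really the content of the proposition; once it is established, the dimension count, the irreducibility, and the explicit basis are formal consequences of $\mathfrak{sl}_2$-representation theory.
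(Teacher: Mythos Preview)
Your proof is correct. The paper does not actually supply its own proof of this proposition; it simply cites \cite[Lemma~5.2]{browneisenstein}. Your argument is the natural direct verification: check on the generators $\adr,\bdr$ that $[\varepsilon_0,\varepsilon_{2n+2}^\vee]=0$ and $[h,\varepsilon_{2n+2}^\vee]=2n\,\varepsilon_{2n+2}^\vee$, then invoke standard $\mathfrak{sl}_2$-theory. The bookkeeping you describe for the boundary terms $(i,j)=(0,2n+1)$ and $(2n+1,0)$ is accurate and does produce the required cancellation with $-\varepsilon_{2n+2}^\vee(\adr)$.

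One small point worth making explicit: when you say the vanishing $\ad(\varepsilon_0^\vee)^{2n+1}(\varepsilon_{2n+2}^\vee)=0$ is ``automatic'', you are implicitly using that the $\mathfrak{sl}_2$-action on $\Der^\theta(L)$ is locally finite. This holds here because the action is the derivative of the algebraic $SL_2$-action of \S\ref{sl2actionugeom} (equivalently, because $\varepsilon_0^\vee$ and $\varepsilon_0$ preserve the total degree in $\adr,\bdr$, and $\varepsilon_{2n+2}^\vee$ lives in the finite-dimensional graded piece of derivations of degree $2n+2$). Without some such finiteness, a highest-weight vector of integral weight $2n$ could in principle generate an infinite-dimensional Verma-type module rather than the $(2n+1)$-dimensional irreducible. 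In this setting the point is routine, but it is the reason the conclusion follows.
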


For later use we note that the highest and lowest weight vectors are related by a simple formula.

\begin{lemma} \label{highestlowest}
The highest and lowest vectors are related via
\begin{equation} \label{highestlowestrelation}
\ad(\varepsilon_0^\vee)^k (\varepsilon_{2n+2}^\vee) = \frac{k!}{(2n-k)!} \ad(\varepsilon_0)^{2n-k}(\varepsilon_{2n+2}).
\end{equation}
\end{lemma}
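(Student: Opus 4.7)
The plan is to interpret the identity as a statement about the single $(2n+1)$-dimensional irreducible $\mathfrak{sl}_2$-subrepresentation of $\Der^\theta(L)$ generated by $\varepsilon_{2n+2}^\vee$, and then quote standard $\mathfrak{sl}_2$ representation theory.

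First I would set up the $\mathfrak{sl}_2$-triple. Using Definition \ref{epsderdef} one checks directly that
\begin{equation*}
    H := [\varepsilon_0^\vee, \varepsilon_0] = \adr \tfrac{\partial}{\partial \adr} - \bdr \tfrac{\partial}{\partial \bdr}, \qquad [H, \varepsilon_0^\vee] = -2 \varepsilon_0^\vee, \qquad [H, \varepsilon_0] = 2 \varepsilon_0,
\end{equation*}
so that under the inner $\mathfrak{sl}_2$-action on $\Der^\theta(L)$ described in \S\ref{sl2actionugeom}, the derivation $\ad(\varepsilon_0^\vee)$ lowers weights by $2$ while $\ad(\varepsilon_0)$ raises weights by $2$. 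By Proposition \ref{epsilonbasis}, $\varepsilon_{2n+2}^\vee$ is a highest-weight vector of weight $2n$ and $\ad(\varepsilon_0^\vee)^k \varepsilon_{2n+2}^\vee$, $0 \leq k \leq 2n$, is a basis of the irreducible subrepresentation it generates, which is isomorphic to $V_{2n}$.

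The second step is to pin down which element of this irreducible representation $\varepsilon_{2n+2}$ is. By the definition \eqref{dualderdef}, $\varepsilon_{2n+2}$ is the image of $\varepsilon_{2n+2}^\vee$ under conjugation by the Weyl element $S \in SL_2$, an $SL_2$-equivariant operation on $\Der^\theta(L)$. Identifying the irrep generated by $\varepsilon_{2n+2}^\vee$ with the standard realisation of $V_{2n}$ as degree-$2n$ homogeneous polynomials in $(X,Y)$, with $\varepsilon_{2n+2}^\vee$ corresponding to the highest-weight vector $X^{2n}$, the conventions of \S\ref{SL2reps} give $S \cdot X^{2n} = Y^{2n}$. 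In the standard $\mathfrak{sl}_2$-action one has $Y^{2n} = \tfrac{1}{(2n)!} F^{2n} X^{2n}$, where $F$ is the lowering operator; translating back through the above identification yields
\begin{equation*}
    \varepsilon_{2n+2} = \frac{1}{(2n)!}\, \ad(\varepsilon_0^\vee)^{2n}(\varepsilon_{2n+2}^\vee),
\end{equation*}
which is exactly the $k = 2n$ case of the lemma.

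The third step is purely representation theoretic. In any irreducible $\mathfrak{sl}_2$-module of highest weight $2n$, setting $v_k := F^k v_0$ and using the standard identity $E v_k = k(2n-k+1) v_{k-1}$ (obtained by iterating $[E,F^m] = m F^{m-1}(H-m+1)$), one computes inductively
\begin{equation*}
    E^{2n-k} v_{2n} = \frac{(2n)!\,(2n-k)!}{k!} \, v_k,
\end{equation*}
which on dividing by $(2n)!$ and substituting $v_0 = \varepsilon_{2n+2}^\vee$, $v_{2n}/(2n)! = \varepsilon_{2n+2}$, $F = \ad(\varepsilon_0^\vee)$, $E = \ad(\varepsilon_0)$ gives precisely \eqref{highestlowestrelation}.

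The main obstacle is the bookkeeping in the second step: one must verify that the sign and scalar arising from applying the Weyl element $S$ to the highest-weight vector in $V_{2n}$ produce exactly the normalisation $1/(2n)!$, with no spurious sign. If one prefers to avoid invoking the abstract Weyl-element formula, a direct alternative is to expand $\ad(\varepsilon_0^\vee)^{2n}(\varepsilon_{2n+2}^\vee)(\bdr)$ using the explicit formulas in Definition \ref{epsderdef} and compare with $(2n)! \,\varepsilon_{2n+2}(\bdr)$ obtained from \eqref{dualderdef}; everything else is forced by the $\mathfrak{sl}_2$-module structure.
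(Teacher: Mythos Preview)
Your approach is conceptually sound but considerably more elaborate than the paper's, and it contains a sign slip that your own caveat partly anticipates. With your definitions $E=\ad(\varepsilon_0)$, $F=\ad(\varepsilon_0^\vee)$ and $H=\ad([\varepsilon_0^\vee,\varepsilon_0])$, one has $[E,F]=\ad([\varepsilon_0,\varepsilon_0^\vee])=-H$, not $+H$; so the ``standard identity'' $E v_k = k(2n-k+1)v_{k-1}$ and the iterated formula $[E,F^m]=mF^{m-1}(H-m+1)$ you quote in step~3 are off by a sign (one checks directly that $[E,F^2]=-2F(H-1)$ here). This introduces a factor $(-1)^{2n-k}$ into your step~3 conclusion, which must then be cancelled by a matching sign in step~2. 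Your Weyl-element argument in step~2 does not visibly supply that sign, so as written the two steps do not mesh; the issue is precisely the bookkeeping you flag, but it lies in step~3 as much as in step~2.

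The paper's proof bypasses all of this. It simply observes that both sides of \eqref{highestlowestrelation} lie in the same one-dimensional $\mathfrak{sl}_2$-weight space of the irreducible $(2n+1)$-dimensional representation (Proposition~\ref{epsilonbasis}), hence are proportional, and then determines the scalar by evaluating both sides on $\adr$ or $\bdr$ using Definition~\ref{epsderdef} and \eqref{dualderdef}. This is exactly the ``direct alternative'' you mention at the end, applied uniformly for every $k$ rather than just for $k=2n$. Your route buys a structural explanation via the Weyl element and ladder operators, at the cost of delicate sign-tracking; the paper's route is a two-line computation once proportionality is noted.
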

\begin{proof}
By comparing $\mathfrak{sl}_2$-weights, one knows that the quantities $\ad(\varepsilon_0^\vee)^k (\varepsilon_{2n+2}^\vee)$ and $\ad(\varepsilon_0)^{2n-k}(\varepsilon_{2n+2})$ must be proportional. The scale factor can be found by applying both sides of \eqref{highestlowestrelation} to either $\adr$ and $\bdr$.
\end{proof}

\subsection{The Lie algebra $\ugeom$} \label{ugeomsection}

The geometric derivations assemble into a Lie subalgebra of $\Der^\theta (L)$.

\begin{definition} \label{ugeomdef}
Let $\ugeom$ denote the Lie subalgebra of $\Der^\theta (L)$ generated by the collection of all $\mathfrak{sl}_2$ representations in Proposition \ref{epsilonbasis}. It is a bigraded Lie subalgebra of $\Der^\theta(L)$ generated by $\ad(\varepsilon_0^\vee)^k (\varepsilon_{2n+2}^\vee)$ for $n\geq 1$ and $k\geq 0$, and carries a canonical $\mathfrak{sl}_2$-action via the adjoint action of $\varepsilon_0^\vee$ and $\varepsilon_0$.
\end{definition}

\begin{remark}
The Lie algebra $\ugeom$ is the graded Lie algebra of the image $\Ugeom = \im(\mu \colon \mathcal{U}_{1,1}^{\dR}\to \Aut(\Pi_Y^{\dR}))$ under the monodromy morphism of \S \ref{monodromysection}. The image is a pro-unipotent subgroup of $\Aut( \Pi_Y^{\dR})$. The inclusion $\ugeom\subseteq \Der^\theta (L)$ is a consequence of the fact that elements of $\Ugeom$ fix a small loop around the puncture on $\Tatecurve$, whose logarithm corresponds to $\theta$.

Note that $\ugeom$ does not contain $\varepsilon_2^\vee = \varepsilon_2 = -\ad([a,b])$. This element is central in $\Der^\theta (L)$. It corresponds to the logarithm of an element of $\mathcal{U}_{1,\vec{1}}^{\dR}$ that acts on $(\Tatecurve, \vec{1}_O)$ by rotating the tangent vector $\vec{1}_O$.
\end{remark}

\subsubsection{Mixed Hodge structure}

The fundamental group $\Pi_Y^{\dR}$ and its automorphism group are equipped with limiting mixed Hodge-Tate structures \cite[\S $13.6$]{mmv}. Therefore $\ugeom$ is equipped with a limiting MHS, and has two increasing weight filtrations ($M$ and $W$) and a decreasing Hodge filtration $F$. Because it is mixed Tate, the $M$-filtration is canonically split by the Hodge filtration. We will briefly describe the weight filtrations $M$ and $W$.

The Lie algebra $\ugeom$ is generated by elements $\delta_{2n+2}^{(k)} := \ad(\varepsilon_0^\vee)^k (\varepsilon_{2n+2}^\vee)$ for all $n\geq 1$ and $0\leq k\leq 2n$. The element $\delta_{2n+2}^{(k)}$ lies in $M_{2k-2-4n}\cap W_{-2n-2}$.

The monodromy weight filtration $W$ is the negative of the lower central series filtration on $\ugeom$. One should note that $\ugeom$ is the image of $\mathfrak{u}_{1,1} = \Lie(\mathcal{U}_{1,1}^{\dR})$ under the monodromy homomorphism $\mu$ (see \S  \ref{monodromysection}). It is known that $\mu$ is a morphism of mixed Hodge structures \cite[Proposition $15.1$]{hainhodge}, so the MHS on $\ugeom$ is inherited from that of $\mathfrak{u}_{1,1}$.

\subsubsection{Motivic structure}

The following general statement may be proven using the Tannakian formalism.

\begin{lemma} \label{motivicinclusion}
Let $(\mathsf{C}, \omega_{\mathsf{C}})$ and $(\mathsf{D}, \omega_{\mathsf{D}})$ be neutral Tannakian categories over $\mathbb{Q}$, and let $F \colon \mathsf{C}\to\mathsf{D}$ be a fully faithful additive tensor functor satisfying $\omega_{\mathsf{D}} \circ F = \omega_{\mathsf{C}}$. Let $M\in\mathsf{C}$. Then $F(\langle M\rangle_\otimes) \simeq \langle F(M)\rangle_\otimes$.
\end{lemma}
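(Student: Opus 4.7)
The plan is to reduce the statement to an equality of Tannakian fundamental groups via Deligne's reconstruction theorem. First I would observe that, since $F$ is a tensor functor between rigid abelian tensor categories, it is automatically exact and preserves duals, so it sends each tensor construction $T_M := M^{\otimes a}\otimes (M^\vee)^{\otimes b}$ to $T_{F(M)} := F(M)^{\otimes a}\otimes (F(M)^\vee)^{\otimes b}$, and takes subquotients in $\mathsf{C}$ to subquotients in $\mathsf{D}$. Hence $F$ restricts to a fully faithful tensor functor
\begin{equation*}
F' \colon \langle M\rangle_\otimes \longrightarrow \langle F(M)\rangle_\otimes
\end{equation*}
satisfying $\omega_{\mathsf{D}}|_{\langle F(M)\rangle_\otimes}\circ F' = \omega_{\mathsf{C}}|_{\langle M\rangle_\otimes}$. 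It suffices to show that $F'$ is essentially surjective.

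Next I would set $V := \omega_{\mathsf{C}}(M) = \omega_{\mathsf{D}}(F(M))$ and let $G$, $H$ be the Tannakian fundamental groups of $\langle M\rangle_\otimes$ and $\langle F(M)\rangle_\otimes$ respectively. The compatibility of fibre functors realises both $G$ and $H$ as closed subgroups of $GL(V)$, and $F'$ corresponds (via Tannakian duality) to a homomorphism $H \to G$ of affine group schemes over $\mathbb{Q}$. By Deligne's reconstruction theorem, $G$ (resp. $H$) is precisely the subgroup of $GL(V)$ fixing every vector of the form $\omega(f)\in \omega(T_M) = V^{\otimes a}\otimes (V^\vee)^{\otimes b}$ arising from a morphism $f\in\Hom_{\mathsf{C}}(\mathbb{1},T_M)$ (resp. $\Hom_{\mathsf{D}}(\mathbb{1},T_{F(M)})$), as $(a,b)$ and $f$ vary.

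Finally, full faithfulness of $F$ yields a bijection $\Hom_{\mathsf{C}}(\mathbb{1},T_M)\xrightarrow{\sim}\Hom_{\mathsf{D}}(\mathbb{1},T_{F(M)})$ via $f\mapsto F(f)$, and the identity $\omega_{\mathsf{D}}\circ F = \omega_{\mathsf{C}}$ ensures that $\omega(f)$ and $\omega(F(f))$ agree as vectors in $V^{\otimes a}\otimes(V^\vee)^{\otimes b}$. Consequently the two sets of tensor invariants defining $G$ and $H$ coincide, so $G=H$ as closed subgroups of $GL(V)$. Tannakian duality then upgrades this to the desired equivalence $F'\colon \langle M\rangle_\otimes \xrightarrow{\sim}\langle F(M)\rangle_\otimes$. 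The argument is entirely formal, and the only delicate ingredient is the characterisation of the Tannaka group as a tensor stabiliser inside $GL(V)$; once this is in place, the result follows at once.
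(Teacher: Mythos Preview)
Your argument is correct and carries out precisely what the paper leaves implicit: the paper does not supply a proof, stating only that the lemma ``may be proven using the Tannakian formalism.'' Your use of Deligne's reconstruction of the Tannaka group as the stabiliser of tensor invariants inside $GL(V)$, together with full faithfulness to identify the invariant vectors on both sides, is a clean way to realise this; an essentially equivalent but slightly more structural route is to note that $F'$ fully faithful forces the induced map $H\to G$ to be faithfully flat, while the common faithful representation on $V$ forces it to be a closed immersion, hence an isomorphism.
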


Lemma \ref{motivicinclusion} implies the following important fact about the Lie algebra $\ugeom$.

\begin{proposition} \label{motivicremark}
The Lie algebra $\ugeom$ is the $\mathcal{H}$-realisation of a pro-object in $\MT$. Equivalently, $\mathcal{O}(\Ugeom)$ is the $\mathcal{H}$-realisation of an ind-object of $\MT$.
\end{proposition}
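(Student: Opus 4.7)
The plan is to derive the result from Lemma \ref{motivicinclusion} applied to the fully faithful tensor functor $\omega^{\mathcal{H}} \colon \MT \hookrightarrow \mathcal{H}$. Take as input object $M$ a motivic lift of the fundamental group $\Pi_Y^{\dR}$; the existence of such a lift—that is, that $\mathcal{O}(\Pi_Y^{\dR})$ is the $\mathcal{H}$-realisation of an ind-object of $\MT$, or equivalently that $L = \Lie(\adr, \bdr)^\wedge$ is a pro-object of $\MT$—is the result of Brown and Umemura \cite{brownzetaelements, umem} recalled in the introduction. Lemma \ref{motivicinclusion} then identifies the Tannakian subcategory $\langle \mathcal{O}(\Pi_Y^{\dR}) \rangle_\otimes \subseteq \mathcal{H}$ with the $\omega^{\mathcal{H}}$-image of the Tannakian subcategory $\langle M \rangle_\otimes \subseteq \MT$. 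It therefore suffices to exhibit $\mathcal{O}(\Ugeom)$ (equivalently the Lie algebra $\ugeom$) as an object of $\langle \mathcal{O}(\Pi_Y^{\dR}) \rangle_\otimes$.

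First I would observe that $\Der^\theta(L)$ lies in $\langle \mathcal{O}(\Pi_Y^{\dR}) \rangle_\otimes$ as a pro-$\mathcal{H}$-object, since it is obtained from $L$ by standard Tannakian constructions (tensor products, duals, and passage to the kernel of the map to $L$ given by $\delta \mapsto \delta(\theta)$); in particular it is pro-Tate as an $\mathcal{H}$-object. Next I would identify $\ugeom$ as the image of the monodromy morphism $\mu \colon \mathfrak{u}_{1,1} \to \Der^\theta(L)$, which is a morphism of MHS by \cite[Proposition $15.1$]{hainhodge} (as noted in \S \ref{ugeomsection}). Since $\mathcal{H}$ is abelian, the image of such a morphism is a sub-pro-object in $\mathcal{H}$, and so $\ugeom$ sits inside $\Der^\theta(L)$ as a sub-pro-$\mathcal{H}$-object. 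Tannakian subcategories being closed under sub-objects, this places $\ugeom$ in $\langle \mathcal{O}(\Pi_Y^{\dR}) \rangle_\otimes$, and Lemma \ref{motivicinclusion} then produces the desired motivic lift. The equivalence between the assertions for $\ugeom$ and for $\mathcal{O}(\Ugeom)$ is formal via Tannakian duality applied inside $\MT$: the pro-unipotent group $\Ugeom$ is reconstructed from its Lie algebra by exponentiation.

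The main obstacle is the verification, underlying the second paragraph, that $\ugeom$ really is a sub-$\mathcal{H}$-object of $\Der^\theta(L)$ and not merely a sub-Lie-algebra of the underlying $\mathbb{Q}$-vector space. The cleanest resolution is via the image characterisation above, which reduces everything to the Hodge-theoretic nature of $\mu$. A more hands-on alternative would be to check directly that each $\mathfrak{sl}_2$-representation from Proposition \ref{epsilonbasis} occupies the specific bigraded pieces $\delta_{2n+2}^{(k)} \in M_{2k - 2 - 4n} \cap W_{-2n-2}$ recorded in \S \ref{ugeomsection}, so that it lifts to a sub-$\MT$-object of $\Der^\theta(L)$ isomorphic to an appropriate Tate twist of $V_{2n}^{\dR}$; the Lie subalgebra generated by these sub-objects is then automatically a pro-sub-object of $\MT$ because the bracket on $\Der^\theta(L)$ is a morphism in $\MT$. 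Either route delivers the proposition.
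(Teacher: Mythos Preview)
Your proposal is correct and follows essentially the same route as the paper: apply Lemma \ref{motivicinclusion} to $\omega^{\mathcal{H}}\colon\MT\hookrightarrow\mathcal{H}$, use that $L^\wedge$ (hence $\Der L^\wedge$) lies in the essential image via \cite{brownzetaelements, umem}, and then observe that $\ugeom$ is an $\mathcal{H}$-subobject of the derivation space. The only cosmetic differences are that the paper works with $\Der(L^\wedge)\cong\Hom(\mathbb{Q}(1)\oplus\mathbb{Q}(0),L^\wedge)$ directly rather than $\Der^\theta(L)$, and simply asserts ``by definition, $\ugeom$ is an $\mathcal{H}$-subobject'' where you (more carefully) justify this via the image of the MHS morphism $\mu$.
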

\begin{proof}
The $\mathcal{H}$-realisation functor $\omega^{\mathcal{H}} \colon \MT\to \mathcal{H}$ is fully faithful and compatible with the respective de Rham fiber functors. The Lie algebra $L^\wedge$ is a pro-object within its essential image \cite{brownzetaelements, umem}. Consequently,
\begin{equation*}
\Der(L^\wedge)\cong \Hom (\mathbb{Q}\adr\oplus\mathbb{Q}\bdr, L^\wedge)\cong \Hom(\mathbb{Q}(1)\oplus\mathbb{Q}(0), L^\wedge)
\end{equation*}
is a pro-object in the essential image of $\omega^{\mathcal{H}}$. By definition, $\ugeom$ is a $\mathcal{H}$-subobject of $\Der(L^\wedge)$. Applying Lemma \ref{motivicinclusion} with $\mathsf{C} = \MT$, $\mathsf{D}=\mathcal{H}$, $F = \omega^{\mathcal{H}}$ and $F(M) = \Der(L^\wedge)$ implies that $\ugeom$ is the $\mathcal{H}$-realisation of a pro-object of $\MT$.
\end{proof}

In Theorem \ref{motivetheorem} we prove that the action of $U_{\MT}^{\dR}$ on $\ugeom$ is faithful.

\section{Filtrations}

The fundamental groups of \S \ref{fgsection} are equipped with various \emph{decreasing} filtrations by identifying their points with subgroups of various noncommutative power series rings. Truncating a series in these filtrations defines \emph{increasing} filtrations on the space of coefficients of this series. The technical core of this paper uses this idea to relate the depth and weight filtrations on motivic multiple zeta values to the length filtration on motivic iterated integrals of Eisenstein series.

In this section we review some the available filtrations. We also introduce some new notation, such as coefficient spaces.

\subsection{Formalities}

Throughout, $R$ is a $\mathbb{Q}$-algebra. If $Z$ is a set, the free monoid on $Z$ is denoted by $M(Z) = M(z:z\in Z)$. For each $z \in Z$ there is a monoid morphism $\deg_{z} \colon M (Z)\to (\mathbb{Z}_{\geq 0}, +)$ for which $\deg_z (z) = 1$ and $\deg_z (z') = 0$ for $z'\neq z$. From this one can define a total degree monoid morphism $\deg = \sum_{z\in Z} \deg_{z}$. These degree functions determine gradings on $M(Z)$. 

Associated to $Z$ is the $R$-algebra $R \langle\langle Z\rangle\rangle$ of formal power series whose noncommuting indeterminates are the elements of $Z$. We can write elements $s\in R\langle\langle Z\rangle\rangle$ as formal power series
\begin{equation*}
s = \sum_{w\in M(Z)} s_w w, \quad s_w \in R.
\end{equation*}

Let $\mathbb{Q}\langle Z\rangle$ denote the free shuffle algebra on $Z$. It is a Hopf algebra equipped with the shuffle product and deconcatenation coproduct. There is an isomorphism of $\mathbb{Q}$-vector spaces
\begin{equation} \label{seriesisovec}
\Hom_{\Vect_{\mathbb{Q}}^{\infty}}(\mathbb{Q}\langle Z\rangle, R)\xrightarrow{\sim} R\langle\langle Z\rangle\rangle, \quad s\mapsto \sum_{w\in M(Z)} s(w) w,
\end{equation}
where the homomorphism space on the left is in the category $\Vect_{\mathbb{Q}}^\infty$ of all $\mathbb{Q}$-vector spaces (not necessarily finite-dimensional).

Let $\mathcal{A}$ be an $R$-algebra. In this section we will be concerned with decreasing filtrations $F^\bullet \mathcal{A}$ that are compatible with the algebra structure on $\mathcal{A}$. Where there is no danger of confusion, we will omit reference to the underlying algebra in the notation and write $F^\bullet$ instead of $F^\bullet \mathcal{A}$.

\begin{remark} \label{transferfiltration}
The (commutative) shuffle algebra $\mathbb{Q}\langle Z \rangle$ and (noncommutative) power series ring $R\langle\langle Z \rangle\rangle$ are naturally constructed from $M(Z)$. One can define decreasing filtrations on each of these algebras using the degree functions on $M(Z)$ in a natural way.

Using the isomorphism \eqref{seriesisovec}, it is possible to pass between filtrations on $\mathbb{Q}\langle Z \rangle$ and $R\langle\langle Z \rangle\rangle$ as follows: given $F^\bullet R\langle\langle Z \rangle\rangle$, define $F^\bullet\mathbb{Q}\langle Z\rangle := F^\bullet R\langle\langle Z\rangle\rangle\cap \mathbb{Q}\langle Z\rangle$. This uses the vector space inclusion $\mathbb{Q}\langle Z\rangle \subseteq R\langle\langle Z\rangle\rangle$.\footnote{If $R\langle \langle Z \rangle \rangle$ contains the $R$-points of a fundamental group and $\mathbb{Q}\langle Z \rangle$ is its affine ring, this inclusion is an abuse of notation whereby elements are implicitly identified with their duals. It is therefore incompatible with the algebra structure.} Conversely, given $F^\bullet \mathbb{Q}\langle Z \rangle$, define $F^\bullet R\langle\langle Z \rangle\rangle$ as the image of $F^\bullet \Hom_{\Vect_{\mathbb{Q}}^\infty} (\mathbb{Q}\langle Z\rangle, R)$ under \eqref{seriesisovec}, where
\begin{equation*}
F^r \Hom_{\Vect_{\mathbb{Q}}^\infty} (\mathbb{Q}\langle Z\rangle, R) := \im\left(\Hom_{\Vect_{\mathbb{Q}}^\infty} (F^r\mathbb{Q}\langle Z\rangle, R)\to\Hom_{\Vect_{\mathbb{Q}}^\infty} (\mathbb{Q}\langle Z\rangle, R) \right)
\end{equation*}
is the image of extension by zero.
\end{remark}

\subsubsection{Filtered pieces of series}

Let $\mathcal{A}$ be any $R$-algebra equipped with a separated and exhaustive decreasing filtration $F^\bullet$, and let $s\in\mathcal{A}$. Write
\begin{equation*} \label{filteredseries}
s = \sum_{k\geq 0} s_k, \quad \text{where } s_k\in F^k\backslash F^{k+1}.
\end{equation*}
\begin{definition}[Filtered piece of series] \label{filteredpiece}
The $r$th $F$-filtered piece of $s$ is
\begin{equation*}
\Fil_F^r (s):=\sum_{k=0}^r s_k.
\end{equation*}
This implies that $s\equiv \Fil_F^r (s) \pmod{F^{r+1}}$.
\end{definition}

\subsubsection{Induced filtrations} \label{inducedfiltration}

Let $\mathcal{A}$ be an $R$-algebra equipped with a decreasing filtration $F^\bullet \mathcal{A}$. Let $E$ be an $R$-module acting $R$-linearly on $\mathcal{A}$. There is an induced filtration $F^\bullet E$ defined by $F^r E := \left\{ e \in E: e\left(F^k \mathcal{A} \right) \subseteq F^{k+r} \mathcal{A}\right\}$.

\subsection{Decreasing filtrations on fundamental groups}

In this section we define decreasing filtrations on $\Pi_X^{\dR}$, $\Pi_Y^{\dR}$, $\mathcal{U}_{1,1}^{\dR}$, $\mathcal{U}_{1,\vec{1}}^{\dR}$ and $\Ugeom$.

\subsubsection{Filtrations on $\Pi_X^{\dR}$}

Let $X = \P1minus$. We have seen that
\begin{equation*}
\Pi_X^{\dR} (R) \cong \GrL \left(R\langle\langle \mathsf{x}_0, \mathsf{x}_1\rangle\rangle \right). 
\end{equation*}
The filtrations on $\Pi_X^{\dR} (R)$ are induced by the following filtrations on the power series ring.
\begin{definition} \label{filtrationsonP1minus}
The power series ring $R\langle\langle \mathsf{x}_0, \mathsf{x}_1\rangle\rangle$ is equipped with two natural decreasing filtrations: the \emph{weight} filtration $W^\bullet R\langle\langle \mathsf{x}_0, \mathsf{x}_1 \rangle \rangle$ is the decreasing filtration on the total degree in both $\mathsf{x}_0$ and $\mathsf{x}_1$, and the \emph{depth} filtration $D^\bullet R\langle\langle \mathsf{x}_0, \mathsf{x}_1 \rangle \rangle$ is the decreasing filtration on the $\mathsf{x}_1$-degree.
\end{definition}

\begin{remark}
The depth filtration is induced by the inclusion $X\hookrightarrow \mathbb{G}_m$ in the sense that $D^1\Pi_X^{\dR}$ is the kernel of the induced morphism $\Pi_X^{\dR} \to \pi_1^{\dR} (\mathbb{G}_m, \vec{1}_1)$.
\end{remark}

\subsubsection{Filtrations on $\Pi_Y^{\dR}$}

We have seen that
\begin{equation*}
\Pi_Y^{\dR} (R) \cong  \GrL \left(R\langle\langle \adr , \bdr\rangle\rangle \right).
\end{equation*}
The filtrations on $\Pi_Y^{\dR} (R)$ are induced by the following filtrations on the power series ring.

\begin{definition} \label{filtrationsonPidr}
The power series ring $R\langle\langle \adr, \bdr \rangle\rangle$ is equipped with three natural decreasing filtrations: the $A$-filtration $A^\bullet R\langle\langle \adr, \bdr \rangle \rangle$ is the decreasing filtration on the $\adr$-degree; the $B$-filtration $B^\bullet R\langle\langle\adr, \bdr \rangle \rangle$ is the decreasing filtration on the $\bdr$-degree; and the \emph{elliptic depth} filtration $D^\bullet R\langle\langle\adr, \bdr \rangle \rangle$ is the decreasing filtration on the $[\adr,\bdr]$-degree, defined by
\begin{align*}
D^r R\langle\langle\adr, \bdr \rangle \rangle &= 
\begin{cases}
R\langle\langle\adr, \bdr \rangle \rangle, & r=0 \\
\ker\left(R\langle\langle\adr, \bdr \rangle \rangle \to R[[\adr,\bdr]] \right), & r=1 \\
 \left\{\text{elements of } D^1\text{-degree } \geq r \right\}, & r\geq 2. 
\end{cases}
\end{align*}
It is clear that $D^\bullet \subseteq A^\bullet\cap B^\bullet$.
\end{definition}

\subsubsection{Filtrations on $\mathcal{U}_{1,1}$ and $\mathcal{U}_{1,\vec{1}}$} \label{lengthfiltration}

In \S \ref{specificfundamentalgroups} we gave an explicit description of the elements of $\mathcal{U}_{1,1} (R)$ and $\mathcal{U}_{1,\vec{1}}(R)$. These can be used to define a length filtration.

\begin{definition}
The groups $\mathcal{U}_{1,1} (R)$ and $\mathcal{U}_{1,\vec{1}}(R)$ are equipped with a decreasing \emph{length} filtration $L^\bullet$. It is the decreasing filtration induced by the total degree, where the generators $\mathbf{e}_{2n+2}$, $\mathbf{e}_f'$ and $\mathbf{e}_f''$ are each assigned degree $1$.
\end{definition}

\begin{remark} \label{lengthfiltrationremark}
The length filtration is so named because the coefficients of an element $u\in L^r$ are $R$-valued iterated integrals of length at most $r$.
\end{remark}

\subsubsection{Filtrations on $\Ugeom$ and $\ugeom$} \label{filtrationsUgeom}

The unipotent group $\Ugeom$ is the group scheme whose Lie algebra is the algebra $\ugeom$ of geometric derivations defined in \S \ref{ugeomsection}. It can also be defined as the image of $\mathcal{U}_{1,1}$ under the monodromy morphism $\mu$ of \S \ref{monodromysection}.
\begin{definition}
The group scheme $\Ugeom$ is equipped with a decreasing \emph{length} filtration $L^\bullet$ defined by $L^\bullet \Ugeom  = \mu (L^\bullet \mathcal{U}_{1,1})$. Equivalently, it can be defined as the decreasing filtration induced by the total degree, where $\varepsilon_{2n+2}^\vee$ and $\varepsilon_{2n+2}$ are assgined degree $1$ for all $n\geq 1$, and $\varepsilon_0^\vee$ and $\varepsilon_0$ are assigned degree $0$.
\end{definition}

\subsection{Coefficient spaces}\label{coeffspace}

De Rham fundamental groups are a convenient tool for packaging iterated integrals into generating series. The notion of coefficient space extracts the coefficients from such series.

\begin{definition}[Coefficient space] \label{coeffspacedef}
Let $s\in R\langle\langle Z\rangle\rangle$ be a formal power series. The coefficient space $\Co(s)$ is the vector space image
\begin{equation*}
\Co(s):=\im\left(s \colon \mathbb{Q}\langle Z\rangle \to R\right)
\end{equation*}
of the associated linear map under \eqref{seriesisovec}. It is a subspace of the $\mathbb{Q}$-vector space $R$.
\end{definition}

Extra structure on a series $s$ induces structure on $\Co(s)$. The ring $R\langle\langle Z\rangle\rangle$ can be equipped with the structure of a Hopf algebra by taking the coproduct $\Delta$ for which each $z \in Z$ is primitive. The isomorphism \eqref{seriesisovec} restricts to a group isomorphism
\begin{equation} \label{shufflegroupiso}
\Hom_{\Alg_{\mathbb{Q}}}(\mathbb{Q}\langle Z\rangle, R)\xrightarrow{\sim} \GrL\left( R\langle\langle Z\rangle\rangle\right).
\end{equation}
Therefore if $s$ is grouplike, $\Co(s)$ is a $\mathbb{Q}$-subalgebra of $R$.

\begin{remark}
Let $X$ be a scheme over $\mathbb{Q}$ and let $x\in X(\mathbb{Q})_{\bp}$. Let $\mathcal{U}$ be the unipotent radical of $\pi_1^{\rel, \dR} (X,x)$ with respect to some choice of semistable subcategory of $\Con (X)$. The affine ring $\mathcal{O} (\mathcal{U})$ is equipped with the shuffle product induced by that on iterated integrals. Choosing a set $Z$ of generators for $\mathcal{O}(\mathcal{U})$ determines a non-canonical isomorphism $\mathcal{O}(\mathcal{U})\cong \mathbb{Q}\langle Z \rangle$. Let $s\in \mathcal{U}(R)\cong \Hom(\mathbb{Q}\langle Z\rangle, R)$. By \eqref{shufflegroupiso}, it follows that $\Co(s)$ is a $\mathbb{Q}$-subalgebra of $R$.
\end{remark}

\subsubsection{Induced filtrations on coefficient spaces} \label{coeffspacefiltration}

As before, let $Z$ be any set of indeterminates. Suppose that $R\langle\langle Z \rangle\rangle$ is equipped with an exhaustive, separated, {decreasing} filtration $F^\bullet$. Let $s\in R\langle\langle Z \rangle\rangle$. To the pair $(s, F^\bullet)$ one can associate an \emph{increasing} filtration on $\Co(s)$ as follows:

\begin{definition}
Define an increasing filtration of subspaces $\Co_\bullet^F (s)$ of $\Co(s)$ by
\begin{equation*}
\Co_r^F (s) := \Co \left(\Fil_F^r (s) \right).
\end{equation*}
\end{definition}

\begin{proposition} \label{coefffiltrationmult}
If $s $ is grouplike, the filtration $\Co_\bullet ^F (s)$ is compatible with the algebra structure on $\Co(s)$ i.e. $\Co_m^F (s) \Co_n^F (s) \subseteq \Co_{m+n}^F (s)$.
\end{proposition}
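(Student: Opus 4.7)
The plan is to reduce the statement to the compatibility of $F^\bullet$ with the shuffle product on $\mathbb{Q}\langle Z \rangle$. By \eqref{shufflegroupiso}, $s$ being grouplike is equivalent to the associated linear map $s \colon \mathbb{Q}\langle Z\rangle \to R$ being a homomorphism from the shuffle algebra. Each of the decreasing filtrations $F^\bullet$ appearing in the paper (weight, depth, $A$, $B$, elliptic depth, length) arises from a grading on the free monoid $M(Z)$ in which every letter is assigned a nonnegative integer degree. Such a grading is additive under shuffle: if $u, v \in M(Z)$, every word appearing in $u \shuffle v$ has $F$-degree exactly $\deg_F(u) + \deg_F(v)$, since shuffling neither creates nor deletes letters.

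For a grading-induced filtration, $\Fil_F^r(s)$ is the truncation of $s$ obtained by discarding all terms $s_w w$ with $\deg_F(w) > r$; hence $\Co_r^F(s)$ is the $\mathbb{Q}$-span of the coefficients $\{s_w : w \in M(Z),\ \deg_F(w) \leq r\}$. Given $a \in \Co_m^F(s)$ and $b \in \Co_n^F(s)$, linearity reduces the proposition to the case $a = s_u$, $b = s_v$ for single words $u, v$ of $F$-degree at most $m$ and $n$ respectively. Since $s$ is a shuffle-algebra homomorphism,
\begin{equation*}
a b \;=\; s(u)\, s(v) \;=\; s(u \shuffle v) \;=\; \sum_k c_k\, s_{w_k},
\end{equation*}
where $u \shuffle v = \sum_k c_k w_k$ in $\mathbb{Q}\langle Z\rangle$. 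Each $w_k$ has $F$-degree $\deg_F(u) + \deg_F(v) \leq m+n$, so $ab$ is a $\mathbb{Q}$-linear combination of coefficients lying in $\Co_{m+n}^F(s)$, as required.

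There is no substantial obstacle here: grouplikeness is essential only insofar as it makes $s$ respect shuffles, and the filtrations of interest are all grading-induced and therefore automatically behave additively under shuffle. One can phrase the result in greater generality by asking that $F^\bullet$ be compatible with the Hopf algebra structure on $R\langle\langle Z\rangle\rangle$, i.e.\ $\Delta(F^r) \subseteq \sum_{i+j=r} F^i \mathbin{\widehat{\otimes}} F^j$; this dualises to $F^m \shuffle F^n \subseteq F^{m+n}$ on $\mathbb{Q}\langle Z\rangle$ and the same computation goes through verbatim.
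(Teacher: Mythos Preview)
Your proof is correct and follows essentially the same approach as the paper's: both use that a grouplike $s$ corresponds to a shuffle-algebra homomorphism via \eqref{shufflegroupiso}, reduce by linearity to coefficients of single words, and then use that $s(u)s(v) = s(u \shuffle v)$ together with the fact that shuffling words of $F$-degree $\leq m$ and $\leq n$ produces words of $F$-degree $\leq m+n$. You are slightly more explicit than the paper in justifying this last step (by pointing out that the relevant filtrations are grading-induced from letter degrees, so shuffle preserves total degree), whereas the paper simply asserts $u_1 \shuffle u_2 \in F^{k+l}\mathbb{Q}\langle Z\rangle$; one small caveat is that the elliptic depth filtration in Definition~\ref{filtrationsonPidr} is not literally a letter-degree grading, but it is never paired with $\Co_\bullet^F$ in the paper, so this does not affect anything.
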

\begin{proof}
The isomorphism \eqref{shufflegroupiso} implies that $s$ is a shuffle-algebra homomorphism $s \colon \mathbb{Q}\langle Z \rangle \to R$. Let $c_1\in \Co_m ^F (s)$ and $c_2 \in \Co_n ^F(s)$. By linearity, we may assume $c_i = s(u_i)$ are the coefficients of words $u_1, u_2 \in M(Z)$. Then
\begin{equation*}
c_1  c_2 = s(u_1) s(u_2) = s(u_1 \shuffle u_2),
\end{equation*}
where $\shuffle$ denotes the shuffle product. Let $F^\bullet \mathbb{Q}\langle Z \rangle$ be the associated decreasing filtration on the shuffle algebra on $Z$ described in Remark \ref{transferfiltration}. The assumption on $c_1$ and $c_2$ implies that $u_1 \in F^k \mathbb{Q}\langle Z \rangle$ for $k\leq m$ and $u_2 \in F^l \mathbb{Q}\langle Z \rangle$ for $l\leq n$. The shuffle product $u_1 \shuffle u_2 = \sum v$ is a sum of words $v\in F^{k+l} \mathbb{Q}\langle Z \rangle$, which implies that $c_1 c_2 = \sum s(v)\in \Co_{k+l}^F (s)\subseteq \Co_{m+n}^F (s)$.
\end{proof}

\begin{proposition} \label{coeffapplication}
Let $E$ be an $R$-module acting on $F^\bullet R\langle\langle Z \rangle\rangle$. Suppose that $L^\bullet E$ is a subfiltration of the induced filtration $F^\bullet E$. Let $e\in E$ and $s\in R\langle\langle Z\rangle\rangle$. Then
\begin{equation*}
\Co_r^F (e(s)) = \sum_{i + j = r} \Co_i^L (e) \Co_j^F (s).
\end{equation*}
\end{proposition}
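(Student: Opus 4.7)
The plan is to prove both inclusions in the claimed equality by decomposing $e$ and $s$ according to their respective filtrations, then tracking how the action of $E$ on $\mathcal{A} = R\langle\langle Z\rangle\rangle$ bilinearly combines coefficients. Throughout, one reads $\Co_i^L(e)$ as the $\mathbb{Q}$-span of the $R$-coefficients of $e$ that are detectable at $L$-filtration level $\leq i$, which is meaningful whenever $E$ carries a compatible power series presentation (as it does in every application of this proposition in the paper).

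First I would fix graded decompositions $e = \sum_i e^{(i)}$ and $s = \sum_j s^{(j)}$, with $e^{(i)}$ of pure $L$-degree $i$ and $s^{(j)}$ of pure $F$-degree $j$, in the sense of Definition~\ref{filteredpiece}. The subfiltration hypothesis $L^\bullet E \subseteq F^\bullet E$ means by definition that $e^{(i)}\bigl(F^k \mathcal{A}\bigr) \subseteq F^{k+i}\mathcal{A}$ for every $k$; in particular $e^{(i)}(s^{(j)}) \in F^{i+j}\mathcal{A}$. Since $e(s) = \sum_{i,j} e^{(i)}(s^{(j)})$, this yields
\[
\Fil^r_F(e(s)) \equiv \sum_{i+j \leq r} e^{(i)}(s^{(j)}) \pmod{F^{r+1}\mathcal{A}},
\]
so taking coefficient spaces gives $\Co_r^F(e(s)) = \Co\bigl(\sum_{i+j\le r} e^{(i)}(s^{(j)})\bigr)$.

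Next I would establish the core bilinearity identity
\[
\Co\bigl(e^{(i)}(s^{(j)})\bigr) = \Co_i^L(e) \cdot \Co_j^F(s),
\]
where the product denotes $\mathbb{Q}$-span of products. Because the action $E \times \mathcal{A} \to \mathcal{A}$ is $R$-linear, every word-coefficient of $e^{(i)}(s^{(j)})$ is a $\mathbb{Q}$-linear combination of products $c_e c_s$ with $c_e$ a coefficient of $e^{(i)}$ and $c_s$ a coefficient of $s^{(j)}$; conversely any such product appears, after the $\mathbb{Q}$-linear extension built into $\Co$. Summing this identity over all pairs with $i+j \leq r$ gives $\Co_r^F(e(s)) = \sum_{i+j\le r} \Co_i^L(e) \Co_j^F(s)$, and the identity in the statement follows by observing that $\Co^L_\bullet$ and $\Co^F_\bullet$ are increasing filtrations, so any term with $i'+j' < r$ is absorbed into the term $\Co_{i'}^L(e)\Co_{r-i'}^F(s)$ with $i'+(r-i')=r$.

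The main obstacle is precisely the bilinearity identity above, since in the abstract formulation the coefficient space $\Co_i^L(e)$ requires $E$ to be equipped with enough structure (a power series presentation, or a natural system of $R$-valued matrix coefficients) for the notion to be well-defined. Once one fixes such a presentation — and in the applications (e.g.\ $E = \mathfrak{u}_{1,1}$ or $\Ugeom$ acting on $\Pi_Y^{\dR}$ through the monodromy morphism of \S\ref{monodromysection}) the $R$-coefficients of $e$ appear explicitly as scalars multiplying derivations from a natural generating set — the bilinearity of the action on coefficients is transparent and the rest of the proof is bookkeeping with the decreasing filtrations $L^\bullet$, $F^\bullet$ and the increasing filtrations they induce on coefficient spaces.
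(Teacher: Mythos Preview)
Your proof is correct and follows essentially the same approach as the paper: decompose $e$ and $s$ into their filtration-graded pieces, use the subfiltration hypothesis $L^\bullet E \subseteq F^\bullet E$ to place $e^{(i)}(s^{(j)})$ in $F^{i+j}$, and then take coefficient spaces. The paper's own proof is terser---it simply writes $\Fil_F^r(e(s)) = \sum_{k+l\le r} e_k(s_l)$ and asserts that ``the result follows by taking coefficient spaces''---whereas you spell out the bilinearity step and correctly flag that $\Co_i^L(e)$ presupposes a power-series presentation of $E$, which is implicit in the paper but present in every application.
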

\begin{proof}
Write $e = \sum_{k\geq 0} e_k$ and $s = \sum_{l\geq 0} s_l$, where $e_k \in L^k \backslash L^{k+1}$ and $s_l\in F^l\backslash F^{l+1}$. It is clear that
\begin{equation*}
\Fil_F^r (e (s)) = \sum_{\substack{k,l\geq 0 \\ k+l \leq r}} e_k (s_l),
\end{equation*}
because $e_k (s_l)\in F^{k+l} R\langle\langle Z\rangle\rangle$. The result follows by taking coefficient spaces.
\end{proof}

\subsubsection{Depth filtration and weight grading on motivic MZVs}

The filtrations on coefficient spaces defined in \S \ref{coeffspacefiltration} can be used to equip the algebra $\Zm$ of motivic multiple zeta values with natural filtrations.

Let $\Phi^{\mm}_{0 1}$ be the motivic Drinfeld associator defined in \S \ref{motivicdrinfeldassociators}. It is a generating series for motivic MZVs, and therefore $\Co(\Phi^{\mm}_{0 1}) = \Zm$. The weight and depth filtrations on $\mathcal{P}^{\mm}_{\MT}  \langle\langle \mathsf{x}_0, \mathsf{x}_1\rangle\rangle$ defined in Definition \ref{filtrationsonP1minus} can be used to equip $\Zm$ with weight and depth filtrations, as follows:

\begin{definition} \label{weightdepthMZV}
The algebra $\Zm$ has two increasing filtrations: the \emph{weight} filtration $\mathfrak{W}_\bullet \Zm$, and the \emph{depth} filtration $\mathfrak{D}_\bullet \Zm$. They are defined by
\begin{align*}
\mathfrak{W}_\bullet \Zm &:= \Co_\bullet^W (\Phi^{\mm}_{0 1}); \\
\mathfrak{D}_\bullet \Zm &:= \Co_\bullet^D (\Phi^{\mm}_{0 1}).
\end{align*}
\end{definition}
Since $\Phi^{\mm}_{0 1}$ is grouplike, Proposition \ref{coefffiltrationmult} implies that these filtrations are compatible with the shuffle product on $\Zm$. 

\begin{remark}
The weight (resp. depth) filtration is so named because its image under the period map is precisely the weight (resp. depth) filtration on numerical multiple zeta values. We recall that the weight of an admissible multiple zeta value $\zeta(k_1, \dots, k_r)$ is $k_1 +\dots + k_r$ and the depth is $r$.
\end{remark}

\begin{proposition} \label{associatorweights}
The weight filtration satisfies $\mathfrak{W}_\bullet \Zm = \Co_\bullet^W (\Phi^{\mm}_{i j})$ whenever $i, j \in \left\{0,1,\infty\right\}$ are distinct.
\end{proposition}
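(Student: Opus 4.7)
The plan is to exploit the fact that, by the definition $\Phi^{\mm}_{ij} := \Phi^{\mm}(\mathsf{x}_i, \mathsf{x}_j)$ with $\mathsf{x}_\infty := -\mathsf{x}_0 - \mathsf{x}_1$, each series $\Phi^{\mm}_{ij}$ is obtained from $\Phi^{\mm}_{01}$ by applying a linear substitution $\sigma_{ij}$ to the generators $\mathsf{x}_0, \mathsf{x}_1$. First I would check, case by case over the six ordered distinct pairs $(i,j) \in \{0,1,\infty\}^2$, that each $\sigma_{ij} \colon \mathbb{Q}\mathsf{x}_0 \oplus \mathbb{Q}\mathsf{x}_1 \to \mathbb{Q}\mathsf{x}_0 \oplus \mathbb{Q}\mathsf{x}_1$ is in fact a $\mathbb{Q}$-linear automorphism; the associated matrices all have determinant $\pm 1$, and in fact these substitutions assemble into an action of $S_3$.

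Next I would extend each $\sigma_{ij}$ multiplicatively to a continuous $\mathcal{P}^{\mm}_{\MT}$-algebra automorphism $\hat\sigma_{ij}$ of $\mathcal{P}^{\mm}_{\MT} \langle\langle \mathsf{x}_0, \mathsf{x}_1 \rangle\rangle$. Because $\sigma_{ij}$ is homogeneous of degree one, the extension $\hat\sigma_{ij}$ preserves total degree, hence the weight filtration $W^\bullet$ of Definition \ref{filtrationsonP1minus}; in particular one has $\hat\sigma_{ij}(\Fil_W^r \Phi^{\mm}_{01}) = \Fil_W^r \hat\sigma_{ij}(\Phi^{\mm}_{01}) = \Fil_W^r \Phi^{\mm}_{ij}$. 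Expanding $\hat\sigma_{ij}(w) = \sum_{w'} a_{w,w'} w'$ for a word $w \in M(\mathsf{x}_0, \mathsf{x}_1)$ of length $r$, the scalar $a_{w,w'} \in \mathbb{Q}$ is nonzero only when $w'$ also has length $r$. Consequently every coefficient of $\Phi^{\mm}_{ij}$ of weight at most $r$ is a $\mathbb{Q}$-linear combination of coefficients of $\Phi^{\mm}_{01}$ of weight at most $r$, yielding $\Co_r^W(\Phi^{\mm}_{ij}) \subseteq \Co_r^W(\Phi^{\mm}_{01}) = \mathfrak{W}_r \Zm$. Applying the same reasoning to the inverse substitution $\hat\sigma_{ij}^{-1}$ (which is another $\hat\sigma_{i'j'}$ up to signs, and in particular also preserves total degree) gives the reverse inclusion.

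There is no substantive obstacle here; the entire content of the argument is the essentially tautological remark that a weight-preserving $\mathbb{Q}$-linear change of variables on the generators of the tensor algebra cannot alter the $\mathbb{Q}$-linear span of the coefficients of a formal power series at any fixed weight. The only thing that could possibly require care is the book-keeping of the six substitutions, but since they all manifestly preserve total degree, the uniform argument above handles them simultaneously.
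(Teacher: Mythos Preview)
Your proposal is correct and follows essentially the same approach as the paper's proof: both observe that the substitution $(\mathsf{x}_0,\mathsf{x}_1)\mapsto(\mathsf{x}_i,\mathsf{x}_j)$ is a $\mathbb{Q}$-linear automorphism of the span of the generators, hence extends to a weight-preserving automorphism of the power series ring, from which the equality of filtered coefficient spaces is immediate. The paper states this in two sentences; you have simply unpacked the same idea in more detail.
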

\begin{proof}
The series $\Phi_{i j}^{\mm}$ is obtained from $\Phi_{0 1}^{\mm}$ by making the change of variables $(\mathsf{x}_0, \mathsf{x}_1)\mapsto (\mathsf{x}_i, \mathsf{x}_j)$. This change of variables induces an automorphism $W^\bullet\xrightarrow{\sim} W^\bullet$, and therefore the coefficient space remains unchanged.
\end{proof}

The weight filtration $\mathfrak{W}_\bullet \Zm$ is induced from a \emph{grading}. This is particular to motivic multiple zeta values; numerical MZVs are only conjecturally graded.

The weight grading is defined as follows: let
\begin{equation*}
\mathcal{Z}_k^{\mm} := \langle \zeta^{\mm}(w): \deg (w) = k\rangle_\mathbb{Q}
\end{equation*}
be the subspace of motivic MZVs of weight $k$. Then $\mathcal{Z}_k^{\mm} \cong \mathfrak{W}_k /\mathfrak{W}_{k-1}$, and
\begin{align*}
\mathfrak{W}_r \Zm = \bigoplus_{k=0}^r \mathcal{Z}_{k}^{\mm}.
\end{align*}
The ideal of motivic MZVs of positive weight is
\begin{equation*}
\mathcal{Z}^{\mm}_{> 0} :=\bigoplus_{r > 0} \mathcal{Z}_r^{\mm}.
\end{equation*}

Let $\mathcal{P}:=\Zm [ \mathbb{L} ]$. Brown's result \cite{brownmtm} implies that $\mathcal{P}$ is the ring $\mathcal{P}^{\mm, +}_{\MT}$of effective motivic periods for mixed Tate motives over $\mathbb{Z}$, and that $\mathcal{P}^{\mm}_{\MT} \cong \mathcal{P}[\mathbb{L}^{-1}]$.

\subsubsection{Action of $S^{\mm}$}

The group scheme $SL_2^{\dR}$ acts on $\mathbb{Q}\langle\langle \adr, \bdr \rangle\rangle$ on the left, as described in \S  \ref{leftrightactions}. The element $S^{\mm}$ acts via
\begin{equation} \label{actionmotivicS}
S^{\mm} (\adr, \bdr) = (-\mathbb{L}^{-1} \bdr, \mathbb{L} \adr).
\end{equation}

\begin{proposition} \label{SL2filtrationswap}
Let $R$ be a $\mathbb{Q}[\mathbb{L}^\pm]$-algebra. The action of $S^{\mm}$ on $R\langle\langle \adr, \bdr \rangle\rangle$ induces isomorphisms of filtered $R$-algebras $A^\bullet R\langle\langle \adr, \bdr \rangle\rangle \rightleftarrows B^\bullet R\langle\langle \adr, \bdr \rangle\rangle$.
\end{proposition}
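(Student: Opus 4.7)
The plan is to observe that by \eqref{actionmotivicS} the action of $S^{\mm}$ on $R\langle\langle \adr, \bdr \rangle\rangle$ is the unique continuous $R$-algebra endomorphism $\Sigma$ determined by the linear substitution $\adr \mapsto -\mathbb{L}^{-1}\bdr$, $\bdr \mapsto \mathbb{L}\adr$ on generators. Since the matrix $S^{\mm} \in SL_2(\mathbb{Q}[\mathbb{L}^\pm])$ is invertible, this substitution is $R$-linearly invertible on the $R$-module $R\adr \oplus R\bdr$, so $\Sigma$ extends to a continuous $R$-algebra automorphism of $R\langle\langle \adr, \bdr \rangle\rangle$. First I would make this extension explicit and record that it is continuous with respect to the total-degree topology (each generator maps to an element of total degree $1$).

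Next I would compute the effect of $\Sigma$ on an arbitrary word $w = w_1 \cdots w_n$ with $w_i \in \{\adr, \bdr\}$: the image is $\Sigma(w) = (-\mathbb{L}^{-1})^{\deg_{\adr}(w)} \mathbb{L}^{\deg_{\bdr}(w)} \bar{w}$, where $\bar{w}$ denotes the word obtained by swapping every occurrence of $\adr$ and $\bdr$ throughout $w$. In particular $\deg_{\adr}(\Sigma(w)) = \deg_{\bdr}(w)$ and $\deg_{\bdr}(\Sigma(w)) = \deg_{\adr}(w)$. Extending by continuity and $R$-linearity, this immediately yields the inclusions $\Sigma(A^r) \subseteq B^r$ and $\Sigma(B^r) \subseteq A^r$ for every $r \geq 0$, so that $\Sigma$ restricts to filtered $R$-algebra homomorphisms $\Sigma_A \colon A^\bullet \to B^\bullet$ and $\Sigma_B \colon B^\bullet \to A^\bullet$.

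It remains to check that these are mutually inverse up to an automorphism of each filtered algebra. I would compute $\Sigma^2$ directly on generators: $\adr \xmapsto{\Sigma} -\mathbb{L}^{-1}\bdr \xmapsto{\Sigma} -\adr$ and similarly $\bdr \mapsto -\bdr$, so $\Sigma^2$ acts on the degree-$n$ homogeneous component as multiplication by $(-1)^n$. This is visibly an $R$-algebra automorphism of $R\langle\langle \adr, \bdr\rangle\rangle$ preserving both $A^\bullet$ and $B^\bullet$ strictly, hence an isomorphism of filtered algebras. Therefore both compositions $\Sigma_B \circ \Sigma_A$ and $\Sigma_A \circ \Sigma_B$ are filtered automorphisms of $A^\bullet$ and $B^\bullet$ respectively, which forces each of $\Sigma_A$ and $\Sigma_B$ to be a filtered isomorphism.

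I do not anticipate any real obstacle: the proposition is essentially bookkeeping, and the key conceptual point is simply that $S^{\mm}$ acts on generators by interchanging $\adr$ and $\bdr$ up to invertible scalars in $\mathbb{Q}[\mathbb{L}^\pm]$, which makes it a \emph{strict} exchange of the $A$- and $B$-gradings and hence of their associated decreasing filtrations.
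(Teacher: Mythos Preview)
Your proof is correct and is exactly the unpacking of what the paper declares ``immediate from equation \eqref{actionmotivicS}'': the substitution $\adr\mapsto -\mathbb{L}^{-1}\bdr$, $\bdr\mapsto\mathbb{L}\adr$ swaps the $\adr$- and $\bdr$-degrees on words up to units of $R$, hence exchanges the $A$- and $B$-filtrations strictly. Your detour through $\Sigma^2$ is fine but slightly more than needed; since $\Sigma$ already exchanges the two gradings \emph{strictly}, its set-theoretic inverse (which exists because $S^{\mm}$ is invertible) is automatically strict for the swapped filtrations, giving the filtered isomorphism directly.
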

\begin{proof}
This is immediate from equation \eqref{actionmotivicS}.
\end{proof}

The following useful lemma is a consequence of Proposition \ref{SL2filtrationswap}.

\begin{lemma} \label{SL2filteredpiece}
Let $R$ be a $\mathbb{Q}[\mathbb{L}^\pm]$-algebra. Then
\begin{equation*}
{S^{\mm}}\circ  {\Fil_A^r} = {\Fil_B^r} \circ {S^{\mm}}.
\end{equation*}
The same formula also holds with $A$ and $B$ interchanged.
\end{lemma}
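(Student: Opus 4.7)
The plan is to reduce the claim to a simple observation about gradings. Since the $A$-filtration is defined as the decreasing filtration on the $\adr$-degree (and similarly for $B$ on $\bdr$-degree), each of these filtrations actually arises from a grading: every series $s \in R\langle\langle \adr, \bdr \rangle\rangle$ admits a unique decomposition $s = \sum_{k\geq 0} s_k$ where $s_k$ is the $A$-homogeneous component of $\adr$-degree exactly $k$, and this is the same decomposition appearing in Definition \ref{filteredpiece}. With this decomposition in hand, $\Fil_A^r(s) = \sum_{k=0}^r s_k$ by unravelling the definition.

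Next I would use the explicit formula \eqref{actionmotivicS}, namely $S^{\mm}(\adr, \bdr) = (-\mathbb{L}^{-1}\bdr, \mathbb{L}\adr)$, which extends to an $R$-algebra automorphism of $R\langle\langle \adr, \bdr \rangle\rangle$ because $R$ is a $\mathbb{Q}[\mathbb{L}^\pm]$-algebra. Any monomial with $k$ occurrences of $\adr$ and $l$ occurrences of $\bdr$ is mapped, up to an invertible scalar in $\mathbb{Q}[\mathbb{L}^\pm]$, to a monomial with $l$ occurrences of $\adr$ and $k$ occurrences of $\bdr$. Consequently $S^{\mm}$ carries the $A$-homogeneous piece of degree $k$ isomorphically onto the $B$-homogeneous piece of degree $k$; this is essentially the content of Proposition \ref{SL2filtrationswap} already, but restated at the level of homogeneous components rather than of filtered pieces.

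Combining the two observations, I would write
\begin{equation*}
S^{\mm}(\Fil_A^r(s)) \;=\; S^{\mm}\Bigl(\sum_{k=0}^r s_k\Bigr) \;=\; \sum_{k=0}^r S^{\mm}(s_k),
\end{equation*}
and then identify the right-hand side as $\Fil_B^r(S^{\mm}(s))$ using that each $S^{\mm}(s_k)$ is $B$-homogeneous of degree $k$ and that $S^{\mm}(s) = \sum_k S^{\mm}(s_k)$ is therefore the $B$-homogeneous decomposition of $S^{\mm}(s)$. The symmetric statement with $A$ and $B$ interchanged follows by running the same argument with the roles of $\adr$ and $\bdr$ swapped, since $S^{\mm}$ also carries $B$-homogeneous degree $k$ onto $A$-homogeneous degree $k$.

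There is no real obstacle here; the only mild subtlety is to make sure that Definition \ref{filteredpiece} (which a priori is phrased in terms of an abstract filtration) really does coincide with the naïve homogeneous truncation in this particular graded setting. This is immediate because the filtration comes from a grading, but it is worth a sentence to flag explicitly so that the two descriptions of $\Fil_A^r$ and $\Fil_B^r$ are interchangeable throughout the argument.
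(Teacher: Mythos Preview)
Your proposal is correct and follows essentially the same approach as the paper. The paper's proof is the one-liner ``$w \in A^r$ iff $S^{\mm}(w) \in B^r$,'' which is exactly the grading-swap observation you spell out; you have simply made explicit the passage from that equivalence to the equality of truncation maps via the homogeneous decomposition, which the paper leaves implicit.
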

\begin{proof}
An element $w\in R\langle\langle \adr, \bdr \rangle\rangle$ is contained in $A^r$ iff $S^{\mm} (w)\in B^r$.
\end{proof}

\section{Relations between modular curves and the Tate curve} \label{modularandTate}

In this section we study the relationships between the fundamental groups of the moduli spaces $\M11$ and $\mathcal{M}_{1,\vec{1}}$, and the fundamental group of the infinitesimal punctured Tate curve $\Tatecurve$.

\subsection{The topological monodromy action} \label{topmonodromysection}

Let $E$ be an elliptic curve over a field $K\subseteq \mathbb{C}$; this determines a point $x = [E]\in \M11(K)$. By equipping $E^\times$ with a nonzero tangential basepoint $\vec{v}$ at the puncture we also determine a point $\hat{x}\in\mathcal{M}_{1,\vec{1}}(K)$ such that $\hat{x}\mapsto x$ under the morphism $\mathcal{M}_{1, \vec{1}} \to \M11$.

We can also take a tangential basepoint on $\M11$; e.g. $x = \vec{1}_\infty$. In this case the corresponding elliptic curve is the infinitesimal Tate curve $\mathcal{E}_{\partial/\partial q}$, which can be equipped with the tangential basepoint $\vec{v} = \vec{1}_O$ at the origin $O$.

Throughout this section we work with an arbitrary (possibly tangential) basepoint $x$. From \S \ref{monodromysection} onwards, however, we specialise $x = \vec{1}_\infty$, as our argument crucially relies on the limiting MHS on $\pi_1^{\dR}(\Tatecurve, \vec{1}_O)$ being mixed Tate.

\subsubsection{The outer monodromy action}

The fiber of $\mathcal{E}^\times \to\M11$ over $x$ is $E^\times$. The homotopy exact sequence of this fibration produces a short exact sequence
\begin{equation*}
1\to \pi_1^{\text{top}}(E^\times ,\vec{v}) \to \pi_1^{\text{top}} (\mathcal{E}^\times, [E^\times,\vec{v}])\to\pi_1^{\text{top}}(\M11,x)\to 1
\end{equation*}
exhibiting $\pi_1^{\text{top}} (\mathcal{E}^\times, [E^\times,\vec{v}])$ as an extension of $\Gamma$ by a free group on two generators \cite[Proposition $1.4$]{hainkzb}. Conjugation determines an outer action
\begin{equation} \label{outeraction}
\bar{\mu}_0 \colon \pi_1^{\text{top}}(\M11,x)\to \Out \pi_1^{\text{top}} (E^\times, \vec{v}).
\end{equation}

\begin{remark}
The outer action does not lift to a genuine action for the following geometric reason: the basepoint $x = [E]$ on $\M11$ can also be represented by a different \emph{model} of $E$, say $x = [E']$, corresponding to an isomorphism $E\cong E'$ defined over $K$. But there is no natural way to choose a tangential basepoint $\vec{v}$ on all models of $E$ simultaneously such that the $\pi_1^{\text{top}}(\M11,x)$-action respects this choice. The moduli space $\mathcal{M}_{1,\vec{1}}$ overcomes this issue because a basepoint $\hat{x}\in\mathcal{M}_{1,\vec{1}}$ determines an elliptic curve $E$ \emph{together} with a choice of tangential basepoint $\vec{v}$ on $E^\times$. This allows us to define an action of $\pi_1^{\text{top}}(\mathcal{M}_{1,\vec{1}}, \hat{x})$ on $\pi_1^{\text{top}}(E^{\times}, \vec{v})$.
\end{remark}

\subsubsection{The topological monodromy action}

We can identify $\pi_1^{\text{top}}(\mathcal{M}_{1,\vec{1}},\hat{x})\cong B_3$ with the mapping class group of $\Sigma^\times$, a genus $1$ surface with one puncture. The generators $t_A$ and $t_B$ can be identified with Dehn twists on $A,B\subseteq \Sigma$, where $A$ and $B$ are any two simple closed curves on $\Sigma$ that intersect transversally at one point. For example, $A$ and $B$ could be (the images of) the two generators $\alpha$ and $\beta$ for the fundamental group of $\Sigma^\times$. The \emph{left} action of $\pi_1^{\text{top}}(\mathcal{M}_{1,\vec{1}}, \hat{x})$ on $\pi_1^{\text{top}}(E^\times, \vec{v})$ via the Dehn twists $t_A$ and $t_B$ defines a homomorphism
\begin{equation} \label{topmonodromy}
\mu_0 \colon \pi_1^{\text{top}}(\mathcal{M}_{1,\vec{1}},\hat{x})\to \Aut \pi_1^{\text{top}} (E^\times, \vec{v}).
\end{equation}

\begin{lemma}
Let $A$ and $B$ denote the images of generators $\alpha$ and $\beta$ for $\pi_1^{\text{top}} (E^\times, \vec{v})$. Then $\mu_0$ is given by the following explicit action:
\begin{equation} \label{topmonodromyexplicit}
t_A(\alpha) = \alpha, \quad t_A(\beta) = \beta\alpha, \quad t_B(\alpha) = \alpha \beta^{-1}, \quad t_B(\beta) = \beta.
\end{equation}
\end{lemma}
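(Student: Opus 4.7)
The strategy is a direct geometric computation using the standard action of Dehn twists on the punctured torus. First, I would realise $\Sigma^{\times}$ as $(\mathbb{R}^2/\mathbb{Z}^2)\setminus\{0\}$, take $A$ (resp.\ $B$) to be the horizontal (resp.\ vertical) coordinate circle, and represent $\alpha$ and $\beta$ by small parallel pushoffs of $A$ and $B$ based at the tangential basepoint $\vec v$ at the puncture. With this model, $\alpha$ is disjoint from $A$ and $\beta$ is disjoint from $B$, while $\alpha$ meets $B$ and $\beta$ meets $A$ transversally in exactly one point each, with algebraic intersection numbers of opposite signs (by antisymmetry of the intersection pairing on the oriented surface $\Sigma$).

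The calculation then rests on the standard formula for the action of a positive Dehn twist $t_C$ along a simple closed curve $C$: for a based loop $\gamma$ meeting $C$ transversally at points $p_1,\dots,p_n$, the homotopy class $t_C(\gamma)$ is obtained from $\gamma$ by splicing in a based copy of $[C]^{\epsilon_i}$ at each crossing $p_i$, where $\epsilon_i \in \{\pm 1\}$ is the local intersection sign. Applying this in the four cases: $t_A(\alpha) = \alpha$ and $t_B(\beta) = \beta$ are immediate from the disjointness above; the single positive intersection of $\beta$ with $A$ forces $t_A(\beta) = \beta \cdot [A] = \beta\alpha$ under the left-to-right path composition convention; and the single (oppositely signed) intersection of $\alpha$ with $B$ forces $t_B(\alpha) = \alpha \cdot [B]^{-1} = \alpha\beta^{-1}$.

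As a sanity check, the induced action on $H_1(E^\times,\mathbb{Z}) \cong \mathbb{Z}\alpha\oplus\mathbb{Z}\beta$ is given by the matrices $\left(\begin{smallmatrix} 1 & 1 \\ 0 & 1 \end{smallmatrix}\right)$ and $\left(\begin{smallmatrix} 1 & 0 \\ -1 & 1 \end{smallmatrix}\right)$, which generate $SL_2(\mathbb{Z})$ and satisfy the braid relation $t_A t_B t_A = t_B t_A t_B$ from \eqref{braidpresentation}; this is consistent with the well-known surjection $B_3 \twoheadrightarrow SL_2(\mathbb{Z}) \cong \Gamma$.

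The main obstacle is purely notational: the asymmetry between $t_A(\beta) = \beta\alpha$ and $t_B(\alpha) = \alpha\beta^{-1}$ (both the placement of the inserted factor and its exponent) is entirely dictated by the choice of positive vs negative Dehn twist, the orientation of $A \cap B$, and the path composition convention. These must be fixed consistently so that the formulas are compatible with the normalisations used later when computing the monodromy morphism $\mu$ on $\Pi_Y^{\dR}$.
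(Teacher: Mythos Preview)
Your proposal is correct and follows essentially the same approach as the paper: the paper's proof is a terse appeal to the geometric action of Dehn twists on the punctured torus, which is exactly the computation you spell out. The only minor differences are that the paper phrases the geometric picture in terms of the Jacobi uniformisation $E(\mathbb{C})\cong\mathbb{C}^\times/q^{\mathbb{Z}}$ rather than the lattice model $\mathbb{R}^2/\mathbb{Z}^2$, and it also mentions an alternative purely group-theoretic derivation (embedding $F_2=\langle\alpha,\beta\rangle$ into $B_4$ and letting $B_3\subseteq B_4$ act by conjugation) that you do not use.
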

\begin{proof}
This follows from the description of $t_A$ and $t_B$ as Dehn twists along the images of $\alpha$ and $\beta$ by considering the shape of $\alpha, \beta$ in the Jacobi uniformisation $E(\mathbb{C})\cong \mathbb{C}^\times/q^{\mathbb{Z}}$ \cite{hainnotes}. It can also be derived group-theoretically by viewing the free group $F_2 = \langle \alpha, \beta \rangle$ as a subgroup of the braid group $B_4$ on four strands, with $B_3 \subseteq B_4$ acting on $F_2$ via conjugation \cite[\S $9$]{nakamuraeisenstein}.
\end{proof}

\begin{remark}
This action fixes $\Theta = [\alpha, \beta]$. Geometrically, $\Theta$ corresponds to a small loop around the puncture of the elliptic curve. The punctured torus $\Sigma^\times$ is homotopy-equivalent to a torus with a small open disc removed, which we denote by $\Sigma^o$. Elements of the mapping class group $\pi_1^{\text{top}}(\mathcal{M}_{1,\vec{1}}, \hat{x})$ fix  $\partial \Sigma^o$, which is homotopic to the image of $\Theta$.
\end{remark}

\subsubsection{Main topological equation} \label{topologicalSbeta}
Let $\tilde{S} := (t_A t_B t_A )^{-1} \in B_3$. Under \eqref{topmonodromy}, this element acts by
\begin{equation*}
\tilde{S}(\alpha) = \alpha \beta\alpha^{-1}, \quad \tilde{S}(\beta) = \alpha^{-1}.
\end{equation*}
Up to conjugation and inverses, the element $\tilde{S}$ interchanges $\alpha$ and $\beta$. The essential idea of this paper is to use this ``modular inverter'' to show how the periods of $\alpha$ and $\beta$ are related to the periods of $\tilde{S}$.

\subsubsection{Induced morphism to $\Gamma$}

There is an induced action of $\pi_1^{\text{top}}(\mathcal{M}_{1,\vec{1}}, \hat{x})$ on
\begin{equation*}
 \pi_1^{\text{top}} (E^\times, \vec{v})^{\ab}\cong H_1(E(\mathbb{C}),\mathbb{Z})\cong \mathbb{Z}^2.
\end{equation*} 
This may be described by a homomorphism $\pi_1^{\text{top}}(\mathcal{M}_{1,\vec{1}}, \hat{x}) \to GL_2(\mathbb{Z})$. This morphism factors through $\Gamma$ because $\pi_1^{\text{top}}(\mathcal{M}_{1,\vec{1}}, \hat{x})$ fixes $\Theta$, and is thus orientation-preserving. We now describe this morphism explicitly.

Let $\adr$ (resp. $\bdr$) denote the image of $\alpha$ (resp. $\beta$) in $H_1(E^\times(\mathbb{C}), \mathbb{Z})$. The fundamental group $\pi_1^{\text{top}}(\mathcal{M}_{1,\vec{1}}, \hat{x})\cong B_3$ acts on $\adr$ and $\bdr$ by the abelianisation of the action \eqref{topmonodromyexplicit}. This can be written as a \emph{right} action on frames, given by
\begin{equation*}
(t_A(\adr),t_A(\bdr)) = (\adr,\bdr) \left( \begin{matrix} 1 & 1 \\ 0 & 1 \end{matrix}\right), \quad (t_B(\adr),t_B(\bdr)) = (\adr,\bdr) \left( \begin{matrix} 1 & 0 \\ -1 & 1 \end{matrix}\right).
\end{equation*}

\begin{definition} \label{fdef}
The abelianised action defines $f \colon B_3 \to \Gamma$, given by
\begin{equation} \label{fdefexplicit}
t_A \mapsto T = \left( \begin{matrix} 1 & 1 \\ 0 & 1 \end{matrix} \right), \quad t_B \mapsto (T S T)^{-1} = \left( \begin{matrix} 1  & 0 \\ -1 & 1 \end{matrix} \right).
\end{equation}
\end{definition}

Let $\tilde{S} =( t_A t_B t_A)^{-1}$ as above, and let $\tilde{T} := t_A$. Then $f(\tilde{S}) = S$ and $f(\tilde{T}) = T$. Since $\Gamma$ is generated by $S$ and $T$, $f$ is surjective.

\begin{proposition} \label{kernelf}
The kernel of $f$ is an infinite cyclic group generated by $\tilde{S}^4 = (t_A^{-1} t_B^{-1})^6$. 
\end{proposition}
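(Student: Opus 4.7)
The plan is to identify both $B_3$ and $\Gamma$ with concrete amalgamated free products and then check that $f$ becomes an isomorphism after quotienting $B_3$ by $\langle \tilde{S}^4\rangle$.

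First I would rewrite the braid presentation in terms of $\Delta := t_A t_B t_A$ and $y := t_A t_B$. The braid relation $t_A t_B t_A = t_B t_A t_B$ gives $\Delta^2 = (t_A t_B t_A)(t_B t_A t_B) = (t_A t_B)^3 = y^3$, and a direct check shows these generate $B_3$, yielding the classical presentation
\begin{equation*}
B_3 \;\cong\; \langle \Delta, y \mid \Delta^2 = y^3 \rangle.
\end{equation*}
In particular $\Delta^2$ is central, so $\Delta^4$ is central and $\langle \tilde{S}^4\rangle = \langle \Delta^{-4}\rangle$ is a normal subgroup. A short calculation with \eqref{fdefexplicit} gives $f(\Delta) = S^{-1}$ and $f(y) = S^{-1}T^{-1}$, from which $f(\tilde S^4) = S^4 = I$, so $\langle \tilde{S}^4 \rangle \subseteq \ker f$. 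It remains to show the reverse inclusion.

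Next I would quotient out by $\tilde{S}^4$ to obtain
\begin{equation*}
B_3/\langle \tilde{S}^4\rangle \;\cong\; \langle \Delta, y \mid \Delta^4 = 1,\; \Delta^2 = y^3 \rangle.
\end{equation*}
Observe that $y^6 = (y^3)^2 = \Delta^4 = 1$ is automatic, so this group is literally the amalgamated free product $\mathbb{Z}/4 *_{\mathbb{Z}/2} \mathbb{Z}/6$, where $\mathbb{Z}/4 = \langle \Delta\rangle$, $\mathbb{Z}/6 = \langle y\rangle$ and the amalgamated $\mathbb{Z}/2$ is $\langle \Delta^2\rangle = \langle y^3\rangle$. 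This is the classical presentation of $SL_2(\mathbb{Z})$.

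Then I would exhibit $f$ as the isomorphism of these amalgamated product presentations. Setting $s = S^{-1}$ and $t = S^{-1}T^{-1}$ inside $\Gamma$, one checks $s^4 = I$ and computes $t^3 = (S^{-1}T^{-1})^3 = -I = s^2$; in particular $t^6 = I$. Thus $s$ and $t$ satisfy the defining relations of $\mathbb{Z}/4 *_{\mathbb{Z}/2} \mathbb{Z}/6$, and since $S,T$ lie in the subgroup they generate, the induced map
\begin{equation*}
B_3/\langle \tilde{S}^4\rangle \;\longrightarrow\; \Gamma,\qquad \Delta\mapsto S^{-1},\ y\mapsto S^{-1}T^{-1}
\end{equation*}
is a surjection of groups with identical amalgamated product presentations, hence an isomorphism. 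Therefore $\ker f = \langle \tilde{S}^4 \rangle$, and this subgroup is infinite cyclic because $\Delta$ has infinite order (the center of $B_3$ is generated by $\Delta^2$ and is isomorphic to $\mathbb{Z}$).

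The main obstacle is simply the book-keeping identifying $\tilde{S}^4$, $\Delta^{-4}$ and $(t_A^{-1}t_B^{-1})^6$ as the same element: the first equality is immediate from $\tilde{S} = \Delta^{-1}$, and for the second one uses $(t_A^{-1}t_B^{-1})^6 = (t_Bt_A)^{-6}$ together with the braid-relation identity $(t_Bt_A)^3 = (t_Bt_At_B)(t_At_Bt_A) = \Delta^2$. Everything else is a standard matching of presentations.
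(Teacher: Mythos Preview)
Your proof is correct and is essentially the same as the paper's, just with slightly different choices of generators: you work with $\Delta = t_A t_B t_A = \tilde S^{-1}$ and $y = t_A t_B$, while the paper uses $\tilde S$ and $\tilde U = \tilde S\tilde T = (t_B t_A)^{-1}$, but both arrive at the presentation $B_3 \cong \langle x,z \mid x^2 = z^3\rangle$ and compare it with the standard presentation $\Gamma \cong \langle S,U \mid S^2 = U^3,\ S^4 = I\rangle$. Your explicit phrasing via the amalgamated product $\mathbb{Z}/4 *_{\mathbb{Z}/2} \mathbb{Z}/6$ and the verification that the center of $B_3$ is infinite cyclic make the argument a bit more self-contained, but the underlying idea is identical.
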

\begin{proof}
It is easy to verify that $f(\tilde{S}^4) = I$. Set $\tilde{U} = \tilde{S}\tilde{T}$. From the presentation for the braid group in terms of the generators $t_A$ and $t_B$ given in \eqref{braidpresentation}, we obtain the alternate presentation $B_3 \cong \langle \tilde{S}, \tilde{U} \vert \tilde{S}^2 = \tilde{U}^3\rangle$. There is a well-known presentation for $\Gamma$ in terms of $S$ and $U = ST$, namely $\Gamma = \langle S, U \vert S^2 = U^3, S^4 = I\rangle$. From this it is clear the kernel is precisely generated by $\tilde{S}^4$.
\end{proof}

\begin{remark}[Geometric interpretation]
The kernel is isomorphic to $\mathbb{Z}$ because $f$ can be identified with the map on fundamental groups induced by the morphism $\mathcal{M}_{1,\vec{1}}\to\M11$, which is a principal $\mathbb{G}_m$-bundle. 

Recall that $\ker(f)$ is generated by $\tilde{S}^4$. One verifies that $\mu_0 (\tilde{S}^4) = \Ad_\Theta$. Therefore, $\ker(f)$ consists of elements acting on $ \pi_1^{\text{top}}(E^\times,\vec{v})$ as a power of $\Ad_\Theta$. 

The action of $\pi_1^{\text{top}}(\mathcal{M}_{1,\vec{1}}, \hat{x})$ on $\pi_1^{\text{top}}(E^\times, \vec{v})^{\ab} \cong \pi_1^{\text{top}}(E, \vec{v})$ factors through the fundamental group $\pi_1^{\text{top}}(\M11, x) \cong SL_2 (\mathbb{Z})$, and this in turn factors through the outer action \eqref{outeraction}. This gives a geometric interpretation for why elements in $\ker(f)$ must act on $\pi_1^{\text{top}}(E^\times, \vec{v})$ via inner automorphisms. The precise choice of $\Ad_\Theta$ corresponds to the fact that $\Theta\in\ker(\pi_1^{\text{top}}(E^\times, \vec{v}) \to \pi_1^{\text{top}}(E, \vec{v}))$.
\end{remark}

The situation is summarised in the following commutative diagram:
\begin{equation*}
\begin{tikzcd}
\ker(f)= \langle \tilde{S}^4\rangle \arrow[d, hook] \arrow[r,"\mu_0\vert_{\ker(f)}"] & \langle \Ad_\Theta \rangle \arrow[d,hook] \\
\pi_1^{\text{top}}(\mathcal{M}_{1,\vec{1}}, \hat{x}) \cong B_3\arrow[r, "\mu_0" ] \arrow[d, "f", swap]& \Aut \pi_1^{\text{top}}(E^\times,\vec{v}) \arrow[d] \\
\pi_1^{\text{top}}(\M11, x) \arrow[r, "\bar{\mu}_0" ] \arrow[d, "\sim", swap]& \Out  \pi_1^{\text{top}}(E^\times,\vec{v}) \arrow[d] \\
\Gamma \arrow[r, hook] & \Aut \left(\pi_1^{\text{top}}(E^\times,\vec{v})^{\ab} \right)\cong GL_2(\mathbb{Z})
\end{tikzcd}
\end{equation*}
The left column is exact at $B_3$ and the final vertical map in the right column is the natural map $\Out(G)\to \Aut(G^{\ab})$.

\begin{remark}
Although one cannot define an action of $\pi_1^{\text{top}}(\M11,x)$ on $\pi_1^{\text{top}}(E^\times,\vec{v})$, there is a way to define a ``motivic'' action of the de Rham relative completion $\pi_1^{\rel,\dR}(\M11,\vec{1}_\infty)$ on the de Rham fundamental group $\Pi_Y^{\dR}$, using the structure theory of the relative fundamental group of $\mathcal{M}_{1,\vec{1}}$. This is covered in \S \ref{monodromysection}.
\end{remark}

\subsection{The monodromy action for relative fundamental groups} \label{monodromysection}

The topological action of \S \ref{topmonodromysection} extends to an action of $\mathcal{G}_{1,\vec{1}}^{\dR}$ on $\Pi_Y^{\dR}$. This can be described by an $SL_2^{\dR}$-equivariant morphism
\begin{equation} \label{truemonodromy}
\mu \colon \mathcal{U}^{\dR}_{1,\vec{1}}\to\Aut(\Pi_Y^{\dR}).
\end{equation}
The monodromy action may we written using the decomposition $\mathcal{G}_{1,\vec{1}}^{\dR} \cong  \mathcal{U}_{1,\vec{1}}^{\dR} \rtimes SL_2^{\dR}$, given in \eqref{sdsplit}, as follows. Let $R$ be a $\mathbb{Q}$-algebra and let $u\in \mathcal{U}_{1,\vec{1}}^{\dR}(R)$, $\gamma\in SL_2^{\dR}(R)$ and $\pi\in\Pi_Y^{\dR}(R)$. Then
\begin{equation} \label{leftactiondef}
(u,\gamma)(\pi) = \mu(u)(\gamma(\pi)).
\end{equation}

\subsubsection{Induced action of $\mathcal{G}_{1,1}^{\dR}$}

Recall that $\mathcal{U}_{1,\vec{1}}^{\dR} \cong \mathcal{U}_{1,1}^{\dR}\times\mathbb{G}_a$. We obtain a morphism
\begin{equation*}
\mathcal{U}_{1,1}^{\dR}\hookrightarrow \mathcal{U}_{1,\vec{1}}^{\dR} \xrightarrow{\mu} \Aut (\Pi_Y^{\dR}),
\end{equation*}
which we also denote by $\mu$ by abuse of notation.

\begin{definition} \label{Ugeomdef}
Let $\mathcal{U}^{\geom}:=\im (\mu \colon \mathcal{U}_{1,1}^{\dR} \to \Aut (\Pi_Y^{\dR}))$. It is a pro-unipotent group scheme whose Lie algebra is the lower central series completion of $\ugeom$.
\end{definition}

\begin{remark} \label{errorterm}
Hain's result \cite[Theorem $15.4$]{hainhodge} implies that $\mu$ vanishes on the cuspidal generators $\mathbf{e}_f'$ and $\mathbf{e}_f ''$. It therefore depends only on its values at Eisenstein generators. These are determined by Proposition \ref{explicitmonodromy} below.
\end{remark}

\subsubsection{Explicit formula}

The monodromy homomorphism induces a morphism of fundamental Lie algebras, which may be written down explicitly.

\begin{proposition} \label{explicitmonodromy}
The monodromy morphism induces a morphism of Lie algebras, denoted $\mu \colon \mathfrak{u}_{1,\vec{1}} \to \ugeom\oplus {\mathbb{Q}\varepsilon_2}$. It vanishes on cuspidal generators and satisfies
\begin{equation*}
\mathbf{e}_{2n+2}\mathsf{X}^{k} \mathsf{Y}^{2n-k} \mapsto \frac{2 (2n-k)!}{\left[ (2n)!\right]^2} \ad\left(\varepsilon_0^\vee\right)^k \left(\varepsilon_{2n+2}^\vee \right)
\end{equation*}
\end{proposition}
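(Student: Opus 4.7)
The first assertion, that $\mu$ kills the cuspidal generators $\mathbf{e}_f' \mathsf{X}^k \mathsf{Y}^{2n-k}$ and $\mathbf{e}_f'' \mathsf{X}^k \mathsf{Y}^{2n-k}$, is the content of Remark \ref{errorterm} and follows directly from Hain's theorem on the monodromy of the universal connection at the cusp: cuspidal symbols generate non-Tate sub-quotients of the limiting MHS on $\mathfrak{u}_{1,1}$, whereas $\Pi_Y^{\dR}$ is mixed Tate, so any morphism of MHS of the requisite type must annihilate them.

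For the Eisenstein generators, my plan is to exploit the $SL_2^{\dR}$-equivariance of $\mu$ guaranteed by its construction in \S \ref{monodromysection}. For each $n \geq 1$, the subspace $\mathbf{e}_{2n+2} \cdot V_{2n}^{\dR} \subseteq \mathfrak{u}_{1,1}$ is an irreducible $(2n+1)$-dimensional $SL_2^{\dR}$-representation with basis $\{\mathbf{e}_{2n+2} \mathsf{X}^k \mathsf{Y}^{2n-k}\}_{0 \leq k \leq 2n}$, and by Proposition \ref{epsilonbasis} the analogous irreducible $(2n+1)$-dimensional $\mathfrak{sl}_2$-sub-representation of $\ugeom$ has basis $\{\ad(\varepsilon_0^\vee)^k \varepsilon_{2n+2}^\vee\}_{0 \leq k \leq 2n}$. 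Comparison of weights shows that the image of $\mathbf{e}_{2n+2} V_{2n}^{\dR}$ under $\mu$ must lie in this sub-representation of $\ugeom$, and by Schur's lemma the restriction is determined by a single scalar $C_n$.

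Concretely, I would pin down the highest-weight vector by setting $\mu(\mathbf{e}_{2n+2} \mathsf{Y}^{2n}) = C_n \, \varepsilon_{2n+2}^\vee$ and then propagate via the $\mathfrak{sl}_2$-action of \S \ref{sl2action} and \S \ref{sl2actionugeom}. Since $(\mathsf{X}\partial/\partial\mathsf{Y})^k \mathsf{Y}^{2n} = \frac{(2n)!}{(2n-k)!} \mathsf{X}^k \mathsf{Y}^{2n-k}$ while the corresponding lowering operator on the target is $\ad(\varepsilon_0^\vee)$, $SL_2^{\dR}$-equivariance yields
\[
\mu(\mathbf{e}_{2n+2} \mathsf{X}^k \mathsf{Y}^{2n-k}) = C_n \frac{(2n-k)!}{(2n)!} \ad(\varepsilon_0^\vee)^k \varepsilon_{2n+2}^\vee.
\]
Matching with the asserted formula then reduces the proposition to the single identity $C_n = 2/(2n)!$.

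The remaining step, and the main obstacle, is computing $C_n$. My approach would be to appeal to Hain's elliptic KZB connection on $\mathcal{M}_{1,\vec{1}}$ (cited as hainkzb), which gives a universal flat connection on $\Pi_Y^{\dR}$ whose $d\tau$-component has coefficient of $\ad(\adr)^{2n+2}(\bdr)$ equal to (a fixed normalisation times) the Hecke-normalised Eisenstein series $\mathbb{G}_{2n+2}(\tau)$. Because $\mu$ is the unipotent monodromy around the cusp, the component of $\mu(\mathbf{e}_{2n+2}\mathsf{Y}^{2n})$ along $\varepsilon_{2n+2}^\vee$ is picked out by the constant term $-B_{2n+2}/(4n+4)$ of $\mathbb{G}_{2n+2}$, and an explicit combinatorial factor $1/(2n)!$ enters from the Taylor expansion of the KZB kernel. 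The real difficulty is bookkeeping: the answer $2/[(2n)!]^2$ is the composite of (i) the Hecke normalisation of $\mathbb{G}_{2n+2}$, (ii) the pairing between $\Sym^{2n}\mathcal{V}$ and $V_{2n}^{\dR}$, (iii) the dualisation implicit in viewing $\mathbf{e}_{2n+2}$ as a class in $H^1_{\dR}(\M11, \Sym^{2n}\mathcal{V})^\vee$, and (iv) the binomial expansion of the KZB kernel. A convenient sanity check is $n=1$, $k=0$: the formula gives $\mu(\mathbf{e}_4 \mathsf{Y}^2) = \varepsilon_4^\vee$, which is consistent with the depth-one computation of $\eta$ undertaken in \S 11.
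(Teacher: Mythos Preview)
Your approach is essentially identical to the paper's: both use $SL_2^{\dR}$-equivariance of $\mu$ to reduce the formula for a general $k$ to the highest-weight case $k=0$, via the identification $\mathsf{X}\partial/\partial\mathsf{Y} \leftrightarrow \ad(\varepsilon_0^\vee)$ and the identity $(\mathsf{X}\partial/\partial\mathsf{Y})^k\,\mathsf{Y}^{2n} = \tfrac{(2n)!}{(2n-k)!}\mathsf{X}^k\mathsf{Y}^{2n-k}$.

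The one substantive difference is how the constant $C_n = 2/(2n)!$ is obtained. The paper simply cites Hain's result \cite[Theorem 15.7]{hainhodge}, which computes $\mu(\mathbf{e}_{2n+2}\mathsf{Y}^{2n})$ directly. You instead attempt to rederive $C_n$ from the constant term of $\mathbb{G}_{2n+2}$ in the KZB connection, but you do not actually carry out the ``bookkeeping'' you list under items (i)--(iv); as written this step is a sketch rather than a proof, and your $n=1$ sanity check via \S 11 is circular since that section uses the present proposition as input. The cleanest fix is to do what the paper does and invoke Hain's theorem for the $k=0$ value. (Your Hodge-theoretic justification for the vanishing on cuspidal generators is fine, given that $\mu$ is a morphism of MHS and the cuspidal pieces are simple and non-Tate; the paper again just cites Hain here.)
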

\begin{proof}
By \cite[Theorem $15.4$]{hainhodge}, the cuspidal generators are contained in $\ker(\mu)$, and \cite[Theorem $15.7$]{hainhodge} provides the first formula in the case $k=0$. It remains to compute the action on a general Eisenstein generator $\mathbf{e}_{2n+2}\mathsf{X}^k \mathsf{Y}^{2n-k}$ for any $0\leq k\leq 2n$.

The $SL_2$-equivariance of $\mu \colon \mathcal{U}_{1,\vec{1}}^{\dR}\to \Aut(\Pi_Y^{\dR})$ implies that the induced map on Lie algebras is $\mathfrak{sl}_2$-equivariant. The two representations of $\mathfrak{sl}_2$ are identified via
\begin{equation*}
\mathsf{X}\frac{\partial}{\partial\mathsf{Y}}\mapsto \ad(\varepsilon_0^\vee), \quad \mathsf{Y}\frac{\partial}{\partial\mathsf{X}}\mapsto -\ad(\varepsilon_0).
\end{equation*}
The $\mathfrak{sl}_2$-action may be used to write a general Eisenstein generator in the form
\begin{equation*}
\mathbf{e}_{2n+2}\mathsf{X}^k \mathsf{Y}^{2n-k} = \frac{(2n-k)!}{(2n)!} \left(\mathsf{X} \frac{\partial}{\partial \mathsf{Y}}\right)^k \left(\mathbf{e}_{2n+2} \mathsf{Y}^{2n}\right).
\end{equation*}
Applying $\mu$ and using the $\mathfrak{sl}_2$-equivariance gives the result.

\end{proof}

\begin{remark}
Proposition \ref{explicitmonodromy} implies that $\mu(\mathbf{e}_2) = 2\varepsilon_2$, where  $\varepsilon_2 =\varepsilon_2^\vee = -\ad([\adr,\bdr])$.
\end{remark}

\subsubsection{Action of the length filtration}

Let $B^\bullet \Pi_Y^{\dR}$ be the filtration defined in Definition \ref{filtrationsonPidr}, and let $L^\bullet \mathcal{U}_{1,1}$ (resp. $L^\bullet \Ugeom$) be the length filtration defined in \S \ref{lengthfiltration} (resp. \S \ref{filtrationsUgeom}).

\begin{proposition} \label{lengthmonodromy}
Under the monodromy action, the length filtration $L^\bullet \mathcal{U}_{1,1}$ (resp. $L^\bullet \Ugeom$) is a subfiltration of the induced  filtration $B^\bullet \mathcal{U}_{1,1}$ (resp. $B^\bullet \Ugeom$).
\end{proposition}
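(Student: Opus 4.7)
The plan is to reduce the statement to a verification on generators at the Lie algebra level, and then propagate it using the compatibility of both filtrations with the Lie bracket.

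By Definition \ref{Ugeomdef} and \S \ref{filtrationsUgeom} we have $\Ugeom = \mu(\mathcal{U}_{1,1})$ and $L^\bullet \Ugeom = \mu(L^\bullet \mathcal{U}_{1,1})$. The induced $B$-filtration on $\mathcal{U}_{1,1}$ is, by construction, the pullback along $\mu$ of that on $\Ugeom$ (cuspidal generators map to $0 \in \ker(\mu)$, which lies trivially in every induced piece). So it suffices to prove the statement for $\Ugeom$, and moreover to prove it at the Lie algebra level, i.e.\ to show $L^r \ugeom \subseteq B^r \Der(L)$ for the induced filtration, where $L = \Lie(\adr,\bdr)$.

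I would then record two general multiplicativity facts. First, the length filtration on $\ugeom$ satisfies $[L^a, L^b] \subseteq L^{a+b}$; this is immediate from the definition, since $\varepsilon_0^\vee$ and $\varepsilon_0$ have length $0$ and are only used to rebracket degree-$1$ generators. Second, the induced filtration on $\Der(L)$ satisfies $[B^a, B^b] \subseteq B^{a+b}$: if $D_i(B^k L) \subseteq B^{k+r_i}L$ for $i=1,2$, then both compositions $D_1 D_2$ and $D_2 D_1$ map $B^k L$ into $B^{k+r_1+r_2}L$, hence so does $[D_1,D_2]$. Together these reduce the problem to checking that each length-$1$ generator of $\ugeom$ lies in $B^1$ (induced).

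The length-$1$ generators are $\delta_{2n+2}^{(k)} = \ad(\varepsilon_0^\vee)^k(\varepsilon_{2n+2}^\vee)$ with $n\geq 1$ and $0\leq k\leq 2n$. Using the second multiplicativity fact, this further reduces to the claim that $\varepsilon_0^\vee$ and $\varepsilon_{2n+2}^\vee$ themselves lie in $B^1$ (induced), which is the direct calculation at the heart of the proposition. Recalling that $\adr \in B^0 L$ and $\bdr \in B^1 L$, Definition \ref{epsderdef} gives $\varepsilon_{2n+2}^\vee(\adr) = \ad(\adr)^{2n+2}(\bdr) \in B^1 L$ (a single $\bdr$), while $\varepsilon_{2n+2}^\vee(\bdr)$ is a sum of brackets each involving two $\bdr$'s, so lies in $B^2 L$; together this says $\varepsilon_{2n+2}^\vee$ raises $B$-degree by at least $1$. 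An even simpler calculation on $\varepsilon_0^\vee = \bdr\,\partial/\partial\adr$ gives the same conclusion. Combined with the bracket argument this yields $L^r \ugeom \subseteq B^r$ as required, and the statement for the pro-unipotent group $\Ugeom$ follows from the exponential correspondence, since the pieces of a BCH expansion of $\exp(X)$ for $X \in L^{\geq r} \cap B^{\geq r}$ remain in $B^r$ (induced) by the same multiplicativity argument.

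The main potential obstacle is largely bookkeeping rather than content: formalising the induced filtration on the \emph{group} $\mathcal{U}_{1,1}$ (Definition \ref{inducedfiltration} is phrased for $R$-modules) so that the Lie-theoretic argument above transfers cleanly. The most convenient route is to view $\mathcal{U}_{1,1}$ and $\Ugeom$ as grouplike elements in the completed universal enveloping algebra of $\Der(L)$ acting on $R\langle\langle \adr, \bdr\rangle\rangle$, where the extension of $B^\bullet$ from the Lie algebra to its completed enveloping algebra is multiplicative and the Lie-algebraic inclusion directly implies the group-theoretic one.
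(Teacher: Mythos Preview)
Your proposal is correct and follows essentially the same approach as the paper: both arguments reduce to the observation that each basic derivation $\delta_{2n+2}^{(k)}$ raises the $\bdr$-degree by at least $1$, and then propagate this multiplicatively to words (you via Lie brackets, the paper via compositions in the enveloping algebra). Your presentation is slightly more formally structured at the Lie-algebra level, while the paper works directly with words $u = \mathbf{e}_{2n_1+2}\mathsf{X}^{k_1}\mathsf{Y}^{2n_1-k_1}\cdots\mathbf{e}_{2n_s+2}\mathsf{X}^{k_s}\mathsf{Y}^{2n_s-k_s}$ in $\mathcal{U}_{1,1}^{\dR}(R)$ and observes that $\mu(u) = \delta_{2n_1+2}^{(k_1)}\circ\cdots\circ\delta_{2n_s+2}^{(k_s)}$ raises the $\bdr$-degree by at least $s$; but the content is the same.
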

\begin{proof}
Let $u\in L^r \mathcal{U}_{1,1}^{\dR}(R)$. By linearity, we may assume that $u$ is a word in the generators \eqref{liealgconcrete}. If $u$ contains any of the symbols $\mathbf{e}_f', \mathbf{e}_f''$ for a Hecke eigenform $f$ then $\mu(u) = 0$ by Proposition \ref{explicitmonodromy}. Therefore we may assume that $u$ only contains Eisenstein symbols; it can then be written in the form
\begin{equation*}
u = \mathsf{X}^{k_1}\mathsf{Y}^{2 n_1-k_1} \mathbf{e}_{2 n_1 + 2}\cdots  \mathsf{X}^{k_s}\mathsf{Y}^{2 n_s-k_s} \mathbf{e}_{2n_s + 2}, \quad \text{where } s\geq r \text{ and }  0\leq k_i\leq 2 n_i.
\end{equation*}
Applying $\mu$ gives
\begin{equation*}
\mu(u) = \delta_{2n_1 + 2}^{(k_1)} \circ \cdots \circ \delta_{2 n_s +2}^{(k_s)}\in L^s \Ugeom(R),
\end{equation*}
where $\delta_{2n + 2}^{(k)} := \ad(\varepsilon_0^\vee)^{k} (\varepsilon_{2n +2}^\vee )$. Each $\delta_{2n+2}^{(k)}$ raises the $\bdr$-degree by at least $1$, so the $s$-fold composition raises the $\bdr$-degree by at least $s\geq r$.
\end{proof}

Proposition \ref{lengthmonodromy} is useful because it enables the detection of multiple zeta values of a certain depth within the length filtration on iterated integrals on $\M11$.

\section{The canonical cocycle and the series $\Psi$} \label{motivicS}

Let $S\in \Gamma$ be the matrix
\begin{equation*}
S =
\left( \begin{matrix}
0 & -1 \\
1 & 0
\end{matrix} \right).
\end{equation*}
 Holomorphic multiple modular values are (regularised) iterated integrals along $S$ when viewed as an element of $\pi_1^{\text{top}}(\M11, \vec{1}_\infty)$ \cite[Definition $5.2$]{mmv}. To work with these formally, it is necessary to understand the image of $S$ under the map $\Gamma\to \mathcal{G}_{1,1}^{\dR}(\mathcal{P}^{\mm}_{\mathcal{H}})$.

Let $f \colon B_3\to \Gamma$ be as in Definition \ref{fdef}. The functoriality of relative completion produces a commutative diagram
\begin{equation} \label{B3toSL2Zdiagram}
\begin{tikzcd}
B_3 \arrow[d, "f", swap] \arrow[r] & \mathcal{G}_{1,\vec{1}}^{\dR}(\mathcal{P}^{\mm}_\mathcal{H}) \arrow[d] \arrow[r, "\sim"] & (\mathcal{U}_{1,\vec{1}}^{\dR}\rtimes SL_2^{\dR})(\mathcal{P}^{\mm}_\mathcal{H}) \arrow[d]\\
 \Gamma \arrow[r] &  \mathcal{G}_{1,1}^{\dR}(\mathcal{P}^{\mm}_\mathcal{H}) \arrow[r, "\sim"] & \left(\mathcal{U}_{1,1}^{\dR}\rtimes SL_2^{\dR}\right)(\mathcal{P}^{\mm}_\mathcal{H})
\end{tikzcd}
\end{equation}
in which the left horizontal maps are the universal comparisons \eqref{universaldrmap} and the right horizontal maps make use of the decompositions \eqref{vectordecomp} and \eqref{sdsplit}.

\subsection{The canonical cocycle}

Composition along the bottom row of Diagram \eqref{B3toSL2Zdiagram} defines quantities $(\mathcal{C}_\gamma^{\mm}, \gamma^{\mm})\in\mathcal{U}_{1,1}^{\dR}(\mathcal{P}^{\mm}_{\mathcal{H}}) \rtimes SL_2^{\dR} (\mathcal{P}^{\mm}_{\mathcal{H}})$ for each $\gamma\in\Gamma$. The element $\mathcal{C}_\gamma^{\mm}$ depends upon the choice of splitting $\mathcal{G}_{1, 1}^{\dR} \cong  \mathcal{U}_{1,1}^{\dR} \rtimes SL_2^{\dR}$.

\begin{definition}[Canonical cocycle]
The map $\gamma\mapsto \mathcal{C}_\gamma^{\mm}$ is called the \emph{canonical cocycle} $\mathcal{C}^{\mm} \in Z^1 (\Gamma, \mathcal{U}_{1,1}^{\dR}(\mathcal{P}^{\mm}_{\mathcal{H}}))$ \cite[Definition $15.4$]{mmv}.
\end{definition}

The element $\mathcal{C}_\gamma^{\mm}$ can be viewed as a generating series for motivic iterated integrals along $\gamma\in\pi_1^{\text{top}}(\M11, \vec{1}_\infty)$. This motivates the following definition, which may be viewed as an analogue of $\Zm = \Co(\Phi_{0 1}^{\mm})$.

\begin{definition}[Motivic multiple modular values]
The $\mathbb{Q}$-algebra $M^{\mm}$ of motivic multiple modular values \cite{mmv} is the $\mathbb{Q}$-subalgebra of $\mathcal{P}^{\mm}_{\mathcal{H}}$ generated by $\Co(\mathcal{C}_\gamma^{\mm})$ for all $\gamma\in\Gamma$.
\end{definition}

The cocycle property implies that $M^{\mm}$ is generated by $\Co(\mathcal{C}^{\mm}_S)$ and $\Co(\mathcal{C}^{\mm}_T)$. It is known that $\Co(\mathcal{C}_T^{\mm}) \subseteq \mathbb{Q}[\mathbb{L}]$ \cite[Lemma $15.6$]{mmv}. Therefore the interesting motivic multiple modular values are contained in $\Co (\mathcal{C}_S^{\mm})$. Some of these coefficients are motivic iterated integrals (along $S$) of differential forms of the shape
\begin{equation*}
f(q) \log(q)^b \frac{dq}{q},
\end{equation*}
where $f$ is a modular form of weight $2n+2\geq 4$ and $0\leq b\leq 2n$. Such integrals are called \emph{totally holomorphic} motivic multiple modular values \cite[Definition $5.1$]{brownihara}.

\begin{definition}[Motivic iterated Eisenstein integrals]
When all $f$ are Eisenstein series $\mathbb{G}_{2n+2}$, these integrals are called motivic \emph{iterated Eisenstein integrals}. They are motivic iterated integrals of the form
\begin{equation} \label{generalEisint}
\int_S^{\mm} [E_{2n_1+2}(b_1)\vert \dots \vert E_{2n_s+2}(b_s)].
\end{equation}
Here $E_{2n+2}(b)\in\mathcal{O}(\mathcal{U}_{1,1}^{\dR})$ is dual to $\mathbf{e}_{2n+2}\mathsf{X}^a\mathsf{Y}^b\in\mathfrak{u}_{1,1}$, where $a+b = 2n$, and $[E_{2n_1+2}(b_1)\vert \dots \vert E_{2n_s+2}(b_s)]\in\mathcal{O}(\mathcal{U}_{1,1}^{\dR})$ is the element of the bar complex representing the iterated integral. The length of this integral is $s$ and the total modular weight is $N:=\sum_{i=1}^s (2n_i+2)$. 
\end{definition}

\begin{remark}[Betti and de Rham normalisations] \label{bettidrintegrals}
It is important to note that the iterated integrals considered here are de Rham normalised. The Betti normalisation differs by replacing $\log(q)$ by $\mathbb{L} \tau$ and $dq/q$ by $\mathbb{L} d\tau$. Therefore the de Rham integral
\begin{equation*}
\int^{\mm} [E_{2n_1 + 2} (b_1) \vert\cdots \vert E_{2n_s + 2} (b_s)] 
\end{equation*}
may be written in the Betti normalisation as
\begin{equation*}
\mathbb{L}^{b_1 + \dots + b_s + s}\int^{\mm} \mathbb{G}_{2n_1 + 2} (\tau_1) \tau_1^{b_1} d\tau_1 \cdots \mathbb{G}_{2n_s + 2} (\tau_s) \tau_s^{b_s} d\tau_s.
\end{equation*}
\end{remark}

\subsection{The series $\Psi$}

We now study the top row of Diagram \eqref{B3toSL2Zdiagram}. Recall that $\mathcal{U}_{1,\vec{1}}^{\dR}\cong\mathcal{U}_{1,1}^{\dR}\times\mathbb{G}_a$ where $\Lie(\mathbb{G}_a)$ is spanned by an element $\mathbf{e}_2$. This implies that the map $B_3 \to (\mathcal{U}_{1,\vec{1}}^{\dR}\rtimes SL_2^{\dR})(\mathcal{P}^{\mm}_\mathcal{H})$ given by composition along the top row may be written
\begin{equation*}
    \sigma \mapsto \left(\exp(r^{\mm}(\sigma)\mathbf{e}_2) \mathcal{C}_{f(\sigma)}^{\mm}, f(\sigma)^{\mm}\right).
\end{equation*}
Here $r^{\mm} \colon B_3 \to \mathcal{P}^{\mm}_{\mathcal{H}}$ is a motivic lift of a cocycle $r$ constructed by Matthes \cite{matthesquasi, matthesE2}. It takes values in the additive subgroup $\mathbb{Q}\mathbb{L}$ because $\mathbf{e}_2$ spans a copy of $\mathbb{Q}(1)$. We discuss this cocycle in \S \ref{G2Lvalue}.

\begin{definition}[Series $\Psi$]
Let $\tilde{S} = (t_A t_B t_A)^{-1}\in B_3$, so that $f(\tilde{S}) = S$. Composition along the top row of Diagram \eqref{B3toSL2Zdiagram} sends $\tilde{S}\mapsto (\Psi, S^{\mm})$, where $S^{\mm}$ is the de Rham normalisation of $S$ defined in \S\ref{SL2reps} and $\Psi = \exp(r^{\mm}(\tilde{S}) \mathbf{e}_2) \mathcal{C}_S^{\mm}$.
\end{definition}

\begin{remark}[Integrals along $S$ and $\tilde{S}$] \label{intsE2}
Let $\eta := r^{\mm}(\tilde{S})$. We have $\eta = \Psi[\mathbf{e}_2] = \int_{\tilde{S}}^{\mm} E_2 (0)$. We compute $\eta = \mathbb{L}/8$ in Corollary \ref{qvalue}.

The algebra $M^{\mm}$ is generated by $\Co(\mathcal{C}_S^{\mm})$ and $\Co(\mathcal{C}_T^{\mm}) = \mathbb{Q}[\mathbb{L}]$. Since $\Psi = \exp(\eta\mathbf{e}_2) \mathcal{C}_S^{\mm} $, where $\eta\in \mathbb{Q}\mathbb{L}$ and $\mathcal{C}_S^{\mm}$ is invertible, it follows that $M^{\mm} = \Co(\Psi)$. Elements of $\Co(\Psi)$ can be decomposed into elements of $\Co(\mathcal{C}_S^{\mm})$ multiplied by powers of $\eta$ as follows.

The projection $\mathcal{U}_{1,\vec{1}}^{\dR}\cong \mathcal{U}_{1,1}^{\dR}\times \mathbb{G}_a \to \mathcal{U}_{1,1}^{\dR}$ sending $\mathbf{e}_2 \mapsto 0$ is dual to an inclusion
\begin{equation*}
j \colon \mathcal{O}(\mathcal{U}_{1,1}^{\dR})\hookrightarrow \mathcal{O}(\mathcal{U}_{1,\vec{1}}^{\dR})\cong  \mathcal{O}(\mathcal{U}_{1,1}^{\dR})\otimes \mathbb{Q}[E_2 (0)]
\end{equation*}
satisfying $\Psi \circ j = \mathcal{C}_S^{\mm} \in \Hom (\mathcal{O}(\mathcal{U}_{1,1}^{\dR}), \mathcal{P}^{\mm}_{\mathcal{H}})$. This implies that $\int_{\tilde{S}}^{\mm} E_{2n+2}(b) = \int_S^{\mm} E_{2n+2}(b)$ whenever $n>0$. The integral of any element of $\mathcal{O}(\mathcal{U}_{1,\vec{1}}^{\dR})$ can therefore be decomposed as an integral along $S$ multiplied by a power of $\eta=\int_{\tilde{S}}^{\mm} E_2(0)\in\mathbb{Q}\mathbb{L}$.
\end{remark}

\subsection{The coefficients of $\Co(\mu(\Psi))$} \label{coeffsmuPsi}
Proposition \ref{explicitmonodromy} implies that $\mu$ vanishes on words involving a cuspidal symbol $\mathbf{e}_f'$ or $\mathbf{e}_f''$. This means that the coefficients of the series $\mu(\Psi)$ (and therefore $\mu(\mathcal{C}_S^{\mm})$) are \emph{a priori} motivic iterated Eisenstein integrals. However, both of these series may also be viewed as elements of $\Aut (\Pi_Y^{\dR})$, which is a pro-object of $\MT$ \cite{brownzetaelements, umem}. Therefore $\Co(\mu(\Psi))$ and $\Co(\mu(\mathcal{C}_S^{\mm}))$ are also $\mathbb{Q}$-subalgebras of $\mathcal{P}^{\mm}_{\MT} \cong \Zm [\mathbb{L}^\pm]$. In other words, the coefficient spaces of $\mu(\Psi)$ and $\mu(\mathcal{C}_S^{\mm})$ consist of all linear combinations of motivic iterated Eisenstein integrals that are equal to motivic MZVs (allowing for powers of $\mathbb{L})$.

The main goal of this paper is to show that $\Co(\mu(\Psi))$ is as large as possible within $\mathcal{P}^{\mm}_{\MT}$. We describe the strategy in more detail at the start of \S \ref{mainargument}.

\section{Relations between $\P1minus$ and the Tate curve} \label{genus0andTate}

In this section we study the relationships between the fundamental groups of $X=\P1minus$ and $Y = \Tatecurve$. This establishes a link between multiple zeta values and the periods of the infinitesimal punctured Tate curve.

\subsection{The Hain morphism} \label{hainmorphismsection}

The Hain morphism \cite[\S $12.2$, \S $16$-$18$]{hainkzb} \cite[\S $3.3$]{brownzetaelements} is a morphism of de Rham fundamental groups $\Pi_X^{\dR} \to \Pi_Y^{\dR}$. It is obtained by pulling back the universal elliptic Knizhnik-Zamolodchikov-Bernard (KZB) connection \cite{hainkzb, enriquezkzb} to $X$. One obtains a version of the Knizhnik-Zamolodchikov (KZ) connection \cite{kz} with modified residues at the poles $\left\{0,1,\infty\right\}$.

\begin{definition}[Hain morphism] \label{haindef}
The Hain morphism is a morphism of affine group schemes $\phi \colon \Pi_X^{\dR}\to\Pi_Y^{\dR}$ over $\mathbb{Q}$. It is equivalent to the continuous Lie algebra homomorphism $\phi \colon \Lie(\mathsf{x}_0, \mathsf{x}_1)^\wedge  \to \Lie(\adr,\bdr)^\wedge$ given by
\begin{align*}
 \mathsf{x}_0 & \mapsto \frac{\ad(\bdr)}{e^{\ad(\bdr)} - 1}(\adr) = \sum_{k\geq 0} \frac{B_k}{k!} \ad(\bdr)^k (\adr) \\
 \mathsf{x}_1 &  \mapsto -[\adr,\bdr].
\end{align*}
\end{definition}

\begin{remark}
A remarkable consequence of the motivic theory is that although there is no algebraic morphism $X\to Y$ defined over $\mathbb{Q}$, there is a topological map $X(\mathbb{C})\hookrightarrow Y(\mathbb{C})$ that still induces the Hain morphism of de Rham fundamental groups over $\mathbb{Q}$. The Hain morphism may be defined as follows: let $\nabla$ be the connection on the bundle over $X(\mathbb{C})$ obtained as the pullback of the universal KZB connection via $X(\mathbb{C})\hookrightarrow Y(\mathbb{C})\hookrightarrow \mathcal{E}^\times (\mathbb{C})$, and let $p\in\left\{0,1,\infty\right\}$. Then $\phi(\mathsf{x}_p) = \res_p (\nabla)$. 
\end{remark}

The following basic fact about $\phi$ follows from the proof of \cite[Lemma $3.3$]{brownzetaelements}.
\begin{proposition} \label{hainmorphisminjective}
The Hain morphism is injective.
\end{proposition}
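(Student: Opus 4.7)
\emph{Proof plan.} The plan is to reduce injectivity of $\phi$, via associated gradeds for a compatible pair of filtrations, to a rank computation for a Lie subalgebra of $\Lie(\adr, \bdr)$.

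First I would equip $\Lie(\mathsf{x}_0, \mathsf{x}_1)^\wedge$ with (the restriction of) the depth filtration $D^\bullet$ by $\mathsf{x}_1$-degree from Definition \ref{filtrationsonP1minus}, and $\Lie(\adr, \bdr)^\wedge$ with (the restriction of) the $B$-filtration by $\bdr$-degree from Definition \ref{filtrationsonPidr}. Both come from $\mathbb{N}$-gradings, so they are bracket-compatible and Hausdorff on the completions. The explicit formulas for $\phi$ in Definition \ref{haindef} give $\phi(\mathsf{x}_0) \equiv \adr \pmod{B^1}$ and $\phi(\mathsf{x}_1) = -[\adr, \bdr] \in B^1$; a short induction on Lie-word length then yields $\phi(D^n) \subseteq B^n$. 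A standard filtered-injectivity argument (any nonzero element of $\ker \phi$ would descend to a nonzero element of the associated graded, by Hausdorffness) reduces the claim to injectivity of the induced map
\begin{equation*}
\gr \phi \colon \gr_D \Lie(\mathsf{x}_0, \mathsf{x}_1)^\wedge \longrightarrow \gr_B \Lie(\adr, \bdr)^\wedge,
\end{equation*}
which is the continuous Lie algebra homomorphism determined by $\mathsf{x}_0 \mapsto \adr$ and $\mathsf{x}_1 \mapsto -[\adr, \bdr]$.

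It therefore suffices to show that the Lie subalgebra $H \subseteq \Lie(\adr, \bdr)$ generated by $\adr$ and $[\adr, \bdr]$ is free of rank $2$. By the Shirshov--Witt theorem every Lie subalgebra of a free Lie algebra is itself free, so $H$ is free of rank at most $2$. To pin down the rank as $2$, I would verify that $\adr$ and $[\adr, \bdr]$ remain linearly independent modulo $[H, H]$. The $\bdr$-degree-$0$ part of $H$ is $\mathbb{Q}\adr$, and since $[\adr, \adr] = 0$ this forces $[H, H] \subseteq B^1$; moreover the $\bdr$-degree-$1$ piece of $[H, H]$ is $\ad(\adr)$ applied to the $\bdr$-degree-$1$ piece of $H$, which (by a brief Jacobi-based reduction) equals $\mathrm{span}\{\ad(\adr)^m(\bdr) : m \geq 1\}$, so ends up as $\mathrm{span}\{\ad(\adr)^n(\bdr) : n \geq 2\}$. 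This does not contain $[\adr, \bdr] = \ad(\adr)^1(\bdr)$, giving rank exactly $2$. Hence $\gr \phi$ is an isomorphism onto $H$, and $\phi$ is injective.

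The main obstacle is the careful set-up of the filtered reduction on the completed Lie algebras and the explicit identification of $\gr \phi$; the Shirshov--Witt plus rank count at the end is then routine.
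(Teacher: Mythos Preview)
Your proposal is correct and aligns with the paper's approach: the paper simply cites the proof of \cite[Lemma~3.3]{brownzetaelements}, and the paper's own Lemma~\ref{hainfiltration} (which generalises that lemma) records precisely the $D^\bullet \to B^\bullet$ filtration compatibility you use, so the reduction to the associated graded map $\mathsf{x}_0 \mapsto \adr$, $\mathsf{x}_1 \mapsto -[\adr,\bdr]$ is exactly the intended route. One minor simplification for your final step: instead of Shirshov--Witt plus the rank computation, Lazard elimination on $\Lie(\adr,\bdr)$ with respect to $\{\bdr\}$ exhibits $\adr$ and $[\bdr,\adr]$ as two of the free generators $\{\ad(\bdr)^n(\adr):n\geq 0\}$ of the ideal generated by $\adr$, so the subalgebra they generate is immediately free on them.
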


\subsubsection{Dictionary between filtrations}

The Hain morphism relates the weight and depth filtrations on $R\langle\langle \mathsf{x}_0, \mathsf{x}_1 \rangle \rangle$ to the $A$ and $B$ filtrations on $R\langle\langle \adr, \bdr \rangle \rangle$. This is made precise in Lemma \ref{hainfiltration}, which generalises \cite[Lemma $3.3$]{brownzetaelements}.

\begin{lemma} \label{hainfiltration}
The Hain morphism induces \emph{strict} morphisms of filtered $R$-algebras
\begin{align*}
W^\bullet R\langle\langle \mathsf{x}_0, \mathsf{x}_1\rangle\rangle &\to A^\bullet R \langle\langle \adr,\bdr \rangle\rangle\\
D^\bullet R \langle\langle \mathsf{x}_0, \mathsf{x}_1\rangle\rangle &\to B^\bullet R \langle\langle \adr,\bdr \rangle\rangle.
\end{align*}
\end{lemma}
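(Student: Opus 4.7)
The plan is to directly inspect the images of the generators under the Hain morphism:
\begin{equation*}
\phi(\mathsf{x}_0) = \sum_{k\geq 0} \frac{B_k}{k!} \ad(\bdr)^k(\adr), \quad \phi(\mathsf{x}_1) = -[\adr, \bdr].
\end{equation*}
The crucial observation is that every monomial appearing in both $\phi(\mathsf{x}_0)$ and $\phi(\mathsf{x}_1)$ contains $\adr$ exactly once, and every monomial in $\phi(\mathsf{x}_1)$ contains $\bdr$ exactly once, while $\phi(\mathsf{x}_0)$ has varying $\bdr$-degree (with its lowest-$\bdr$-degree piece being simply $\adr$). By multiplicativity of $\phi$, a word of $W$-degree $n$ therefore maps into $A^n$, and a word of $\mathsf{x}_1$-degree $n$ maps into $B^n$, establishing the containments $\phi(W^n) \subseteq A^n$ and $\phi(D^n) \subseteq B^n$.

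For strictness with respect to $(W, A)$, the observation is in fact sharper: $\phi$ sends the finite-dimensional $W$-homogeneous subspace of degree $k$ into the subspace of monomials in $\adr, \bdr$ of $\adr$-degree exactly $k$. Writing $s = \sum_k s_k$ in its $W$-homogeneous decomposition, the $\phi(s_k)$ thus live in pairwise disjoint $\adr$-homogeneous pieces of the target, so $\phi(s)\in A^n$ forces $\phi(s_k)=0$ for all $k<n$. Injectivity of $\phi$ (Proposition \ref{hainmorphisminjective}) then yields $s_k = 0$ for $k<n$, i.e.\ $s\in W^n$, as required.

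For strictness with respect to $(D, B)$, the argument is more delicate since $\phi(\mathsf{x}_0)$ is not $B$-homogeneous. I would introduce the auxiliary ``leading term'' map $\psi\colon R\langle\langle \mathsf{x}_0, \mathsf{x}_1\rangle\rangle \to R\langle\langle \adr, \bdr\rangle\rangle$ defined by $\psi(\mathsf{x}_0) = \adr$ and $\psi(\mathsf{x}_1) = -[\adr, \bdr]$. For a word $w$ of $\mathsf{x}_1$-degree $k$, the $\bdr$-degree-$k$ component of $\phi(w)$ equals $\psi(w)$ (obtained by retaining only the $\adr$-summand of each $\phi(\mathsf{x}_0)$ factor), and $\psi(w)$ is itself $\bdr$-homogeneous of degree exactly $k$. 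Decomposing $s = \sum_k s_k$ by $\mathsf{x}_1$-degree and letting $s_{k_0}$ be the first nonzero term, the $\bdr$-degree-$k_0$ piece of $\phi(s)$ equals $\psi(s_{k_0})$; if this is nonzero then $\phi(s) \notin B^{k_0+1}$, and strictness follows.

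The principal obstacle is therefore the injectivity of $\psi$. This reduces to the classical fact that $\adr$ and $[\adr, \bdr]$ generate a \emph{free} Lie subalgebra of $\Lie(\adr, \bdr)$, whence by Poincar\'e--Birkhoff--Witt the induced map on completed associative algebras is injective. This is essentially the content of \cite[Lemma 3.3]{brownzetaelements}, so the $(D,B)$-half of the statement follows by invoking Brown's argument; the $(W,A)$-half is the new piece supplied by the direct dimension-counting argument above.
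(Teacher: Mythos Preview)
Your argument is correct and follows the same strategy as the paper's: both compute the $\adr$- and $\bdr$-degrees of $\phi(\mathsf{x}_0)$ and $\phi(\mathsf{x}_1)$ and read off the filtration compatibilities from there. The paper reduces ``by linearity'' to a single monomial $w$ and records the biconditionals $w\in W^r \Leftrightarrow \phi(w)\in A^r$ and $w\in D^r \Leftrightarrow \phi(w)\in B^r$ for such $w$; your treatment is more explicit about why strictness survives passage to arbitrary linear combinations, separating the $(W,A)$ case (where $\phi$ is genuinely $\adr$-graded, so injectivity of $\phi$ suffices) from the $(D,B)$ case (where you pass to the associated $\bdr$-graded map $\psi\colon \mathsf{x}_0\mapsto \adr,\ \mathsf{x}_1\mapsto -[\adr,\bdr]$ and invoke its injectivity via the free-subalgebra/PBW argument, which is indeed what underlies \cite[Lemma~3.3]{brownzetaelements}).
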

\begin{proof}
Let $w\in R\langle\langle \mathsf{x}_0, \mathsf{x}_1\rangle\rangle$. By linearity we may assume that $w\in M(\mathsf{x}_0, \mathsf{x}_1)$. Therefore we can write
\begin{equation} \label{wordform}
w = \mathsf{x}_0^{k_1} \mathsf{x}_1^{l_1} \dots \mathsf{x}_0^{k_n} \mathsf{x}_1^{l_n}, \quad \text{where } k_1, l_n\geq 0 \text{ and } k_2, \dots, k_n, l_1, \dots, l_{n-1} \geq 1.
\end{equation}
Definition \ref{haindef} implies that $\phi(\mathsf{x}_0) = \adr + d$, where $d\in D^1\mathbb{L}(\adr,\bdr)^\wedge$, and $\phi(\mathsf{x}_1) = -[\adr, \bdr]$. Applying $\phi$ to \eqref{wordform} gives
\begin{equation} \label{wordformphi}
\phi(w) = (-1)^l (\adr+d)^{k_1} [\adr,\bdr]^{l_1} \cdots (\adr+d)^{k_n} [\adr,\bdr]^{l_n}
\end{equation}
where $l := l_1 + \dots + l_n$. We have $\adr+d\in A^1\cap B^0$ and $[\adr,\bdr]\in D^1 \subseteq A^1\cap B^1$. Using this and comparing \eqref{wordform} and \eqref{wordformphi} gives
\begin{equation*}
w\in W^r R\langle\langle \mathsf{x}_0, \mathsf{x}_1\rangle\rangle \iff k_1 + \dots + k_n + l_1 + \dots + l_n \geq r \iff \phi(w)\in A^r R\langle\langle \adr, \bdr\rangle\rangle.
\end{equation*}
Similarly, we have
\begin{equation*}
w\in D^r R\langle\langle \mathsf{x}_0, \mathsf{x}_1\rangle\rangle \iff  l_1 + \dots + l_n \geq r \iff \phi(w)\in B^r R\langle\langle \adr, \bdr\rangle\rangle.
\end{equation*}
\end{proof}

\subsection{The series $\alpha^{\mm}$ and $\beta^{\mm}$} \label{alphabetasection}

Recall that $\Pi_Y = \langle\alpha, \beta\rangle$. Under the natural map $\Pi_Y \to \Pi_Y^{\dR} (\mathcal{P}^{\mm}_{\mathcal{H}})$, these elements are sent to power series
\begin{equation*}
\alpha^{\mm}, \beta^{\mm}\in\Pi_Y^{\dR}(\mathcal{P}^{\mm}_{\mathcal{H}})\cong \GrL (\mathcal{P}^{\mm}_{\mathcal{H}}\langle\langle \adr, \bdr \rangle\rangle).
\end{equation*}

Using the Hain morphism, it is possible to explicitly write down these series in terms of exponential series and the associators $\Phi^{\mm}_{i j}$ of \S  \ref{motivicdrinfeldassociators}. 

\begin{lemma} \label{alphabetaformula}
The series $\alpha^{\mm}$ and $\beta^{\mm}$ are given explicitly by
\begin{align*}
\alpha^{\mm} &= \phi(\Phi^{\mm}_{1 \infty}) e^{-\mathbb{L} \phi(\mathsf{x}_\infty)} \phi(\Phi_{\infty 1}^{\mm}),\\
\beta^{\mm} &= e^{-\frac{\mathbb{L}}{2} \phi(\mathsf{x}_1)} \phi(\Phi^{\mm}_{1 0}) e^{-\bdr} \phi (\Phi^{\mm}_{\infty 1}).
\end{align*}
\end{lemma}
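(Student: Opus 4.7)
The plan is to realize $\alpha^{\mm}$ and $\beta^{\mm}$ as images of explicit topological paths in $Y(\mathbb{C})$ under the universal comparison $\Pi_Y \to \Pi_Y^{\dR}(\mathcal{P}^{\mm}_{\mathcal{H}})$, by decomposing each of $\alpha, \beta$ into (i) pieces that lie in the image of the topological inclusion $j : X(\mathbb{C}) \hookrightarrow Y(\mathbb{C})$ underlying the Hain morphism and (ii) a deck-transformation correction coming from the cover $\mathbb{C}^\times \to \mathbb{C}^\times/q^{\mathbb{Z}}$. Each piece of type (i) will map to $\phi$ applied to a product of Drinfeld associators $\Phi^{\mm}_{ij}$ and exponentials $\exp(\mathbb{L}\mathsf{x}_p)$, using the known values $\dch_{ij}\mapsto \Phi^{\mm}_{ij}$ and $\gamma_p\mapsto \exp(\mathbb{L}\mathsf{x}_p)$ in $\Pi_X^{\dR}(\mathcal{P}^{\mm}_{\MT})$ together with the defining property that $\phi$ is the de Rham realization of $j$. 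Pieces of type (ii) are invisible to $X$ and must be extracted by compatibility with the abelianization of $\Pi_Y^{\dR}$.

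\textbf{Computation of $\alpha^{\mm}$.} I would work in the model $Y(\mathbb{C}) = (\mathbb{C}^\times \setminus q^{\mathbb{Z}})/q^{\mathbb{Z}}$ with puncture at $[1]$, where $X(\mathbb{C}) = \mathbb{C}^\times \setminus \{1\}$ embeds as a fundamental domain. The cycle $\alpha$ is represented by (a perturbation of) the unit circle in $\mathbb{C}^\times$, which in $\pi_1(X, \vec{1}_1)$ is homotopic to $\gamma_\infty^{-1}$: choosing the perturbation so that both $0$ and $1$ lie in the enclosed disc gives a loop equal to $\gamma_0\gamma_1 = \gamma_\infty^{-1}$ by the relation $\gamma_0\gamma_1\gamma_\infty = 1$. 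Writing $\gamma_\infty^{-1}$ concretely as $\dch_{1\infty}\cdot\tilde{\gamma}_\infty^{-1}\cdot\dch_{\infty 1}$ for a small positively-oriented loop $\tilde{\gamma}_\infty$ based at $-\vec{1}_\infty$, then applying the comparison followed by $\phi$, yields the stated formula. As a consistency check, Definition \ref{haindef} gives $\phi(\mathsf{x}_\infty) \equiv -\adr \pmod{[\adr,\bdr]}$, so the abelianization of the right-hand side is $\exp(\mathbb{L}\adr)$, matching the image of $\alpha$ in $(\Pi_Y^{\dR})^{\ab}$.

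\textbf{Computation of $\beta^{\mm}$ and the main obstacle.} The cycle $\beta$ lifts to a non-closed path from $1$ to $q$ in $\mathbb{C}^\times$. I would decompose it as (a) a tangential-basepoint correction at $O$ corresponding to a half-turn at $1$ (producing $\exp(-\tfrac{\mathbb{L}}{2}\phi(\mathsf{x}_1))$), (b) a straight-line path in $X$ from $\vec{1}_1$ to $-\vec{1}_0$ (mapping to $\phi(\Phi^{\mm}_{10})$), (c) the deck translation $z \mapsto qz$ identifying the inner boundary of the fundamental domain with the outer one, and (d) a straight-line path in $X$ from $\vec{1}_\infty$ back to $-\vec{1}_1$ (mapping to $\phi(\Phi^{\mm}_{\infty 1})$). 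Pieces (b) and (d) are immediate from the $\phi$-image of the corresponding elements of $\Pi_X^{\dR}(\mathcal{P}^{\mm}_{\MT})$. The deck translation (c) is invisible to $\phi$; it is the unique element compatible with $\beta\mapsto \exp(-\bdr)$ in $(\Pi_Y^{\dR})^{\ab}$, and since all other factors have trivial abelianization (using $\phi(\mathsf{x}_1)\equiv 0$ and $\phi(\Phi^{\mm}_{i j})\equiv 1$ modulo $[\adr,\bdr]$), this forces (c) to equal $e^{-\bdr}$. The hardest step will be establishing (a) and (c) rigorously, since both involve data genuinely beyond $X$; I expect to pin them down by combining the abelianization comparison just described with the strictness of $\phi$ on the weight/depth filtrations (Lemma \ref{hainfiltration}), which constrains how each factor sits with respect to the $A$, $B$, and $D$ filtrations on $\Pi_Y^{\dR}$ and thereby rules out spurious corrections.
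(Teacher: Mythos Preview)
Your topological decomposition is exactly the one the paper uses: $\alpha$ is realized as $\phi_0(\dch_{1\infty}\cdot\gamma_\infty^{-1}\cdot\dch_{\infty 1})$ via the Jacobi model $Y(\mathbb{C})\simeq(\mathbb{C}^\times\setminus q^{\mathbb{Z}})/q^{\mathbb{Z}}$, and $\beta$ is built from a half-turn at the puncture, the path $\dch_{10}$, a crossing of the identified boundary circles, and the path $\dch_{\infty 1}$. The paper simply cites Hain \cite[\S 16, \S 18.1]{hainkzb} for these facts.

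The gap is in your treatment of piece (c). You propose to determine the deck-translation factor by matching abelianizations and then ``ruling out spurious corrections'' via the filtration lemma. This cannot work: the abelianization constraint only pins (c) down modulo the commutator subgroup, and there are infinitely many grouplike elements of $\Pi_Y^{\dR}(\mathcal{P}^{\mm}_{\mathcal{H}})$ with abelianization $e^{-\bdr}$ (e.g.\ $\exp(-\bdr+\lambda[\adr,\bdr])$ for any $\lambda$). Lemma~\ref{hainfiltration} concerns the behaviour of $\phi$ on the $W/D$ and $A/B$ filtrations, but (c) is precisely the piece \emph{not} in the image of $\phi$, so that lemma gives you no leverage on it. The actual content here is that the KZB connection, restricted to the fiber, has parallel transport $e^{-\bdr}$ across the identification $z\sim qz$; this is a computation with the connection form itself (carried out in \cite[\S 18.1]{hainkzb}), not something recoverable from abelian or filtered data alone. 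You should either perform that transport computation or cite it.
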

\begin{proof}
These formulae are obtained from a topological model for $Y(\mathbb{C})$ in terms of a quotient of $X(\mathbb{C})$ \cite[\S $16$]{hainkzb}.\footnote{This is an infinitesimal version of the Jacobi uniformisation $E(\mathbb{C}) \cong \mathbb{C}^\times / q^{\mathbb{Z}}$ of an elliptic curve as $q \to 0$. Here $q = \exp(2\pi i \tau)$ and $E(\mathbb{C})\cong  \mathbb{C}/(\mathbb{Z}\oplus\mathbb{Z}\tau)$. This construction is not algebraic.} This induces a homomorphism $\phi_0 \colon  \Pi_X \to \Pi_Y$ of topological fundamental groups, and therefore a commutative diagram
\begin{equation*}
\begin{tikzcd}
\Pi_X \arrow[r, "\phi_0"] \arrow[d] & \Pi_Y \arrow[d] \\
\Pi_X^{\dR}(\mathcal{P}^{\mm}_{\MT}) \arrow[r, "\phi", swap] & \Pi_Y^{\dR}(\mathcal{P}^{\mm}_{\MT})
\end{tikzcd}
\end{equation*}
where the vertical arrows are the natural morphisms \eqref{universaldrmap}; they land in the $\mathcal{P}^{\mm}_{\MT}$-points of the respective fundamental groups because the affine rings of each are ind-objects in $\MT$. Define
\begin{equation} \label{topgammadef}
\gamma := \dch_{1\infty} \cdot \gamma_{\infty}^{-1} \cdot \dch_{\infty 1}\in\Pi_X.
\end{equation} 
Hain shows that $\phi_0 (\gamma) = \alpha$ \cite[\S $16$]{hainkzb}. Under the map $\Pi_X \to \Pi_X^{\dR}(\mathcal{P}^{\mm}_{\MT})$, we have
\begin{equation} \label{gammamotdef}
\gamma\mapsto \gamma^{\mm}:=\Phi^{\mm}_{1 \infty} e^{-\mathbb{L} \mathsf{x}_\infty}\Phi_{\infty 1}^{\mm}.
\end{equation}
It follows that $\alpha^{\mm} = \phi(\gamma^{\mm})$.

The element $\beta$ is not contained within $\im(\phi_0)$. Instead, it is constructed in two stages. First take $\delta_1 :=(\gamma_1^{-, +})^{-1} \cdot \dch_{1 0} \in \pi_1^{\text{top}}(X, \vec{1}_1, -\vec{1}_0)$ and $\delta_2 := \dch_{\infty 1} \in \pi_1^{\text{top}}(X, \vec{1}_\infty, -\vec{1}_1)$, where ${\gamma_p^{\pm, \mp}}$ represents the homotopy class of a counterclockwise semicircle from $\pm \vec{1}_p$ to $\mp \vec{1}_p$ on $X(\mathbb{C})$. Their images in the $\mathcal{P}^{\mm}_{\MT}$-points of the respective de Rham path torsors are $\delta_1^{\mm} = \exp(-\mathbb{L}\mathsf{x}_1/2) \Phi_{1 0}^{\mm}$ and $\delta_2^{\mm} = \Phi_{\infty 1}^{\mm}$.

The boundary circles around the punctures at $0$ and $\infty$ are then identified to obtain a space homotopic to $Y(\mathbb{C})$ (this identification is done without twisting; see \cite[\S $18.1$]{hainkzb}). The element $\beta$ is the result of first traversing $\delta_1$, then identifying the boundary circles, and then traversing $\delta_2$. The identification inserts a factor of $e^{-\bdr}$ \cite[\S $18.1$]{hainkzb}, giving $\beta^{\mm} = \delta_1^{\mm} e^{-\bdr} \delta_2^{\mm}$.
\end{proof}

\begin{remark}[Hexagon equation]
The element $\gamma$ is homotopic to the path
\begin{equation*}
{\gamma_1^{+, -}} \cdot {\dch_{1 0}} \cdot {\gamma_0} \cdot {\dch_{0 1}} \cdot {\gamma_1^{-, +}},
\end{equation*}
where ${\gamma_p^{\pm, \mp}}$ is defined as in the proof of Lemma \ref{alphabetaformula}. For this reason, or equivalently by making use of the hexagon equation \cite{drinfeld} for $\Phi_{i j}^{\mm}$, we may also write
\begin{equation*}
\alpha^{\mm} = e^{\frac{\mathbb{L}}{2} \phi(\mathsf{x}_1)} \phi (\Phi_{1 0}^{\mm}) e^{\mathbb{L} \phi(\mathsf{x}_0)} \phi (\Phi_{0 1}^{\mm})e^{\frac{\mathbb{L}}{2} \phi(\mathsf{x}_1)}.
\end{equation*}
\end{remark}

Lemma \ref{alphabetaformula} implies that the coefficients of the series $\alpha^{\mm}$ and $\beta^{\mm}$ are contained within the subalgebra $\mathcal{P} = \mathcal{P}^{\mm, +}_{\MT}$ of effective periods of mixed Tate motives over $\mathbb{Z}$. This follows because the MHS on $ \Pi_Y^{\dR}$ is mixed Tate \cite[\S $13.6$]{mmv}.

Formulae similar to those given in Lemma \ref{alphabetaformula} appear in various forms in the literature. They are the initial values of solutions to the universal elliptic KZB connection at the cusp \cite[Proposition $4.9$, Theorem $4.11$]{enriquezkzb}, \cite[Theorem $4.3$]{nils}, \cite[\S $3.1$]{lochakmatthesschneps}. In this case, their images under the period map are sometimes denoted $A_\infty$ and $B_\infty$ and comprise part of the data an \emph{elliptic associator} \cite{ellipticassociators}. The reader is warned that conventions differ regarding path multiplication and choice of tangential basepoints on $Y$.

Similar formulae also appear in the profinite context \cite[Theorem $3.4$]{nakamuratangential} when describing the Galois action on the \'{e}tale fundamental group of the punctured Tate curve.

\subsection{Bounds on coefficients of $\beta^{\mm}$ and $(\alpha^{\mm})^{-1}$} \label{boundssection}

The formula $\tilde{S}(\beta) = \alpha^{-1}$ given in \S \ref{topologicalSbeta}, together with the functoriality of relative completion, implies that
\begin{equation} \label{mainrelativeequation}
    \mu(\Psi)(S^{\mm}(\beta^{\mm})) = (\alpha^{\mm})^{-1}.
\end{equation}
This is an equation holding between noncommutative formal power series in the letters $\adr, \bdr$. By the discussion in \S \ref{coeffsmuPsi} the coefficients of $\mu(\Psi)$ are precisely those motivic iterated Eisenstein integrals that are contained in $\mathcal{P}^{\mm}_{\MT}$. The coefficients of $S^{\mm}(\beta^{\mm})$ and $(\alpha^{\mm})^{-1}$ are contained in $\mathcal{P}^{\mm}_{\MT}$ by Lemma \ref{alphabetaformula}.

Our main result, Theorem \ref{maintheorem}, essentially says that $\Co(\mu(\Psi))$ is as large as possible within $\mathcal{P}^{\mm}_{\MT}$. Its proof uses the equation \eqref{mainrelativeequation} to detect coefficients of $\mu(\Psi)$ by \emph{comparing} $\Co(S^{\mm}(\beta^{\mm}))$ with $\Co((\alpha^{\mm})^{-1})$. The ``difference'' between these is the contribution from $\Co(\mu(\Psi))$ by equation \eqref{mainrelativeequation}.

More precisely, in this section we compute bounds on $\Co_r^A (\beta^{\mm})$ and $\Co_r^B ((\alpha^{\mm})^{-1})$ in terms of multiple zeta values of a certain weight or depth. In \S \ref{mainargument} we use Proposition \ref{lengthmonodromy} to transfer this to information about $\Co_r^L (\mu(\Psi))$.

\subsubsection{The $A^r$-filtered pieces of $\beta^{\mm}$}

The following lemma gives an upper bound on the weights of motivic MZVs that may occur in $\Co_r^A(\beta^{\mm})$.

\begin{lemma} \label{betacoeffs}
For all $r\geq 0$ we have
\begin{equation*}
\Co_r^A(\beta^{\mm}) \subseteq \sum_{0\leq k\leq r} \left(\bigoplus_{0\leq j\leq k} \mathbb{L}^j\mathbb{Q} \right) \mathfrak{W}_{r-k}\Zm.
\end{equation*}
\end{lemma}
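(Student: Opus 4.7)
The plan is to use the explicit expression
\[
\beta^{\mm} = e^{-\tfrac{\mathbb{L}}{2}\phi(\mathsf{x}_1)} \cdot \phi(\Phi^{\mm}_{1 0}) \cdot e^{-\bdr} \cdot \phi(\Phi^{\mm}_{\infty 1})
\]
from Lemma \ref{alphabetaformula} and bound the $A$-filtered coefficient space of each of the four factors separately, before combining them multiplicatively.

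\textbf{Step 1 (factorwise bounds).} For the first factor, since $\phi(\mathsf{x}_1) = -[\adr,\bdr] \in A^1$, the $k$-th summand of $\exp(-\mathbb{L}\phi(\mathsf{x}_1)/2)$ lies in $A^k$ and has coefficients in $\mathbb{L}^k\mathbb{Q}$; hence $\Co^A_m(e^{-\frac{\mathbb{L}}{2}\phi(\mathsf{x}_1)}) \subseteq \bigoplus_{j=0}^m \mathbb{L}^j\mathbb{Q}$. The factor $e^{-\bdr}$ has $A$-degree zero and rational coefficients, so it only contributes at $r_3 = 0$ with coefficient space $\mathbb{Q}$. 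For the two associator factors, Proposition \ref{associatorweights} gives $\Co^W_n(\Phi^{\mm}_{ij}) \subseteq \mathfrak{W}_n \Zm$, and Lemma \ref{hainfiltration} ensures that $\phi$ sends $W^n$ into $A^n$; since the coefficients of $\phi(\Phi^{\mm}_{ij})$ in $\adr,\bdr$ are $\mathbb{Q}$-linear combinations of the $\zeta^{\mm}(w)$ for $w$ of weight at most the prevailing $A$-degree, this yields $\Co^A_n(\phi(\Phi^{\mm}_{ij})) \subseteq \mathfrak{W}_n \Zm$.

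\textbf{Step 2 (multiplication).} For any product $s_1 \cdots s_p$ in $\mathcal{P}^{\mm}_{\mathcal{H}}\langle\langle \adr, \bdr\rangle\rangle$, decomposing each $s_i$ into its homogeneous $A$-degree components gives the standard bound
\[
\Co^A_r(s_1 \cdots s_p) \subseteq \sum_{r_1+\cdots+r_p \leq r} \Co^A_{r_1}(s_1) \cdots \Co^A_{r_p}(s_p).
\]
Applying this to the four factors above, using that only $r_3 = 0$ contributes and invoking the multiplicativity $\mathfrak{W}_a \cdot \mathfrak{W}_b \subseteq \mathfrak{W}_{a+b}$ (Proposition \ref{coefffiltrationmult}), we obtain
\[
\Co^A_r(\beta^{\mm}) \subseteq \sum_{r_1+r_2+r_4 \leq r} \Bigl(\bigoplus_{j=0}^{r_1} \mathbb{L}^j \mathbb{Q}\Bigr) \mathfrak{W}_{r_2+r_4} \Zm.
\]

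\textbf{Step 3 (reindexing).} Setting $k = r_1$ and using $\mathfrak{W}_{r_2+r_4} \Zm \subseteq \mathfrak{W}_{r-k} \Zm$ whenever $r_2 + r_4 \leq r - k$, the above is contained in
\[
\sum_{k=0}^r \Bigl(\bigoplus_{j=0}^k \mathbb{L}^j \mathbb{Q}\Bigr) \mathfrak{W}_{r-k} \Zm,
\]
which is precisely the claimed bound. I do not anticipate a substantive obstacle: the argument is essentially bookkeeping around filtered multiplicativity, with the only point to be careful about being that $\Co_r^A$ refers to the coefficient space of the truncation $\Fil_A^r$, not of a single $A$-homogeneous piece.
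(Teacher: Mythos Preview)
Your proof is correct and follows essentially the same route as the paper's: both use the product formula for $\beta^{\mm}$ from Lemma~\ref{alphabetaformula}, bound each factor's $A$-filtered coefficient space via Lemma~\ref{hainfiltration} and Proposition~\ref{associatorweights}, and combine using multiplicativity of the weight filtration. The only point worth making explicit is that you rely on the \emph{strictness} of $\phi$ as a filtered morphism $W^\bullet \to A^\bullet$ (Lemma~\ref{hainfiltration}), not merely that $\phi(W^n) \subseteq A^n$; this is what guarantees $\Fil_A^n(\phi(\Phi^{\mm}_{ij})) = \phi(\Fil_W^n(\Phi^{\mm}_{ij}))$ and hence that the coefficients at $A$-degree $\le n$ really do lie in $\mathfrak{W}_n \Zm$.
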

\begin{proof}
Recall that $\beta^{\mm} = e^{-\frac{\mathbb{L}}{2} \phi(\mathsf{x}_1)} \phi(\Phi^{\mm}_{1 0}) e^{-\bdr} \phi (\Phi^{\mm}_{\infty 1})$. We have
\begin{align} \label{betafiltration}
\Fil_A^r(\beta^{\mm}) &= \sum_{r_1 + \dots + r_4 = r} \Fil_A^{r_1} \left(e^{-\frac{\mathbb{L}}{2} \phi(\mathsf{x}_1)} \right) \Fil_A^{r_2} \left(\phi(\Phi^{\mm}_{1 0}) \right) \Fil_A^{r_3} \left( e^{-\bdr} \right) \Fil_A^{r_4} \left( \phi (\Phi^{\mm}_{\infty 1}) \right) \nonumber \\
& = \sum_{r_1 + r_2 + r_3 = r} \phi\left( \Fil_W^{r_1} \left(e^{-\frac{\mathbb{L}}{2} \mathsf{x}_1} \right)\right) \phi\left(\Fil_W^{r_2} \left(\Phi^{\mm}_{1 0} \right)\right) e^{-\bdr} \phi\left(\Fil_W^{r_3} \left(\Phi^{\mm}_{\infty 1} \right)\right).
\end{align}
The second line follows by applying Lemma \ref{hainfiltration} and noting that $e^{-\bdr}\in A^0$.

The Hain morphism is injective and defined over $\mathbb{Q}$. This implies that $\Co(\phi(s)) =\Co(s)$ for any $s\in \mathcal{P}\langle\langle \mathsf{x}_0, \mathsf{x}_1\rangle\rangle$. It therefore suffices to compute the coefficient spaces of the $W$-filtered pieces of the factors in \eqref{betafiltration}.

It is easy to see that $\Co_{r}^W (e^{-\mathbb{L} \mathsf{x}_1/2})  =\bigoplus_{j=0}^{r} \mathbb{L}^j\mathbb{Q}$ and $\Co\left(e^{-b}\right) =\mathbb{Q}$. Proposition \ref{associatorweights} states that for any $i \neq j$ we have $\Co_r^W(\Phi^{\mm}_{i j}) = \mathfrak{W}_r \Zm$. Therefore
\begin{equation} \label{coeffsum}
\Co_r^A (\beta^{\mm}) = \sum_{r_1+r_2+r_3 = r} {\left(\bigoplus_{j=0}^{r_1} \mathbb{L}^j\mathbb{Q}\right)} \cdot {\mathfrak{W}_{r_2} \Zm} \cdot {\mathfrak{W}_{r_3}\Zm}.
\end{equation}
Finally we use that $\mathfrak{W}_r \Zm \cdot\mathfrak{W}_s \Zm \subseteq \mathfrak{W}_{r+s} \Zm$, which gives the result.
\end{proof}

\subsubsection{The $B^r$-filtered pieces of $(\alpha^{\mm})^{-1}$}

In this section we compute a lower bound on $\Co_r^B\left((\alpha^{\mm})^{-1}\right)$ in terms of the depth filtration on motivic MZVs.

\begin{lemma} \label{alphacoeffs}
For all $r\geq 0$ we have $\mathbb{L}\mathfrak{D}_r \Zm \subseteq \Co_r^B(\alpha^{\mm})^{-1})$.
\end{lemma}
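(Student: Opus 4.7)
The proof parallels that of Lemma \ref{betacoeffs}, with the depth filtration replacing the weight filtration and aiming for a lower bound on coefficient spaces. First, I would use the strictness of $\phi$ (Lemma \ref{hainfiltration}) together with its injectivity (Proposition \ref{hainmorphisminjective}) to translate the problem: strictness combined with injectivity implies $\Co_r^B(\phi(s)) = \Co_r^D(s)$ for every $s \in \mathcal{P}\langle\langle \mathsf{x}_0, \mathsf{x}_1\rangle\rangle$. Letting $\gamma^{\mm} = \Phi^{\mm}_{1\infty} e^{-\mathbb{L}\mathsf{x}_\infty} \Phi^{\mm}_{\infty 1}$ be the series from \eqref{gammamotdef} with $\phi(\gamma^{\mm}) = \alpha^{\mm}$, the claim reduces to $\mathbb{L}\mathfrak{D}_r \Zm \subseteq \Co_r^D((\gamma^{\mm})^{-1})$. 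Using the antipode identity $(\Phi^{\mm}_{ij})^{-1} = \Phi^{\mm}_{ji}$, one has $(\gamma^{\mm})^{-1} = \Phi^{\mm}_{1\infty} e^{\mathbb{L}\mathsf{x}_\infty} \Phi^{\mm}_{\infty 1}$.

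Next, expand $\Fil_D^r((\gamma^{\mm})^{-1})$ as a sum over $r_1 + r_2 + r_3 = r$ of products of $D$-filtered pieces, just as in the proof of Lemma \ref{betacoeffs}. Since $\Phi^{\mm}(X,0) = 1$ we have $\Phi^{\mm}_{1\infty}|_{\mathsf{x}_1=0} = 1$, while $e^{\mathbb{L}\mathsf{x}_\infty}|_{\mathsf{x}_1=0} = e^{-\mathbb{L}\mathsf{x}_0}$. Thus the ``main'' summand indexed by $(r_1,r_2,r_3) = (0,0,r)$ is $e^{-\mathbb{L}\mathsf{x}_0}\cdot \Fil_D^r(\Phi^{\mm}_{\infty 1})$. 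The defining substitution $(\mathsf{x}_0,\mathsf{x}_1) \mapsto (-\mathsf{x}_0-\mathsf{x}_1, \mathsf{x}_1)$ for $\Phi^{\mm}_{\infty 1}$ only \emph{raises} the $\mathsf{x}_1$-degree, so it preserves $D^\bullet$; modulo $D^{r+1}$ it restricts on the depth-$r$ graded piece to the involution $\mathsf{x}_0\mapsto -\mathsf{x}_0$, giving a $\mathbb{Q}$-linear bijection that matches coefficients with those of $\Phi^{\mm}_{01}$. From this one concludes $\Co_r^D(\Phi^{\mm}_{\infty 1}) = \mathfrak{D}_r\Zm$.

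The proof then proceeds by induction on $r$. The base case $r=0$ is immediate, since $(\gamma^{\mm})^{-1}|_{\mathsf{x}_1=0} = e^{-\mathbb{L}\mathsf{x}_0}$, giving $\Co_0^D((\gamma^{\mm})^{-1}) = \mathbb{Q}[\mathbb{L}] \supseteq \mathbb{L}\mathbb{Q} = \mathbb{L}\mathfrak{D}_0\Zm$. For the inductive step, given an admissible word $w$ of depth $r$, consider the word $\mathsf{x}_0 w$ (of depth $r$). Extracting its coefficient from the expansion: the $(0,0,r)$-summand contributes $[\mathsf{x}_0 w]\bigl(e^{-\mathbb{L}\mathsf{x}_0}\cdot\Fil_D^r(\Phi^{\mm}_{\infty 1})\bigr) = [\mathsf{x}_0 w]\Phi^{\mm}_{\infty 1} - \mathbb{L}[w]\Phi^{\mm}_{\infty 1}$, and by the substitution analysis above $[w]\Phi^{\mm}_{\infty 1} = \pm \zeta^{\mm}(w)$ modulo $\mathfrak{D}_{r-1}\Zm$. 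The remaining summands $(r_1,r_2,r_3) \neq (0,0,r)$ contribute elements which, by induction together with Proposition \ref{coefffiltrationmult} (making $\Co_\bullet^D((\gamma^{\mm})^{-1})$ a filtered $\mathbb{Q}[\mathbb{L}]$-algebra), lie in $\Co_{r-1}^D((\gamma^{\mm})^{-1}) + \mathbb{Q}[\mathbb{L}]$. Subtracting these yields $\mathbb{L}\zeta^{\mm}(w) \in \Co_r^D((\gamma^{\mm})^{-1})$.

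The main obstacle is the precise bookkeeping of the non-main $(r_1,r_2,r_3)$-summands: one must show that their contributions to $[\mathsf{x}_0 w](\gamma^{\mm})^{-1}$ remain inductively controllable by $\Co_{r-1}^D((\gamma^{\mm})^{-1}) + \mathbb{Q}[\mathbb{L}]$, even though the coefficients of $\Phi^{\mm}_{1\infty}$ at depth-$r_1$ words can themselves involve MZVs of depth exceeding $r_1$ after the substitution $\mathsf{x}_\infty = -\mathsf{x}_0 - \mathsf{x}_1$. Handling this cleanly relies on using the identity $(\gamma^{\mm})^{-1} = \exp(\mathbb{L}\cdot\mathrm{Ad}(\Phi^{\mm}_{1\infty})(\mathsf{x}_\infty))$ to isolate the $\mathbb{L}^1$-component via the motivic weight grading on $\mathcal{P}$.
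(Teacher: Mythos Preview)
Your reduction to showing $\mathbb{L}\mathfrak{D}_r\Zm\subseteq\Co_r^D((\gamma^{\mm})^{-1})$ via strictness and injectivity of $\phi$ is correct and matches the paper. The analysis of $\tau_{0\infty}$ showing $\Co_r^D(\Phi^{\mm}_{\infty 1})=\mathfrak{D}_r\Zm$ is also fine. However, the inductive step has a genuine gap.

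The problem is the ``extra'' term $[\mathsf{x}_0 w]\Phi^{\mm}_{\infty 1}$ in your main $(0,0,r)$-summand. This is a depth-$r$ motivic MZV carrying \emph{no} factor of $\mathbb{L}$, so it lies in $\mathfrak{D}_r\Zm$ but not, by induction, in $\Co_{r-1}^D((\gamma^{\mm})^{-1})+\mathbb{Q}[\mathbb{L}]$. You cannot subtract it off to isolate $\mathbb{L}\zeta^{\mm}(w)$ without already knowing $\mathfrak{D}_r\Zm\subseteq\Co_r^D((\gamma^{\mm})^{-1})$, which is strictly stronger than the goal. The same issue infects the non-main summands: as you correctly note, the depth-$r_1$ coefficients of $\Phi^{\mm}_{1\infty}$ (obtained by substituting $\mathsf{x}_0\mapsto\mathsf{x}_1$, $\mathsf{x}_1\mapsto-\mathsf{x}_0-\mathsf{x}_1$) involve MZVs $\zeta^{\mm}(w')$ with $\depth(w')$ possibly much larger than $r_1$, and these are not inductively controlled either.

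Your proposed rescue via the identity $(\gamma^{\mm})^{-1}=\exp(\mathbb{L}\,\Ad(\Phi^{\mm}_{1\infty})(\mathsf{x}_\infty))$ is on the right track, but the motivic weight grading cannot isolate the $\mathbb{L}^1$-component: for a word $u$ of length $n$, each term $\mathbb{L}^k[u]\xi^k/k!$ has $M$-weight $k+(n-k)=n$, so they are all indistinguishable by weight. What \emph{does} work is to take the logarithm rather than project: since $(\gamma^{\mm})^{-1}$ is grouplike, $\Co_r^D(\log(\gamma^{\mm})^{-1})\subseteq\Co_r^D((\gamma^{\mm})^{-1})$, and $\log(\gamma^{\mm})^{-1}=\mathbb{L}\xi$ with $\xi\in\Zm\langle\langle\mathsf{x}_0,\mathsf{x}_1\rangle\rangle$. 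One then has to show $\mathfrak{D}_r\Zm\subseteq\Co_r^D(\xi)$. The paper does essentially this, but packages it by passing to the quotient $\mathcal{P}/I$ with $I=\mathcal{Z}^{\mm}_{>0}\mathcal{Z}^{\mm}_{>0}+\mathbb{L}\mathcal{Z}^{\mm}_{>0}\mathcal{Z}^{\mm}_{>0}$: modulo products one has $\Phi^{\mm}_{1\infty}-1\equiv-(\Phi^{\mm}_{\infty 1}-1)$, and the whole of $(\gamma^{\mm})^{-1}$ collapses to $e^{\mathbb{L}\mathsf{x}_\infty}+\mathbb{L}\,\ad(\mathsf{x}_\infty)(\Phi^{\mm}_{\infty 1}-1)$. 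This single commutator term then visibly produces all admissible depth-$r$ MZVs (times $\mathbb{L}$) as coefficients of depth-$r$ words, modulo $\mathfrak{D}_{r-1}\Zm$, closing the induction. The key move you are missing is this passage modulo products, which kills exactly the uncontrolled $\mathbb{L}^0$-terms and the high-depth contributions from $\Phi^{\mm}_{1\infty}$.
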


\begin{remark}[Proof strategy]
While the proof may appear complicated, the strategy is simple: compute an expression (equation \eqref{vcommapprox}) for $\Fil_D^r((\gamma^{\mm})^{-1})$ modulo products of MZVs of positive weight, where $\gamma^{\mm}$ was defined in \eqref{gammamotdef}. The formula has a leading term consisting of a generating series for motivic MZVs of the form $\zeta^{\mm}(w)$ where $w$ has depth precisely $r$, followed by a correction term whose coefficients are motivic MZVs of depth strictly less than $r$. This explicitly demonstrates that $\Co_r^D((\gamma^{\mm})^{-1})$ contains all MZVs of depth at most $r$, multiplied by $\mathbb{L}$. We then use that $\phi(\gamma^{\mm}) = \alpha^{\mm}$ and Lemma \ref{hainfiltration} to conclude.
\end{remark}

\begin{proof}[Proof of Lemma \ref{alphacoeffs}]
Let $\gamma^{\mm} \in \Pi_X^{\dR}(\mathcal{P})$ be as in \eqref{gammamotdef}. Recall that $\phi(\gamma^{\mm}) = \alpha^{\mm}$; therefore, by Lemma \ref{hainfiltration}, it suffices to show that 
\begin{equation} \label{toprove}
\mathbb{L}\mathfrak{D}_r \Zm \subseteq \Co_r^D ((\gamma^{\mm})^{-1}).
\end{equation}
This is trivially true if $r=0$, so from now on we assume that $r\geq 1$. Define $I :=  \mathcal{Z}_{>0}^{\mm} \mathcal{Z}_{>0}^{\mm}+\mathbb{L} \mathcal{Z}_{>0}^{\mm} \mathcal{Z}_{>0}^{\mm}$. This is an ideal of $\mathcal{P}$, and the induced homomorphism $\Pi_X^{\dR}(\mathcal{P})\to \Pi_X^{\dR}(\mathcal{P}/I)$ sends $(\gamma^{\mm})^{-1}$ to an element $\bar\gamma$.

We may regard $\Co_r^D(\bar{\gamma})$ as the subspace of $\Co_r^D((\gamma^{\mm})^{-1}))$ spanned by coefficients that are not products of two positive-weight MZVs. Therefore it suffices to prove \eqref{toprove} with $(\gamma^{\mm})^{-1}$ replaced by $\bar\gamma$, using the $D$-filtration defined on $(\mathcal{P}/I)\langle\langle \mathsf{x}_0, \mathsf{x}_1 \rangle\rangle$. We show this by direct calculation.

To begin, write $\Phi_{1\infty}^{\mm} = 1+u$ and $\Phi_{\infty 1}^{\mm} = 1+v$, where $u,v\in \mathcal{Z}_{> 0}^{\mm}\langle\langle \mathsf{x}_0, \mathsf{x}_1 \rangle\rangle$. The property $\Phi_{1\infty}^{\mm} \Phi_{\infty 1}^{\mm} = 1$ implies that $u\equiv -v \pmod{\mathcal{Z}^{\mm}_{>0}\mathcal{Z}^{\mm}_{>0}}$. Expanding in $(\mathcal{P}/I)\langle\langle \mathsf{x}_0,\mathsf{x}_1 \rangle\rangle$ gives
\begin{equation} \label{gammaapprox1b}
\bar{\gamma}\equiv (1-v) e^{\mathbb{L}\mathsf{x}_\infty} (1+v) \equiv e^{\mathbb{L}\mathsf{x}_\infty} + \mathbb{L}\ad(\mathsf{x}_\infty)( v)  \pmod{I}.
\end{equation}

We now compute $v$. Recall that $ \Phi_{\infty 1}^{\mm} = \tau_{0 \infty}(\Phi_{0 1}^{\mm})$, where
\begin{equation*}
\tau_{0 \infty} (\mathsf{x}_0) = \mathsf{x}_\infty = -(\mathsf{x}_0 + \mathsf{x}_1), \quad \tau_{0\infty}(\mathsf{x}_1) = \mathsf{x}_1.
\end{equation*}
It is clear from the formula that $\tau_{0\infty}$ preserves the depth filtration and that the associated depth-graded morphism is multiplication by $(-1)^{(\mathsf{x}_0\text{-degree})} = (-1)^{\weight-\depth}$. Filtering $\Phi_{0 1}^{\mm}$ by the depth gives
\begin{equation*}
\Phi_{0 1}^{\mm} = 1+\sum_{k\geq 1} \sum_{\depth(w)=k} \zeta^{\mm}(w)w.
\end{equation*}
We obtain
\begin{align*}
v &= \tau_{0\infty} (\Phi_{0 1}^{\mm}) - 1 \\
&\equiv \sum_{\depth(w)=r} (-1)^{\weight(w)-\depth(w)} \zeta^{\mm}(w) w + t \pmod{D^{r+1}\Zm \langle\langle \mathsf{x}_0, \mathsf{x}_1 \rangle\rangle},
\end{align*}
where $t\in\mathfrak{D}_{r-1}\Zm\langle\langle \mathsf{x}_0, \mathsf{x}_1 \rangle\rangle$ is the contribution from applying $\tau_{0 \infty}$ to $D^{r-1}\backslash D^r$.

Using that $\mathsf{x}_\infty \equiv -\mathsf{x}_0 \pmod{D^1}$, the final term in \eqref{gammaapprox1b} may be approximated:
\begin{equation} \label{vcommapprox}
\ad(\mathsf{x}_\infty)( v) \equiv -\ad(\mathsf{x}_0) \left( \sum_{\depth(w)=r} (-1)^{\weight(w)-r} \zeta^{\mm}(w) w \right) +t' \pmod{D^{r+1}}
\end{equation}
where $t'\in\mathfrak{D}_{r-1}\Zm\langle\langle \mathsf{x}_0, \mathsf{x}_1 \rangle\rangle$ is the contribution from applying $\tau_{0 \infty}$ to $D^{r-1}\backslash D^r$.

Consider the words in this sum beginning with the letter $\mathsf{x}_1$. As $w$ ranges over all words of depth $r$, the coefficients of these words range over the $\mathbb{Q}$-vector space of all admissible motivic MZVs of depth $r$ (see Remark \ref{admissibledef}). This vector space is equal to the space of all motivic MZVs of depth $r$ by shuffle-regularisation.

Induction on all $s< r$ implies that $\Co(t')$ contains $\mathbb{L}\mathfrak{D}_{r-1} \Zm$. Equations \eqref{gammaapprox1b} and \eqref{vcommapprox} imply that $\Co_r^D(\bar{\gamma})$ contains $\mathbb{L}\mathfrak{D}_{r}\Zm$, which completes the proof.
\end{proof}

\section{MZVs and iterated Eisenstein integrals} \label{mainargument}

This section is devoted to the proof the following theorem:
\begin{theorem} \label{maintheorem}
Every motivic multiple zeta value of weight $n$ and depth $r$ is a $\mathbb{Q}$-linear combination of motivic iterated Eisenstein integrals of the form \eqref{generalEisint} of length $s \leq r$ and total modular weight $N\leq n+s$, multiplied by $\mathbb{L}^m$, where $m := n - s - \sum b_i\geq 0$.
\end{theorem}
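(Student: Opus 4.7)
The proof revolves around the identity
\begin{equation*}
    \mu(\Psi)\bigl(S^{\mm}(\beta^{\mm})\bigr) \;=\; (\alpha^{\mm})^{-1}
\end{equation*}
from \eqref{introfundamentaleq}, which follows from the topological equation $\tilde S(\beta) = \alpha^{-1}$ by functoriality of relative completion. Applying Proposition \ref{coeffapplication} with the $B^\bullet$-filtration on $\mathcal{P}^{\mm}_{\mathcal{H}}\langle\langle\adr,\bdr\rangle\rangle$, together with Proposition \ref{lengthmonodromy} (the length filtration $L^\bullet\Ugeom$ is a subfiltration of the induced $B^\bullet$-filtration), and using Lemma \ref{SL2filteredpiece} to pass from $\Co_j^B(S^{\mm}(\beta^{\mm}))$ to $S^{\mm}\Co_j^A(\beta^{\mm})$, I obtain the exact coefficient-space decomposition
\begin{equation*}
    \Co_r^B\bigl((\alpha^{\mm})^{-1}\bigr) \;=\; \sum_{i+j=r} \Co_i^L(\mu(\Psi)) \cdot S^{\mm}\bigl(\Co_j^A(\beta^{\mm})\bigr).
\end{equation*}
Lemma \ref{alphacoeffs} supplies the lower bound $\mathbb{L}\mathfrak{D}_r\Zm\subseteq \Co_r^B((\alpha^{\mm})^{-1})$, and Lemma \ref{betacoeffs} supplies a matching upper bound on each $\Co_j^A(\beta^{\mm})$ in terms of $\mathfrak{W}_{j-k}\Zm$ twisted by $\mathbb{L}$-powers. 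The aim is to ``solve'' this identity for $\Co_r^L(\mu(\Psi))$.

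\textbf{Induction on depth.} I will induct on $r$, proving that every $\zeta^{\mm}\in \mathfrak{D}_r\Zm$ of weight $n$ admits an expression of the form claimed. The base case $r=0$ is trivial since $\mathfrak{D}_0\Zm=\mathbb{Q}$. For the inductive step, by Lemma \ref{alphacoeffs} one has $\mathbb{L}\zeta^{\mm}\in\Co_r^B((\alpha^{\mm})^{-1})$, and by the decomposition above it expands as $\sum_{i+j=r} x_iy_j$ with $x_i\in\Co_i^L(\mu(\Psi))$ and $y_j\in S^{\mm}\Co_j^A(\beta^{\mm})$. Each $y_j$ is, by Lemma \ref{betacoeffs}, a $\mathbb{Q}[\mathbb{L}^{\pm}]$-linear combination of motivic MZVs, and every such MZV either is of depth strictly less than $r$ (and so is by the inductive hypothesis already an iterated Eisenstein integral of controlled length) or can be rewritten using the filtered shuffle-multiplicativity of Proposition \ref{coefffiltrationmult}. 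Reabsorbing these substitutions on the right-hand side forces $\mathbb{L}\zeta^{\mm}$, modulo lower-depth contributions already in the correct form, to lie in $\mathbb{Q}[\mathbb{L}^{\pm}]\cdot\Co_r^L(\mu(\Psi))$. Because coefficients in $\Co_r^L(\mu(\Psi))$ are iterated Eisenstein integrals of length $\leq r$ (Remark \ref{errorterm} and the vanishing of $\mu$ on cuspidal generators in Proposition \ref{explicitmonodromy}), this yields the length bound $s\leq r$.

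\textbf{Weight bookkeeping and main obstacle.} The remaining constraints $N\leq n+s$ and $m = n-s-\sum b_i\geq 0$ are purely Hodge-theoretic. Because $\mu$ is a morphism of mixed Hodge structures and every coefficient appearing lies in $\mathcal{P}^{\mm}_{\MT}$ (Proposition \ref{motivicremark}), I match Tate weights: the weight of $\zeta^{\mm}$ equals $2n$, while the Hodge weight of $\mathbb{L}^m\, I^{b_1,\ldots,b_s}_{2n_1+2,\ldots,2n_s+2}$ is determined by the MHS on the generators $\mathbf{e}_{2n_i+2}\mathsf{X}^{2n_i-b_i}\mathsf{Y}^{b_i}\in\mathfrak{u}_{1,1}$ (each Tate-twisted by $b_i+1$) together with the Lefschetz factor $\mathbb{L}^m$. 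Summing and equating gives $m = n-s-\sum b_i$, and nonnegativity $m\geq 0$ is then equivalent to $N = \sum(2n_i+2)\leq n+s$. The delicate step will be the inductive reabsorption in the second paragraph: one must ensure that when iterated Eisenstein expressions for lower-depth MZVs are substituted back into $\Co_j^A(\beta^{\mm})$ and multiplied by length-$i$ contributions, the products still lie in $\Co_{i+(\text{length})}^L(\mu(\Psi))$ at the expected filtered level. This hinges on Proposition \ref{lengthmonodromy} compatibly interacting with the shuffle-algebra structure of Proposition \ref{coefffiltrationmult}, and is where the bulk of the technical bookkeeping occurs.
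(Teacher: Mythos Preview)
Your depth/length argument is essentially the paper's approach: the same fundamental equation, the same filtration machinery (Propositions \ref{lengthmonodromy} and \ref{coeffapplication}, Lemma \ref{SL2filteredpiece}), the same induction on $r$ using Lemmas \ref{alphacoeffs} and \ref{betacoeffs}. The paper makes the inductive step cleaner by working purely with the filtered coefficient spaces $H_r := \Co_r^L(\mu(\Psi))$ and invoking the trivial inclusion $\mathfrak{W}_j\Zm \subseteq \mathfrak{D}_{j-1}\Zm$ for $j>0$ (every admissible MZV has depth strictly less than its weight), together with $H_i \cdot H_j \subseteq H_{i+j}$ from Proposition \ref{coefffiltrationmult}, to close the induction in one line. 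Your ``reabsorption'' paragraph is aiming at the same thing, but the clause ``or can be rewritten using the filtered shuffle-multiplicativity'' is confused: there is no alternative case---every MZV appearing in $\Co_j^A(\beta^{\mm})$ already has depth $<r$.

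There is, however, a genuine gap in your weight bookkeeping. Matching Tate weights correctly yields the equality $m = n - s - \sum b_i$, but your claim that ``$m \geq 0$ is then equivalent to $N \leq n+s$'' is false: since $N = \sum(a_i + b_i + 2) = \sum a_i + \sum b_i + 2s$, the inequality $N \leq n+s$ is equivalent to $m \geq \sum a_i$, which is strictly stronger than $m \geq 0$ whenever some $a_i > 0$. More importantly, you give no argument for \emph{either} inequality; weight matching alone produces only the equality, never a sign. The paper obtains $m \geq \sum a_i$ (and hence both $m \geq 0$ and $N \leq n+s$) by a separate \emph{effectivity} argument: the highest-weight integral $\int_S^{\mm} E_{2n+2}(2n)$ is an effective motivic period, and applying $(\mathsf{X}\partial/\partial\mathsf{Y})^{a_i}$ divides by $\mathbb{L}^{a_i}$, so the period attached to $v$ has the form $\kappa/\mathbb{L}^{\sum a_i}$ with $\kappa$ effective. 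Since $\zeta^{\mm}(w)$ is itself effective, each coefficient $\lambda \in \mathbb{Q}\mathbb{L}^m$ must compensate, forcing $m \geq \sum a_i$. Without this step your argument does not establish the bounds in the theorem.
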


The main idea behind the proof of Theorem \ref{maintheorem} is to use the monodromy action of \S  \ref{monodromysection} to detect coefficients of $\mu(\mathcal{C}_S^{\mm})$ by comparing the coefficients of $\alpha^{\mm}$ and $\beta^{\mm}$. As relative completion is functorial, the topological equation $\tilde{S}(\beta) = \alpha^{-1}$ of \eqref{topmonodromyexplicit} induces the following equation at the level of $\mathcal{P}^{\mm}_{\mathcal{H}}$-points of $\Pi_Y^{\dR}$:
\begin{equation*}
(\Psi, S^{\mm})(\beta^{\mm}) :=\mu(\Psi)\left(S^{\mm}(\beta^{\mm})\right) =  (\alpha^{\mm})^{-1}.
\end{equation*}
This equation relates the coefficients of $(\alpha^{\mm})^{-1}$ and $S^{\mm}(\beta^{\mm})$, which are motivic MZVs, to the coefficients of $\mu(\Psi)$, which are motivic iterated Eisenstein integrals.

This equation can be filtered with respect to $B^\bullet \Pi_Y^{\dR} (\mathcal{P}^{\mm}_{\MT})$. By combining this with the bounds given in Lemmas  \ref{betacoeffs} and \ref{alphacoeffs}, we will show that for all $r\geq 0$
\begin{equation*}
\mathfrak{D}_r \Zm \subseteq \langle \text{iterated Eisenstein integrals of length} \leq r \rangle [\mathbb{L}^\pm].
\end{equation*}
The proof concludes by relating the modular weights to the MZV weights using the Hodge filtration on $\mathcal{O}(\mathcal{U}_{1,1}^{dR})$. This fixes the powers of $\mathbb{L}$ that can occur in such a linear combination. In particular, we show that a precise \emph{nonnegative} power of $\mathbb{L}$ is required for each term in this linear combination.

\begin{remark}
The weight filtration $\mathfrak{W}_\bullet \Zm$ is given by the length filtration of iterated integrals on $X=\P1minus$. Theorem \ref{maintheorem} therefore implies that there is a large reduction in the length of the iterated integral expressing a given multiple zeta value in passing from $X$ to $\M11$. 
\end{remark}

\subsection{Main argument}

The topological equation $\tilde{S}(\beta) = \alpha^{-1}$  of \S \ref{topologicalSbeta} induces the equation
\begin{equation} \label{motivicSactionbeta}
\mu(\Psi)\left(S^{\mm}(\beta^{\mm})\right) = (\alpha^{\mm})^{-1}.
\end{equation}

The proof of Theorem \ref{maintheorem} crucially relies on Proposition \ref{inequalitytheorem} below, which uses this equation to derive an inclusion relating the depth filtration on motivic MZVs to the length filtration on motivic iterated Eisenstein integrals.

\begin{proposition}  \label{inequalitytheorem}
Let $r\geq 0$ and let $H_r := \Co_r^L (\mu(\Psi))$. There is an inclusion
\begin{equation*}
\mathfrak{D}_r \Zm \subseteq H_r [\mathbb{L}^\pm].
\end{equation*}
\end{proposition}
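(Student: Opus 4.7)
The plan is to apply $\Co_r^B$ to both sides of the equation \eqref{motivicSactionbeta}, namely $\mu(\Psi)(S^{\mm}(\beta^{\mm})) = (\alpha^{\mm})^{-1}$, and exploit the asymmetry between the bounds in Lemmas \ref{alphacoeffs} and \ref{betacoeffs}. The right-hand side directly yields $\mathbb{L}\mathfrak{D}_r\Zm \subseteq \Co_r^B((\alpha^{\mm})^{-1})$ by Lemma \ref{alphacoeffs}. For the left-hand side, Proposition \ref{lengthmonodromy} guarantees that the length filtration on $\Ugeom$ is a subfiltration of the filtration induced by $B^\bullet \Pi_Y^{\dR}$, so Proposition \ref{coeffapplication} gives the decomposition
\[
\Co_r^B\bigl(\mu(\Psi)(S^{\mm}(\beta^{\mm}))\bigr) = \sum_{i+j=r} H_i \cdot \Co_j^B\bigl(S^{\mm}(\beta^{\mm})\bigr).
\]
By Lemma \ref{SL2filteredpiece} together with the explicit formula \eqref{actionmotivicS}, the automorphism $S^{\mm}$ swaps the $A$- and $B$-filtrations while only rescaling coefficients by powers of $\mathbb{L}$, so $\Co_j^B(S^{\mm}(\beta^{\mm})) \subseteq \Co_j^A(\beta^{\mm})[\mathbb{L}^\pm]$. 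Combined with Lemma \ref{betacoeffs} this yields the master inclusion
\[
\mathbb{L}\,\mathfrak{D}_r\Zm \;\subseteq\; \sum_{i+j=r} H_i \cdot \mathfrak{W}_j \Zm [\mathbb{L}^\pm].
\]

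I would then induct on $r$, the case $r=0$ being trivial. Assume $\mathfrak{D}_{r'}\Zm \subseteq H_{r'}[\mathbb{L}^\pm]$ for every $r' < r$. The elementary inequality $\depth \leq \weight$ for motivic MZVs yields $\mathfrak{W}_j \Zm \subseteq \mathfrak{D}_j \Zm$ for all $j$, and the inductive hypothesis then gives $\mathfrak{W}_j\Zm \subseteq H_j[\mathbb{L}^\pm]$ for $j<r$. Each summand of the master inclusion with $i\geq 1$ thus lies in $H_i \cdot H_j[\mathbb{L}^\pm] \subseteq H_{i+j}[\mathbb{L}^\pm] = H_r[\mathbb{L}^\pm]$, where the final inclusion uses Proposition \ref{coefffiltrationmult}; this applies because $\mu(\Psi)$ is grouplike, corresponding to a Hopf algebra homomorphism $\mathcal{O}(\Ugeom) \to \mathcal{P}^{\mm}_{\mathcal{H}}$.

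The main obstacle is the boundary term $i=0$, $j=r$, which contributes $\mathfrak{W}_r\Zm[\mathbb{L}^\pm]$ and is not \emph{a priori} controlled by the inductive hypothesis. To overcome this I would decompose $\mathfrak{W}_r \Zm = \bigoplus_{k\leq r}\mathcal{Z}_k^{\mm}$ by weight. Components with $k<r$ satisfy $\mathcal{Z}_k^{\mm} \subseteq \mathfrak{D}_k\Zm \subseteq H_k[\mathbb{L}^\pm]$ by induction. For the component of weight exactly $r$, the admissibility requirement $k_s \geq 2$ forces any non-trivial depth-$s$ admissible MZV of weight $r$ with $s\geq 2$ to satisfy $s \leq r-1$, so $\mathcal{Z}_r^{\mm} \subseteq \mathbb{Q}\zeta^{\mm}(r) + \mathfrak{D}_{r-1}\Zm$. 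The summand $\mathbb{Q}\zeta^{\mm}(r)$ lies in $\mathfrak{D}_1\Zm \subseteq H_1[\mathbb{L}^\pm]$ by induction (the case $r=1$ being trivial since $\mathcal{Z}_1^{\mm}=0$), and $\mathfrak{D}_{r-1}\Zm \subseteq H_{r-1}[\mathbb{L}^\pm]$ by induction. Hence $\mathfrak{W}_r\Zm \subseteq H_r[\mathbb{L}^\pm]$, the master inclusion reduces to $\mathbb{L}\mathfrak{D}_r\Zm \subseteq H_r[\mathbb{L}^\pm]$, and inverting $\mathbb{L}$ completes the inductive step.
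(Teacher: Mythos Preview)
Your proof is correct and follows essentially the same approach as the paper: apply $\Co_r^B$ to \eqref{motivicSactionbeta}, use Propositions \ref{lengthmonodromy} and \ref{coeffapplication} together with Lemma \ref{SL2filteredpiece} and Lemmas \ref{alphacoeffs}, \ref{betacoeffs} to obtain the master inclusion $\mathbb{L}\mathfrak{D}_r\Zm \subseteq \sum_{i+j=r} H_i\cdot\mathfrak{W}_j\Zm[\mathbb{L}^\pm]$, then induct on $r$.

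The only organizational difference is in the induction step. The paper applies the single inclusion $\mathfrak{W}_j\Zm \subseteq \mathfrak{D}_{j-1}\Zm$ (valid for $j\geq 1$ by admissibility: the last index $k_s\geq 2$ forces depth $\leq$ weight $-1$) uniformly to \emph{every} term with $j>0$, so the only surviving term is $j=0$, $i=r+1$, which is $H_{r+1}[\mathbb{L}^\pm]$. You instead use the weaker $\mathfrak{W}_j\subseteq\mathfrak{D}_j$ for the terms with $i\geq 1$, and then separately dismantle the boundary term $i=0$, $j=r$ via the weight decomposition $\mathfrak{W}_r\Zm=\bigoplus_{k\leq r}\mathcal{Z}_k^{\mm}$; your handling of $\mathcal{Z}_r^{\mm}$ is in effect a rederivation of $\mathfrak{W}_r\subseteq\mathfrak{D}_{r-1}$ (since $\zeta^{\mm}(r)\in\mathfrak{D}_1\subseteq\mathfrak{D}_{r-1}$). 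Both routes land in the same place.
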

\begin{proof}
By Proposition \ref{lengthmonodromy}, the length filtration $L^\bullet \Ugeom$ is a subfiltration of the induced filtration $B^\bullet \Ugeom$. Therefore, we may apply $\Co_r^B$ to both sides of \eqref{motivicSactionbeta} and apply Proposition \ref{coeffapplication} to obtain
\begin{align*}
\Co_r^B ((\alpha^{\mm})^{-1}) &= \sum_{i+j = r} \Co_i^L (\mu(\Psi)) \cdot \Co_j^B (S^{\mm} (\beta)) \\
& = \sum_{i+j = r} H_i \cdot \Co (S^{\mm} (\Fil_A^j (\beta^{\mm}))).
\end{align*}
We use Lemma \ref{SL2filteredpiece} in going from the first to the second line.

We now combine Lemmas \ref{betacoeffs} and \ref{alphacoeffs}  with the previous formula to obtain
\begin{equation} \label{refinedbound}
\begin{split}
\mathbb{L}\mathfrak{D}_r\Zm &\subseteq \Co_r^B ((\alpha^{\mm})^{-1}) \\ 
&=\sum_{i + j = r} H_i \cdot \Co (S^{\mm} (\Fil_A^j (\beta^{\mm}))) \\
&\subseteq \sum_{i + j = r} H_i \cdot \Co_j^A (\beta^{\mm})[\mathbb{L}^\pm] \\
& \subseteq \sum_{i + j = r} H_i \cdot \left(\sum_{0\leq k \leq j} \left( \bigoplus_{0\leq l \leq k} \mathbb{L}^l \mathbb{Q}\right) \mathfrak{W}_{j-k}\Zm\right)[\mathbb{L}^\pm] \\
&\subseteq \sum_{i+j = r} H_i \cdot \left(\sum_{0\leq k \leq j} \mathfrak{W}_{j-k}\Zm \right)[\mathbb{L}^\pm] \\
&\subseteq \sum_{i+j = r} H_i \cdot \mathfrak{W}_j\Zm [\mathbb{L}^\pm].
\end{split}
\end{equation}
The first line uses Lemma \ref{alphacoeffs}. To go from the second to third line, recall from \eqref{actionmotivicS} that $S^{\mm}(\adr, \bdr) = (-\mathbb{L}^{-1}\bdr, \mathbb{L} \adr)$. To go from the third to the fourth line, apply Lemma \ref{betacoeffs}. The remaining simplifications follow by allowing multiplication by arbitrary powers of $\mathbb{L}$.

We now use induction to show that $\mathfrak{D}_r\Zm \subseteq H_r [\mathbb{L}^\pm]$ for all $r$. The case $r=0$ is trivially true, and the case $r = 1$ is known by Brown's formula for the length $1$ part of $\mathcal{C}_S^{\mm}$ \cite[\S $15.4$]{mmv}. So let us assume that for all $k\leq r$ we have $\mathfrak{D}_k\Zm \subseteq H_k [\mathbb{L}^\pm]$. Then apply \eqref{refinedbound} and the induction hypothesis to obtain

\begin{align*}
\mathbb{L} \mathfrak{D}_{r+1}\Zm &\subseteq \sum_{\substack{i + j = r+1 \\ j > 0}} H_i \cdot \mathfrak{W}_j\Zm [\mathbb{L}^\pm] + H_{r+1} [\mathbb{L}^\pm] \\
& \subseteq \sum_{i + j = r} H_i \cdot H_j [\mathbb{L}^\pm] + H_{r+1} [\mathbb{L}^\pm] \\
&   \subseteq H_{r+1} [\mathbb{L}^\pm].
\end{align*}
The transition from the first to the second line follows from the trivial inclusion $\mathfrak{W}_j\Zm \subseteq \mathfrak{D}_{j-1}\Zm $, coming from the fact that every motivic MZV can be written as a linear combination of admissible MZVs, and then applying the induction hypothesis. Now invert the leading $\mathbb{L}$ to obtain $\mathfrak{D}_{r+1}\Zm \subseteq H_{r+1} [\mathbb{L}^\pm]$. This proves the inductive step and completes the proof.
\end{proof}

We are now able to prove Theorem \ref{maintheorem}.

\begin{proof}[Proof of Theorem \ref{maintheorem}]

By Proposition \ref{inequalitytheorem}, there is an inclusion $\mathfrak{D}_r \Zm \subseteq H_r[\mathbb{L}^\pm]$ for every $r\geq 0$. By construction, the space $H_r$ consists of motivic periods of $\mathcal{O}(\Ugeom)$. By Remark \ref{lengthfiltrationremark} and the fact that the $\mathcal{H}$-subobject $\mathcal{O}(\Ugeom) \subseteq \mathcal{O}(\mathcal{U}_{1,1}^{\mathcal{H}})$ is mixed Tate, the $\mathbb{Q}$-subspace $H_r \subseteq \mathcal{P}^{\mm}_{\mathcal{H}}$ is contained within
\begin{equation*}
    \langle \int_{\tilde{S}}^{\mm} [E_{2n_1 + 2}(b_1) \vert \cdots\vert E_{2n_s + 2}(b_s)] :s\leq r \rangle_{\mathbb{Q}} \cap \mathcal{P}^{\mm}_{\MT},
\end{equation*}
which uses the inclusion $\mathcal{P}^{\mm}_{\MT} \hookrightarrow \mathcal{P}^{\mm}_{\mathcal{H}}$ given in \eqref{motivicperiodinclusion}. We then use Remark \ref{intsE2} to split such integrals into products of nonnegative powers of $\mathbb{L}$ and iterated Eisenstein integrals along $S \in \Gamma \cong \pi_1^{\text{top}}(\M11, \vec{1}_\infty)$. This proves the first part of Theorem \ref{maintheorem}.

We now turn to determining the weight. The weight filtration is canonically split by the Hodge filtration in the mixed Tate case. This means that for any (ind-) object $V \in \MT$ there is a canonical isomorphism
\begin{equation*}
    \gr_n^M V^{\dR} \cong M_{2n} V^{\dR} \cap F^n V^{\dR}.
\end{equation*}
Let $\zeta^{\mm}(w)\in \mathfrak{W}_n \Zm$. It follows that $w\in M_{2n}\mathcal{O}({_0 \Pi_1^{\dR}}) \cap F^n \mathcal{O}({_0 \Pi_1^{\dR}})$.

Suppose further that $\zeta^{\mm} (w) \in \mathfrak{D}_r \Zm$. By the previous part of the proof we may write $\zeta^{\mm}(w)$ as a $\mathbb{Q}[\mathbb{L}^\pm]$-linear combination of the form
\begin{equation} \label{mpeq}
\zeta^{\mm}(w) = \sum \lambda \left[\mathcal{O}(\mathcal{U}^{\mathcal{H}}_{1,1}), [E_{2n_1 + 2}(b_1) \vert \cdots\vert E_{2n_s + 2}(b_s)], S \right]^{\mm}
\end{equation}
where each $s\leq r$ and $a_i + b_i = 2n_i $ for all $1\leq i\leq s$.\footnote{Each term in this linear combination has its own value of $s$ and choice of $n_i, a_i, b_i$.} The element $E_{2n_i + 2}(b_i)\in\mathcal{O}(\mathcal{U}_{1,1}^{\dR})$ is dual to $\mathbf{e}_{2n_i+2} \mathsf{X}^{a_i} \mathsf{Y}^{b_i}$, and spans $\mathbb{Q}(-b_i - 1)$. The coefficient $\lambda$ lies in $\mathbb{Q}\mathbb{L}^m$ for some $m\in\mathbb{Z}$. Equation \eqref{mpeq} uses the fact that $\mu$ is a morphism of MHS \cite[Proposition $15.1$]{hainhodge} and the functoriality relations for motivic periods to write motivic periods of $\mathcal{O}(\Ugeom)$ as periods of $\mathcal{O}(\mathcal{U}_{1,1}^{\mathcal{H}})$.

Let $v :=  [E_{2n_1+2 }(b_1) \vert \cdots \vert E_{2n_s+2}( b_s)]$. By construction, $v$ is contained within the mixed Tate Hopf subalgebra $\mathcal{O}(\Ugeom) \subseteq \mathcal{O}(\mathcal{U}_{1,1}^{\dR})$. Hence it must be contained within the de Rham realisation of a mixed Tate motive that is an iterated extension of the objects $\mathbb{Q}(0)$, $\mathbb{Q}(-b_1 - 1)$, $\dots$, $\mathbb{Q}(-b_s-1)$. It follows that
\begin{equation*}
\lambda v\in M_{2(m+s+b_1+\dots+b_s)} \mathcal{O}(\Ugeom) \cap F^{m+s+b_1+\dots + b_s} \mathcal{O}(\Ugeom).
\end{equation*}
Equation \eqref{mpeq} implies that each $\lambda v$ is also contained in $M_{2n} \mathcal{O}(\Ugeom) \cap F^n \mathcal{O}(\Ugeom)$. Using the splitting of the weight filtration above and comparing the degrees of the graded pieces we deduce that
\begin{equation} \label{weightcomp}
n = m+s+\sum_{i=1}^s b_i.
\end{equation}
We now show that $m\geq \sum a_i$, which implies the result.

The highest weight vector $\mathbf{e}_{2n+2}\mathsf{Y}^{2n}$ is contained in an effective motive, for all $n\geq 0$. Therefore the motivic integral of $E_{2n+2} (2n)$ along any element of the fundamental group is an effective motivic period.\footnote{See \cite[\S $3.3$]{motivicperiods} for a definition of effective $\mathcal{H}$-periods.} The other generators $\mathbf{e}_{2n+2}\mathsf{X}^k\mathsf{Y}^{2n-k}$ are obtained from it by applying the $\mathfrak{sl}_2$-generator $\mathsf{X}\partial/\partial\mathsf{Y}$ $k$ times. Since $\mathsf{X}$ spans $\mathbb{Q}(0)$ and $\mathsf{Y}$ spans $\mathbb{Q}(1)$, the motivic integral of $E_{2n+2}(2n-k)$ along any path is the quotient of an effective motivic period by $\mathbb{L}^k$. This is also true of iterated integrals of such forms. In order for $\Psi$ to be homogeneous of weight $0$, the motivic period $[\mathcal{O}(\mathcal{U}^{\mathcal{H}}_{1,1}), v, S]^{\mm}$ therefore must be of the form
\begin{equation*}
[\mathcal{O}(\mathcal{U}^{\mathcal{H}}_{1,1}), v, S]^{\mm} = \frac{\kappa}{\mathbb{L}^{\sum a_i}},
\end{equation*}
where $\kappa\in\mathcal{P}^{\mm}_{\mathcal{H}}$ is effective.

As $\zeta^{\mm}(w)$ is effective, each term on the right hand side of \eqref{mpeq} is also effective. It follows that each $\lambda\in\mathbb{Q}\mathbb{L}^m$ where $m\geq \sum a_i$. In particular $m\geq 0$, which implies that $\mathfrak{D}_r \Zm\subseteq H_r [\mathbb{L}]$. Combining the bound for $m$ with \eqref{weightcomp} gives
\begin{equation*}
n = m+s + \sum_{i=1}^s b_i \geq N - s,
\end{equation*}
where the total modular weight is
\begin{equation*}
N=\sum_{i=1}^s (2n_i + 2) = \sum_{i=1}^s (a_i + b_i + 2).
\end{equation*}
This gives the bound $N\leq n+s$ and completes the proof.
\end{proof}

\section{Galois-theoretic consequences}

\subsection{Modular generator for $\MT$}

The benefit of working at the level of motivic periods throughout is that it gives access to structure results for the category of mixed Tate motives over $\mathbb{Z}$. The following corollary of Theorem \ref{maintheorem} can be seen as a ``modular'' analogue of Brown's main result in \cite{brownmtm}.

\begin{theorem} \label{motivetheorem}
The group $G^{\dR}_{\MT}$ acts faithfully on the affine ring $\mathcal{O}(\Ugeom)$.
\end{theorem}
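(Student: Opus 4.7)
The plan is to reduce Theorem \ref{motivetheorem} to Brown's faithfulness result (\S\ref{browntheorem}) via the Tannakian formalism. By Tannakian duality, the assertion that $G^{\dR}_{\MT}$ acts faithfully on $\mathcal{O}(\Ugeom)$ is equivalent to the equality of full sub-Tannakian categories
\[
\langle \mathcal{O}(\Ugeom) \rangle_\otimes = \MT,
\]
which in turn is equivalent to equality of the corresponding period algebras $\mathcal{P}^{\mm}_{\langle \mathcal{O}(\Ugeom)\rangle_\otimes} \hookrightarrow \mathcal{P}^{\mm}_{\MT}$ (these are the affine rings of the respective Betti--de Rham period torsors, and a surjection of affine torsors is an isomorphism). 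So the goal is to show this inclusion is an equality.

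The first step is to verify that $\langle \mathcal{O}(\Ugeom)\rangle_\otimes$ already contains all Tate motives. By Proposition \ref{motivicremark}, $\ugeom$ is a pro-object of $\MT$, and the weight description in \S\ref{ugeomsection} says that the generator $\delta^{(k)}_{2n+2} = \ad(\varepsilon_0^\vee)^k(\varepsilon_{2n+2}^\vee)$ has motivic weight $2k-2-4n$. Taking $n \geq 1$ and $k = 2n$ produces a nonzero element in $M_{-2}$, i.e.\ a copy of $\mathbb{Q}(1)$ sitting inside $\ugeom$ as a motivic subquotient. Consequently $\mathbb{Q}(1)$, and hence every Tate twist $\mathbb{Q}(n)$, belongs to $\langle \mathcal{O}(\Ugeom) \rangle_\otimes$, and in particular $\mathbb{L}^{\pm 1} \in \mathcal{P}^{\mm}_{\langle \mathcal{O}(\Ugeom)\rangle_\otimes}$.

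The second step is to apply Theorem \ref{maintheorem}: every motivic multiple zeta value is a $\mathbb{Q}[\mathbb{L}]$-linear combination of motivic iterated Eisenstein integrals along $S$, which are matrix coefficients of $\mathcal{O}(\Ugeom)$ by the discussion in \S\ref{coeffsmuPsi}. Combined with the first step this gives
\[
\Zm[\mathbb{L}^{\pm 1}] \subseteq \mathcal{P}^{\mm}_{\langle \mathcal{O}(\Ugeom)\rangle_\otimes}.
\]
Brown's theorem (\S\ref{browntheorem}) identifies $\Zm[\mathbb{L}^{\pm 1}]$ with $\mathcal{P}^{\mm}_{\MT}$, so the inclusion of period algebras is an equality and the conclusion follows.

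The serious work lies in Theorem \ref{maintheorem}; the deduction above is a standard Tannakian repackaging. The only small point demanding care — and the one feature that keeps the argument honest — is the location of a $\mathbb{Q}(1)$ inside $\ugeom$ as a motivic subquotient, without which one would only recover periods of $\langle \mathcal{O}(\Ugeom)\rangle_\otimes$ up to unknown powers of $\mathbb{L}$ rather than all of $\mathcal{P}^{\mm}_{\MT}$.
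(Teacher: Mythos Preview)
Your proof is correct and rests on the same two inputs as the paper's proof---Theorem \ref{maintheorem} and Brown's theorem---but is packaged via the Tannakian equivalence $\langle \mathcal{O}(\Ugeom)\rangle_\otimes = \MT$ and equality of period algebras, whereas the paper argues directly at the level of elements: it takes $g\in U_{\MT}^{\dR}(\mathbb{Q})$ fixing the specific point $\mu(\mathcal{C}_S^{\mm})\in \Ugeom(\mathcal{P}^{\mm}_{\MT})$, uses Theorem \ref{maintheorem} to write every $\zeta^{\mm}(w)$ as a $\mathbb{Q}[\mathbb{L}]$-combination of its coefficients, concludes $g$ fixes $\Zm$, and then invokes Brown. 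The paper then treats the $\mathbb{G}_m$-factor separately via the $M$-grading, while you absorb it into the first step by locating a $\mathbb{Q}(1)$-subquotient of $\ugeom$. The two arguments are equivalent reformulations of one another (as the paper itself notes in the remark following the proof); yours is slightly more structural and handles the reductive part more uniformly, while the paper's is more concrete in exhibiting the witness $\mu(\mathcal{C}_S^{\mm})$ on which the Galois action is already faithful.
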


The central idea of the proof is to exhibit a specific element $\varepsilon \in \Ugeom (\mathcal{P}^{\mm}_{\MT})$ with sufficiently many coefficients to be able to distinguish elements of $G^{\dR}_{\MT} (\mathbb{Q})$. By Theorem \ref{maintheorem} the element $\varepsilon = \mu(\mathcal{C}_S^{\mm})$ has this property, since its coefficients contain all motivic multiple zeta values.

\begin{proof}[Proof of Theorem \ref{motivetheorem}]
By Proposition \ref{motivicremark} the affine ring $\mathcal{O}(\Ugeom)$ is an ind-object of $\MT$, so it is equipped with an action of the motivic Galois group $G^{\dR}_{\MT} \cong U_{\MT}^{\dR} \rtimes \mathbb{G}_m$. The Galois action on $\mathcal{O}(\Ugeom)$ is equivalent to the Galois action on the group of points
\begin{equation*}
\Ugeom (\mathcal{P}^{\mm}_{\MT})\cong \Hom (\mathcal{O}(\Ugeom), \mathcal{P}^{\mm}_{\MT}).
\end{equation*}
For $\varepsilon\in \Ugeom (\mathcal{P}^{\mm}_{\MT})$ and $g\in G^{\dR}_{\MT}(\mathbb{Q})$, this action is given by
\begin{equation*}
\left(\mathcal{O}(\Ugeom)\xrightarrow{\varepsilon} \mathcal{P}^{\mm}_{\MT} \right) \mapsto \left(\mathcal{O}(\Ugeom)\xrightarrow{g} \mathcal{O}(\Ugeom)\xrightarrow{\varepsilon} \mathcal{P}^{\mm}_{\MT} \right).
\end{equation*}
This, in turn, is equivalent to the Galois action on $\Ugeom (\mathcal{P}^{\mm}_{\MT})$ via the action on \emph{coefficients}; by viewing $\varepsilon$ as a series $\varepsilon = \sum_w \varepsilon_w w$ with $\varepsilon_w \in \mathcal{P}^{\mm}_{\MT}$ and $w$ ranging over a basis for $\mathcal{O}(\Ugeom)$, it is defined by $g(\varepsilon) = \sum_w g(\varepsilon_w) w$.

Let $\varepsilon = \mu(\mathcal{C}_S^{\mm})\in \Ugeom(\mathcal{P}^{\mm}_{\MT})$, and let $g\in U^{\dR}_{\MT}(\mathbb{Q})$. Suppose that $g(\varepsilon) = \varepsilon$. Then for each basis element $w\in \mathcal{O}(\Ugeom)$ we must have
\begin{equation} \label{gE}
g(\varepsilon_w) = \varepsilon_w.
\end{equation}
Since $g\in U^{\dR}_{\MT}(\mathbb{Q})$ it also follows that $g(\mathbb{L}) = \mathbb{L}$.

Theorem \ref{maintheorem} implies that there is a $G_{\MT}^{\dR}$-equivariant inclusion $\Zm\hookrightarrow\Co(\varepsilon)[\mathbb{L}]$. This means that any $\zeta^{\mm}(w) \in \Zm$ can be written as a linear combination
\begin{equation*}
\zeta^{\mm}(w) = \sum_w \lambda_w \varepsilon_w
\end{equation*}
where $w$ ranges over a set of basis elements for $\mathcal{O}(\Ugeom)$ and $\lambda_w\in\mathbb{Q}\mathbb{L}^{m_w}$ are zero for all but finitely many $w$, where the integer $m_w\geq 0$ is determined as in Theorem \ref{maintheorem}. Since $g$ fixes $\mathbb{L}$ it follows that $g(\lambda_w) = \lambda_w$ for each $w$.

Equation \eqref{gE} therefore implies that $g(\zeta^{\mm}(w)) = \zeta^{\mm}(w)$. But the action of $U^{\dR}_{\MT}$ on $\Zm$ is faithful by Brown's theorem, so $g = \id$. This proves that the action of $U^{\dR}_{\MT}$ on $\Ugeom(\mathcal{P}^{\mm}_{\MT})$ is free, and hence faithful. The compatibilities outlined above then imply that the action of $U_{\MT}^{\dR}$ on $\mathcal{O}(\Ugeom)$ is faithful.

The affine ring $\mathcal{O}(\Ugeom)$ is an ind-object in $\MT$. Its $M$-weight filtration is canonically split by the Hodge filtration. The multiplicative group $\mathbb{G}_m$ acts faithfully on the associated $M$-graded. It follows that the full motivic Galois group $G^{\dR}_{\MT} = U^{\dR} \rtimes \mathbb{G}_m$ also acts faithfully on $\mathcal{O}(\Ugeom)$.
\end{proof}

\begin{remark}
Equivalently, Theorem \ref{motivetheorem} says that the subcategory $\langle \mathcal{O}(\Ugeom)\rangle_\otimes$ generates the whole of $\MT$. Another equivalent statement is that the Lie algebra $\ugeom = \Lie(\Ugeom)$ also generates $\MT$.

It is \emph{a priori} surprising that these ``modular'' generators exist; instead of being built directly from the cohomology of projective spaces they emerge from the relative completion of the fundamental group of $\M11$ and its action on $\Pi_Y^{\dR}$.
\end{remark}

\subsection{Oda's conjecture} \label{odasection}

Oda's conjecture \cite{oda} was proven by Takao \cite{takao}, building on the work of Ihara, Matsumoto and Nakamura, as well as Brown's result \cite{brownmtm}. Although it is a statement about relative pro-$\ell$ completions of mapping class groups and their associated Galois action, the conjecture implies the following statement at the motivic level.

Let $L:=\Lie \Pi_Y^{\dR}\cong \Lie(\adr, \bdr)^\wedge$. It is a pro-object of $\MT$ and thus a $\mathfrak{k}$-module, where $\mathfrak{k} = \Lie(U^{\dR}_{\MT})$. Consequently, there is a homomorphism
\begin{equation*}
\rho\colon \mathfrak{k}\to \Der(L).
\end{equation*}
Let $\mathfrak{u}_{1,\vec{1}}:=\Lie \mathcal{U}_{1,\vec{1}}^{\dR}$. The monodromy action of \S \ref{monodromysection} induces a homomorphism
\begin{equation*}
\mu\colon \mathfrak{u}_{1,\vec{1}} \to \Der(L),
\end{equation*}
whose image is $\ugeom\oplus \mathbb{Q}\varepsilon_2$.

Proposition \ref{motivicremark} implies that $\ugeom$ is equipped with a $\mathfrak{k}$-action described by a homomorphism $\tilde{\rho} \colon \mathfrak{k} \to \Der(\ugeom)$. It is compatible with the $\mathfrak{k}$-action on $L$ via $\rho$ in the sense that $\tilde{\rho}(\sigma) = \ad(\rho(\sigma))$. Therefore, the image of $\rho$ normalises $\ugeom$.\footnote{This means that $[\im(\rho), \ugeom]\subseteq \ugeom$, or equivalently that $\sigma \mapsto \ad(\rho(\sigma))\vert_{\ugeom}$ defines a Lie algebra homomorphism $\mathfrak{k}\to \Der(\ugeom)$.} The Oda conjecture implies that $\rho$ induces an injection $\mathfrak{k} \hookrightarrow N(\ugeom)/\ugeom$, where $N$ denotes the normaliser. Since $\mathfrak{k}\subseteq N(\ugeom)/\ugeom$ is free, its adjoint action on $N(\ugeom)/\ugeom$ is faithful.

Theorem \ref{motivetheorem} can be regarded as an ``orthogonal'' statement to this result in the sense that the action of $\mathfrak{k}$ on $\ugeom$ is also faithful. This is surprisingly nontrivial, for the following reason. In \cite{brownzetaelements}, Brown showed that $\mathfrak{k}$ acts faithfully on $L$ and that there is a choice of generators $\sigma_{2n+1}\in\mathfrak{k}$ that act via
\begin{equation}\label{nakamurader}
\rho(\sigma_{2n+1}) \equiv \varepsilon_{2n+2}^\vee \pmod{W_{-2n-3}}.
\end{equation}
If $\ugeom$ had no relations, \eqref{nakamurader} would trivially imply that $\tilde{\rho}$ is injective, which is equivalent to the statement that $\mathfrak{k}$ acts faithfully on $\ugeom$. But $\ugeom$ is \emph{not} free, due to the Pollack relations, which pose a potential obstruction to a faithful Galois action. Theorem \ref{motivetheorem} implies that $\tilde{\rho}$ is still injective despite this obstruction.

\section{Example in depth $1$}

In this section we compare our results with Brown's formula for $\Fil_L^1 (\mathcal{C}_S^{\mm})$ \cite[\S $15.4$]{mmv}. Brown's formula explicitly demonstrates that all odd motivic zeta values appear in $\Co (\mu(\mathcal{C}_S^{\mm}))$. This comparison allows us to compute the motivic period $\eta$ appearing in the formula for $\Psi$ explicitly.

Doing so requires several computational tools, such as a depth $1$ version of the Baker-Campbell-Hausdorff formula, which has a connection to Euler's formula for the even zeta values $\zeta(2n)$. We also give an interpretation of the value of $\eta$ in terms of a cocycle for the braid group constructed by Matthes \cite{matthesquasi, matthesE2}.

It therefore seems worthwhile to include an account of the depth $1$ case here.

\subsection{Brown's computation of $\mathcal{C}_S^{\mm}$ in length $1$}

In \cite[\S $15.4$]{mmv}, Brown gives a formula for the length-$1$ part of the value at $S\in \Gamma$ of the motivic canonical cocycle $\mathcal{C}^{\mm}$. In the de Rham normalisation, his formula is
\begin{equation}\label{brownformula}
\Fil_L^1 (\mathcal{C}_S^{\mm}) = 1 + \sum_{n\geq 1} \mathbf{e}_{2n+2} \left[\frac{(2n)!}{2}\zeta^{\mm}(2n+1) \left(\frac{\mathsf{X}^{2n}}{\mathbb{L}^{2n}} - \mathsf{Y}^{2n} \right) + \mathbb{L} e_{2n+2, S}^0 \right] 
+ C
\end{equation}
where:
\begin{itemize}
\item $e_{2n+2, S}^0$  is a homogeneous polynomial of degree $2n$ in $\mathsf{X}$ and $\mathbb{L}\mathsf{Y}$. The degree of each individual variable in each term is odd. It is the $\mathbb{Q}(\mathbb{L})$-rational part of the cocycle for $\Gamma$ attached to $\mathbb{G}_{2n+2}$, evaluated at $S$.
\item $C\in\ker(\mu)$ is a contribution from cuspidal generators.
\end{itemize}

\subsection{Explicit calculation}

Recall that $\Psi = \exp(\eta \mathbf{e}_2) \mathcal{C}_S^{\mm}$, where $\eta \in\mathbb{Q}\mathbb{L}$. The element $\Psi$ satisfies $\mu(\Psi)(S^{\mm} (\beta^{\mm})) = (\alpha^{\mm})^{-1}$, and Proposition \ref{explicitmonodromy} implies that $\mu(\mathbf{e}_2) = 2\varepsilon_2$. Consequently, we have
\begin{equation*}
e^{2\eta \varepsilon_2}\mu(\mathcal{C}_S^{\mm})(S^{\mm}(\beta^{\mm})) = (\alpha^{\mm})^{-1} .
\end{equation*}
We now filter this equation modulo $B^2$. By Lemma \ref{SL2filteredpiece} and the fact that $\varepsilon_2 \in L^1$, we expand the exponential to obtain
\begin{align} \label{qformula}
\Fil_B^1 \left((\alpha^{\mm})^{-1}\right) & \equiv \Fil_L^1 (\mu(\mathcal{C}_S^{\mm})) \left( \Fil_B^1 (S^{\mm}(\beta^{\mm}))\right) + 2\eta\varepsilon_2(\Fil_L^0(\mathcal{C}_S^{\mm})(\Fil^0_B (S^{\mm}(\beta^{\mm}))) \nonumber \\
& \equiv \Fil_L^1 (\mu(\mathcal{C}_S^{\mm})) \left(S^{\mm} (\Fil_A^1 (\beta^{\mm}))\right) + 2\eta\varepsilon_2(e^{-\mathbb{L}a}) \pmod{B^2}.
\end{align}
This formula determines $\eta = \mathbb{L}/8$; in order to show this, we compare explicit expressions for the power series $\Fil_L^1 \left(\mu(\mathcal{C}_S^{\mm})\right)$, $\Fil_A^1 (\beta^{\mm})$ and $\Fil_B^1 \left((\alpha^{\mm})^{-1}\right)$.

\subsubsection{Formula for $\Fil_L^1 \left(\mu(\mathcal{C}_S^{\mm})\right)$}

\begin{proposition} \label{Zseries}
Let $Z:= 1 - \sum_{n\geq 1} \zeta^{\mm} (2n+1) \varepsilon_{2n+2}^\vee$. Then for all $w\in\mathbb{Q}\langle\langle a,b\rangle\rangle$ we have
\begin{equation*}
\Fil_L^1 (\mu(\mathcal{C}_S^{\mm}))(w)\equiv Z(w) \pmod{B^2}.
\end{equation*}
\end{proposition}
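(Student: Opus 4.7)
The plan is to apply the monodromy morphism $\mu$ directly to Brown's formula \eqref{brownformula} for $\Fil_L^1(\mathcal{C}_S^{\mm})$ and then argue that all but one family of terms acts trivially modulo $B^2$. By Proposition \ref{explicitmonodromy} the cuspidal contribution $C$ lies in $\ker(\mu)$ and therefore drops out immediately. This leaves three types of terms, one for each monomial appearing in the bracket of \eqref{brownformula}: the $\mathsf{Y}^{2n}$ piece, the $\mathsf{X}^{2n}/\mathbb{L}^{2n}$ piece, and the monomials $\mathbf{e}_{2n+2}\mathsf{X}^k\mathsf{Y}^{2n-k}$ for $1\leq k \leq 2n-1$ constituting $\mathbb{L} e_{2n+2,S}^0$.

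The crucial computation is the $\bdr$-degree shift of each image derivation. Assigning the $(A,B)$-bidegree $(1,0)$ to $\adr$ and $(0,1)$ to $\bdr$, one sees that $\varepsilon_0^\vee = \bdr\,\partial/\partial\adr$ has bidegree $(-1,+1)$ and that $\varepsilon_{2n+2}^\vee$ has bidegree $(2n+1,+1)$, so $\ad(\varepsilon_0^\vee)^k(\varepsilon_{2n+2}^\vee)$ shifts the $\bdr$-degree by exactly $k+1$; by Lemma \ref{highestlowest} the top case $k=2n$ is proportional to $\varepsilon_{2n+2}$, which therefore shifts the $\bdr$-degree by $2n+1 \geq 3$ for $n \geq 1$. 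Applying Proposition \ref{explicitmonodromy}, the $\mathsf{Y}^{2n}$ term contributes $-\zeta^{\mm}(2n+1)\varepsilon_{2n+2}^\vee$, which shifts the $\bdr$-degree by exactly $1$ and is therefore the only term that survives modulo $B^2$ when evaluated on a general $w$. The $\mathsf{X}^{2n}/\mathbb{L}^{2n}$ term contributes $\mathbb{L}^{-2n}\zeta^{\mm}(2n+1)\varepsilon_{2n+2}$ (using Lemma \ref{highestlowest} to convert $\ad(\varepsilon_0^\vee)^{2n}(\varepsilon_{2n+2}^\vee) = (2n)!\,\varepsilon_{2n+2}$), which shifts the $\bdr$-degree by $\geq 3$ and hence sends every $w$ into $B^2$. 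Each piece of $\mathbb{L} e_{2n+2,S}^0$ contributes a scalar multiple of $\ad(\varepsilon_0^\vee)^k(\varepsilon_{2n+2}^\vee)$ with $1\leq k \leq 2n-1$, shifting the $\bdr$-degree by $k+1 \geq 2$, and so also acts into $B^2$ on any $w$.

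Combining these observations yields
\begin{equation*}
\Fil_L^1(\mu(\mathcal{C}_S^{\mm}))(w) \equiv w - \sum_{n \geq 1}\zeta^{\mm}(2n+1)\varepsilon_{2n+2}^\vee(w) = Z(w) \pmod{B^2},
\end{equation*}
which is the claimed identity. The only point requiring care is verifying the bidegree claim for $\ad(\varepsilon_0^\vee)^k(\varepsilon_{2n+2}^\vee)$; this follows from a short induction on $k$ using the derivation property of $\ad(\varepsilon_0^\vee)$ in $\Der(L)$, or equivalently by inspection of the explicit formulae for $\varepsilon_{2n+2}^\vee(\adr)$ and $\varepsilon_{2n+2}^\vee(\bdr)$ given in Definition \ref{epsderdef}.
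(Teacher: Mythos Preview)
Your proof is correct and follows essentially the same approach as the paper's own proof: apply $\mu$ to Brown's formula \eqref{brownformula}, discard the cuspidal part via Proposition \ref{explicitmonodromy}, and then use a bidegree count to show that only the $\varepsilon_{2n+2}^\vee$ terms survive modulo $B^2$. Your bidegree analysis is slightly more explicit than the paper's (you record the exact $\bdr$-shift $k+1$ of $\ad(\varepsilon_0^\vee)^k(\varepsilon_{2n+2}^\vee)$, whereas the paper just observes it is at least $2$ for $1\leq k\leq 2n-1$ and invokes the definition of $\varepsilon_{2n+2}$ for $k=2n$), but the substance is identical.
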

\begin{proof}
Using \eqref{brownformula}, Proposition \ref{explicitmonodromy} and Lemma \ref{highestlowest}, we compute
\begin{equation*}
\Fil_L^1(\mu(\mathcal{C}_S^{\mm})) = 1 + \sum_{n\geq 1} \zeta^{\mm}(2n+1) \left[ \left(\frac{\varepsilon_{2n+2}}{\mathbb{L}^{2n}} - \varepsilon_{2n+2}^\vee \right) + \mathbb{L} \mu\left(e_{2n+2, S}^0(\mathsf{X},\mathbb{L}\mathsf{Y})\mathbf{e}_{2n+2} \right) \right].
\end{equation*}
For any $w\in\mathbb{Q}\langle\langle \adr,\bdr\rangle\rangle$ we have
\begin{equation*}
\mu\left(e_{2n+2, S}^0\mathbf{e}_{2n+2}\right)(w)\equiv 0 \pmod{A^2\cap B^2}.
\end{equation*}
This is for degree reasons; the description of the polynomial $e_{2n+2, S}^0$ above implies that $\mu\left(e_{2n+2, S}^0 \mathbf{e}_{2n+2} \right)$ is a linear combination of derivations $\delta_{2n+2}^{(k)} = \ad(\varepsilon_0^\vee)^k\left( \varepsilon_{2n+2}^\vee \right)$
for $1\leq k\leq 2n-1$. Any such $\delta_{2n+2}^{(k)}$ with $k$ in this range raises both the $\adr$- and $\bdr$-degree by at least $2$.

Similarly, the definition of $\varepsilon_{2n+2}$ implies that $\varepsilon_{2n+2}(w)\in B^2$ for any $w$. Therefore the only terms in $\Fil_L^1(\mu(\mathcal{C}_S^{\mm}))$ that act nontrivially modulo $B^2$ are those involving $\varepsilon_{2n+2}^\vee$. The definition of $Z$ then implies that $\Fil_L^1(\mu(\mathcal{C}_S^{\mm}))(w)\equiv Z(w)\pmod{B^2}$ for all $w\in\mathbb{Q}\langle\langle \adr,\bdr\rangle\rangle$.
\end{proof}

\subsubsection{Formulae for $\Fil_A^1\left(\beta^{\mm}\right)$ and $\Fil_B^1\left((\alpha^{\mm})^{-1}\right)$}

Lemma \ref{alphabetaformula} gives explicit formulae for $\alpha^{\mm}$ and $\beta^{\mm}$. These can be used to compute the filtered pieces of these series.

\begin{proposition} \label{fil1}
We have
\begin{align*}
\Fil_A^1\left(\beta^{\mm} \right) &\equiv e^{\frac{\mathbb{L}}{2}[\adr,\bdr]} e^{-\bdr} \pmod{A^2}\\
\Fil_B^1 \left((\alpha^{\mm})^{-1} \right) &\equiv  e^{-\mathbb{L}\adr + \frac{\mathbb{L}}{2}[\adr,\bdr]} + \ad\left(e^{-\mathbb{L}\adr} \right)\left(t\right)\pmod{B^2} 
\end{align*}
where 
\begin{equation*}
t := \sum_{n\geq 2} (-1)^n \zeta^{\mm}(n)\ad(\adr)^n (\bdr).
\end{equation*}
\end{proposition}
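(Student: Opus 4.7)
The formula for $\Fil_A^1(\beta^{\mm})$ will follow essentially for free from Lemma \ref{alphabetaformula} together with Lemma \ref{hainfiltration}. Since the shuffle regularisation forces $\zeta^{\mm}(\mathsf{x}_0) = \zeta^{\mm}(\mathsf{x}_1) = 0$, the generating series $\Phi_{0 1}^{\mm}$ has no weight-$1$ contribution, so $\Phi_{i j}^{\mm} \equiv 1 \pmod{W^2}$ for any distinct $i, j \in \{0, 1, \infty\}$. By Lemma \ref{hainfiltration}, $\phi$ sends $W^{\bullet}$ strictly into $A^{\bullet}$, so $\phi(\Phi_{1 0}^{\mm}) \equiv \phi(\Phi_{\infty 1}^{\mm}) \equiv 1 \pmod{A^2}$. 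Substituting this into Lemma \ref{alphabetaformula} and using $\phi(\mathsf{x}_1) = -[\adr, \bdr]$ immediately yields $\beta^{\mm} \equiv e^{\frac{\mathbb{L}}{2}[\adr, \bdr]} e^{-\bdr} \pmod{A^2}$.

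The formula for $\Fil_B^1((\alpha^{\mm})^{-1})$ requires real computation. I would start from
\[
(\alpha^{\mm})^{-1} = \phi(\Phi_{\infty 1}^{\mm})^{-1}\, e^{\mathbb{L}\phi(\mathsf{x}_\infty)}\, \phi(\Phi_{1 \infty}^{\mm})^{-1}
\]
and expand each factor modulo $B^2$. The Bernoulli expansion of $\phi(\mathsf{x}_0)$ truncates at $\phi(\mathsf{x}_0) \equiv \adr + \tfrac{1}{2}[\adr, \bdr] \pmod{B^2}$, so $\phi(\mathsf{x}_\infty) \equiv -\adr + \tfrac{1}{2}[\adr, \bdr] \pmod{B^2}$ and therefore $e^{\mathbb{L}\phi(\mathsf{x}_\infty)} \equiv e^{-\mathbb{L}\adr + \frac{\mathbb{L}}{2}[\adr,\bdr]} \pmod{B^2}$, which produces the first term of the claimed formula.

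For the associator factors, the depth-$0$ part of $\phi(\Phi_{\infty 1}^{\mm})$ and of $\phi(\Phi_{1 \infty}^{\mm})$ is trivial because $\zeta^{\mm}(\mathsf{x}_0^k) = \zeta^{\mm}(\mathsf{x}_1^k) = 0$. The depth-$1$ parts then involve shuffle-regularised motivic MZVs $\zeta^{\mm}(\mathsf{x}_0^a \mathsf{x}_1 \mathsf{x}_0^b)$ and $\zeta^{\mm}(\mathsf{x}_1^a \mathsf{x}_0 \mathsf{x}_1^b)$, which can be shown by induction on $a+b$ (using iterated applications of $\mathsf{x}_0 \shuffle (-)$) to equal $(-1)^a \binom{a+b}{a}\zeta^{\mm}(a+b+1)$ and $(-1)^b \binom{a+b}{a}\zeta^{\mm}(a+b+1)$ respectively. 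Applying $\phi$ modulo $B^2$ (where we may replace $\phi(\mathsf{x}_\infty)$ by its $B^0$-part $-\adr$) and using the standard identity
\[
\sum_{a+b=n}(-1)^{b}\binom{n}{a}\adr^a X\, \adr^b = \ad(\adr)^n(X)
\]
collapses each double sum to a generating series in $\ad(\adr)^m(\bdr)$. Setting $n = a+b$ and $m = n+1$ produces exactly the series $t$ (for $\phi(\Phi_{1\infty}^{\mm})$) and its counterpart (for $\phi(\Phi_{\infty 1}^{\mm})$).

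The final step is to invert. Writing $\alpha^{\mm} \equiv P + c \pmod{B^2}$ with $P = e^{\mathbb{L}\adr}$ and $c \in B^1$ an explicit sum of three terms — the depth-$1$ part $-J_+$ of the exponential plus the two contributions from the associator factors — we have $(\alpha^{\mm})^{-1} \equiv P^{-1} - P^{-1} c P^{-1} \pmod{B^2}$. The integral term $P^{-1}(-J_+)P^{-1}$, after a change of variables, is precisely the $B^1$-correction in the expansion of $e^{-\mathbb{L}\adr + \frac{\mathbb{L}}{2}[\adr,\bdr]}$, which completes that first piece. The remaining terms, coming from conjugating the two associator contributions by $P^{\pm 1}$, assemble into $\ad(e^{-\mathbb{L}\adr})(t)$ in the notation of the proposition.

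The main obstacle will be the bookkeeping of signs through the Bernoulli expansion, the shuffle regularisation, the change-of-variable identities, and the final conjugation, and matching the resulting noncommutative power series with the Lie-theoretic shorthand $\ad(e^{-\mathbb{L}\adr})(t)$.
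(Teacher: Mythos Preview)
Your proof of the first formula matches the paper's argument exactly.

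For the second formula your approach is correct but genuinely different from the paper's. You expand $(\alpha^{\mm})^{-1}$ directly on the $Y$-side, compute each of the three factors modulo $B^2$, rederive the depth-$1$ associator contribution from the binomial shuffle identities, and then invert via $(P+c)^{-1}\equiv P^{-1}-P^{-1}cP^{-1}$. The paper instead stays on the $X$-side: it writes $(\alpha^{\mm})^{-1}=\phi((\gamma^{\mm})^{-1})$ and computes $(\gamma^{\mm})^{-1}$ modulo $D^2$ \emph{before} applying $\phi$. The key simplification is that $(\gamma^{\mm})^{-1}=\Ad_{\Phi_{1\infty}^{\mm}}(e^{\mathbb{L}\mathsf{x}_\infty})$, and modulo $D^2$ this is $\Ad_{e^r}(e^{\mathbb{L}\mathsf{x}_\infty})\equiv e^{\mathbb{L}\mathsf{x}_\infty}+[r,e^{\mathbb{L}\mathsf{x}_\infty}]$, where $r$ is read off from the known closed-form depth-$1$ formula $\Phi_{01}^{\mm}\equiv 1+\sum_{n\ge 2}\zeta^{\mm}(n)\ad(\mathsf{x}_0)^{n-1}(\mathsf{x}_1)\pmod{D^2}$ (cited from Deligne--Goncharov). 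Only then is $\phi$ applied, and one checks $\phi(r)\equiv -t\pmod{B^2}$ in one line. This route bypasses your explicit inversion, the separate treatment of the three factors, and the binomial bookkeeping entirely; the $\ad(e^{-\mathbb{L}\adr})(t)$ term falls out of the conjugation structure rather than having to be reassembled from two associator pieces. Your approach buys a self-contained derivation of the depth-$1$ associator formula, at the cost of the sign-tracking you anticipate; the paper's buys a two-line computation once the cited formula and the conjugation identity are in hand.
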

\begin{proof}
The first statement follows immediately from the expression
\begin{equation*}
\beta^{\mm} = e^{-\frac{\mathbb{L}}{2} \phi(\mathsf{x}_1)} \phi(\Phi^{\mm}_{1 0}) e^{-\bdr} \phi(\Phi_{\infty 1}^{\mm}).
\end{equation*}
Lemma \ref{hainfiltration} implies that $\Fil_A^1\left(\phi(\Phi_{i j}^{\mm})\right) \equiv \phi \left(\Fil_W^1 (\Phi_{i j}^{\mm}) \right) \pmod{A^2}$. The associators are grouplike and have no linear term, so $\Fil_W^1(\Phi_{i j}^{\mm})=\Fil_W^0(\Phi_{i j}^{\mm})  = 1$. Thus the only contributions to $\Fil_A^1 \left( \beta^{\mm}\right)$ come from the exponential factors. The result follows by recalling that $\phi(\mathsf{x}_1) = -[\adr,\bdr]$.

To prove the second statement, recall that $\alpha^{\mm} = \phi (\gamma^{\mm})$, where $\gamma^{\mm} = \Phi_{1 \infty}^{\mm} e^{-\mathbb{L} \mathsf{x}_\infty} \Phi_{\infty 1}^{\mm}$. By Lemma \ref{hainfiltration} we have
\begin{equation*}
\Fil_B^1 \left((\alpha^{\mm})^{-1} \right) \equiv \phi\left(\Fil_D^1 \left((\gamma^{\mm})^{-1} \right)\right) \pmod{B^2}.
\end{equation*}
We therefore compute $(\gamma^{\mm})^{-1}$ modulo $D^2$ and then apply $\phi$.

The series $\Phi_{\infty 1}^{\mm}$ is obtained from $\Phi_{0 1}^{\mm}$ by exchanging, in each word, all occurrences of $\mathsf{x}_0$ for $\mathsf{x}_\infty = - (\mathsf{x}_0 + \mathsf{x}_1)$. The formula for $\Phi_{0 1}^{\mm}$ in depth $1$ \cite[\S $6.7$]{delignegoncharov} gives
\begin{align*}
\Phi_{\infty 1}^{\mm} &\equiv 1 + \sum_{n\geq 2} \zeta^{\mm}(n) \ad(\mathsf{x}_\infty)^{n-1} (\mathsf{x}_1) \pmod{D^2} \\
& \equiv 1-\sum_{n\geq 2} (-1)^n \zeta^{\mm}(n) \ad(\mathsf{x}_0) ^{n-1} (\mathsf{x}_1) \pmod{D^2}.
\end{align*}
Let $r := \sum_{n\geq 2} (-1)^n \zeta^{\mm}(n) \ad(\mathsf{x}_0) ^{n-1} (\mathsf{x}_1)\in D^1$. The property $\Phi_{1 \infty}^{\mm} \Phi_{\infty 1}^{\mm} = 1$ implies that $\Phi_{1\infty}^{\mm}\equiv e^r \pmod{D^2}$. Therefore
\begin{align*}
(\gamma^{\mm})^{-1}  &\equiv \Ad_{e^r}\left(e^{\mathbb{L}\mathsf{x}_\infty}\right)  = e^{\ad(r)}\left( e^{\mathbb{L} \mathsf{x}_\infty}\right)   \equiv e^{\mathbb{L} \mathsf{x}_\infty} + \ad\left(r\right)\left(e^{\mathbb{L} \mathsf{x}_\infty} \right) \pmod{D^2}.
\end{align*}
We now apply $\phi$ to this formula. Note that $\phi(r)\equiv - t \pmod{B^2}$, where $t\in B^1$ is defined in the statement of the proposition. This gives
\begin{align*}
(\alpha^{\mm})^{-1} = \phi \left(( \gamma^{\mm})^{-1}\right) & \equiv e^{\mathbb{L} \phi(\mathsf{x}_\infty)} - \ad(t)( e^{\mathbb{L} \phi(\mathsf{x}_\infty)}) \pmod{B^2} \\
& \equiv  e^{-\mathbb{L}\adr + \frac{\mathbb{L}}{2}[\adr,\bdr]} + \ad\left(e^{-\mathbb{L}\adr} \right)\left(t\right)\pmod{B^2}.
\end{align*}
The second line follows by noting that $\ad(e^{\mathbb{L}\phi(\mathsf{x}_\infty)})(t)\equiv \ad(e^{-\mathbb{L}\adr})(t)\pmod{B^2}$, because $t\in B^1$.
\end{proof}

\subsection{Formal Lie algebraic computations}

Before proving Theorem \ref{depth1check} we record the following elementary formal computations.

\begin{lemma} \label{adjointformula}
Let $X,Y$ be elements in a Lie algebra $\mathfrak{g}$ over a field of characteristic zero, and let $k\geq 1$. Then in the universal enveloping algebra $U(\mathfrak{g})$ we have
\begin{equation*} 
\ad(X^k)(Y) = \sum_{r=1}^k X^{r-1} \ad(X)(Y) X^{k-r}.
\end{equation*}
\end{lemma}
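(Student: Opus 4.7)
The plan is to verify this as a direct telescoping identity in the associative algebra $U(\mathfrak{g})$. First I would expand $\ad(X)(Y) = [X,Y] = XY - YX$ inside the sum, so that
\begin{equation*}
\sum_{r=1}^k X^{r-1} \ad(X)(Y) X^{k-r} = \sum_{r=1}^k \left( X^{r} Y X^{k-r} - X^{r-1} Y X^{k-r+1} \right).
\end{equation*}
Setting $a_r := X^r Y X^{k-r}$ for $0 \le r \le k$, the right-hand side becomes $\sum_{r=1}^k (a_r - a_{r-1})$, which telescopes to $a_k - a_0 = X^k Y - Y X^k = \ad(X^k)(Y)$, as required.

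Alternatively, one could argue by induction on $k$. The base case $k=1$ is tautological. For the inductive step, one uses the Leibniz-type identity $[AB,Y] = A[B,Y] + [A,Y]B$ in $U(\mathfrak{g})$ with $A = X^k$ and $B = X$, giving
\begin{equation*}
\ad(X^{k+1})(Y) = X^k \ad(X)(Y) + \ad(X^k)(Y) \cdot X,
\end{equation*}
and then applying the inductive hypothesis to the second term re-indexes the sum to yield the required formula for $k+1$.

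Neither argument contains any real obstacle; the identity is purely formal. The only minor subtlety is keeping track of indices in the telescoping sum, but I would favour the telescoping proof since it is both shorter and more transparent than the induction.
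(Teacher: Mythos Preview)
Your proposal is correct, and your inductive argument is exactly the paper's proof: the paper simply says ``Use the relation $\ad(X)(Y) = XY - YX$ in $U(\mathfrak{g})$ and induct on $k$.'' Your telescoping argument is a valid and slightly more direct alternative that avoids the induction altogether, but both are equally elementary verifications of the same formal identity.
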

\begin{proof}
Use the relation $\ad(X)(Y) = XY - YX$ in $U(\mathfrak{g})$ and induct on $k$.
\end{proof}

We also need the following ``depth-$1$'' version of the Baker-Campbell-Hausdorff formula. A similar formula was used in \cite{nils} to compute the meta-abelian logarithm of the elliptic associator.
\begin{lemma}[Truncated BCH formula] \label{bch}
Let $X$ and $Y$ be elements in a Lie algebra $\mathfrak{g}$ over a field of characteristic zero. In the universal enveloping algebra we have
\begin{equation*}
e^X e^Y \equiv e^{X+Y} + \ad\left(e^X \right) \left(\sum_{n\geq 1} \frac{B_n^+}{n!} \ad(X)^{n-1} (Y) \right) \pmod{Y^2},
\end{equation*}
where $B_n^+ = (-1)^n B_n$ is the sequence of Bernoulli numbers with $B_1^+ = 1/2$.
\end{lemma}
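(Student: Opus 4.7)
The plan is to extract the portion of $\log(e^X e^Y)$ that is linear in $Y$ using the Baker--Campbell--Hausdorff formula, re-exponentiate, and then compare with $e^{X+Y}$ via a first-order Duhamel expansion. The computation is tractable precisely because everything is being reduced modulo $Y^2$: only the parts of the power series that are linear in $Y$ survive, which avoids grappling with the full BCH series.

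First I would invoke Dynkin's formula for BCH and isolate the terms linear in $Y$, which assemble into the well-known series
\[
\frac{\ad X}{1 - e^{-\ad X}}(Y) \;=\; \sum_{n \geq 0} \frac{B_n^+}{n!}\, \ad(X)^n(Y) \;=:\; W,
\]
where the generating-function identity $\tfrac{t}{1 - e^{-t}} = \sum_{n \geq 0} \frac{B_n^+}{n!} t^n$ follows from $\tfrac{t}{e^t - 1} = \sum_n \frac{B_n}{n!} t^n$ by $t \mapsto -t$. Thus $\log(e^X e^Y) \equiv X + W \pmod{Y^2}$ and hence $e^X e^Y \equiv e^{X+W} \pmod{Y^2}$. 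Peeling off the $n=0$ term yields $W = Y + [X, S]$, where $S := \sum_{n \geq 1} \frac{B_n^+}{n!}\, \ad(X)^{n-1}(Y)$ is exactly the series appearing in the statement, so it remains to show $e^{X + Y + [X,S]} \equiv e^{X+Y} + \ad(e^X)(S) \pmod{Y^2}$.

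Since $[X, S]$ is already linear in $Y$, the first-order Duhamel formula
\[
e^{A+U} - e^A \equiv \int_0^1 e^{tA}\, U\, e^{(1-t)A}\, dt \pmod{U^2},
\]
applied with $A = X + Y$ and $U = [X, S]$, combined with the reduction $e^{t(X+Y)} \equiv e^{tX} \pmod{Y}$, gives
\[
e^{X+W} - e^{X+Y} \equiv \int_0^1 e^{tX}\, [X, S]\, e^{(1-t)X}\, dt \pmod{Y^2}.
\]
Because $X$ commutes with $e^{tX}$, the integrand equals $\tfrac{d}{dt}\bigl(e^{tX} S\, e^{(1-t)X}\bigr)$, so the integral telescopes to $e^X S - S e^X = \ad(e^X)(S)$, which is precisely the correction term in the claim.

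The step most deserving of care is the identification of the linear-in-$Y$ part of BCH with $\tfrac{\ad X}{1 - e^{-\ad X}}(Y)$: one can either cite Dynkin's explicit series and discard all brackets with more than one $Y$, or cross-check the first few coefficients directly against the standard expansion $X + Y + \tfrac{1}{2}[X,Y] + \tfrac{1}{12}[X,[X,Y]] + \cdots$. Once that is in hand, every remaining manipulation is formal and valid in any associative algebra.
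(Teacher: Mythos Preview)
Your proof is correct and shares its opening move with the paper: both identify $\log(e^X e^Y) \equiv X + Y + \tilde{Y} \pmod{Y^2}$ with $\tilde{Y} = \sum_{n\geq 1}\frac{B_n^+}{n!}\ad(X)^n(Y)$, and both then need to compare $e^{X+Y+\tilde{Y}}$ with $e^{X+Y}$. The paper obtains the BCH input by citing the form $\log(e^X e^Y)\equiv X+Y+\sum_{n\geq 1}\frac{B_n}{n!}\ad(Y)^n(X)\pmod{X^2}$ and substituting $(X,Y)\mapsto(-Y,-X)$ after inverting; you cite Dynkin directly and package the same series via the generating function $\tfrac{t}{1-e^{-t}}$. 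These are equivalent.

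Where the arguments genuinely diverge is in extracting the correction term. The paper expands $e^{X+Y+\tilde{Y}}$ as a power series, isolates the terms linear in $\tilde{Y}$, and then invokes the combinatorial identity $\ad(X^k)(Z)=\sum_{r=1}^k X^{r-1}\ad(X)(Z)X^{k-r}$ (its Lemma~\ref{adjointformula}) to resum into $\ad(e^X)(S)$. You instead observe that $\tilde{Y}=[X,S]$ and apply the first-order Duhamel formula $e^{A+U}-e^A\equiv\int_0^1 e^{tA}Ue^{(1-t)A}\,dt\pmod{U^2}$; after replacing $e^{t(X+Y)}$ by $e^{tX}$ (legitimate since $U$ is already linear in $Y$), the integrand becomes an exact derivative and the integral telescopes to $\ad(e^X)(S)$. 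Your route is slicker and avoids the auxiliary combinatorial lemma, at the cost of importing Duhamel; the paper's route is more hands-on but entirely self-contained. Both reach the same conclusion with comparable effort.
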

\begin{proof}
The classical BCH formula \cite[Corollary $3.24$]{freeliealgebras} gives
\begin{equation*}
\log(e^X e^Y) \equiv X + Y + \sum_{n\geq 1} \frac{B_n}{n!} \ad(Y)^n (X) \pmod{X^2}.
\end{equation*}
Taking exponentials, inverting both sides and replacing $(X,Y)$ by $(-Y, -X)$ gives
\begin{equation*}
e^X e^Y \equiv \exp\left(X + Y + \tilde{Y}\right)  \pmod{Y^2}, \quad \text{where } \tilde{Y} :=  \sum_{n\geq 1} \frac{B_n^{+}}{n!} \ad(X)^n (Y).
\end{equation*}
Expanding the exponential, we obtain
\begin{align*}
e^{X+Y+\tilde{Y}} &= \sum_{k\geq 0} \frac{1}{k!}\left(X+Y+\tilde{Y}\right)^k \\
& \equiv \sum_{k\geq 0} \frac{(X+Y)^k}{k!} + \sum_{k\geq 1} \frac{1}{k!} \sum_{r=1}^k (X+Y)^{r-1} \tilde{Y} (X+Y)^{k-r}\pmod{Y^2} \\
&\equiv e^{X+Y} + \sum_{k\geq 1}\frac{1}{k!} \sum_{r=1}^k X^{r-1}\tilde{Y}X^{k-r} \pmod{Y^2} \\
&=e^{X+Y} + \sum_{n, k\geq 1} \frac{B_n^+}{n!}\frac{1}{k!} \sum_{r=1}^k X^{r-1} \ad(X)^n (Y) X^{k-r} \\
&=e^{X+Y} + \sum_{n, k\geq 1} \frac{B_n^+}{n!}\frac{1}{k!} \ad(X^k)\left(\ad(X)^{n-1}(Y) \right) \\
& = e^{X+Y} + \ad\left(e^X - 1\right)\left( \sum_{n\geq 1} \frac{B_n^+}{n!} \ad(X)^{n-1}(Y) \right) \\
& = e^{X+Y} + \ad\left(e^X\right)\left( \sum_{n\geq 1} \frac{B_n^+}{n!} \ad(X)^{n-1}(Y) \right).
\end{align*}
In going from the fourth to the fifth line we use Lemma \ref{adjointformula}, where $Y$ has been replaced by $\ad(X)^{n-1} (Y)$.
\end{proof}

\subsection{Main calculation}

In this section we will prove the following result:

\begin{theorem} \label{depth1check}
We have
\begin{equation*}
 \Fil_B^1 \left((\alpha^{\mm})^{-1}\right) \equiv \Fil_B^1 \left(\mu(\mathcal{C}_S^{\mm})(S^{\mm} (\beta^{\mm})) \right) + \frac{\mathbb{L}}{4} \varepsilon_2 \left(S^{\mm}(\Fil_A^0(\beta^{\mm}))\right) \pmod{B^2}.
\end{equation*}
\end{theorem}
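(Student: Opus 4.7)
Plan. The approach is to expand both sides of the claimed congruence in $\mathbb{Q}[\mathbb{L}]\langle\langle \adr, \bdr\rangle\rangle / B^2$ using Propositions \ref{fil1} and \ref{Zseries}, and to match them via the truncated BCH formula (Lemma \ref{bch}) and Euler's formula for even motivic zeta values. I would set $X := -\mathbb{L}\adr$ and $Y := \tfrac{\mathbb{L}}{2}[\adr,\bdr]$; then $Y \in B^1$, $Y^2 \in B^2$, and $S^{\mm}(Y) = Y$ by direct calculation.

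First I would translate the right-hand side. By Proposition \ref{fil1} and Lemma \ref{SL2filteredpiece}, $S^{\mm}(\Fil_A^1(\beta^{\mm})) \equiv e^Y e^X \pmod{B^2}$ and $S^{\mm}(\Fil_A^0(\beta^{\mm})) \equiv e^X \pmod{B^1}$. Since the length-$k$ components of $\mu(\mathcal{C}_S^{\mm})$ raise $B$-degree by at least $k$ (Proposition \ref{lengthmonodromy}) and $\varepsilon_{2n+2}^\vee \in \Der^\theta(L)$ annihilates $e^Y$ (so $e^Y \varepsilon_{2n+2}^\vee(e^X) \equiv \varepsilon_{2n+2}^\vee(e^X) \pmod{B^2}$, because the latter already lies in $B^1$), Proposition \ref{Zseries} reduces the theorem to verifying
\begin{equation*}
e^{X+Y} + \ad(e^X)(t) \equiv e^Y e^X - \sum_{n\geq 1}\zeta^{\mm}(2n+1)\varepsilon_{2n+2}^\vee(e^X) + \tfrac{\mathbb{L}}{4}\varepsilon_2(e^X) \pmod{B^2}.
\end{equation*}

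Next I would process the left-hand side. Lemma \ref{bch} gives $e^{X+Y} \equiv e^X e^Y - \ad(e^X)(\Xi) \pmod{B^2}$, where $\Xi := \sum_{n\geq 1}\tfrac{B_n^+}{n!}\ad(X)^{n-1}(Y)$, and the expansion $e^Y \equiv 1 + Y \pmod{B^2}$ yields $e^X e^Y \equiv e^Y e^X + \ad(e^X)(Y) \pmod{B^2}$. Using $\ad(\adr)^{n-1}([\adr,\bdr]) = \ad(\adr)^n(\bdr)$, the vanishing of $B_{2k+1}$ for $k \geq 1$, and $B_1^+ = \tfrac{1}{2}$, this gives $\Xi = \tfrac{\mathbb{L}}{4}[\adr,\bdr] - \tfrac{1}{2}\sum_{n\geq 1}\tfrac{B_{2n}\mathbb{L}^{2n}}{(2n)!}\ad(\adr)^{2n}(\bdr)$. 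The motivic lift $\zeta^{\mm}(2n) = -B_{2n}\mathbb{L}^{2n}/(2(2n)!)$ of Euler's formula then converts the Bernoulli tail into $t_\even := \sum_{n\geq 1}\zeta^{\mm}(2n)\ad(\adr)^{2n}(\bdr)$, so $\Xi = \tfrac{\mathbb{L}}{4}[\adr,\bdr] + t_\even$. Splitting $t = t_\even + t_\odd$ with $t_\odd = -\sum_{n\geq 1}\zeta^{\mm}(2n+1)\ad(\adr)^{2n+1}(\bdr)$, the even zeta values cancel and the left side becomes $e^Y e^X + \ad(e^X)(\tfrac{\mathbb{L}}{4}[\adr,\bdr] + t_\odd) \pmod{B^2}$.

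Finally I would match term-by-term. The $\tfrac{\mathbb{L}}{4}[\adr,\bdr]$-piece yields $\tfrac{\mathbb{L}}{4}\ad(e^X)([\adr,\bdr]) = \tfrac{\mathbb{L}}{4}\varepsilon_2(e^X)$ because $\varepsilon_2 = -\ad([\adr,\bdr])$, cancelling the corresponding term on the right. For the odd piece, Lemma \ref{adjointformula} together with the derivation property of $\varepsilon_{2n+2}^\vee$ produces the exact identity $\varepsilon_{2n+2}^\vee(e^X) = \ad(e^X)(\ad(\adr)^{2n+1}(\bdr))$, whence $\ad(e^X)(t_\odd) = -\sum \zeta^{\mm}(2n+1)\varepsilon_{2n+2}^\vee(e^X)$, as required. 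The main obstacle is the penultimate step: one has to recognise that the specific Bernoulli coefficients introduced by BCH, together with precisely the right powers of $\mathbb{L}$, reassemble via Euler's formula into exactly the even part $t_\even$ of Brown's series from the depth-$1$ expansion of $\Phi_{\infty 1}^{\mm}$. Only this numerical coincidence eliminates the even zeta values while leaving the isolated $B_1^+ = \tfrac{1}{2}$ contribution, which then maps onto the desired $\tfrac{\mathbb{L}}{4}\varepsilon_2(e^X)$ and ultimately forces $\eta = \mathbb{L}/8$.
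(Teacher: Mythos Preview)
Your proof is correct and follows essentially the same approach as the paper's: both use Propositions~\ref{fil1} and~\ref{Zseries} to reduce to a comparison between $e^{X+Y}$ and $e^Ye^X$, apply the truncated BCH formula of Lemma~\ref{bch} together with Lemma~\ref{adjointformula}, and recognise via Euler's formula that the even Bernoulli tail reassembles into $t_{\even}$, leaving the $B_1^+$ contribution that produces the $\tfrac{\mathbb{L}}{4}\varepsilon_2$ term. The only difference is organisational---the paper packages the computation into Propositions~\ref{Zactingonbeta1} and~\ref{appliedbch} and works from $e^Ye^X$ towards $e^{X+Y}$, while you proceed in the reverse direction---but the ingredients and the essential identity $\varepsilon_{2n+2}^\vee(e^X)=\ad(e^X)(\ad(\adr)^{2n+1}(\bdr))$ are identical.
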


\begin{remark} \label{depth1checkremark}
Note that $S^{\mm} (\Fil_A^0 (\beta^{\mm}))=e^{-\mathbb{L}\adr}$ and $S^{\mm} (\Fil_A^1 (\beta^{\mm}))=e^{\mathbb{L}[\adr, \bdr]/2} e^{-\mathbb{L}\adr}$. To simplify notation, write $t = t_{\even} - t_{\odd}$, where
\begin{equation*} \label{teventodd}
t_{\even}:=\sum_{n\geq 1 } \zeta^{\mm} (2n) \ad(\adr)^{2n} (\bdr), \quad t_{\odd} := \sum_{n\geq 1} \zeta^{\mm}(2n+1) \ad(\adr)^{2n+1} (\bdr).
\end{equation*}
\end{remark}

\begin{proposition} \label{Zactingonbeta1}
We have
\begin{equation*} 
\Fil_B^1 \left(\mu(\mathcal{C}_S^{\mm})(S^{\mm} (\beta^{\mm})) \right) \equiv e^{\frac{\mathbb{L}}{2}[\adr,\bdr]} e^{-\mathbb{L}\adr} - \ad\left(e^{-\mathbb{L}\adr} \right)\left(t_{\odd} \right) \pmod{B^2}.
\end{equation*}
\end{proposition}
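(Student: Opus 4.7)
The plan is to start from the decomposition $\mu(\mathcal{C}_S^{\mm})(S^{\mm}(\beta^{\mm}))$ and filter both the ``operator'' side and the ``argument'' side simultaneously. Since the length filtration $L^\bullet \Ugeom$ is a subfiltration of the $B$-induced filtration by Proposition \ref{lengthmonodromy}, and $\Fil_B^r\circ S^{\mm}= S^{\mm}\circ \Fil_A^r$ by Lemma \ref{SL2filteredpiece}, one gets
\begin{equation*}
\Fil_B^1\!\bigl(\mu(\mathcal{C}_S^{\mm})(S^{\mm}(\beta^{\mm}))\bigr)\equiv \Fil_L^1(\mu(\mathcal{C}_S^{\mm}))\bigl(S^{\mm}(\Fil_A^1(\beta^{\mm}))\bigr) \pmod{B^2}.
\end{equation*}
Proposition \ref{fil1} computes $\Fil_A^1(\beta^{\mm})\equiv e^{\frac{\mathbb{L}}{2}[\adr,\bdr]}e^{-\bdr}\pmod{A^2}$, and applying $S^{\mm}\colon(\adr,\bdr)\mapsto(-\mathbb{L}^{-1}\bdr,\mathbb{L}\adr)$ (which fixes $[\adr,\bdr]$) yields $S^{\mm}(\Fil_A^1(\beta^{\mm}))\equiv e^{\frac{\mathbb{L}}{2}[\adr,\bdr]}e^{-\mathbb{L}\adr}\pmod{B^2}$. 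By Proposition \ref{Zseries}, the operator $\Fil_L^1(\mu(\mathcal{C}_S^{\mm}))$ may be replaced by $Z = 1-\sum_{n\geq 1}\zeta^{\mm}(2n+1)\varepsilon_{2n+2}^\vee$, so the task reduces to computing $Z\bigl(e^{\frac{\mathbb{L}}{2}[\adr,\bdr]}e^{-\mathbb{L}\adr}\bigr)$ modulo $B^2$.

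Next I would evaluate each $\varepsilon_{2n+2}^\vee$ as a derivation. Because $\varepsilon_{2n+2}^\vee\in\Der^\theta(L)$ annihilates $\theta=[\adr,\bdr]$, we have $\varepsilon_{2n+2}^\vee(e^{\frac{\mathbb{L}}{2}[\adr,\bdr]})=0$, and Leibniz reduces the computation to $e^{\frac{\mathbb{L}}{2}[\adr,\bdr]}\varepsilon_{2n+2}^\vee(e^{-\mathbb{L}\adr})$. The factor $e^{\frac{\mathbb{L}}{2}[\adr,\bdr]}-1$ lies in $B^1$ and $\varepsilon_{2n+2}^\vee(e^{-\mathbb{L}\adr})$ lies in $B^1$, so their product is in $B^2$ and drops out modulo $B^2$. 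Expanding $e^{-\mathbb{L}\adr}=\sum_k\tfrac{(-\mathbb{L})^k}{k!}\adr^k$ and using $\varepsilon_{2n+2}^\vee(\adr)=\ad(\adr)^{2n+2}(\bdr)\in B^1$, each summand $\adr^j\ad(\adr)^{2n+2}(\bdr)\adr^{k-1-j}$ is congruent modulo $B^2$ to $\adr^{k-1}\ad(\adr)^{2n+2}(\bdr)$ (since commuting any $\adr$ past an element of $B^1$ changes it by something in $B^2$). Summing produces
\begin{equation*}
\varepsilon_{2n+2}^\vee(e^{-\mathbb{L}\adr})\equiv -\mathbb{L}\,e^{-\mathbb{L}\adr}\,\ad(\adr)^{2n+2}(\bdr)\pmod{B^2}.
\end{equation*}

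Summing over $n$ with weights $\zeta^{\mm}(2n+1)$ and pulling one factor of $\ad(\adr)$ outside gives $-\mathbb{L}\,e^{-\mathbb{L}\adr}\ad(\adr)(t_{\odd})$, so
\begin{equation*}
Z\bigl(e^{\frac{\mathbb{L}}{2}[\adr,\bdr]}e^{-\mathbb{L}\adr}\bigr)\equiv e^{\frac{\mathbb{L}}{2}[\adr,\bdr]}e^{-\mathbb{L}\adr}+\mathbb{L}\,e^{-\mathbb{L}\adr}\ad(\adr)(t_{\odd})\pmod{B^2}.
\end{equation*}
The last step is the identity $\mathbb{L}\,e^{-\mathbb{L}\adr}\ad(\adr)(t_{\odd})\equiv -\ad(e^{-\mathbb{L}\adr})(t_{\odd})\pmod{B^2}$, proved by exactly the same commuting-modulo-$B^2$ technique: expand $\ad(e^{-\mathbb{L}\adr})(t_{\odd})=\sum_k\tfrac{(-\mathbb{L})^k}{k!}[\adr^k,t_{\odd}]$, note that $[\adr^k,t_{\odd}]=\sum_{j=0}^{k-1}\adr^j\ad(\adr)(t_{\odd})\adr^{k-1-j}\equiv k\adr^{k-1}\ad(\adr)(t_{\odd})\pmod{B^2}$, and resum.

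The main obstacle is purely bookkeeping: justifying the two mod-$B^2$ commutations (moving $\adr$'s past $\ad(\adr)^{2n+2}(\bdr)$ and past $t_{\odd}$) in enough generality, and tracking the fact that Leibniz-type cross terms involving $e^{\frac{\mathbb{L}}{2}[\adr,\bdr]}-1$ fall into $B^2$. Once these filtration lemmas are in hand the identity is an essentially formal manipulation of power series.
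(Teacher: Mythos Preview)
Your overall strategy is correct and matches the paper's: reduce via Proposition~\ref{lengthmonodromy} and Lemma~\ref{SL2filteredpiece}, apply Propositions~\ref{fil1} and~\ref{Zseries}, then compute $Z(e^{\frac{\mathbb{L}}{2}[\adr,\bdr]}e^{-\mathbb{L}\adr})$ modulo $B^2$. The observation that $\varepsilon_{2n+2}^\vee$ kills the prefactor $e^{\frac{\mathbb{L}}{2}[\adr,\bdr]}$ and that the remaining cross term lies in $B^2$ is also fine.

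The gap is in your ``commuting modulo $B^2$'' step. You claim that $\adr^j\ad(\adr)^{2n+2}(\bdr)\adr^{k-1-j}\equiv \adr^{k-1}\ad(\adr)^{2n+2}(\bdr)\pmod{B^2}$ because ``commuting any $\adr$ past an element of $B^1$ changes it by something in $B^2$''. This is false: $\adr$ has $\bdr$-degree $0$, so $[\adr,X]$ has the \emph{same} $\bdr$-degree as $X$. Concretely, $[\adr,\ad(\adr)^{2n+2}(\bdr)]=\ad(\adr)^{2n+3}(\bdr)\in B^1\setminus B^2$. The same error recurs in your second commutation claim $\mathbb{L}e^{-\mathbb{L}\adr}\ad(\adr)(t_{\odd})\equiv-\ad(e^{-\mathbb{L}\adr})(t_{\odd})\pmod{B^2}$. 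You reach the right final answer only because the two false congruences happen to undo each other.

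The fix is simpler than what you attempted and requires no approximation at all. The sum $\sum_{j=0}^{k-1}\adr^{j}\ad(\adr)^{2n+2}(\bdr)\adr^{k-1-j}$ is \emph{exactly} $\ad(\adr^k)\bigl(\ad(\adr)^{2n+1}(\bdr)\bigr)$ by Lemma~\ref{adjointformula} applied with $Y=\ad(\adr)^{2n+1}(\bdr)$. Summing over $k$ gives the exact identity
\[
\varepsilon_{2n+2}^\vee(e^{-\mathbb{L}\adr})=\ad(e^{-\mathbb{L}\adr})\bigl(\ad(\adr)^{2n+1}(\bdr)\bigr),
\]
and summing over $n$ with coefficients $\zeta^{\mm}(2n+1)$ immediately yields $\ad(e^{-\mathbb{L}\adr})(t_{\odd})$, with no intermediate step and no mod-$B^2$ reduction needed. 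This is precisely what the paper does.
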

\begin{proof}
This is a straightforward calculation, working modulo $B^2$:
\begin{align*}
\Fil_B^1 \left(\mu(\mathcal{C}_S^{\mm})(S^{\mm} (\beta^{\mm})) \right) & \equiv \Fil_L^1(\mu(\mathcal{C}_S^{\mm}))(S^{\mm} (\Fil_A^1(\beta^{\mm}))) \\
& \equiv Z\left(e^{\frac{\mathbb{L}}{2}[\adr,\bdr]}e^{-\mathbb{L}\adr}\right) \\
 &\equiv  e^{\frac{\mathbb{L}}{2}[\adr,\bdr]} Z(e^{-\mathbb{L}\adr})  \\
& \equiv e^{\frac{\mathbb{L}}{2}[\adr,\bdr]} \left(e^{-\mathbb{L}\adr} - \sum_{n\geq 1} \zeta^{\mm}(2n+1) \varepsilon_{2n+1}^\vee (e^{-\mathbb{L}\adr}) \right) \\
& \equiv e^{\frac{\mathbb{L}}{2}[\adr,\bdr]} e^{-\mathbb{L}\adr} - \sum_{n,k\geq 1}\frac{(-\mathbb{L})^k}{k!} \zeta^{\mm}(2n+1) \sum_{r=1}^k \adr^{r-1}\ad(\adr)^{2n+2}(\bdr) \adr^{k-r} \\
& = e^{\frac{\mathbb{L}}{2}[\adr,\bdr]} e^{-\mathbb{L}\adr} - \ad\left(e^{-\mathbb{L}\adr} \right)\left(t_{\odd}\right). 
\end{align*}
The first line follows from Lemma \ref{lengthmonodromy}. The second line follows from Propositions \ref{Zseries} and \ref{fil1}. In the third line, the prefactor $e^{\mathbb{L}[\adr,\bdr]/2}$ pulls through the action of $Z$ because every term in $Z$ is a geometric derivation $\varepsilon_{2n+2}^\vee$, all of which annihilate $[\adr,\bdr]$. The fourth line follows from the definition of $Z$ given in Proposition \ref{Zseries}. The fifth line follows by computing $\varepsilon_{2n+2}^{\vee} \left(e^{-\mathbb{L}\adr} \right)$ explicitly. We also reduce the product of the prefactor and the sum over $n$ modulo $B^2$, because $e^{\mathbb{L}[\adr,\bdr]/2} -1\in B^1$. The sixth line follows by applying Lemma \ref{adjointformula} with $X = \adr$ and $Y = \ad(\adr)^n(\bdr)$.
\end{proof}

We may now apply the BCH formula of Lemma \ref{bch} to write $e^{\frac{\mathbb{L}}{2}[\adr,\bdr]} e^{-\mathbb{L}\adr}$ in terms of $e^{-\mathbb{L}\adr + \frac{\mathbb{L}}{2}[\adr,\bdr]}$.

\begin{proposition} \label{appliedbch}
We have
\begin{equation*}
e^{\frac{\mathbb{L}}{2}[\adr,\bdr]} e^{-\mathbb{L}\adr} \equiv e^{-\mathbb{L}\adr+\frac{\mathbb{L}}{2}[\adr,\bdr]} + \ad\left(e^{-\mathbb{L}\adr}\right) \left(t_{\even} \right) -\frac{ \mathbb{L}}{4}\varepsilon_2\left(e^{-\mathbb{L}\adr}\right) \pmod{B^2}.
\end{equation*}
\end{proposition}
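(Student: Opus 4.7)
The plan is to apply the truncated BCH formula of Lemma \ref{bch}, first swapping the order of the two exponentials to the one suited to the formula, at the cost of picking up a single commutator correction that produces the $\varepsilon_2$ term.

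First I would note that since $\frac{\mathbb{L}}{2}[\adr,\bdr] \in B^1$ (working in the universal enveloping algebra), modulo $B^2$ we may commute the factor $\frac{\mathbb{L}}{2}[\adr,\bdr]$ past $e^{-\mathbb{L}\adr}$ up to a single commutator. Concretely, since $\varepsilon_2 = -\ad([\adr,\bdr])$, one obtains
\begin{equation*}
e^{\frac{\mathbb{L}}{2}[\adr,\bdr]} e^{-\mathbb{L}\adr} \equiv e^{-\mathbb{L}\adr} e^{\frac{\mathbb{L}}{2}[\adr,\bdr]} - \frac{\mathbb{L}}{2}\varepsilon_2(e^{-\mathbb{L}\adr}) \pmod{B^2}.
\end{equation*}
I would justify this by writing $e^{\frac{\mathbb{L}}{2}[\adr,\bdr]} \equiv 1 + \frac{\mathbb{L}}{2}[\adr,\bdr] \pmod{B^2}$ and then using the identity $[\adr,\bdr] \cdot e^{-\mathbb{L}\adr} = e^{-\mathbb{L}\adr} \cdot [\adr,\bdr] + [[\adr,\bdr], e^{-\mathbb{L}\adr}]$.

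Next I would apply Lemma \ref{bch} with $X = -\mathbb{L}\adr$ and $Y = \frac{\mathbb{L}}{2}[\adr,\bdr]$, noting that $Y^2 \in B^2$. Using $\ad(-\mathbb{L}\adr)^{n-1}([\adr,\bdr]) = (-\mathbb{L})^{n-1}\ad(\adr)^n(\bdr)$, this yields
\begin{equation*}
e^{-\mathbb{L}\adr} e^{\frac{\mathbb{L}}{2}[\adr,\bdr]} \equiv e^{-\mathbb{L}\adr + \frac{\mathbb{L}}{2}[\adr,\bdr]} + \ad(e^{-\mathbb{L}\adr})\left(\sum_{n\geq 1} \frac{(-1)^{n-1}B_n^+ \mathbb{L}^n}{2 \cdot n!}\ad(\adr)^n(\bdr)\right) \pmod{B^2}.
\end{equation*}

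The key identification is that this inner sum is exactly $\frac{\mathbb{L}}{4}[\adr,\bdr] + t_{\even}$. For $n=1$ the coefficient is $B_1^+/2 = 1/4$, giving $\frac{\mathbb{L}}{4}[\adr,\bdr]$. For odd $n \geq 3$ one has $B_n^+ = B_n = 0$. For even $n = 2m$, the coefficient $-B_{2m}\mathbb{L}^{2m}/(2\cdot(2m)!)$ equals $\zeta^{\mm}(2m)$ by Euler's formula for even zeta values (in its motivic form). Summing over $m \geq 1$ recovers the definition of $t_{\even}$ from Remark \ref{depth1checkremark}.

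Finally I would combine the pieces: the $\frac{\mathbb{L}}{4}[\adr,\bdr]$ piece, when fed into $\ad(e^{-\mathbb{L}\adr})$, gives $\frac{\mathbb{L}}{4}[e^{-\mathbb{L}\adr}, [\adr,\bdr]] = \frac{\mathbb{L}}{4}\varepsilon_2(e^{-\mathbb{L}\adr})$ by the definition of $\varepsilon_2$. Adding this to the $-\frac{\mathbb{L}}{2}\varepsilon_2(e^{-\mathbb{L}\adr})$ from step one produces the advertised $-\frac{\mathbb{L}}{4}\varepsilon_2(e^{-\mathbb{L}\adr})$. I expect the only subtlety, and the main bookkeeping obstacle, is keeping signs straight across the $B_n \leftrightarrow B_n^+$ conversion and the identification of the even Bernoulli contribution with $\zeta^{\mm}(2m)$; everything else is a direct substitution.
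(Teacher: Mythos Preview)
Your proposal is correct and follows essentially the same approach as the paper's proof: first swap the two exponentials at the cost of a single $\varepsilon_2$ correction, then apply the truncated BCH formula of Lemma~\ref{bch} with $X=-\mathbb{L}\adr$, $Y=\tfrac{\mathbb{L}}{2}[\adr,\bdr]$, separate out the $n=1$ term as another $\varepsilon_2$ contribution, and identify the remaining even Bernoulli terms with $t_{\even}$ via Euler's formula. The only cosmetic difference is that the paper packages the swap step as the general identity $e^Y e^X \equiv e^X e^Y - \ad(e^X)(Y) \pmod{Y^2}$, whereas you expand $e^{\frac{\mathbb{L}}{2}[\adr,\bdr]}$ directly; these are the same computation.
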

\begin{proof}
To apply Lemma \ref{bch} we must work with the conjugate $e^{-\mathbb{L}\adr}e^{\mathbb{L}[\adr,\bdr]/2}$ instead of $e^{\mathbb{L}[\adr,\bdr]/2} e^{-\mathbb{L}\adr}$. They can be compared by the formula
\begin{equation*}
e^Y e^X \equiv e^X e^Y - \ad(e^X) (Y) \pmod{Y^2}
\end{equation*}
with $X = - \mathbb{L}\adr$ and $Y = \mathbb{L} [\adr, \bdr]/2$, to obtain\footnote{Recall that $\varepsilon_2 = -\ad([\adr,\bdr])$ and that the degree in $Y$ corresponds to the $B$-filtration.}
\begin{equation} \label{local1}
e^{\mathbb{L}[\adr,\bdr]/2}e^{-\mathbb{L}\adr} \equiv e^{-\mathbb{L}\adr} e^{\mathbb{L}[\adr,\bdr]/2} - \frac{\mathbb{L}}{2} \varepsilon_2 (e^{-\mathbb{L}\adr}) \pmod{B^2},
\end{equation}
Now apply the BCH formula of Lemma \ref{bch}. This gives
\begin{equation} \label{local2}
e^{-\mathbb{L}\adr}e^{\frac{\mathbb{L}}{2}[\adr,\bdr]} \equiv e^{-\mathbb{L}\adr+\frac{\mathbb{L}}{2}[\adr,\bdr]} + \ad\left(e^{-\mathbb{L}\adr}\right) \left(t_{\even} \right) +\frac{\mathbb{L}}{4}\varepsilon_2\left(e^{-\mathbb{L}\adr}\right) \pmod{B^2}.
\end{equation}
The term involving $\varepsilon_2$ appears by writing the $n=1$ term of the sum over $n$ in Lemma \ref{bch} separately and then rearranging. The remaining terms in the sum over $n$ are only nonzero when $n$ is even due the Bernoulli numbers. This sum can be shown to equal $t_{\even}$ by applying Euler's formula for the even zeta values
\begin{equation*}
\zeta^{\mm}(2n) = - \frac{B_{2n} \mathbb{L}^{2n}}{2(2n)!}.
\end{equation*}
Combining \eqref{local1} and \eqref{local2} gives the result.
\end{proof}

\begin{proof}[Proof of Theorem \ref{depth1check}]
By combining Proposition \ref{Zactingonbeta1} and Proposition \ref{appliedbch}, we obtain
\begin{equation*}
\Fil_B^1 \left(\mu(\mathcal{C}_S^{\mm})(S^{\mm} (\beta^{\mm})) \right) + \frac{\mathbb{L}}{4} \varepsilon_2\left(e^{-\mathbb{L}\adr}\right) \equiv e^{-\mathbb{L}\adr + \frac{\mathbb{L}}{2}[\adr,\bdr]} + \ad\left(e^{-\mathbb{L}\adr}\right)\left( t \right)\pmod{B^2}.
\end{equation*}
But the expression on the right hand side is precisely $\Fil_B^1((\alpha^{\mm})^{-1})$, as computed in Proposition \ref{fil1}.
\end{proof}

\subsection{Interpretation of $\eta$} \label{G2Lvalue}

Comparing Theorem \ref{depth1check} to equation \eqref{qformula} gives a precise value for $\eta$:

\begin{corollary} \label{qvalue}
We have $\eta = \mathbb{L}/8$.
\end{corollary}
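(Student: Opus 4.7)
The plan is to read off $\eta$ directly by comparing the two independent descriptions of $\Fil_B^1\!\left((\alpha^{\mm})^{-1}\right)$ modulo $B^2$ that are available at this point in the paper, namely equation \eqref{qformula} (which came out of expanding $\mu(\Psi) = \exp(2\eta\varepsilon_2)\,\mu(\mathcal{C}_S^{\mm})$ and filtering, and hence is linear in the unknown $\eta$) and Theorem \ref{depth1check} (whose right hand side has a definite coefficient $\mathbb{L}/4$ in front of $\varepsilon_2$).

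First I would rewrite both sides in a common form. From the proof of Theorem \ref{depth1check} and Proposition \ref{Zactingonbeta1} we have
\begin{equation*}
\Fil_B^1\!\left(\mu(\mathcal{C}_S^{\mm})\bigl(S^{\mm}(\beta^{\mm})\bigr)\right) \equiv \Fil_L^1(\mu(\mathcal{C}_S^{\mm}))\bigl(S^{\mm}(\Fil_A^1(\beta^{\mm}))\bigr) \pmod{B^2},
\end{equation*}
so \eqref{qformula} reads
\begin{equation*}
\Fil_B^1\!\left((\alpha^{\mm})^{-1}\right) \equiv \Fil_B^1\!\left(\mu(\mathcal{C}_S^{\mm})\bigl(S^{\mm}(\beta^{\mm})\bigr)\right) + 2\eta\,\varepsilon_2\!\left(e^{-\mathbb{L}\adr}\right) \pmod{B^2},
\end{equation*}
after noting that $S^{\mm}(\Fil_A^0(\beta^{\mm})) = e^{-\mathbb{L}\adr}$. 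Theorem \ref{depth1check} gives the same left hand side but with $\mathbb{L}/4$ in place of $2\eta$.

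Subtracting yields
\begin{equation*}
\left(2\eta - \tfrac{\mathbb{L}}{4}\right) \varepsilon_2\!\left(e^{-\mathbb{L}\adr}\right) \equiv 0 \pmod{B^2}.
\end{equation*}
The only non-trivial step is to verify that $\varepsilon_2(e^{-\mathbb{L}\adr})$ does not vanish modulo $B^2$, so that the scalar coefficient is forced to be zero. Since $\varepsilon_2 = -\ad([\adr,\bdr])$, one computes
\begin{equation*}
\varepsilon_2\!\left(e^{-\mathbb{L}\adr}\right) = -\sum_{k\geq 1}\frac{(-\mathbb{L})^k}{k!}\,\ad([\adr,\bdr])(\adr^k),
\end{equation*}
whose leading term (at $k=1$) is $\mathbb{L}\,\bdr\adr - \mathbb{L}\,\adr\bdr \bmod B^2$, which is plainly non-zero in $\mathcal{P}\langle\langle \adr,\bdr\rangle\rangle / B^2$. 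Hence $2\eta = \mathbb{L}/4$, giving $\eta = \mathbb{L}/8$.

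The main obstacle is really bookkeeping rather than conceptual: one must ensure that the two comparison formulae are stated in genuinely compatible normalisations (the same tangential basepoints, the same convention for $\mu(\mathbf{e}_2) = 2\varepsilon_2$, and the correct filtered piece $\Fil_L^0 \Fil_B^0$ for the term that survives when the $\varepsilon_2$ factor is pulled out). Once these are aligned, the non-vanishing of $\varepsilon_2(e^{-\mathbb{L}\adr})$ modulo $B^2$ closes the argument, and no further input is required.
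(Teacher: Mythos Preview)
Your proposal is correct and follows exactly the paper's approach: compare equation \eqref{qformula} with Theorem \ref{depth1check}, identify the two first terms via Proposition \ref{Zactingonbeta1}, and cancel to force $2\eta = \mathbb{L}/4$ by the non-vanishing of $\varepsilon_2(e^{-\mathbb{L}\adr})$ modulo $B^2$. One tiny slip: the $k=1$ term of your displayed sum is $\mathbb{L}\,[[\adr,\bdr],\adr]$, a degree-$3$ word, not $\mathbb{L}\bdr\adr - \mathbb{L}\adr\bdr$; but it still has $\bdr$-degree $1$ and is non-zero, so your conclusion stands.
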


In this section we explain the significance of this value and how it may be computed by alternative analytic means.

The element $\mathbf{e}_2$ is dual to the cohomology class of a form $\psi_2 \in \Omega^1 (\mathcal{M}_{1,\vec{1}})$ \cite[\S $14$]{hainhodge}. The form $\psi_2$ may be written as the following form defined on the partial cover $\mathbb{C}^\times \times \mathfrak{H} \to \mathcal{M}_{1,\vec{1}}$,
\begin{equation*}
\underline{\mathbb{G}_2}(\xi, \tau) = 2\pi i \mathbb{G}_2 (\tau) d\tau - \frac{1}{2}\frac{d\xi}{\xi},
\end{equation*}
by checking it is preserved under the $\Gamma$-action $(\xi, \tau)\mapsto ((c\tau+d)^{-1}\xi, \gamma(\tau))$. Here $\mathbb{G}_2 (\tau) := -1/24 + \sum_{n \geq 1} \sigma(n)q^n$ is the Eisenstein series of weight $2$. It is a quasimodular form for $\Gamma$ of weight $2$, and satisfies the modular transformation property
\begin{equation*}
    \mathbb{G}_2(\gamma(\tau)) = (c\tau + d)^2 \mathbb{G}_2(\tau) + \frac{i c(c\tau + d)}{4 \pi}.
\end{equation*}

The universal cover of $\mathcal{M}_{1,\vec{1}}$ factors as $\mathbb{C}\times \mathfrak{H}\to \mathbb{C}^\times \times \mathfrak{H} \to \mathcal{M}_{1,\vec{1}}$, where the first map is $\exp\times\id$ and the second identifies $\Gamma$-equivalent points. The coordinates on the universal cover are $(z, \tau)$. Pulling back $\underline{\mathbb{G}_2} (\xi, \tau)$ to the universal cover produces the $B_3$-invariant form
\begin{equation*}
\underline{\mathbb{G}_2} (z, \tau) = 2\pi i \mathbb{G}_2 (\tau) d\tau - \frac{1}{2}dz \in \Omega^1 (\mathbb{C}\times\mathfrak{H}).
\end{equation*}
The integral of $\underline{\mathbb{G}_2}(z,\tau)$ over $\tilde{S}$ is the coefficient of $\mathbf{e}_2$ in $\per(\Psi)$, which is
\begin{equation} \label{numerology}
 \int_{\tilde{S}}\underline{\mathbb{G}_2}(\xi, \tau) = \per(\Psi)[\mathbf{e}_2] =  \per(\eta) =  \frac{2\pi i}{8}.
\end{equation}

This equals the value, at $\tilde{S}$, of a cocycle $r \in Z^1 (B_3, \mathbb{C})$ constructed by Matthes \cite{matthesquasi}. Using our conventions, which differ slightly from those of Matthes,\footnote{Matthes' convention for the $SL_2(\mathbb{Z})$-action on $\mathbb{C}^\times \times \mathfrak{H}$ differs from the one used here by the reciprocal in the first factor. This only changes the cocycle values by a possible sign. He also defines the cocycle in terms of $E_2 = -24\mathbb{G}_2$.} it is defined as the map $r\colon B_3\to \mathbb{C}$ given by
\begin{equation} \label{E2cocycledef}
\gamma\mapsto  \int_{\gamma^{-1} (0,\vec{1}_\infty)}^{(0, \vec{1}_\infty)} \underline{\mathbb{G}_2} (z,\tau),
\end{equation}
where $(0, \vec{1}_\infty)$ is a tangential basepoint on $\mathbb{C}\times\mathfrak{H}$ and elements of $B_3$ act on the universal cover as deck transformations. It can be viewed as the limiting $n=0$ case of a family of cocycles $r_{2n+2} \in Z^1 (B_3, V_{2n}\otimes\mathbb{C})$. These are defined for $n>0$ by sending $\gamma\in B_3$ to value of the period polynomial associated to $\mathbb{G}_{2n+2}$ at $f(\gamma)\in \Gamma$, where $f\colon B_3\to \Gamma$ is defined in Definition \ref{fdef}.

The braid group is a central extension
\begin{equation*}
1\to \mathbb{Z} \to B_3 \xrightarrow{f} \Gamma\to 1.
\end{equation*}
Here the infinite cyclic group is the group of automorphisms of the covering $\exp \times \id\colon \mathbb{C}\times \mathfrak{H} \to \mathbb{C}^\times \times \mathfrak{H}$. This may be identified with the fundamental group of $\mathbb{C}^\times$, which is generated by a counterclockwise loop around $0$ denoted by $\sigma$.

Proposition \ref{kernelf} states that $\ker(f)$ is generated by $\tilde{S}^4$, and hence $\tilde{S}^4 = \sigma^{\pm 1}$. The braid group acts trivially on $\mathbb{C}$ which implies that $r\in \Hom(B_3, \mathbb{C})$. It follows that
\begin{equation} \label{rvalues}
r(\tilde{S}) = \pm \frac{r(\sigma)}{4}.
\end{equation}
It therefore suffices to compute $r(\sigma)$. Matthes' original computation \cite{matthesE2} is unpublished, so we record it again here using our own conventions.

\begin{proposition}
We have
\begin{equation*}
    r(\sigma) = -\frac{2\pi i}{2}.
\end{equation*}
\end{proposition}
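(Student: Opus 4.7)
The plan is to evaluate the cocycle integral defining $r(\sigma)$ directly on the universal cover $\mathbb{C} \times \mathfrak{H}$ by choosing a path adapted to the geometry of the generator $\sigma$ of $\ker(f) \subseteq B_3$. The key observation is that on the total space $\mathbb{C}^\times \times \mathfrak{H}$, the element $\sigma$ corresponds to a counterclockwise loop around $0$ in the fiber $\mathbb{C}^\times$ (at $\tau = \vec{1}_\infty$), and such a loop lifts to the universal cover $\mathbb{C} \times \mathfrak{H}$ via $\exp \times \id$ as translation by $2\pi i$ in the $z$-coordinate.

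First, I would make the deck-transformation action precise: $\sigma$ acts on $\mathbb{C} \times \mathfrak{H}$ by $(z, \tau) \mapsto (z + 2\pi i, \tau)$ (with the sign fixed by the convention on the orientation of the loop around $0$, which must be consistent with the earlier identification $\tilde{S}^4 = \sigma^{\pm 1}$ used in \eqref{rvalues}). Consequently, $\sigma^{-1}(0, \vec{1}_\infty) = (-2\pi i, \vec{1}_\infty)$.

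Next, I would choose as path of integration the straight vertical segment in $\mathbb{C} \times \mathfrak{H}$ from $(-2\pi i, \vec{1}_\infty)$ to $(0, \vec{1}_\infty)$, parametrised by $t \mapsto (-2\pi i + 2\pi i t, \vec{1}_\infty)$ for $t \in [0,1]$. Along this path $\tau$ is constant, so $d\tau = 0$ and the $2\pi i\, \mathbb{G}_2(\tau)\, d\tau$ part of $\underline{\mathbb{G}_2}(z,\tau)$ vanishes. Only the $-\tfrac{1}{2}\, dz$ piece contributes, giving
\begin{equation*}
r(\sigma) = \int_{-2\pi i}^{0} -\frac{1}{2}\, dz = -\frac{1}{2}(0 - (-2\pi i)) = -\pi i = -\frac{2\pi i}{2}.
\end{equation*}

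The only real subtlety—and the main obstacle—is to pin down the sign convention: one must check that with our convention for $\sigma$ and for the orientation of loops on $\mathbb{C}^\times$, the lift is indeed $(z,\tau)\mapsto (z+2\pi i, \tau)$ rather than its inverse, and that the integral is taken from $\sigma^{-1}(\text{basepoint})$ to the basepoint as prescribed by \eqref{E2cocycledef}. Once this is verified (by tracing through the definition of $\sigma$ as the deck transformation associated with the $\mathbb{G}_m$-fiber of $\mathcal{M}_{1,\vec{1}} \to \mathcal{M}_{1,1}$ and the induced action on the universal cover), everything else is a one-line computation. One can then cross-check the result against Corollary~\ref{qvalue} via \eqref{rvalues}: indeed $r(\tilde{S}) = \pm r(\sigma)/4 = \mp \pi i/4$, which is consistent with $\per(\eta) = 2\pi i/8$ from \eqref{numerology}, fixing the sign as $+$.
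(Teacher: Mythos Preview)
Your proposal is correct and takes essentially the same approach as the paper: identify the deck-transformation action of $\sigma$ as $(z,\tau)\mapsto(z+2\pi i,\tau)$, observe that $d\tau=0$ along the resulting path so only the $-\tfrac{1}{2}\,dz$ term contributes, and evaluate the integral directly. Your additional discussion of the sign convention and the cross-check against Corollary~\ref{qvalue} is more thorough than the paper's terse presentation, but the argument is the same.
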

\begin{proof}
We simply compute the action on the universal covering space and then apply \eqref{E2cocycledef}. The generator $\sigma$ acts on $\mathbb{C}\times \mathfrak{H}$ via $\sigma(z, \tau) = (z+2\pi i, \tau)$. Since it fixes the second coordinate, we have
\begin{equation*}
    r(\sigma) = \int_{(-2\pi i, \vec{1}_\infty)}^{(0, \vec{1}_\infty)} \left(2\pi i \mathbb{G}_2 (\tau)d\tau-\frac{1}{2} dz \right) = -\frac{1}{2}\int_{-2\pi i}^0 dz = -\frac{2\pi i}{2}.
\end{equation*}
\end{proof}

By \eqref{rvalues} it follows that $r(\tilde{S}) = \pm (2\pi i)/8$. Comparing to \eqref{numerology} shows that the two values agree up to a choice of sign. This reason for this is because the coefficient of $\mathbf{e}_{2n+2}$ in $\per(\Fil_L^1(\Psi))$ is $r_{2n+2} (\tilde{S})$, for $n> 0$, and the coefficient of $\mathbf{e}_2$ is the value at $\tilde{S}$ of a cocycle for $B_3$ associated to $\mathbb{G}_2$ that must be compatibly normalised in order for $\Psi$ to be grouplike. Our choice of normalisation, together with the fact that $H^1 (B_3, \mathbb{C})$ is one-dimensional, implies that this cocycle is $r$.

\section{Coefficients in linear combinations of iterated Eisenstein integrals} \label{coefficientsection}

The proof of Theorem \ref{maintheorem} is nonconstructive. However, it is possible to determine some of the coefficients in a linear combination of iterated Eisenstein integrals equal to a given multiple zeta value using additional information. One such source of information is the $f$-alphabet decomposition \cite{browndecomposition}\cite[\S $22$]{mmv}. This assigns an element of the shuffle algebra $\mathbb{Q}[\mathbb{L}^\pm]\otimes \mathbb{Q}\langle f_{2n+1}:n\geq 1 \rangle$ to a given motivic multiple zeta value, or to a ``mixed Tate'' linear combination of motivic iterated Eisenstein integrals. Although this depends on some choices, the highest coradical-degree piece in this decomposition is canonical.

In this section we use this idea to determine the coefficient of the highest-length iterated Eisenstein integrals in the decompositions of some example MZVs. This uses both the rich combinatorics of the $f$-alphabet decomposition for motivic MZVs and the modular theory of the canonical cocycle $\mathcal{C}^{\mm}$. 

\subsection{$f$-alphabet decomposition}

In this section we define the notion of a decomposition for (mixed Tate) motivic periods and give a formula for the decomposition of motivic iterated Eisenstein integrals to leading order in the coradical filtration $C_\bullet \mathcal{P}^{\mm}_{\mathcal{H}}$ \cite[\S $2.5$, \S $3.8$]{motivicperiods}.

\begin{definition}[$f$-alphabet decomposition]
An $f$-alphabet decomposition is an isomorphism of $\mathbb{G}_m$-modules $\varphi\colon \mathcal{P}^{\mm}_{\MT} \xrightarrow{\sim} \mathbb{Q}[\mathbb{L}^\pm] \otimes \mathbb{Q}\langle f_3, f_5, \dots \rangle$. It is \emph{normalised} if $\varphi(\zeta^{\mm}(2n+1)) = f_{2n+1}$ for all $n\geq 1$.
\end{definition}

There is a more general notion of a decomposition map \cite[Definition $3.10$]{motivicperiods}, which is a canonical isomorphism of $S^{\dR}_{\mathcal{H}}$-modules $\Phi\colon \gr^C \mathcal{P}^{\mm}_{\mathcal{H}} \xrightarrow{\sim} \mathcal{P}^{\mm}_{\mathcal{H}^{ss}}\otimes {\mathbb{Q}\langle \gr_1^C \mathcal{O}(U^{\dR}_{\mathcal{H}}) \rangle}$. Here $G_{\mathcal{H}}^{\dR} = \Aut^{\otimes}_{\mathcal{H}} (\omega^{\dR}_{\mathcal{H}})$, $U_{\mathcal{H}}^{\dR}$ is its unipotent radical, $S^{\dR}_{\mathcal{H}}$ its reductive quotient, and $\mathcal{H}^{ss} \hookrightarrow \mathcal{H}$ the full Tannakian subcategory of semisimple objects. See \cite[\S $3$]{motivicperiods}.

The fully faithful functor $\omega^{\mathcal{H}}$ embeds $\MT \hookrightarrow \mathcal{H}$ and induces an inclusion $\mathcal{P}^{\mm}_{\MT} \hookrightarrow \mathcal{P}^{\mm}_{\mathcal{H}}$ (see \S \ref{tannakianperiods} and Remark \ref{motivicperiodinclusionremark}). As described in \cite[\S $5.4$]{motivicperiods} this restricts to a canonical isomorphism of $S^{\dR}_{\MT} (= \mathbb{G}_m)$-modules 
\begin{equation*}
    \Phi\colon \gr^C \mathcal{P}^{\mm}_{\MT} \xrightarrow{\sim} \mathbb{Q}[\mathbb{L}^\pm] \otimes \mathbb{Q} \langle f_3, f_5, \dots \rangle.
\end{equation*}
A choice of splitting of the coradical filtration $C_\bullet \mathcal{P}^{\mm}_{\MT}$ (for example, using the Hoffman elements \cite{brownmtm}) determines an isomorphism $\mathcal{P}^{\mm}_{\MT} \cong \gr^C \mathcal{P}^{\mm}_{\MT}$ and hence an isomorphism $\varphi$ as above. Different choices of splittings for the coradical filtration will determine different choices of $\varphi$; the map $\Phi = \gr^C \varphi$, however, is canonical.

The coradical filtration on motivic periods is connected to the length filtration on iterated integrals as follows:

\begin{remark} \label{lengthcoradical}
By \cite[Theorem $22.2$]{mmv} the element $\mathcal{C}_S^{\mm} = \int_S^{\mm} \in \mathcal{U}_{1,1}^{\dR} (\mathcal{P}^{\mm}_{\mathcal{H}})$ may be interpreted as a morphism of filtered $\mathbb{Q}$-algebras
\begin{equation*} 
    \mathcal{C}_S^{\mm} = \int_S^{\mm} \colon L_\bullet \mathcal{O}(\mathcal{U}_{1,1}^{\dR}) \to C_\bullet \mathcal{P}^{\mm}_{\mathcal{H}},
\end{equation*}
where the length filtration $L_\bullet \mathcal{O}(\mathcal{U}_{1,1}^{\dR})$ is defined by
\begin{equation*}
    L_r\mathcal{O}(\mathcal{U}_{1,1}^{\dR})=\langle [E_{2n_1+2}(0)\vert \dots \vert E_{2n_s+2}(0)]: s \leq r \rangle.
\end{equation*}
\end{remark}

Remark \ref{lengthcoradical} is compatible with the following formula for the $f$-alphabet decomposition of motivic iterated Eisenstein integrals.

\begin{lemma}\label{falphabet}
Let $n_1, \dots, n_s > 0$ and let $\varphi$ be any normalised $f$-alphabet decomposition. Let $\Phi = \gr^C \varphi$. Then $\Phi \colon \Co(\mu(\Psi)) \to \mathbb{Q}[\mathbb{L}^\pm] \otimes \mathbb{Q}\langle f_3, f_5, \dots \rangle$ and is given explicitly on the integrals of the lowest and highest weight vectors by
\begin{align*}
\int_S^{\mm}  [E_{2n_1+2}(0)\vert \dots \vert E_{2n_s+2}(0)] & \mapsto \left(-1 \right)^s \frac{(2n_1)! \cdots (2n_s)!}{2^s} \frac{f_{2n_1 + 1} \cdots f_{2n_s + 1}}{\mathbb{L}^{2n_1 + \dots + 2n_s}}\\
\int_S^{\mm}  [E_{2n_1+2}(2n_1)\vert \dots \vert E_{2n_s+2}(2n_s)] & \mapsto \frac{(2n_1)! \cdots (2n_s)!}{2^s} f_{2n_s + 1} \cdots f_{2n_1 + 1}.
\end{align*}
\end{lemma}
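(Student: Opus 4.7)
The plan is to exploit the compatibility of $\int_S^{\mm}$ with the length filtration on $\mathcal{O}(\mathcal{U}_{1,1}^{\dR})$ and the coradical filtration on motivic periods (Remark \ref{lengthcoradical}), together with Brown's explicit length-$1$ formula \eqref{brownformula}. Since $\Co(\mu(\Psi)) \subseteq \mathcal{P}^{\mm}_{\MT}$ by Theorem \ref{maintheorem} (equivalently \S\ref{coeffsmuPsi}), any normalised $f$-alphabet decomposition $\varphi$ is well-defined on the image of $\int_S^{\mm}$. The composition $\varphi \circ \int_S^{\mm}$ carries $L_r \mathcal{O}(\mathcal{U}_{1,1}^{\dR})$ into $C_r \bigl(\mathbb{Q}[\mathbb{L}^\pm]\otimes\mathbb{Q}\langle f_3,f_5,\dots\rangle\bigr)$; passing to $\gr^C$ yields a length-preserving map sending the bar element $[E_{2n_1+2}(b_1)\vert\cdots\vert E_{2n_s+2}(b_s)]$ to a length-$s$ word in the $f$-alphabet with coefficient in $\mathbb{Q}[\mathbb{L}^\pm]$.

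To identify the specific word I would use the compatibility of the two coactions: the motivic coaction on $\mathcal{P}^{\mm}_{\mathcal{H}}$ pulled back through $\int_S^{\mm}$ agrees to leading coradical order with the Goncharov-type coaction on iterated integrals, while on the $f$-alphabet side the motivic coaction is deconcatenation. Iterating the reduced coaction $s-1$ times on a length-$s$ iterated integral produces, at leading order, a tensor product of $s$ length-$1$ integrals $\int_S^{\mm} E_{2n_i+2}(b_i)$, the order of the factors being dictated by the direction in which the coaction strips off the bar word. The length-$1$ inputs are then read off from Brown's formula \eqref{brownformula}: the coefficient of $\mathbf{e}_{2n+2}\mathsf{X}^{2n}/\mathbb{L}^{2n}$ is $\tfrac{(2n)!}{2}\zeta^{\mm}(2n+1)$ and that of $\mathbf{e}_{2n+2}\mathsf{Y}^{2n}$ is $-\tfrac{(2n)!}{2}\zeta^{\mm}(2n+1)$, which via the duality between $E_{2n+2}(b)$ and $\mathbf{e}_{2n+2}\mathsf{X}^a\mathsf{Y}^b$ ($a+b=2n$) and $\varphi(\zeta^{\mm}(2n+1)) = f_{2n+1}$ yields the length-$1$ cases of the lemma. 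The opposite orderings in the two formulas reflect the fact that the highest- and lowest-weight vectors sit at opposite ends of their $SL_2^{\dR}$-orbits and are interchanged, up to a power of $\mathbb{L}$, by $S^{\mm}$, which also reverses the direction in which the iterated coaction decomposes the bar word.

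The main obstacle is the bookkeeping required to align the sign and $\mathbb{L}$-power conventions in two places: the pairing between the bar complex of $\mathcal{U}_{1,1}^{\dR}$ and its enveloping algebra, used to convert \eqref{brownformula} into statements about $\int_S^{\mm} E_{2n+2}(b)$; and the compatibility between the Goncharov coaction on iterated Eisenstein integrals and the deconcatenation coproduct on $\mathbb{Q}\langle f_3,f_5,\dots\rangle$. The underlying structural inputs are standard, but checking that the iterated coaction produces the $f$-letters in the orders $f_{2n_1+1}\cdots f_{2n_s+1}$ in the lowest-weight case and $f_{2n_s+1}\cdots f_{2n_1+1}$ in the highest-weight case requires tracking consistently through both dualities; this is where I would expect the bulk of the work to lie.
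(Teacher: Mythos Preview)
Your plan is workable but takes a different and more laborious route than the paper. You propose to iterate the reduced coaction down to length-$1$ pieces and then invoke the length-$1$ formula \eqref{brownformula} for each factor; this amounts to re-deriving the content of \cite[Theorem~22.2]{mmv} rather than citing it. The paper instead proceeds in two steps: first it cites \cite[Theorem~22.2]{mmv} directly for the highest-weight case, and then it proves an \emph{exact} identity
\[
\int_S^{\mm}[E_{2n_1+2}(0)\vert\cdots\vert E_{2n_s+2}(0)]
=\frac{(-1)^s}{\mathbb{L}^{2n_1+\dots+2n_s}}
\int_S^{\mm}[E_{2n_s+2}(2n_s)\vert\cdots\vert E_{2n_1+2}(2n_1)]
\]
relating the lowest-weight integral to a reversed highest-weight one. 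This identity is obtained by combining the dual $S^{\mm}$-action on $\mathcal{O}(\mathcal{U}_{1,1}^{\dR})$ with the cocycle equation for $S^2=-I$, which gives $S^{\mm}(\mathcal{C}_S^{\mm})=(\mathcal{C}_S^{\mm})^{-1}$, and then applying the reversal-of-paths (antipode) formula. The order reversal and the sign $(-1)^s$ thus come cleanly from path reversal, not from any statement about the coaction ``stripping in the opposite direction''; your explanation of the reversed ordering via $S^{\mm}$ alone is imprecise on this point and would need to be replaced by the cocycle-plus-antipode argument to go through. The advantage of the paper's route is that all the delicate sign and $\mathbb{L}$-power bookkeeping you anticipate is absorbed into one transparent equality, after which both formulas follow from a single citation.
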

\begin{proof}
We prove the equality
\begin{equation} \label{Sactionintegrals}
\int_S^{\mm}  [E_{2n_1+2}(0)\vert \dots \vert E_{2n_s+2}(0)] = \frac{(-1)^s}{\mathbb{L}^{2n_1 + \dots + 2n_s}} \int_S^{\mm}  [E_{2n_s+2}(2n_s)\vert \dots \vert E_{2n_1+2}(2n_1)],
\end{equation}
which can be shown by making use of the action of $SL_2$ on $\mathcal{O}(\mathcal{U}_{1,1}^{\dR})$. The result follows by applying  \cite[Theorem $22.2$]{mmv} to the right hand side of \eqref{Sactionintegrals}, although some modifications must be made (namely, the coefficient needs to be inverted and the order of letters reversed).

To proceed with showing \eqref{Sactionintegrals}, define elements $v_{\mathsf{X}}, v_{\mathsf{Y}}\in\mathcal{O}(\mathcal{U}_{1,1}^{\dR})$ by $v_{\mathsf{X}}:= [E_{2n_1+2}(0)\vert \dots \vert E_{2n_s+2}(0)]$ and $v_{\mathsf{Y}}:= [E_{2n_1+2}(2n_1)\vert \dots \vert E_{2n_s+2}(2n_s)]$. They are dual to the elements $\mathbf{e}_{2n_1+2} \mathsf{X}_1^{2n_1} \cdots \mathbf{e}_{2n_s+2} \mathsf{X}_s^{2n_s} , \mathbf{e}_{2n_1+2} \mathsf{Y}_1^{2n_1} \cdots \mathbf{e}_{2n_s+2} \mathsf{Y}_s^{2n_s} \in \mathcal{U}_{1,1}^{\dR} (\mathbb{Q})$ respectively. The action of $S^{\mm}$ on $\mathcal{U}_{1,1}^{\dR}(\mathcal{P}^{\mm}_{\mathcal{H}})$ sends
\begin{equation*}
\mathbf{e}_{2n_1+2} \mathsf{X}_1^{2n_1} \cdots \mathbf{e}_{2n_s+2} \mathsf{X}_s^{2n_s} \mapsto \mathbb{L}^{2n_1 + \dots + 2n_s} \mathbf{e}_{2n_1+2} \mathsf{Y}_1^{2n_1} \cdots \mathbf{e}_{2n_s+2} \mathsf{Y}_s^{2n_s}.
\end{equation*}
The dual action of $S^{\mm}$ on $\mathcal{O}(\mathcal{U}_{1,1}^{\dR})\otimes \mathcal{P}^{\mm}_{\mathcal{H}}$ therefore satisfies
\begin{equation}\label{Sactiondual}
S^{\mm}\colon v_{\mathsf{Y}} \mapsto \mathbb{L}^{2n_1 + \dots + 2n_s} v_{\mathsf{X}}.
\end{equation}
Applying $\mathcal{C}_S^{\mm} \in \mathcal{U}_{1,1}^{\dR} (\mathcal{P}^{\mm}_{\mathcal{H}}) = \Hom(\mathcal{O}(\mathcal{U}_{1,1}^{\dR}), \mathcal{P}^{\mm}_{\mathcal{H}})$ to both sides of \eqref{Sactiondual} gives
\begin{equation} \label{coeffsofCs}
\mathcal{C}_S^{\mm}(v_{\mathsf{Y}}\vert _{S^{\mm}}) = \mathbb{L}^{2n_1 + \dots + 2n_s} \mathcal{C}_S^{\mm} (v_{\mathsf{X}}).
\end{equation}

By definition, $\mathcal{C}_S^{\mm}(v_{\mathsf{Y}}\vert _{S^{\mm}}) = (S^{\mm}(\mathcal{C}_S^{\mm}))(v_{\mathsf{Y}})$. The cocycle equation for $\mathcal{C}^{\mm}$ associated to the equation $S^2 = -I$ implies that $S^{\mm}(\mathcal{C}_S^{\mm}) = (\mathcal{C}_S^{\mm})^{-1}$. The element $(\mathcal{C}_S^{\mm})^{-1}$ is the generating series for motivic iterated integrals along $S^{-1}$, where we use the choice of splitting that defines $\mathcal{C}_S^{\mm}$. Equation \eqref{coeffsofCs} therefore reads
\begin{equation*}
\int_{S^{-1}}^{\mm} v_{\mathsf{Y}} = \mathbb{L}^{2n_1 + \dots + 2n_s} \int_S^{\mm} v_{\mathsf{X}}.
\end{equation*}
Applying the reversal of paths formula gives
\begin{equation*}
\int_{S}^{\mm} \text{reverse}(v_{\mathsf{Y}}) = (-1)^s \mathbb{L}^{2n_1 + \dots + 2n_s} \int_S^{\mm} v_{\mathsf{X}},
\end{equation*}
where $\text{reverse}(v_{\mathsf{Y}}):=[E_{2n_s+2}(2n_s)\vert \dots \vert E_{2n_1+2}(2n_1)]$. This is precisely equation \eqref{Sactionintegrals}.
 Taking $f$-alphabet decompositions gives the result.
\end{proof}

\begin{remark} \label{falphabetlemmaremark}
Lemma \ref{falphabet} is actually valid for all $n_1, \dots, n_s > 0$, even when the words $[E_{2n_1+2}(0)\vert \dots \vert E_{2n_s+2}(0)], [E_{2n_1+2}(2n_1)\vert \dots \vert E_{2n_s+2}(2n_s)] \in \mathcal{O}(\mathcal{U}_{1,1}^{\dR})$ are not contained in the mixed Tate subalgebra $\mathcal{O}(\Ugeom) \subseteq \mathcal{O}(\mathcal{U}_{1,1}^{\dR})$. This implies that the leading-order term in the coradical filtration of \emph{any} motivic iterated Eisenstein integral is an element of $\mathcal{P}^{\mm}_{\MT}$. With these more general integrals, however, non-mixed Tate motivic periods can appear lower down in the coradical filtration.
\end{remark}

\subsection{Cocycle equations}

To determine coefficients in linear combinations of iterated Eisenstein integrals we must understand relations between such integrals. The \emph{cocycle equations} for $\mathcal{C}^{\mm} \in Z^1 (\Gamma, \mathcal{U}_{1,1}^{\dR} (\mathcal{P}^{\mm}_{\mathcal{H}}))$ determine many of these. Set $U = S T \in \Gamma$; then
\begin{equation*}
    U^{\mm} = S^{\mm} T^{\mm} = \left( \begin{matrix} 0 & -\mathbb{L} \\ \mathbb{L}^{-1} & 1 \end{matrix} \right) \in SL_2^{\dR}(\mathcal{P}^{\mm}_{\MT}).
\end{equation*}
It is well-known that $S^2 = U^3 = -I$. Furthermore $\mathcal{C}_{-I}^{\mm} = 1$ because all modular forms for $\Gamma$ have even weight. The canonical cocycle therefore satisfies
\begin{align}
    \mathcal{C}_S^{\mm}\vert_{S^{\mm}} \cdot \mathcal{C}_S^{\mm} & = 1 \label{Seq}\\
    \mathcal{C}_U^{\mm} \vert_{(U^{\mm})^2} \cdot  \mathcal{C}_U^{\mm} \vert_{U^{\mm}} \cdot \mathcal{C}_U^{\mm} & = 1, \label{Ueq}
\end{align}
where $\mathcal{C}_U^{\mm} = \mathcal{C}_{S T}^{\mm} = \mathcal{C}_S^{\mm}\vert_{T^{\mm}}\cdot \mathcal{C}_T^{\mm}$.

Taking the coefficient of $\mathbf{e}_{2n_1+2} \mathsf{X}_1^{2n_1 - b_1} \mathsf{Y}_1^{b_1} \cdots \mathbf{e}_{2n_s+2}\mathsf{X}_s^{2n_s - b_s} \mathsf{Y}_s^{b_s}$ in equations \eqref{Seq} and \eqref{Ueq} produces relations between iterated Eisenstein integrals.

Equation \eqref{Ueq}, when written out in terms of $\mathcal{C}_S^{\mm}$ and $\mathcal{C}_T^{\mm}$, becomes rapidly complex as the length $s$ increases. However, since $\Co(\mathcal{C}_T^{\mm}) = \mathbb{Q}[\mathbb{L}]$, we may focus instead on only the leading-order terms in the coradical filtration. Doing so gives
\begin{equation} \label{Umodeq}
    \mathcal{C}_S^{\mm}\vert_{(TSTST)^{\mm}} \cdot \mathcal{C}_S^{\mm} \vert_{(TST)^{\mm}} \cdot \mathcal{C}_S^{\mm} \vert_{T^{\mm}} = 1 \in \mathcal{U}_{1,1}^{\dR}(\gr^C \mathcal{P}^{\mm}_{\mathcal{H}}).
\end{equation}
For our purposes this is just as useful and easier to compute; for example, taking the coefficient of a length $2$ word in \eqref{Ueq} produces a relation with $21$ terms, whereas taking the coefficient of the same word in \eqref{Umodeq} produces a relation modulo $C_1 \mathcal{P}^{\mm}_{\mathcal{H}}$ with only $6$ terms.

\subsection{Examples}

We now consider the two example equations given in the introduction to this paper and show how their coefficients may be determined using the above machinery. The essential idea in each case is to first use Theorem \ref{maintheorem} to obtain the (fairly small) finite set of iterated Eisenstein integrals that may appear in a linear combination equal to the given MZV. We then use relations between iterated Eisenstein integrals coming from the cocycle equations to exhibit linear dependencies in this set and reduce its size further. Finally we form a general linear combination of these remaining terms, apply Lemma \ref{falphabet} and compare coefficients.

\begin{remark}[Notation]
To ease notation we define
\begin{equation*}
    I_{2n_1+2, \dots, 2n_s +2}^{b_1, \dots, b_s} := \int_S^{\mm} [E_{2n_1+2}(b_1)\vert \dots \vert E_{2n_s+2}(b_s)].
\end{equation*}
\end{remark}

\begin{example} \label{example1}
Consider the expression for $\zeta(3)$ given in the introduction:
\begin{align*}
\zeta(3) &= -(2\pi i)^3 \int_0^{\vec{1}_\infty} \mathbb{G}_4(\tau) d\tau \\
&= -(2\pi i)^2 \int_S E_4(0).
\end{align*}
We illustrate how to derive this equation from the motivic theory. Theorem \ref{maintheorem} implies that $\zeta^{\mm}(3)$ can be written as a $\mathbb{Q}$-linear combination of the motivic periods
$\mathbb{L}^2 I_4^0$, $\mathbb{L} I_4^1$,   $I_4^2$ and $\mathbb{L}^3$. By Remark \ref{lengthcoradical} we work modulo $C_0 \mathcal{P}^{\mm}_{\MT}$, which kills $\mathbb{L}^3 \in C_0 \mathcal{P}^{\mm}_{\MT}$. We shortly explain why there can be no $\mathbb{L}^3$ term in the full linear combination, so in fact it makes no difference whether we work modulo $C_0 \mathcal{P}^{\mm}_{\MT}$ or not in this case.

Taking the coefficient of $\mathbf{e}_4 \mathsf{X}^2$ in \eqref{Seq} implies that $I_4^2 = -\mathbb{L}^2 I_4^0$. Taking the coefficient of $\mathbf{e}_4 \mathsf{X}\mathsf{Y}$ in \eqref{Umodeq} implies that $I_4^1 \equiv 0 \pmod{C_0 \mathcal{P}^{\mm}_{\MT}}$.\footnote{This concisely expresses that the term involving a zeta value is a coboundary and the image of the cocycle of $\mathbb{G}_{4}$ in cohomology is rational up to a power of $\mathbb{L}$ \cite[Equation $(7.8)$]{mmv}.}  Therefore we may write $\zeta^{\mm}(3) \equiv A \mathbb{L}^2 I_4^0 \pmod{C_0 \mathcal{P}^{\mm}_{\MT}}$, for some $A \in\mathbb{Q}$.
In fact, this expression holds ``on the nose'', without working modulo $C_0 \mathcal{P}^{\mm}_{\MT}$, because the undetermined term in $C_0 \mathcal{P}^{\mm}_{\MT}$ must have weight $3$ and so must be a multiple of $\mathbb{L}^3$, which is anti-invariant under the real Frobenius $F_\infty$. Therefore
\begin{equation} \label{zeta3ex}
\zeta^{\mm}(3) = A \mathbb{L}^2 I_4^0, \quad \text{for some } A \in\mathbb{Q}.
\end{equation}
We now apply the normalised decomposition map $\varphi$ to \eqref{zeta3ex}. Lemma \ref{falphabet} gives
\begin{equation*}
\varphi\left(I_4^0\right) = -\frac{2!}{2} \frac{f_3}{\mathbb{L}^2} + \lambda \mathbb{L}^3, \quad \text{for some } \lambda \in \mathbb{Q}.
\end{equation*}
Comparing coefficients in \eqref{zeta3ex} determines $A = -1$ and $\lambda = 0$.
\end{example}

\begin{remark}
A similar calculation may be used to show that
\begin{equation*}
    \zeta^{\mm}(2n+1) = -\frac{2}{(2n)!} \mathbb{L}^{2n} I_{2n+2}^0 = \frac{2}{(2n)!} I_{2n+2}^{2n},
\end{equation*}
which recovers the completed $L$-values $\Lambda(\mathbb{G}_{2n+2},1)$ and $\Lambda(\mathbb{G}_{2n+2},2n+1)$. Combining this with the relation obtained by taking the coefficient of $\mathbf{e}_{2n+2} \mathsf{X}^{2n-b}\mathsf{Y}^b$ in equation \eqref{Umodeq} also shows that
\begin{equation*}
    \mathbb{L}^{2n-b} I_{2n+2}^b \equiv (\delta_{2n-b} - \delta_b) \frac{(2n)!}{2} \zeta^{\mm}(2n+1) \pmod{C_0 \mathcal{P}^{\mm}_{\MT}},
\end{equation*}
where $\delta_c = \delta_{c,0}$ is the Kronecker delta. In particular, this implies that the ``middle'' values $I_{2n+2}^b$ for $1\leq b \leq 2n-1$ are powers of $\mathbb{L}$, which is of course also well-known. From this point onward we therefore do not distinguish between single integrals of Eisenstein series and the associated single zeta value.
\end{remark}

The second example gives an expression for $\zeta^{\mm}(3,5) \in \mathfrak{D}_2 \Zm \backslash \mathfrak{D}_1 \Zm$ as a double Eisenstein integral.

\begin{example} \label{example2}
We have the following expression for $\zeta(3,5)$ \cite[Example $7.2$]{brownihara}\footnote{There is a difference in normalisation between Brown's formulae and ours; namely $\Lambda(\mathbb{G}_{k_1}, \dots, \mathbb{G}_{k_s};b_1, \dots, b_s) = i^{b_1 +\dots + b_s} (2\pi i)^{-(b_1 + \dots + b_s)} \int_S [E_{k_1} (b_1-1) \vert \dots \vert E_{k_s} (b_s-1)]$.}:
\begin{align*}
\zeta(3,5) &= -\frac{5}{12} (2\pi i)^8 \int_S \mathbb{G}_6 (\tau_1) d\tau_1 \mathbb{G}_4 (\tau_2) d\tau_2 + \frac{503}{2^{13} 3^5 5^2 7}(2\pi i)^8 \\
& = -\frac{5}{12} (2\pi i)^6 \int_S [E_6(0)\vert E_4(0)] + \frac{503}{2^{13} 3^5 5^2 7}(2\pi i)^8.
\end{align*}
We explain how the coefficient $-5/12$ of the longest iterated Eisenstein integral may be computed abstractly. We first determine the possible such integrals that may appear in a linear combination giving $\zeta^{\mm} (3,5)$. Theorem \ref{maintheorem} implies that these must be iterated integrals of length at most $2$ and total modular weight at most $10$. By Remark \ref{lengthcoradical} we may work modulo $C_1 \mathcal{P}^{\mm}_{\MT}$, giving
\begin{equation} \label{zeta35initial}
    \zeta^{\mm}(3,5) \equiv \sum_{\substack{(n_1, n_2) \in \left\{(1,1), (1,2), (2,1)\right\} \\ 0\leq b_i \leq 2n_i}} \lambda I_{2n_1+2, 2n_2+2}^{b_1, b_2} \pmod{C_1 \mathcal{P}^{\mm}_{\MT}},
\end{equation}
where $\lambda \in \mathbb{Q}\mathbb{L}^{6-b_1 - b_2}$. The motivic MZV $\zeta^{\mm}(3,5)$ has weight $8$, and the weight $8$ subspace\footnote{Recall that the weight filtration is a grading on motivic MZVs.} of $C_1 \mathcal{P}^{\mm}_{\MT}$ is spanned by $\zeta^{\mm}(8)$, which is a rational multiple of $\mathbb{L}^8$. Thus, the only term undetermined modulo $C_1 \mathcal{P}^{\mm}_{\MT}$ is a multiple of $\mathbb{L}^8 \in C_0 \mathcal{P}^{\mm}_{\MT}$. 

Let $\varphi$ be some choice of normalised $f$-alphabet decomposition and $\Phi = \gr^C \varphi$. The decomposition algorithm \cite[\S $5$]{browndecomposition} implies that
\begin{equation} \label{Phizeta35}
\Phi (\zeta^{\mm}(3,5)) \equiv -5 f_5 f_3 \pmod{C_0 \mathcal{P}^{\mm}_{\MT}}.
\end{equation}
The longest word in this decomposition is $f_5 f_3$, with coefficient $-5$. The coaction on $\mathbb{Q}\langle f_3, f_5, \dots \rangle \otimes_{\mathbb{Q}} \mathbb{Q}[\mathbb{L}^\pm]$ is deconcatenation, and we compute
\begin{equation} \label{coactionzeta35}
    \Delta(\Phi(\zeta^{\mm}(3,5)) = -5\otimes f_5 f_3 - 5 f_5 \otimes f_3 - 5 f_5 f_3 \otimes 1.
\end{equation}
In particular we have $\Delta'(\Phi (\zeta^{\mm}(3,5))) \equiv -5 f_5 \otimes f_3 \neq 0 \pmod{C_1 \mathcal{P}^{\mm}_{\MT}}$, where $\Delta' := \Delta - \id\otimes 1 - 1\otimes \id$.

We must now compute the same expression for the iterated Eisenstein integrals on the right hand side of \eqref{zeta35initial}. To leading order in the coradical filtration $C_\bullet \mathcal{P}^{\mm}_{\mathcal{H}}$, the coaction on motivic iterated Eisenstein integrals is deconcatenation \cite[Theorem $22.2$]{mmv}. This can be written explicitly as follows:
\begin{align*}
     \Delta \left(I_{2n_1+2, 2n_2 +2}^{b_1, b_2}  \right) & \equiv 1\otimes I_{2n_1+2, 2n_2 +2}^{b_1, b_2}  + I_{2n_1+2, 2n_2 +2}^{b_1, b_2}  \otimes 1 \\
     & + I_{2n_1 +2}^{b_1}\otimes I_{2n_2+2}^{b_2}   \pmod{C_1 \mathcal{P}^{\mm}_{\mathcal{H}}}.
\end{align*}
Note that $I_{2n+2}^b\in \mathcal{P}^{\mm}_{\MT}$ is a rational multiple of:
\begin{equation*}
    \begin{cases}
    \mathbb{L}^{b-2n} \zeta^{\mm}(2n+1)\in C_1 \mathcal{P}^{\mm}_{\MT}, & b = 0 \text{ or } 2n; \\
    \mathbb{L}^{b+1} \in C_0 \mathcal{P}^{\mm}_{\MT}, & \text{otherwise}.
    \end{cases}
\end{equation*}
Therefore $\Delta'(I_{2n_1+2, 2n_2 +2}^{b_1, b_2}) \equiv 0 \pmod{C_1 \mathcal{P}^{\mm}_{\mathcal{H}}}$ unless $(b_1, b_2)$ is one of $(0,0)$, $(0, 2n_2)$, $(2n_1, 0)$ or $(2n_1, 2n_2)$. In any of these cases we have
\begin{equation*}
    \Delta'(\Phi(I_{2n_1+2, 2n_2 +2}^{b_1, b_2})) =(-1)^{\delta_{b_1} + \delta_{b_2}} \frac{(2n_1)! (2n_2)!}{4}\mathbb{L}^{b_1+b_2 - 2n_1 - 2n_2} (f_{2n_1 +1} \otimes f_{2n_2+1}).
\end{equation*}

Thus the only iterated Eisenstein integrals that contribute nontrivially modulo $C_1 \mathcal{P}^{\mm}_{\mathcal{H}}$ to the sum on the right hand side of \eqref{zeta35initial} are $I_{2n_1 + 2,2n_2 + 2}^{b_1, b_2}$ with $\left\{n_1,n_2\right\} = \left\{1,2\right\}$ and $b_i \in \left\{0, 2n_i\right\}$. As described in Lemma \ref{falphabet}, the action of $S^{\mm}$ gives an equality $I_{6,4}^{b_1, b_2} = \mathbb{L}^{6 - 2b_1 -2 b_2}I_{4,6}^{2-b_2, 4-b_1}$ so that we may fix $(n_1,n_2) = (2,1)$. The sum in \eqref{zeta35initial} therefore consists of at most 4 distinct integrals.

We may simplify further using the cocycle equations. Taking the coefficients of $\mathbf{e}_6 \mathsf{X}_1^4 \mathbf{e}_4 \mathsf{X}_2^2$ and $\mathbf{e}_6 \mathsf{X}_1^4 \mathbf{e}_4 \mathsf{Y}_2^2$ in \eqref{Seq} respectively produces the two equations
\begin{align}
    \mathbb{L}^6 I_{6,4}^{0,0} + I_{6,4}^{4,2} - 12\zeta^{\mm}(3) \zeta^{\mm}(5) &= 0; \label{symmetricS}\\
    \mathbb{L}^4 I_{6,4}^{0,2} + \mathbb{L}^2 I_{6,4}^{4,0} + 12\zeta^{\mm}(3) \zeta^{\mm}(5) &= 0. \label{antisymmetricS}
\end{align}
Taking the coefficients of the same words in \eqref{Umodeq} produces the equations
\begin{align} 
    2 \mathbb{L}^6 I_{6,4}^{0,0}  + \mathbb{L}^4 I_{6,4}^{0,2} + \mathbb{L}^2 I_{6,4}^{4,0} + 2 I_{6,4}^{4,2} - 12\zeta^{\mm}(3) \zeta^{\mm}(5) &\equiv 0 \pmod{C_1 \mathcal{P}^{\mm}_{\mathcal{H}}}; \label{Ucoeff1} \\
    \mathbb{L}^6 I_{6,4}^{0,0} + 2 \mathbb{L}^4 I_{6,4}^{0,2} + \mathbb{L}^2 I_{6,4}^{4, 0}   + I_{6,4}^{4,2} &\equiv 0 \pmod{C_1 \mathcal{P}^{\mm}_{\mathcal{H}}}. \label{Ucoeff2}
\end{align}
Equation \eqref{Ucoeff1} is implied by combining \eqref{symmetricS} with \eqref{antisymmetricS}, which shows that it holds even without working modulo $C_1 \mathcal{P}^{\mm}_{\mathcal{H}}$. Though this gives no interesting new information for this calculation, it does imply the existence of a further relation in $C_1 \mathcal{P}^{\mm}_{\mathcal{H}}$. What is more useful is \eqref{Ucoeff2}.
Combining it with equations \eqref{symmetricS} and \eqref{antisymmetricS} implies that
\begin{equation} \label{Ueqcoeff3}
    I_{6,4}^{0,2} \equiv 0 \pmod{C_1 \mathcal{P}^{\mm}_{\mathcal{H}}}.
\end{equation}
The combination of \eqref{symmetricS}, \eqref{antisymmetricS} and \eqref{Ueqcoeff3}, together with Theorem \ref{maintheorem}, implies that $\zeta^{\mm}(3,5)$ may be written as a $\mathbb{Q}$-linear combination
\begin{equation} \label{zeta35nearlydone}
    \zeta^{\mm}(3,5) \equiv A \mathbb{L}^6 I_{6,4}^{0,0} + B \zeta^{\mm}(3)\zeta^{\mm}(5) \pmod{C_0 \mathcal{P}^{\mm}_{\MT}}, \quad A,B \in \mathbb{Q}.
\end{equation}
Applying Lemma \ref{falphabet} to \eqref{zeta35nearlydone} gives
\begin{align*}
    -5 f_5 f_3 & \equiv  \Phi(\zeta^{\mm}(3,5)) \\
    & \equiv \Phi(A \mathbb{L}^6 I_{6,4}^{0,0} + B \zeta^{\mm}(3)\zeta^{\mm}(5)) \\
    & = 12 A f_5 f_3 + B f_3 \shuffle f_5 \\
    & = (12 A + B) f_5 f_3 + B f_3 f_5 \pmod{C_0 \mathcal{P}^{\mm}_{\MT}}.
\end{align*}
Comparing coefficients determines $A = -5/12$ and $B = 0$. We obtain
\begin{equation*}
    \zeta^{\mm}(3,5) = -\frac{5}{12} \mathbb{L}^6 I_{6,4}^{0,0} + \lambda \mathbb{L}^8, \quad \text{for some } \lambda\in \mathbb{Q}.
\end{equation*}

\end{example}

\subsubsection{The general case and further questions} \label{futuresection}

Theorem \ref{maintheorem} implies that any $\zeta^{\mm}(w) \in \Zm$ may be written as a linear combination of the form
\begin{equation} \label{generalcaselincomb}
\zeta^{\mm}(w) = \sum_i A_i \mathbb{L}^{m_i} \int_S^{\mm} v_i,
\end{equation}
where $v_i\in\mathcal{O}(\mathcal{U}_{1,1}^{\dR})$ and $A_i\in\mathbb{Q}$. Theorem \ref{maintheorem} also places constraints on the elements $v_i$ and values $m_i \in \mathbb{Z}$ in terms of the weight $n$ and depth $r$ of $\zeta^{\mm}(w)$.

The left hand side of \eqref{generalcaselincomb} has an $f$-alphabet decomposition. In many cases (e.g. for the family of Hoffman MZVs \cite{brownmtm}) this decomposition may be computed algorithmically up to a rational multiple of $f_n$ \cite{browndecomposition}, though in general this decomposition is not unique. However the left hand side has a coradical filtration and, in particular, the longest term in the $f$-alphabet decomposition with respect to the coradical filtration is canonically determined by this algorithm.

The right hand side also has an $f$-alphabet decomposition. The space of iterated Eisenstein integrals on the right hand side has a length filtration\footnote{This coincides with the coradical filtration.} with respect to which the longest part in the $f$-alphabet decomposition is canonically determined by Lemma \ref{falphabet}. By comparing the $f$-alphabet decompositions of each side of \eqref{generalcaselincomb} we may then read off the coefficients $A_i$ of the leading order terms in the coradical or length filtrations.

By extending the formula given in Lemma \ref{falphabet} for the $f$-alphabet decomposition for motivic iterated Eisenstein integrals past the highest length term it may be possible to determine more of the coefficients $A_i$. In the most optimistic scenario all coefficients except that of $\zeta^{\mm}(n)$ may be determined by this method. This raises the following question:

\begin{problem}
Is there a recursive procedure to completely decompose motivic iterated Eisenstein integrals in the $f$-alphabet in a similar way to the procedure given for motivic MZVs defined in \cite{browndecomposition}?
\end{problem}

A related area of potential future study is suggested by Theorem \ref{motivetheorem}, which implies that the action of $\mathfrak{k} = \Lie(U_{\MT}^{\dR})$ on $\ugeom$ is faithful. The first few terms of this action with respect to the filtration $W_\bullet \ugeom$ were computed in \cite{brownzetaelements}. 

\begin{problem}
Describe the action of $\mathfrak{k}$ on $\ugeom$ completely.
\end{problem}

The proof of Theorem \ref{mainargument} implies that the words $v_i$ appearing on the right hand side of \eqref{generalcaselincomb} are elements of the Hopf subalgebra $\mathcal{O}(\Ugeom)\subseteq \mathcal{O}(\mathcal{U}_{1,1}^{\dR})$. The value of such a word under the homomorphism $\int_S^{\mm} = \mu(\mathcal{C}_S^{\mm}) \in \Hom (\mathcal{O}(\Ugeom), \mathcal{P}^{\mm}_{\mathcal{H}})$ is an element of $\mathcal{P}^{\mm}_{\MT}$ because $\mathcal{O}(\Ugeom)$ is an ind-object of $\MT$. Applying the period map and \cite{brownmtm} implies that $\int_S v_i = \mu(\mathcal{C}_S)(v_i) \in \mathcal{Z}[(2\pi i)^\pm] \subseteq \mathbb{C}$.

We expect that a form of converse holds as long as one considers integrals of not just a single word $v_i$ but rather the whole (finite) subset of words obtained by varying the values of $b_1, \cdots, b_s$ within $v_i$ in the allowed ranges $0 \leq b_j \leq 2n_j$. This may be formalised as follows: for $n_1,\dots, n_s > 0$ define a linear map 
\begin{equation*}
    [E_{2n_1+2}\vert \dots \vert E_{2n_s+2}] \colon V_{2n_1}^{\dR} \otimes \cdots \otimes V_{2n_s}^{\dR} \to \mathcal{O}(\mathcal{U}_{1,1}^{\dR})
\end{equation*}
by the formula
\begin{equation*}
    \mathsf{X}_1^{2n_1 - b_1} \mathsf{Y}_1^{b_1} \otimes \cdots \otimes \mathsf{X}_s^{2n_s - b_s} \mathsf{Y}_s^{b_s} \mapsto [E_{2n_1+2}(b_1)\vert \dots \vert E_{2n_s+2}(b_s)].
\end{equation*}
It is $SL_2$-equivariant. Precomposing with any choice of $SL_2$-equivariant linear map $g \colon V_{2n}^{\dR} \to V_{2n_1}^{\dR} \otimes \cdots \otimes V_{2n_s}^{\dR}$ defines an $SL_2$-equivariant linear map
\begin{equation} \label{Eispullback}
    V_{2n}^{\dR} \xrightarrow{g} V_{2n_1}^{\dR} \otimes \cdots \otimes V_{2n_s}^{\dR} \xrightarrow{[E_{2n_1+2}\vert \dots \vert E_{2n_s+2}]} \mathcal{O}(\mathcal{U}_{1,1}^{\dR}).
\end{equation}
Let $\mathcal{O}(\mathcal{U}_E^{\dR}) \subseteq \mathcal{O}(\mathcal{U}_{1.1}^{\dR})$ be the Hopf subalgebra\footnote{Note that $\mathcal{O}(\mathcal{U}_E^{\dR})$ is not a natural ind-object of $\mathcal{H}$.} generated by Eisenstein words, so that $\mathcal{O}(\Ugeom) \subseteq \mathcal{O}(\mathcal{U}_E^{\dR}) \subseteq \mathcal{O}(\mathcal{U}_{1.1}^{\dR})$. Every element of $\Hom_{SL_2} (V_{2n}^{\dR}, \mathcal{O}(\mathcal{U}_E^{\dR}))$ is a $\mathbb{Q}$-linear combination of maps of the form \eqref{Eispullback}. With this in mind we pose the following conjecture.

\begin{conjecture} \label{moduleMZVs}
Let $f \in \Hom_{SL_2} (V_{2n}^{\dR}, \mathcal{O}(\mathcal{U}_E^{\dR}))$. Then $\im(f) \subseteq \mathcal{O}(\Ugeom)$ if and only if $\im(\mathcal{C}_S \circ f) \subseteq \mathcal{Z}[(2\pi i)^\pm]$, where we view $\mathcal{C}_S \in \mathcal{U}_{1,1}^{\dR} (\mathbb{C}) = \Hom(\mathcal{O}(\mathcal{U}_{1,1}^{\dR}), \mathbb{C})$.
\end{conjecture}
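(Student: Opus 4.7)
\emph{Easy direction} ($\Rightarrow$). Suppose $\im(f) \subseteq \mathcal{O}(\Ugeom)$. By Proposition \ref{motivicremark}, $\mathcal{O}(\Ugeom)$ is an ind-object of $\MT$. The restriction of the motivic cocycle $\mathcal{C}_S^{\mm} \in \mathcal{U}_{1,1}^{\dR}(\mathcal{P}^{\mm}_{\mathcal{H}})$ to $\mathcal{O}(\Ugeom)$ coincides with $\mu(\mathcal{C}_S^{\mm})$, whose coefficients lie in $\mathcal{P}^{\mm}_{\MT}$ by \S \ref{coeffsmuPsi}. Applying the period map and Brown's theorem ($\mathcal{P}^{\mm}_{\MT} \cong \Zm[\mathbb{L}^{\pm}]$, with $\per(\mathbb{L}) = 2\pi i$) yields $\mathcal{C}_S(\im(f)) \subseteq \mathcal{Z}[(2\pi i)^{\pm}]$.

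\emph{Hard direction} ($\Leftarrow$). Assume $\mathcal{C}_S(\im(f)) \subseteq \mathcal{Z}[(2\pi i)^{\pm}]$. The plan is to first use $SL_2$-equivariance to reduce the claim to a single vector: $\im(f)$ is determined by the value $v_0 := f(\mathsf{Y}^{2n})$ on a highest weight vector, and since $\mathcal{O}(\Ugeom) \subseteq \mathcal{O}(\mathcal{U}_E^{\dR})$ is $SL_2$-stable, it suffices to show $v_0 \in \mathcal{O}(\Ugeom)$. Next, consider the short exact sequence of $SL_2$-modules
\[
0 \to \mathcal{O}(\Ugeom) \to \mathcal{O}(\mathcal{U}_E^{\dR}) \to Q \to 0,
\]
and argue that the image of $v_0$ in $Q$ vanishes. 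Dually, $Q^{\vee}$ is, in each fixed length and modular weight, spanned by Pollack-type relations in $\mathfrak{u}_E^{\dR}$, which are connected to period polynomials of cusp forms for $\Gamma$ (compare the Pollack relations $[\varepsilon_{10}^\vee, \varepsilon_4^\vee] - 3[\varepsilon_8^\vee, \varepsilon_6^\vee] = 0$ displayed in the introduction). Geometrically, the ``extra'' iterated Eisenstein integrals witnessed by $v_0 \notin \mathcal{O}(\Ugeom)$ are precisely the combinations that produce completed $L$-values of level-$1$ cusp forms, exactly as in the $\Lambda(\Delta, 12)$ example from the introduction.

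\emph{Main obstacle.} In this formulation the hard direction amounts to a transcendence statement: noncritical $L$-values of level-$1$ cusp forms are $\mathbb{Q}[2\pi i]$-linearly independent from MZVs. This is a consequence of Brown's conjectures on periods of modular forms and is beyond current techniques. A natural way to bypass transcendence is to work motivically throughout: one strengthens the hypothesis to $\mathcal{C}_S^{\mm}(\im(f)) \subseteq \mathcal{P}^{\mm}_{\MT}$ and then uses the $G_{\mathcal{H}}^{\dR}$-equivariance of the whole picture, together with the fact that $\mathcal{O}(\Ugeom)$ is the maximal ind-$\MT$ Hopf subalgebra of $\mathcal{O}(\mathcal{U}_E^{\dR})$, to force $v_0$ into $\mathcal{O}(\Ugeom)$. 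Establishing the required maximality property, which should follow from assembling the structural theorems of Hain \cite{hainhodge} and Brown \cite{mmv, brownhain2018} on the MHS of $\mathcal{U}_{1,1}^{\dR}$ and the motivic behaviour of the cocycle $\mathcal{C}^{\mm}$, is the principal additional step required, and is the place where one should expect the real difficulty to reside.
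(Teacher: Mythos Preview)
The statement you are attempting to prove is labelled a \emph{Conjecture} in the paper, and the paper offers no proof of either direction. It is posed in \S\ref{futuresection} as a question for future investigation, and the surrounding discussion explains why the converse direction is expected to be delicate (the examples \eqref{nonex1}--\eqref{nonex2} involving $\Lambda(\Delta,12)$ and $c(\Delta,12)$ illustrate how cuspidal periods can cancel in Eisenstein combinations that are not visibly in $\mathcal{O}(\Ugeom)$).

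Your treatment of the forward direction $(\Rightarrow)$ is correct and matches the reasoning the paper gives in the paragraph immediately preceding the conjecture: $\mathcal{O}(\Ugeom)$ is an ind-object of $\MT$ (Proposition~\ref{motivicremark}), so $\mu(\mathcal{C}_S^{\mm})$ takes values in $\mathcal{P}^{\mm}_{\MT}$, and applying $\per$ lands in $\mathcal{Z}[(2\pi i)^{\pm}]$ by Brown's theorem. This direction is unconditionally true, and the paper treats it as such.

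Your diagnosis of the reverse direction $(\Leftarrow)$ is also accurate: as stated, with the complex-valued cocycle $\mathcal{C}_S$, it is a transcendence statement asserting that noncritical $L$-values of level-$1$ cusp forms (and the ``new'' periods such as $c(\Delta,12)$) cannot lie in $\mathcal{Z}[(2\pi i)^{\pm}]$. This is open. Your proposed motivic strengthening---replacing $\mathcal{C}_S$ by $\mathcal{C}_S^{\mm}$ and $\mathcal{Z}[(2\pi i)^{\pm}]$ by $\mathcal{P}^{\mm}_{\MT}$---is a natural manoeuvre, but the claim that $\mathcal{O}(\Ugeom)$ is the \emph{maximal} ind-$\MT$ Hopf subalgebra of $\mathcal{O}(\mathcal{U}_E^{\dR})$ is itself not established anywhere in the paper (note the footnote remarking that $\mathcal{O}(\mathcal{U}_E^{\dR})$ is not even a natural ind-object of $\mathcal{H}$), and proving it would require control over the full motivic structure of the cuspidal contributions, which is precisely what is not yet available. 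So your sketch correctly locates the obstruction but does not overcome it; this is consistent with the paper's decision to state the result as a conjecture.
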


The implications of Conjecture \ref{moduleMZVs} are somewhat subtle. For example, there are elements of $\mathcal{O}(\mathcal{U}^E)$ that are (conjecturally) \emph{not} contained in $\mathcal{O}(\Ugeom)$ whose image under $\mathcal{C}_S$ is contained in $\mathcal{Z}[(2\pi i)^\pm]$. Brown gives two numerical examples illustrating this in \cite[Examples $7.3$ and $7.5$]{brownihara}. Their conjectural motivic versions are
\begin{align}
       \frac{\mathbb{L}^{10}}{2^5 \cdot 3^3 \cdot 5} I_{6,8}^{0,0} - \frac{3 \mathbb{L}^4}{2^3 \cdot 691} I_{4,10}^{2,4} &\overset{?}{=} \zeta^{\mm}_{5,7}, \label{nonex1} \\ 
       \frac{\mathbb{L}^{10}}{2^6 \cdot 3^2 \cdot 5 \cdot 7} I_{4,10}^{0,0} + \frac{\mathbb{L}^4}{2^3 \cdot 691} I_{4,10}^{2,4} &\overset{?}{=} \zeta^{\mm}_{3,9}. \label{nonex2}
\end{align}
Here $\zeta^{\mm}_{5,7}, \zeta^{\mm}_{3,9} \in C_2 \mathcal{P}^{\mm}_{\MT}$ are motivic MZVs whose $f$-alphabet decompositions are $f_5 f_7 \pmod{\mathbb{Q} \mathbb{L}^{12}}$ and $f_3 f_9 \pmod{\mathbb{Q} \mathbb{L}^{12}}$ respectively. They are only well-defined up to addition of a rational multiple of $\mathbb{L}^{12}$. Despite lying in coradical degree $2$ they cannot be expressed by motivic MZVs of depth less than $4$.

A verification of \eqref{nonex1} and \eqref{nonex2} would imply that the linear combinations
\begin{align*}
    \frac{(2 \pi i)^{10}}{2^5 \cdot 3^3 \cdot 5} [E_6(0)\vert E_8 (0)] &- \frac{3 (2 \pi i)^4}{2^3 \cdot 691} [E_4(2)\vert E_{10}(4)] \\
    \frac{(2 \pi i)^{10}}{2^6 \cdot 3^2 \cdot 5 \cdot 7} [E_4(0)\vert E_{10}(0)] &+ \frac{(2 \pi i)^4}{2^3 \cdot 691} [E_4(2)\vert E_{10}(4)],
\end{align*}
which are elements of $\mathcal{O}(\mathcal{U}_E^{\dR}) \otimes_{\mathbb{Q}} \mathbb{Q}[2 \pi i]$, evaluate to elements of $\mathcal{Z}[(2\pi i)^\pm]$ under $\mathcal{C}_S$. Conjecturally, however, the individual terms appearing in these linear combinations are not contained in $\mathcal{O}(\Ugeom) \otimes_{\mathbb{Q}} \mathbb{Q}[2 \pi i]$ since their values under $\mathcal{C}_S$ should also involve the noncritical $L$-value $\Lambda(\Delta;12)$ of the cusp form $\Delta \in S_{12}(\Gamma)$, as well as a ``new'' period $c(\Delta;12)$ \cite[\S $7.2$]{brownihara}. By Remark \ref{falphabetlemmaremark} the motivic versions of these periods associated to $\Delta$ must occur in $C_1\mathcal{P}^{\mm}_{\mathcal{H}}$. They happen to cancel out in the linear combinations on the left hand sides of \eqref{nonex1} and \eqref{nonex2} to give the mixed Tate periods $\zeta^{\mm}_{5,7}$ and $\zeta^{\mm}_{3,9}$.

A further implication is that equations \eqref{nonex1} and \eqref{nonex2} are inaccessible from Theorem \ref{maintheorem}, which only makes use of periods of $\mathcal{O}(\Ugeom)$. This is evidenced by the expressions \eqref{nonex1} and \eqref{nonex2} for the depth $4$ motivic MZVs $\zeta^{\mm}_{5,7}$ and $\zeta^{\mm}_{3,9}$ as linear combinations of iterated Eisenstein integrals of length $s\leq 2$, while Theorem \ref{maintheorem} only gives the weaker bound $s \leq 4$.

This suggests that it is possible to reduce the length $s$ of the iterated Eisenstein integrals appearing on the right hand side of \eqref{generalcaselincomb} substantially below the bound $s \leq \depth \zeta^{\mm}(w)$ afforded by Theorem \ref{maintheorem} if we allow integrals of elements in the full space $\mathcal{O}(\mathcal{U}_E^{\dR})$ rather than just the mixed Tate subspace $\mathcal{O}(\Ugeom)$. In this larger space, cuspidal correction terms in lower coradical degree may account for these ``depth defects''.

This also raises an interesting question about relations between iterated Eisenstein integrals. Theorem \ref{maintheorem} implies that the depth $4$ MZVs appearing on the right hand sides of \eqref{nonex1} and \eqref{nonex2} may be written as linear combinations of iterated Eisenstein integrals of length at most $4$ that, moreover, come from $\mathcal{O}(\Ugeom)$. Equating with the iterated Eisenstein integrals on the respective left hand sides produces relations between iterated Eisenstein integrals along $S$ of potentially different lengths. It is natural to ask whether these relations may be proven using known relations (e.g. the cocycle relations). We therefore conclude with a final suggestion for future investigation:

\begin{problem}
Consider the relations in $M^{\mm}$ arising as follows: let
\begin{equation} \label{moregeneral}
    \sum_{i} \int_S^{\mm} w_i = \kappa^{\mm} \in \mathcal{P}^{\mm}_{\MT} = \Zm[\mathbb{L}^\pm]
\end{equation}
be a mixed Tate linear combination of motivic iterated Eisenstein integrals, where each $w_i \in \mathcal{O}(\mathcal{U}_E^{\dR}) \backslash \mathcal{O}(\Ugeom)$. Theorem \ref{maintheorem} produces a new expression
\begin{equation} \label{lessgeneral}
    \kappa^{\mm} = \sum_j \int_S^{\mm} v_j
\end{equation}
where each $v_i \in \mathcal{O}(\Ugeom)$. Equating \eqref{moregeneral} and \eqref{lessgeneral} produces the following relation between mixed Tate linear combinations of iterated Eisenstein integrals:
\begin{equation} \label{newrelation}
    \sum_i \int_S^{\mm} w_i = \sum_j \int_S^{\mm} v_j.
\end{equation}
Does the relation \eqref{newrelation} have a geometric origin?
\end{problem}

\bibliographystyle{amsplain}
\bibliography{ReferenceFile.bib}{}

\providecommand{\bysame}{\leavevmode\hbox to3em{\hrulefill}\thinspace}
\providecommand{\MR}{\relax\ifhmode\unskip\space\fi MR }
\providecommand{\MRhref}[2]{%
  \href{http://www.ams.org/mathscinet-getitem?mr=#1}{#2}
}
\providecommand{\href}[2]{#2}
\begin{thebibliography}{10}

\bibitem{belyi}
G.~V. Belyi, \emph{On {Galois} extensions of a maximal cyclotomic field},
  Mathematics of the {USSR}-Izvestiya \textbf{14} (1980), no.~2, 247--256.

\bibitem{ramanujan}
B.~C. Berndt and A.~Straub, \emph{Ramanujan's formula for $\zeta(2n + 1)$},
  Exploring the Riemann Zeta Function (H.~Montgomery, A.~Nikeghbali, and M.~T.
  Rassias, eds.), Springer, Cham, 2017.

\bibitem{browndecomposition}
F.~Brown, \emph{On the decomposition of motivic multiple zeta values}, Adv.
  Stud. Pure Math. \textbf{63} (2011).

\bibitem{brownmtm}
\bysame, \emph{Mixed {Tate} motives over {$\mathbb{Z}$}}, {Annals of
  Mathematics} \textbf{175} (2012), no.~2, 949--976.

\bibitem{mmv}
\bysame, \emph{Multiple modular values and the relative completion of the
  fundamental group of {$\mathcal{M}_{1,1}$}}, arXiv:1407.5167v4 (2017).

\bibitem{motivicperiods}
\bysame, \emph{Notes on motivic periods}, Communications in Number Theory and
  Physics \textbf{11} (2017), no.~3, 557–655.

\bibitem{brownzetaelements}
\bysame, \emph{{Zeta elements in depth 3 and the fundamental Lie algebra of the
  infinitesimal Tate curve}}, {Forum of Mathematics, Sigma} \textbf{5} (2017),
  e1.

\bibitem{browneisenstein}
\bysame, \emph{A class of non-holomorphic modular forms {II}: equivariant
  iterated {Eisenstein} integrals}, Research in the Mathematical Sciences
  \textbf{5} (2018), no.~1.

\bibitem{brownLvalues}
\bysame, \emph{{A multi-variable version of the completed Riemann zeta function
  and other $L$-functions}}, arXiv:1904.00190 (2019).

\bibitem{brownihara}
\bysame, \emph{From the {Deligne-Ihara} conjecture to multiple modular values},
  arXiv:1904.00179 (2019).

\bibitem{brownhain2018}
F.~Brown and R.~Hain, \emph{Algebraic {de Rham} theory for weakly holomorphic
  modular forms of level one}, Algebra \& Number Theory \textbf{12} (2018),
  no.~3, 723–750.

\bibitem{fresan}
J.~I. Burgos~Gil and J.~Fres\'{a}n, \emph{Multiple zeta values: from numbers to
  motives}, Clay Mathematics Proceedings, (to appear).

\bibitem{enriquezkzb}
D.~Calaque, B.~Enriquez, and P.~Etingof, \emph{Universal {KZB} equations: the
  elliptic case}, Algebra, Arithmetic, and Geometry: Volume I: In Honor of Yu.
  I. Manin (Y.~Tschinkel and Y.~Zarhin, eds.), {Birkh{\"a}user Boston}, 2009,
  pp.~165--266.

\bibitem{deligne}
P.~Deligne, \emph{Le groupe fondamental de la droite projective moins trois
  points}, Galois Groups over {$\mathbb{Q}$} (Y.~Ihara, K.~Ribet, and J-P.
  Serre, eds.), Springer, New York, 1989, pp.~79--297.

\bibitem{delignegoncharov}
P.~Deligne and A.~B. Goncharov, \emph{Groupes fondamentaux motiviques de {Tate}
  mixte}, {Annales scientifiques de l'\'{E}cole Normale Sup\'{e}rieure}
  \textbf{4e s{\'e}rie, 38} (2005), no.~1, 1--56 (fr). \MR{2136480}

\bibitem{delignemilne}
P.~Deligne and J.~S. Milne, \emph{Tannakian categories}, pp.~101--228,
  {Springer}, Berlin, Heidelberg, 1982.

\bibitem{drinfeld}
V.~G. {Drinfel'd}, \emph{{Quasitriangular quasi-Hopf algebras and a group
  closely connected to \(\text{Gal}(\overline{\mathbb Q}/\mathbb Q)\).}},
  {Algebra i Analiz} \textbf{2} (1990), no.~4, 149--181 (Russian).

\bibitem{ellipticassociators}
B.~Enriquez, \emph{Elliptic associators}, Selecta Mathematica \textbf{20}
  (2014), no.~2, 491--584.

\bibitem{haingeometry}
R.~Hain, \emph{The geometry of the mixed {Hodge} structure on the fundamental
  group}, Proc. Symp. Pure Math \textbf{46}, no.~2, 247--282.

\bibitem{hainmalcev}
R.~Hain, \emph{The {Hodge de Rham theory of relative Malcev completion}},
  Annales scientifiques de l'\'{E}cole {Normale} {Sup\'{e}rieure} \textbf{Ser.
  4, 31} (1998), no.~1, 47--92 (en). \MR{99f:14009}

\bibitem{hainmoduli}
\bysame, \emph{Lectures on moduli spaces of elliptic curves}, arXiv:0812.1803
  (2008).

\bibitem{hainkzb}
\bysame, \emph{Notes on the universal elliptic {KZB} connection},
  arXiv:1309.0580 (2013).

\bibitem{hainhodge}
\bysame, \emph{The {Hodge-de Rham} theory of modular groups}, {London
  Mathematical Society Lecture Note Series}, pp.~422--514, {Cambridge
  University Press}, 2016.

\bibitem{hainnotes}
\bysame, \emph{{Action of $\Gamma_{1,\vec{1}}$ on $\pi_1 (E'_{\partial/\partial
  q})$}}, (handwritten notes), 2017.

\bibitem{umem}
R.~Hain and M.~Matsumoto, \emph{Universal mixed elliptic motives}, Journal of
  the Institute of Mathematics of Jussieu \textbf{19} (2020), no.~3, 663–766.

\bibitem{kz}
V.~G. Knizhnik and A.~B. Zamolodchikov, \emph{Current algebra and {Wess-Zumino}
  model in two-dimensions}, Nucl. Phys. \textbf{B247} (1984), 83--103,
  [,690(1984)].

\bibitem{lochakmatthesschneps}
P.~Lochak, N.~Matthes, and L.~Schneps, \emph{Elliptic multizetas and the
  elliptic double shuffle relations}, arXiv:1703.09410 (2017).

\bibitem{matthesquasi}
N.~Matthes, \emph{On the algebraic structure of iterated integrals of
  quasimodular forms}, Algebra \& Number Theory \textbf{11} (2017), no.~9,
  2113--2130.

\bibitem{matthesE2}
\bysame, \emph{{The canonical cocycle of $E_2$}}, (unpublished notes), 2017.

\bibitem{nils}
\bysame, \emph{{The meta-abelian elliptic KZB associator and periods of
  Eisenstein series}}, Selecta Mathematica \textbf{24} (2018), no.~4,
  3217--3239.

\bibitem{nakamuratangential}
H.~Nakamura, \emph{Tangential basepoints and {Eisenstein} power series},
  Aspects of {Galois Theory} (H~V\"{o}lklein, D~Harbater, P.~M\"{u}ller, and
  J.~Thompson, eds.), {London Mathematcial Society Lecture Note Series},
  Cambridge {University Press}, 1999, pp.~202--217.

\bibitem{nakamuraeisenstein}
\bysame, \emph{{On profinite Eisenstein periods in the monodromy of universal
  elliptic curves}}, Available at:
  \url{http://www4.math.sci.osaka-u.ac.jp/~nakamura/zoo/fox/EisenRevisited.pdf},
  2016.

\bibitem{oda}
T.~Oda, \emph{{The universal monodromy representations on the pro-nilpotent
  fundamental groups of algebraic curves}}, {Mathematische Arbeitstagung (Neue
  Serie)}, {Max-Planck Institute preprint}, 1993.

\bibitem{pollack}
A.~Pollack, \emph{Relations between derivations arising from modular forms},
  Honors thesis, Duke University, 2009.

\bibitem{freeliealgebras}
C.~Reutenauer, \emph{Free {Lie} algebras}, LMS monographs, Clarendon Press,
  1993.

\bibitem{scholl}
A.~Scholl, \emph{Motives for modular forms}, Inventiones mathematicae
  \textbf{100} (1990), 419--430.

\bibitem{silverman}
J.~H. Silverman, \emph{Advanced topics in the arithmetic of elliptic curves},
  Graduate Texts in Mathematics, Springer-Verlag, New York, 1994.

\bibitem{takao}
N.~Takao, \emph{{Braid monodromies on proper curves and pro-$\ell$ Galois
  representations}}, Journal of the Institute of Mathematics of Jussieu
  \textbf{11} (2012), no.~1, 161--188.

\bibitem{tsunogai}
H.~Tsunogai, \emph{On some derivations of {Lie} algebras related to {Galois}
  representations}, {Publications of the Research Institute for Mathematical
  Sciences} \textbf{31} (1995), 113--134.

\end{thebibliography}

\end{document}